\newcolumntype{C}[1]{>{\centering\arraybackslash}p{#1}}
\DeclareSymbolFont{rsfs}{U}{rsfs}{m}{n}
\DeclareSymbolFontAlphabet{\mathrsfs}{rsfs}
\definecolor{darkspringgreen}{rgb}{0.09, 0.45, 0.27}
\definecolor{deepjunglegreen}{rgb}{0.0, 0.29, 0.29}
\definecolor{oldgold}{rgb}{0.81, 0.71, 0.23}
\definecolor{pastelgreen}{rgb}{0.47, 0.87, 0.47}
\newenvironment{NB}{
\color{red}{\bf NB}. \footnotesize
}{}
\newenvironment{NB2}{
\color{blue}{\bf NB2}. \footnotesize
}{}
\newcommand\fixcref[1]{%
  \AddToHook{env/#1/begin}{\crefalias{equation}{#1}}%
  \crefname{#1}{#1}{#1s}%
  \crefformat{#1}{##2~##1##3}%
  \crefrangeformat{#1}{#1s~##3##1##4 to~##5##2##6}%
  \crefmultiformat{#1}{#1~##2##1##3}{ and~##2##1##3}{, ##2##1##3}{ and~##2##1##3}%
}
\crefname{Theorem}{Theorem}{Theorems}
\crefname{section}{\S}{\S\S}
\crefname{Lemma}{Lemma}{Lemmas}
\crefname{Proposition}{Proposition}{Propositions}
\crefname{Corollary}{Corollary}{Corollaries}
\crefname{Definition}{Definition}{Definitions}
\crefname{Remark}{Remark}{Remarks}
\crefname{Remarks}{Remark}{Remarks}
\crefname{Conjecture}{Conjecture}{Conjectures}
\crefname{figure}{Figure}{Figure}
\crefname{Example}{Example}{Examples}
\crefname{Fact}{Fact}{Facts}
\crefname{appendix}{Appendix}{Appendices}
\crefname{equation}{}{}
\crefname{table}{Table}{Table}
\crefname{paragraph}{}{}
\renewcommand{\thesubsection}{\thesection(\@roman\c@subsection)}
\newcounter{number}
\newtheorem{Theorem}[equation]{Theorem}
\newtheorem{Corollary}[equation]{Corollary}
\newtheorem{Lemma}[equation]{Lemma}
\newtheorem{Proposition}[equation]{Proposition}
\newtheorem{Fact}[equation]{Fact}
\theoremstyle{definition}
\newtheorem{Definition}[equation]{Definition}
\newtheorem{Example}[equation]{Example}
\theoremstyle{remark}
\newtheorem{Remark}[equation]{Remark}
\newtheorem{Assumption}[equation]{Assumption}
\numberwithin{equation}{section}
\newcommand{\lsp}[2]{\prescript{#1}{}{#2}}
\newcommand{\defeq}{\overset{\operatorname{\scriptstyle def.}}{=}}
\newcommand{\CC}{{\mathbb C}}
\newcommand{\ZZ}{{\mathbb Z}}
\newcommand{\RR}{{\mathbb R}}
\newcommand{\proj}{{\mathbb P}}
\newcommand{\SL}{\operatorname{\rm SL}}
\newcommand{\SU}{\operatorname{\rm SU}}
\newcommand{\GL}{\operatorname{GL}}
\providecommand{\U}{}
\renewcommand{\U}{\operatorname{\rm U}}
\newcommand{\SO}{\operatorname{\rm SO}}
\newcommand{\grpSp}{\operatorname{\rm Sp}}
\newcommand{\grpO}{\operatorname{\rm O}}
\newcommand{\algsl}{\operatorname{\mathfrak{sl}}} 
\newcommand{\gl}{\operatorname{\mathfrak{gl}}}
\newcommand{\so}{\operatorname{\mathfrak{so}}}
\newcommand{\algsp}{\operatorname{\mathfrak{sp}}}
\newcommand{\End}{\operatorname{End}}
\newcommand{\Hom}{\operatorname{Hom}}
\newcommand{\Ext}{\operatorname{Ext}}
\newcommand{\Ker}{\operatorname{Ker}}
\newcommand{\Ima}{\operatorname{Im}}
\newcommand{\Sym}{\operatorname{Sym}}
\newcommand{\codim}{\mathop{\text{\rm codim}}\nolimits}
\newcommand{\id}{\operatorname{id}}
\newcommand{\ve}{\varepsilon}
\renewcommand{\MR}[1]{}
\newcommand{\Wedge}{{\textstyle \bigwedge}}
\newcommand{\vin}[1]{\operatorname{i}(#1)} 
\newcommand{\vout}[1]{\operatorname{o}(#1)} 
\newcommand{\bM}{\mathbf M}
\newcommand{\shfO}{\mathcal O}
\newcommand{\tslash}{/\!\!/\!\!/}
\newcommand{\tslabar}{\mathbin{
    \setbox0=\hbox{/\!\!/\!\!/}\rule[0.4\ht0]{\wd0}{.3\dp0}\kern-\wd0\box0}}
\newcommand{\scH}{{\mathscr H}}
\newcommand{\scR}{{\mathscr R}}
\newcommand{\cA}[1][{}]{%
  \@ifmtarg{#1}%
  {\mathcal A}
  {\mathcal A(#1)}
}
\newcommand{\cAh}[1][{}]{%
  \@ifmtarg{#1}%
  {\mathcal A_\hbar}
  {\mathcal A_\hbar(#1)}
}
\newcommand{\Stab}{\operatorname{Stab}}
\newcommand{\Lie}{\operatorname{Lie}}
\newcommand{\gr}{\operatorname{gr}}
\newcommand{\po}{\ar@{}[dr]|{\text{\pigpenfont R}}}
\newcommand{\pb}{\ar@{}[dr]|{\text{\pigpenfont J}}}
\newcommand{\pp}{\ar@{}[dr]|{\text{\pigpenfont P}}}
\newcommand{\BZ}{{\mathbb{Z}}}
\newcommand{\bq}{{\mathbf{q}}}
\newcommand{\bff}{{\mathbf{f}}}
\newcommand{\CU}{{\mathcal{U}}}
\newcommand{\CV}{{\mathcal{V}}}
\newcommand{\CW}{{\mathcal{W}}}
\newcommand{\fg}{{\mathfrak{g}}}
\newcommand{\fh}{{\mathfrak{h}}}
\newcommand{\fH}{{\mathfrak{H}}}
\newcommand{\fL}{{\mathfrak L}}
\newcommand{\fM}{{\mathfrak{M}}}
\newcommand{\sr}{{\mathsf{r}}}
\newcommand{\sA}{{\mathsf{A}}}
\newcommand{\sS}{{\mathsf{S}}}
\newcommand{\sT}{{\mathsf{T}}}
\newcommand{\sX}{{\mathsf{X}}}
\newcommand{\sY}{{\mathsf{Y}}}
\newcommand{\bv}{{\mathbf v}} 
\newcommand{\bw}{{\mathbf w}} 
\newcommand{\bC}{{\mathbf C}} 
\newcommand{\bc}{{\mathbf c}} 
\newcommand{\bU}{{\mathbf U}} 
\DeclareSymbolFont{symbolsC}{U}{pxsyc}{m}{n}
\DeclareMathSymbol{\medcirc}{\mathbin}{symbolsC}{7}
\newcommand{\ch}{\operatorname{ch}}
\tikzset{SO/.style={draw,
    minimum size=12pt,inner sep=0pt, outer sep=0pt,font={$+$}}}
\tikzset{Sp/.style={draw,
    minimum size=12pt,inner sep=0pt, outer sep=0pt,font={$-$}}}
\tikzset{GL/.style={draw,
    minimum size=12pt,inner sep=0pt, outer sep=0pt,
    font={$\phantom{-}$}}}
\tikzset{pm/.style={draw,
    minimum size=12pt,inner sep=0pt, outer sep=0pt,font={$\pm$}}}
\tikzset{mp/.style={draw,
    minimum size=12pt,inner sep=0pt, outer sep=0pt,font={$\mp$}}}
\newcommand{\invast}[1][]{\ifthenelse{\isempty{#1}}{\star}{{#1}^\star}}
\newcommand{\TT}{\mathbb T}
\title[Involutions on quiver varieties and quantum symmetric pairs]
{Instantons on ALE spaces for classical groups, involutions on
  quiver varieties, and quantum symmetric pairs}
\dedicatory{Dedicated to the memory of Masatoshi Noumi}
\author{Hiraku Nakajima}
\begin{document}

\begin{abstract}
Moduli spaces of instantons on ALE spaces for classical groups are examples of fixed point sets of involutions on quiver varieties,
i.e., $\sigma$-quiver varieties.
In 2018 Yiqiang Li considered their equivariant cohomology, and by stable envelope of Maulik-Okounkov, constructed representations of coideal subalgebras of Maulik-Okounkov Yangian, 
called twisted Yangian.
We calculate $K$-matrices as matrices in examples, identified the twisted Yangians with
ones studied in other literature,
and clarify conditions which we should impose to make them well-defined.
\end{abstract}
\maketitle

\section{Introduction}

In 1992, the author \cite{Na-quiver} discovered a relation between homology of moduli spaces of
instantons on ALE spaces and representations of affine Kac-Moody Lie algebras.\footnote{
  Announced in \cite{Hyogen1992}.
}
There have been many developments since then, extensions both in geometry and representation theory sides. 
One of applications of these developments was an explicit character formula
of arbitrary simple modules of quantum loop algebras \cite{MR2144973}.
See \cite[Appendix~B]{2019arXiv190706552N} for reduction of non-simply laced cases to simply laced cases.
The formula itself, like in the case of Kazhdan-Lusztig polynomials, involves
only combinatorial inputs, but its proof used geometry.
No alternative proof is known up to this day.

Most of developments are restricted to the case when the gauge group is unitary,
or moduli spaces of coherent sheaves, quiver varieties on geometric side,
and quantum loop algebras or Yangians on representation theory side.
The focus of this paper is to study examples when the gauge group is a classical group,
i.e., a special orthogonal or symplectic group.
%
Let us recall sporadic earlier results in this direction.

When the underlying ALE space is $\RR^4 = \CC^2$, 
Braverman, Finkelberg and the author equipped 
the equivariant intersection cohomology of the moduli space of
$G$-instantons
with a structure of a representation of the affine $\mathscr W$-algebra
associated with $\fg = \operatorname{Lie}G$ \cite{2014arXiv1406.2381B}.
Here the moduli space is, more precisely, Uhlenbeck partial compactification of the
\emph{genuine} moduli space, which has singularities. And $G$ could
be even an exceptional group, but assumed to be of type ADE.
This construction was motivated by the extension of the Alday-Gaiotto-Tachikawa correspondence
\cite{AGT} to arbitrary $G$ \cite{ABCDEFG}.
When the underlying ALE space is of type A, it is expected that the
same result holds where the affine $\mathscr W$-algebra is replaced by
the coset vertex algebra \cite{PhysRevD.84.046009}. 
But it is not clear
what is the corresponding algebra for ALE spaces of type D, E.

In 2018, Yiqiang Li \cite{MR3900699} considered an involution on
a quiver variety of finite type, which is a moduli space of
unitary instantons on ALE spaces by \cite{KN}. He called
the fixed point set a \emph{$\sigma$-quiver variety}, and equipped
its equivariant cohomology with a structure of a representation of a coideal subalgebra $\sX^{\mathrm{tw}}$ of the Maulik-Okounkov Yangian $\sX$ of the corresponding quiver variety.
(We will show that Maulik-Okounkov Yangian is isomorphic to the extended Yangian \cite{MR3849990} associated with
the corresponding simple Lie algebra.)
An example of involutions was given earlier in \cite{Na-reflect}, and
the fixed points set is the moduli space of orthogonal or symplectic instantons on ALE spaces, 
but Li considered more general involutions. 
We will work in his generality in this paper, hence study $\sigma$-quiver varieties.
The type of a $\sigma$-quiver variety is classified by (i) the type of the underlying Dynkin graph,
(ii) a choice of a diagram involution $\sigma'$, possibly trivial, and (iii) a choice of the type $(+)$ or $(-)$.
Moduli spaces of $\SO$ and $\grpSp$ instantons correspond to the case when $\sigma' = \id$, and $(+)$ or $(-)$ is $\SO$ and $\grpSp$ respectively.

However, one should keep in mind that we assume $\sigma$-quiver varieties are \emph{smooth},
hence the classification above is not complete.
We exclude examples in \cite{2014arXiv1406.2381B}, as they are singular.
Therefore, generalization of this paper to singular $\sigma$-quiver varieties is an important future problem.
For instantons on $\RR^4$, the deformed affine $\mathscr W$-algebras of classical type
are related to a coideal subalgebra of the quantum toroidal algebra
type $\mathbf U_q(\mathbf L\hat{\mathfrak{gl}}_1)$ \cite{MR4274696}.
This result suggests that our construction should have $K$-theoretic analogue,
even though we need to deal with singular moduli spaces.

Li's construction is based on the stable envelope of Maulik-Okounkov \cite{MR3951025}.
The stable envelope gives a geometric construction of the $R$-matrix on
the equivariant cohomology of quiver varieties, and the Yangian $\sX$ was
constructed by the $RTT$ construction \cite{MR1015339}.
Similarly, Li constructed $K$-matrices using stable envelope,
and the coideal subalgebra $\sX^{\mathrm{tw}}$ is given by matrix coefficients of $K$-matrices.
This construction is similar to one for twisted Yangian by Olshanski, Molev-Ragoucy, MacKay and others.
(See the textbook by Molev \cite{MR2355506} for the construction.)
Note that coideal subalgebras of quantum groups have been studied
in the context of quantum symmetric pairs, which are quantizations of symmetric pairs, i.e.,
pairs of Lie algebras and their fixed point subalgebras under an involution.
Twisted Yangians are quantization of the fixed point subalgebras of the current algebras
$\fg[z]$ of complex simple Lie algebras $\fg$.
The involution is in the form $x u^k\mapsto \sigma(x)(-u)^k$ for $x\in\fg$, $k\ge 0$,
where $\sigma$ is an involution on $\fg$.
See \cite{MR3615052} and references therein.

Li's construction is \emph{abstract}, in the sense that $K$-matrices are defined as linear operators
on the localized equivariant cohomology of $\sigma$-quiver varieties.
In a sense, the $K$-matrices should be called $K$-\emph{linear maps}.
See \cref{rem:explicitR}. 
In order to compare his twisted Yangian with the one in other literature,
we need to calculate $K$-matrices explicitly as \emph{matrices} 
after choosing bases of the localized equivariant cohomology.
This is the main purpose of this paper.
We will find that Li's construction indeed give twisted Yangians in the literature,
Olshanski twisted Yangian and Molev-Ragoucy reflection equation algebra.
See \cref{thm:O_twisted,thm:MR_refl}.
%
The Lie algebra $\fg$ is determined by (i), and the involution $\sigma$ on $\fg$
is the composite of the Chevalley involution of $\fg$ and $\sigma'$. Both are
independent of (iii). See \cref{lem:sigma_on_g}.
The embedding $\sX^{\mathrm{tw}}\to\sX$ depends on the data (iii) in general.

In the course of our study, we found that an assumption in Li's paper,
$a(W^2) = W^2$ in \cite[\S5.2, the first paragraph]{MR3900699},
is unnecessary.
This is important for our purpose, as moduli spaces of orthogonal and symplectic instantons
do not satisfy this condition in general. See \cref{sec:ex1}.

We also find that we need to impose a compatibility condition on the polarization, which is
a choice of a square root of the Euler class of the normal bundle of the fixed point set
of a torus action, in order to make $K$-matrices well-defined. See \cref{subsec:induced-polarization,subsec:K-matrix}
and Assumption~\ref{assum:induced-polarization}.
This is a \emph{new} phenomenon, not seen in the case of $R$-matrices from quiver varieties.
It is an interesting question to understand the homology group when this condition is \emph{not} satisfied.

We find a polarization satisfying the compatibility condition in
type A, D or $\mathrm{E}_6$ with $\sigma' = \id$, $(+)$ type, and
$\mathrm{A}_1$ with $\sigma'=\id$, $(-)$ type.
We also find a polarization in type A with $\sigma'\neq\id$ for both $(+)$ and $(-)$ types.
For type $\mathrm{A}_{\ell-1}$ ($\ell>2$), $\sigma' = \id$ and $(-)$ type, we show that there is no choice of polarization
satisfying the compatibility condition.
In the remaining case, we do not know whether a compatible polarization exists or not.

Except Li's assumption mentioned above, and the compatibility condition on the polarization,
the only new part in this paper is calculation of $K$-matrices in examples.


There are earlier works on geometric realization of twisted Yangians and
coideal subalgebras of quantum groups 
\cite{MR4329193,MR4717572,2024arXiv240706865S},
and references therein.
Their approaches are different from Li's one, hence from ours.
They are based on explicit presentations of coideal subalgebras, given
under $\iota$quantum group program \cite{MR4680353}. In particular,
the larger algebra, containing the coideal subalgebra, 
does \emph{not} play any role in their approaches.\footnote{
  In \cite{MR4717572} it was shown that equivariant cohomology in these cases
  are representations of the pre-twisted Yangian, defined by generators and relations.
  We show that the same cohomology groups are representations of
  the reflection equation algebra \cite[\S2.16 Example~4]{MR2355506}. But,
  relation between pre-twisted Yangian and reflection equation algebra
  is not known to the author.}
\begin{NB}
\begin{equation*}
  \iota\text{quantum group} =
  \text{the coideal subalgebra in quantum symmetric pair}
\end{equation*}
\end{NB}%
Moreover, all these papers use cotangent bundles of partial
flag varieties, which are special cases of $\sigma$-quiver varieties,
studied in \cref{sec:ex2}.
However, they were studied in the spirit of $\iota$quantum group program,
namely relation between $\sigma$-quiver varieties and quiver varieties 
played no direct role in their works.
Also, our approach can be applied to more examples, even though we still impose the strong assumption
that $\sigma$-quiver varieties are smooth.

There is also another work \cite{2025arXiv250106643D}.
It is an orthosymplectic analogue of the cohomological Hall algebra approach to the upper half subalgebra
of the Yangian \cite{MR3018956,MR2851153}. For the Yangian, cohomological Hall algebra has a 
representation on the equivariant cohomology of moduli spaces of unitary instantons or quiver varieties
\cite{MR3805051,MR4069884}.
One of key properties of relation between two constructions was the claim that a quiver variety is an open
subset of the moduli \emph{stack} of representations of the preprojective algebra.
This claim has no obvious analogue for $\sigma$-quiver varieties.
On representation theory side, it is not clear what is the upper half subalgebra of the twisted Yangian.
Hence the relation between two constructions is not clear.


Another, more technical motivation of the author to study moduli spaces of orthogonal and symplectic instantons
is an issue in the definition of de Campos Affonso’s symmetric bow varieties \cite{Henrique}.
They were introduced as candidates of Coulomb branches of quiver gauge theories
of type B, C, D, but do not have expected properties in general.
Our study of moduli spaces resolves the issue.
We will mention relation to Coulomb branches at several places in the main text.

As we mentioned above, geometric realization of Yangian and quantum loop algebras
has applications to representation theory. 
We expect that our result will have similar applications to representation theory of
twisted Yangians. Many ingredients used in \cite{MR2144973} have analogues in our setting.
For example, $t$-analogue of Knight's spectral characters (aka $q$-characters) of standard modules
\cite[\S13.5]{Na-qaff} have analogue as generating functions of
Poincar\'e polynomials of the fixed point sets of a $\CC^\times$-action on
$\sigma$-quiver varieties.
One of important tools, not discussed in this paper, is the Grassmann bundle 
in \cite[\S4]{Na-alg}, \cite[\S5.4]{Na-qaff}.
It is an interesting direction to pursue, as the Grassmann bundle
is an essential ingredient of geometric realization of Kashiwara crystals.
See e.g., \cite[\S4]{Na-Tensor} for a derivation of the tensor product rule.

We focus on the twisted Yangian in this paper, and leave modification to quantum coideal
subalgebras to future works. We need to replace equivariant cohomology by equivariant $K$-theory.
If we restrict ourselves to smooth $\sigma$-quiver varieties as in this paper, the only issue
is to clarify the condition on the polarization. If we want to deal with more examples of 
quantum symmetric pairs, we need to consider singular $\sigma$-quiver varieties.
Then the definition of $K$-theoretic stable envelope should be more involved.

The paper is organized as follows. 
In \cref{sec:ext_Yangian} we introduce the extended Yangian $\sX$ following
\cite{MR3849990}.
In \cref{sec:inv-quiver} we introduce an involution on a quiver variety
following \cite{MR3900699}. We consider the involution for both generic
stability parameter and zero stability parameter, although the latter
one plays no significant role in this paper. They give rise examples
of gauge theories, whose Coulomb branches are expected to be interesting.
See \cref{subsec:cat_quotient}.
In \cref{sec:stable-envelope} we introduce the stable envelope \cite{MR3951025}
and show that $\sX$ is isomorphic to one given by the stable envelope.
Contrary to other parts of this paper, we also discuss the case of affine quivers.
See \cref{subsec:affine}.
In \cref{sec:twisted-yangian} we introduce $K$-matrices and define
the twisted Yangian $\sX^{\mathrm{tw}}$ following \cite{MR3900699}.
In \cref{sec:ex1} we study the $K$-matrices for moduli spaces of $\SO$ and $\grpSp$ instantons
on ALE spaces in type A in detail. 
%
%
We observe that there is
no polarization satisfying the condition~\ref{assum:induced-polarization}
for $\grpSp$-instantons. On the other hand, the twisted Yangian is
Olshanski twisted Yangian for $\SO$-instantons.
In \cref{sec:ex2} we study the $K$-matrices for cotangent bundles of partial flag varieties
of type BCD. 
This is the case when $\sigma'\neq \id$.
We show that the twisted Yangian is Molev-Ragoucy reflection equation algebra.
In \cref{sec:ADHM} we summarize the ADHM description of classical group instantons
on ALE spaces.

\subsection*{Convention}\label{subsec:convention}

\subsubsection*{Transpose}

Suppose $C\colon V_1\to V_2$ is a linear map from a finite dimensional
complex vector space $V_1$ to another space $V_2$. Let $V_1^*$,
$V_2^*$ denote the dual spaces of $V_1$, $V_2$. The transpose
$\lsp{t}C\colon V_2^*\to V_1^*$ of $C$ is defined by
$\langle C e, f\rangle = \langle e, \lsp{t}C f\rangle$ for
$e\in V_1$, $f\in V_2^*$. We have $\lsp{t}(\lsp{t}C) = C$ under
the natural isomorphism $V_1^{**} \cong V_1$, $V_2^{**} \cong V_2$.

\subsubsection*{Adjoint}

Let $\ve$ be $1$ or $-1$. An \emph{$\ve$-form} on a finite dimensional
complex vector space $V$ is a non-degenerate form $(\ ,\ )$ such that
$(x,y) = \ve(y,x)$ for $x$, $y\in V$. Namely it is an orthogonal form
if $\ve=1$, and a symplectic form if $\ve = -1$. Throughout this
paper, an \emph{orthogonal} form means a non-degenerate symmetric
form.

Let $G_\ve(V)$ denote the subgroup of $\GL(V)$ preserving the
$\ve$-form. Namely $G_\ve(V) = \grpO(V)$ if $\ve=1$, $\grpSp(V)$ if $\ve=-1$.
We denote it simply by $G(V)$ if the $\ve$-form on $V$ is clear
from the context.
We denote the connected component of $G(V)$ containing $\id$ by
$G^0(V)$.
The corresponding Lie algebra is denoted by $\mathfrak g_\ve(V)$ or
$\mathfrak g(V)$. It is $\so(V_2)$ ($\ve=1$) or $\algsp(V_2)$ ($\ve=-1$).

Suppose that $V_1$, $V_2$ are equipped with $\ve$-forms
$(\ ,\ )_{V_1}$, $(\ ,\ )_{V_2}$ respectively. We define the
\emph{adjoint} $C^*$ of a linear map $C\colon V_1\to V_2$ by
\begin{equation*}
  (Cv_1, v_2)_{V_2} = (v_1, C^* v_2)_{V_1}\qquad
  v_1\in V_1, v_2\in V_2.
\end{equation*}
We then have $(C^*)^* = C$.

Suppose $V_1$, $V_2$ are equipped with $\ve$-form and $(-\ve)$-form
respectively. We define $C^*$ by the same formula as above. We then have
$(C^*)^* = -C$.

If we identify $(\ ,\ )_{V_1}$ with $\varphi_1\colon V_1\to V_1^*$ by
$\langle v_1, \varphi_1(v_1')\rangle = (v_1, v_1')_{V_1}$, and similarly for $V_2$
with $\varphi_2$, we have $C^* = \varphi_1^{-1}\, \lsp{t}C\, \varphi_2$.

\begin{NB}
  Indeed,
  $(Cv_1,v_2)_{V_2} = \langle Cv_1, \varphi_2(v_2)\rangle
  = \langle v_1, \lsp{t}C \varphi_2(v_2)\rangle$, while
  $(v_1,C^*v_2)_{V_1} = \langle v_1, \varphi_1(C^* v_2)\rangle$.
\end{NB}%

\subsection*{Acknowledgments}

The author is grateful to Yiqiang Li about the discussion on his paper \cite{MR3900699}.
He is also grateful to Masatoshi Noumi for recalling earlier works on quantum analogs of symmetric spaces
at the workshop in June 2024 at Tokyo Institute of Technology.
It was the last conversation that the author had with him, and the author is saddened by his passing.
Parts of results in this paper were announced at Oberseminar Darstellungsthorie in Bonn in April 2025,
``Categorification and Symplectic Duality" workshop at Northeastern University in June 2025, and
``Representations, Moduli and Duality" workshop at the Bernoulli Center in August 2025.
However, the condition \ref{assum:induced-polarization} was found only at the last moment.
The author is grateful to the organizers for the invitation and hospitality.
The research was supported in part by JSPS Kakenhi Grant Numbers 23K03067.

\section{Extended Yangian}\label{sec:ext_Yangian}

In this section, we introduce the extended Yangian $\sX$ of
Drinfeld Yangian $\sY$ \cite{Drinfeld}. 
The definition was given in \cite{MR3849990}, but it appears in
earlier literature in various forms. For example, in \cite{MR2355506},
$\sY$ appears as $\mathrm{Y}(\mathfrak{sl}_N)$, while $\sX$ appears as
$\mathrm{Y}(\mathfrak{gl}_N)$.
We introduce $\sX$ as we will prove that the Maulik-Okounkov Yangian \cite{MR3951025} in
type ADE is isomorphic to $\sX$ in \cref{subsec:MOyangian}.

\subsection{Yangian}\label{subsec:Yangian}

Let $\fg$ be a finite-dimensional complex simple Lie algebra.
We will assume that $\fg$ is of type ADE later, but we do not need this assumption at the moment.
We fix an invariant orthogonal form $(\ ,\ )$ on $\fg$.
\begin{NB}
  This is a part of data, used in the definition of $\sY$.
\end{NB}%
We normalize it so that $(\alpha_i,\alpha_i) = 2$ for \emph{long} simple roots $\alpha_i$ of $\fg$.
This is \emph{the} normalized invariant form used for untwisted affine Lie algebras
in \cite[\S6]{Kac}.
\begin{NB}
  See the second paragraph of \cite[Proof of Cor.~3.6]{MR3917347} for
  some different conventions.
\end{NB}%
Since our primary interest is in the case when $\fg$ is of type ADE, all
roots have the same length. Hence we do not need to pay much attention to our
convention.

Let $\Bbbk = 
\CC[\hbar]$. The Yangian $\sY$ is a $\Bbbk$-algebra defined by
generators $X$ and $J(X)$ for $X\in\fg$ with the certain defining relations, including
\begin{equation*}
  XY - YX = [X,Y], J([X,Y]) = [X, J(Y)], \text{$J$ is linear in $X$},
  \dots
\end{equation*}
as in \cite[Theorem~2]{Drinfeld}.
It contains the universal enveloping algebra $\bU(\fg)$ of $\fg$ as a subalgebra, generated
by $X\in\fg$.
For later convenience, we recall the Casimir elements
\begin{equation}\label{eq:h:38}
  \Omega = \sum_\lambda X_\lambda\otimes X_\lambda,\quad
  \omega = \sum_\lambda X_\lambda X_\lambda,
\end{equation}
where $\{ X_\lambda\}$ is an orthonormal basis of $\fg$ with respect to $(\ ,\ )$.
It appears in the formula for the coproduct, e.g.,
\begin{equation*}
  \Delta J(h_i) = J(h_i)\otimes 1 + 1\otimes J(h_i) + \frac{\hbar}2 [h_{i,0},\Omega].
\end{equation*}
See \cite[(4.11)]{2017arXiv170105288G}.

Let $c_\fg$ denote the eigenvalue of $\omega$ on the adjoint representation of $\fg$.
In other words, it is the ratio of the Killing form and $(\ ,\ )$ on $\fg$. Under our normarlization of $(\ ,\ )$,
it is equal to twice of the dual Coxeter number of $\fg$. See \cite[Ex.~6.2]{Kac}. It appears in the definition of the antipode:
\begin{equation}\label{eq:h:27}
  S(X) = -X, \quad S(J(X)) = -J(X) + \frac{\hbar}4 c_\fg X.
\end{equation}
See \cite[(3.5)]{MR3849990}.

There is another presentation of $\sY$, called \emph{a new realization},
given in \cite[Theorem~1]{MR914215}. It is generated by $x^\pm_{i,r}$, $h_{i,r}$ for
$i\in I$, $r\in\BZ_{\ge 0}$, where $I$ is the index set of vertices of the Dynkin diagram corresponding to $\fg$.
We will use the convention in \cite[Def.~2.1]{2017arXiv170105288G}, but
the details of the defining relations are not important in this paper.
The subalgebra $\bU(\fg)$ is generated by $x_{i,0}^\pm$, $h_{i,0}$. They basically coincide
with standard generators of $\fg$, but rescaled according to $(\ ,\ )$, say
$h_i = \frac{(\alpha_i,\alpha_i)}2 \alpha_i^\vee$ for a simple coroot $\alpha_i^\vee$.
The equivalence of the two presentations is explicit, and is due to Drinfeld, and the
details are available in \cite{MR3917347}. One of formulas in the equivalence is
\begin{equation*}
  h_{i,1} = J(h_i) - \hbar v_i
  \quad\text{ where }
  v_i = \frac14 \sum_{\alpha\in\Delta^+}(\alpha,\alpha_i) \{ x_\alpha^+, x_\alpha^-\}
  - \frac12 h_i^2,
\end{equation*}
and $\{ a,b\} = ab + ba$.

The Yangian $\sY$ is equipped with an ascending filtration 
  $\sY_{\le 0} \subset \sY_{\le 1} \subset \sY_{\le 2} \subset \cdots$
defined by
$\deg X = 0$, $\deg J(X) = 1$ for $X\in\fg$.
The associated graded algebra $\gr \sY = \sY_{\le 0}\oplus\sY_{\le 1}/\sY_{\le 0}
\oplus\cdots$
is isomorphic to the universal enveloping algebra $\bU(\fg[u])$
of the current algebra $\fg[u] = \fg\otimes_\CC \Bbbk[u]$ of 
$\fg$.\footnote{
  It is isomorphic to the algebra given by $\hbar = 0$, and tensored with $\Bbbk$ over $\CC$.
  In Maulik-Okounkov Yangian, which makes sense for arbitrary quiver
  including Jordan quiver, this is \emph{not} the case. Indeed,
  Maulik-Okounkov Lie algebra $\fg^{\mathrm{ext}}_{\mathrm{MO}}$, in general, 
  is defined \emph{a priori} only over $\Bbbk$.}
The embedding $\bU(\fg)\to \sY$ specializes to the embedding $\fg\to \fg[u]$ at the associated graded.

Yangian is a Hopf algebra, in particular has a coproduct $\Delta$. Its
definition is given in the original presentation. The
details about the proof of the well-definedness of $\Delta$ are available,
if $\fg \neq \algsl_2$, and more generally affine Lie algebras 
for $\fg\neq A^{(1)}_1$, $A^{(2)}_2$, in \cite[Th.~4.9]{2017arXiv170105288G}.

It is known that $\sY$ has a one parameter family of Hopf algebra
automorphism $\tau_\zeta$ for $\zeta\in\CC$. See \cite[\S3.2]{2017arXiv170105288G}.
It gives a translation $u\mapsto u+\zeta$ on the associated graded algebra $\bU(\fg[u])$.
From \eqref{eq:h:27}, we have
\begin{equation}\label{eq:h:28}
  S^2 = \tau_{-\frac{\hbar}2 c_\fg}.
\end{equation}
See \cite[Cor.~3.3]{MR3849990}. This will play an important role later in comparison
with the extended Yangian and Drinfeld Yangian.

Yangian has a distinguished family of irreducible representations $\{ F_i\}$
indexed by $i\in I$.
Following \cite[\S1.4]{Na-Tensor}, we call them \emph{$\ell$-fundamental representations}.
They are called \emph{fundamental representations} in \cite{MR1103907}.
It should be noted that their restriction to $\fg$ are not irreducible in general,
hence different from the fundamental representations of $\fg$.
To avoid confusion, we introduce the \emph{new} terminology, which is almost the same as the original one.
Let us denote it by $\rho_{F_i}$:
\begin{equation*}
  \rho_{F_i}\colon \sY\to \End_{\Bbbk}(F_i).
\end{equation*}
One way to define them
\begin{NB}
  at least after specializing $\hbar = 1$ ?
\end{NB}%
is to specify the Drinfeld polynomials as
$P_i(u) = u$, $P_j(u) = 1$ for $j\neq i$. See \cite[\S2.13]{MR1103907}.
It means that $F_i$ has a distinguished vector $v_i$, called the \emph{$\ell$-highest weight vector}, such that
\begin{equation*}
  \begin{split}
    &  \sY v_i = F_i, \qquad
      x^+_{j,r} v_i = 0 \quad \text{for $j\in I$, $r\ge 0$},\\
    & \left(1 + \hbar\sum_{r=0}^\infty h_{j,r} u^{-r-1}\right) v_i 
      = \left(
      \frac{P_j(u+\frac{\hbar}2\frac{(\alpha_i,\alpha_i)}2)}{P_j(u-\frac{\hbar}2\frac{(\alpha_i,\alpha_i)}2)}
      \right)^- v_i
      \quad \text{for $j\in I$},
  \end{split}
\end{equation*}
where $(\ )^-$ means the expansion at $u=\infty$.
These conditions fixes $v_i$ uniquely up to a scalar multiple. We choose and fix $v_i$ hereafter.

In type ADE, they are constructed explicitly as the equivariant cohomology of the
lagrangian subvarieties 
in the quiver variety (\cite{Varagnolo}). 
In order to follow the convention in \cite{MR3951025}, we define it
as the equivariant cohomology of the whole quiver variety:
\begin{equation}\label{eq:h:48}
  F_i = H^*_{\CC^\times_\hbar}(\fM(\varpi_i))
  = \bigoplus_{\bv} H^*_{\CC^\times_\hbar}(\fM(\bv,\varpi_i)).
\end{equation}
The construction will be reviewed briefly later in \cref{subsec:MOyangian}.
Strictly speaking, this is different from one given by the lagrangian subvarieties.
But they are isomorphic if we invert $\hbar$. It is also known that the above
$F_i$ is the dual module of one given by the lagrangian subvarieties, see \cite[Th.~7.3.5]{Na-qaff}.

The component $\fM(0,\varpi_i)$ is a single point, hence we have a distinguished class
$1\in H^*_{\CC^\times_\hbar}(\fM(0,\varpi_i))$. We take it as the $\ell$-highest weight vector $v_i$ of $F_i$,
in type ADE.

It is known that $F_i$ is free of finite rank over $\Bbbk$.
See \cite[\S7]{Na-qaff}.
We fix a $\Bbbk$-base of $F_i$.

\subsection{\texorpdfstring{$R$}{R}-matrix}

Let us consider $F_i[u] = F_i\otimes \Bbbk[u]$, where
$\otimes$ is over $\Bbbk$ hereafter.
We can make it a $\sY$-module through $\tau_u$: $\left.F_i[u]\right|_{u=a}$ is
the $\sY$-module $F_i$ pull-backed by $\tau_a$ for $a\in\CC$.

The tensor product $F_i[u_1]\otimes F_j[u_2]$ has an $R$-matrix (\cite[Th.~4]{Drinfeld} and \cite[Th.~7.2]{MR4391348} for a published proof):
\begin{Fact}\label{fact:R-matrix}
  There exists $R_{F_i,F_j}(u_1-u_2)\in \End(F_i\otimes F_j)$ depending
  rationally on $u_1 - u_2$ with $R_{F_i,F_j}(\infty) = \id$ such that
  \begin{equation*}
    (12) R_{F_i,F_j}(u_1-u_2) \colon F_i[u_1]\otimes F_j[u_2] \to
    F_j[u_2]\otimes F_i[u_1]
  \end{equation*}
  intertwines the $\sY$-module structures on both sides. Here $(12)$ is the
  exchange of the first and second factors.
  Moreover, it comes from the universal $R$-matrix $\scR(u)\in 1 + \frac{\hbar}{u}\sY\otimes\sY[[u^{-1}]]$ by
  evaluation up to rescaling by an element $f(u)\in 1 + u^{-1}\Bbbk[[u^{-1}]]$.
\end{Fact}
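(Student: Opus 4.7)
The plan is to split the claim into two parts: existence of a normalized rational intertwiner $R_{F_i,F_j}(u)$, and its identification with the evaluation of Drinfeld's universal $R$-matrix. First I would observe that for generic $a\in\CC$, the $\sY$-module $F_i[u_1]\otimes F_j[u_2]$ with $a = u_1-u_2$ is irreducible: by the preamble to \cref{fact:R-matrix} its Drinfeld polynomials are
\[ P_k(u) = (u-u_1)^{\delta_{ki}}(u-u_2)^{\delta_{kj}}, \]
and these satisfy Chari--Pressley's criterion for irreducibility of tensor products of $\ell$-fundamentals \cite{MR1103907}, which excludes at most a finite union of affine hyperplanes in $a$. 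The swapped module $F_j[u_2]\otimes F_i[u_1]$ has identical Drinfeld polynomials, hence is $\sY$-isomorphic to the first for generic $a$; Schur's lemma then produces a one-dimensional space of intertwiners, from which I would single out the unique one sending $v_i\otimes v_j$ to $(12)(v_j\otimes v_i)$. This normalization makes $R_{F_i,F_j}(\infty)=\id$ automatic from the leading-order expansion.

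Next I would establish rationality in $u=u_1-u_2$. Fixing $\Bbbk$-bases of $F_i$ and $F_j$, the intertwining conditions, expressed on the finite set of generators of $\sY$ in the new realization, form a linear system on the matrix entries of $(12)R$ with coefficients that are polynomials in $u_1$, $u_2$. Combined with the normalization on the $\ell$-highest weight vector, Cramer's rule forces the solution to lie in $\End_\Bbbk(F_i\otimes F_j)(u)$; the possible denominators are explicit linear factors $(u-c)$ controlled by differences of $h_{k,0}$-eigenvalues on the weight decomposition, which are exactly the values of $a$ at which generic irreducibility fails.

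For the final assertion I would invoke Drinfeld's theorem on pseudo-quasitriangularity of $\sY$ \cite{Drinfeld}: there is a universal $\scR(u)\in 1+\hbar u^{-1}\sY\hat{\otimes}\sY[[u^{-1}]]$ whose evaluation on $F_i\otimes F_j$ makes sense as a formal power series in $u^{-1}$, since on each $\fg$-weight subspace of $F_i\otimes F_j$ only finitely many terms of $\scR(u)$ act nontrivially at each order in $u^{-1}$. The evaluated series is an $\sY$-intertwiner after the shift by $\tau_u$ and the swap $(12)$, so by the uniqueness coming from Schur's lemma it must equal $R_{F_i,F_j}(u)$ up to a scalar factor $f(u)\in 1+u^{-1}\Bbbk[[u^{-1}]]$. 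The main obstacle is pinning down the denominator pattern in the rationality step---equivalently, locating the poles of $R_{F_i,F_j}(u)$ at the reducibility locus---which in full generality requires the careful pole analysis carried out in \cite[Th.~7.2]{MR4391348}.
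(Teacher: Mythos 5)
The paper does not actually prove this statement: it is quoted as a \emph{Fact}, with \cite[Th.~4]{Drinfeld} and \cite[Th.~7.2]{MR4391348} cited for a published proof. Your sketch --- generic irreducibility of $F_i[u_1]\otimes F_j[u_2]$ via the Chari--Pressley criterion \cite{MR1103907}, Schur's lemma giving a unique normalized intertwiner, rationality by Cramer's rule, and identification with the evaluated universal $R$-matrix up to a scalar $f(u)\in 1+u^{-1}\Bbbk[[u^{-1}]]$ --- is precisely the standard argument carried out in those references, so in the only meaningful sense it is the same approach as the paper's. The one step you call automatic that is not is $R_{F_i,F_j}(\infty)=\id$: the order-$u^0$ coefficient $R^{(0)}$ is a priori only known to commute with $\fg\otimes 1$ and $1\otimes\fg$, i.e.\ to lie in $\End_{\fg}(F_i)\otimes\End_{\fg}(F_j)$, which can be strictly larger than the scalars because $F_i$ restricted to $\fg$ need not be irreducible; one needs the order-by-order analysis of the intertwining relation involving the $J(x)$'s (as in \cite{MR1103907}) to pin down $R^{(0)}=\id$. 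You correctly flag the pole/denominator analysis as the other point that genuinely requires \cite[Th.~7.2]{MR4391348}, and the remaining technicality worth a sentence is that Schur-type arguments are being applied over $\Bbbk=\CC[\hbar]$ rather than a field.
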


We normalize the $R$-matrix so that the tensor product of the $\ell$-highest
weight vectors of $F_i[u_1]$, $F_j[u_2]$ is preserved.
\begin{NB}
This is the convention in \cite{MR3951025}.
\end{NB}%
Since this normalized $R$-matrix is different from one from the universal $R$-matrix,
let us denote the universal $R$-matrix by $\scR^{\mathrm{uni}}(u)$.
Both will play a role below.

The $R$-matrices satisfy the Yang-Baxter equation:
\begin{equation}
  \begin{multlined}
    R_{F_1,F_2}(u_1-u_2) R_{F_1,F_3}(u_1-u_3) R_{F_2,F_3}(u_2-u_3) \qquad \\
    \qquad = R_{F_2,F_3}(u_2-u_3) R_{F_1,F_3}(u_1-u_3) R_{F_1,F_2}(u_1-u_2)
  \end{multlined}
\end{equation}
as rational operators on $F_1[u_1]\otimes F_2[u_2]\otimes F_3[u_3]$,
where $F_1$, $F_2$, $F_3$ are chosen among $\{ F_i\}$.

\subsection{Monodromy matrices}

For a sequence of indices $i_1,\dots, i_n$, we define a $\sY$-module by
\begin{equation}\label{eq:h:11}
  \scH 
  = F_{i_1}[u_1]\otimes F_{i_2}[u_2]\otimes\dots\otimes F_{i_n}[u_n],
\end{equation}
where the $\sY$-module structure
is given by the coproduct $\Delta$.

Choose $F$ among $\{ F_i\}$ as an auxiliary space, we introduce a \emph{monodromy} matrix:
\begin{equation}\label{eq:h:12}
  \begin{split}
    \sT_F(u) \defeq R_{F[u],\scH} &=
    R_{F,F_{i_n}}(u-u_n) R_{F, F_{i_{n-1}}}(u-u_{n-1})\cdots
    R_{F,F_{i_1}}(u-u_1) \\
    &\in\id + \frac{\hbar}u \End_{\Bbbk[u_1,u_2,\dots, u_n]}(F\otimes \scH)[[u^{-1}]].
  \end{split}
\end{equation}
In the view point in \cite[\S4.2.4]{MR3951025},
this operator moves from the chamber
$\{ u > u_1 > u_2 > \cdots > u_n\}$ to the chamber 
$\{ u_1 > u_2 > \cdots > u_n > u\}$ in the space of cocharacters.
The first step goes from $\{u > u_1\}$ to $\{ u_1 > u\}$, hence
$R_{F,F_{i_1}}(u-u_1)$. Then continue.
In terms of tensor product representations, it intertwines
between $F \otimes (F_{i_1}\otimes\dots\otimes F_{i_n})$ and
$(F_{i_1}\otimes\dots\otimes F_{i_n})\otimes F$ after swapping the two factors.

By the base of $F$, we regard $\sT_F(u)$ as a matrix of size $=\dim F$ whose entries are
elements of $\End_{\Bbbk[u_1,u_2,\dots, u_n]}(\scH)[[u^{-1}]]$.
We run all $\scH$, hence consider $\sT_F(u)-\id$ as elements of
$\frac{\hbar}u \prod_{\scH}\End_{\Bbbk[u_1,u_2,\dots, u_n]}(\scH)[[u^{-1}]]$.
So we do not include $\scH$ in the notation of $\sT_F(u)$.
Note also that $\sT_F(u)$, once $\scH$ is fixed, is a rational function in $u-u_1$, \dots, $u-u_n$ by its construction.
It is regular at $u=\infty$, the expansion at $u=\infty$ gives a formal power series in $u^{-1}$ as above.

\subsection{$RTT$ relation}

Let us extend the action of $\sT_{F_i}(u)$ 
(resp.\ $\sT_{F_j}(v)$) to
$F_i[u]\otimes F_j[v]\otimes\scH$ by the identity on the 
remaining factor $F_j[v]$ (resp.\ $F_i[u]$).
From the Yang-Baxter equation, one derives the $RTT$ relation:
\begin{equation}\label{eq:h:5}
  R_{F_i,F_j}(u-v) \sT_{F_i}(u)\sT_{F_j}(v) = \sT_{F_j}(v)\sT_{F_i}(u) R_{F_i,F_j}(u-v),
\end{equation}
where $R_{F_i,F_j}(u-v)$ acts by the identity on the factor $\scH$.

\begin{NB}
The $RTT$ relation means the commutativity of the following diagram
for arbitrary choice of $\scH$:
\begin{equation}\label{eq:h:4}
  \begin{tikzcd}[column sep = 10em]
    F_1[u]\otimes F_2[v]\otimes \scH \arrow[r,"{(1\otimes\tau_{v-u})\Delta(\sT(u)) = \sT_1(u)\sT_2(v)}"]
    \arrow[d,"R_{F_1 F_2}(u-v)"']
    & F_1[u]\otimes F_2[v]\otimes \scH
    \arrow{d}{R_{F_1 F_2}(u-v)}\\
    F_1[u]\otimes F_2[v]\otimes \scH \arrow[r,"{(1\otimes\tau_{v-u})\Delta^{\mathrm{op}}(\sT(u)) = \sT_2(v)\sT_1(u)}"']
    & F_1[u]\otimes F_2[v]\otimes \scH\rlap{.}
  \end{tikzcd}
\end{equation}
\end{NB}%

\subsection{Extended Yangian}\label{subsec:ext_yangian}

We choose a finite dimensional representation $V$ of $\sY$ as
$\bigoplus_{i\in I} F_i$, and apply 
the construction \cite[\S5]{MR3849990} of the extended Yangian $\sX$.
The index set $\mathcal I$, appearing as $X_{\mathcal I}(\fg)$ in the notation of the extended Yangian in \cite{MR3849990},
is nothing but $I$ in our choice of $V$:
By \cite[\S4.3]{MR3849990}, $\mathcal I$ is the index set of a chosen base of $\End_{\sY}(V)$. In our case,
we choose $\{ \id_{F_i}\}_{i\in I}$ as a base of $\End_{\sY}(V)$.
We then formulate the construction as follows.

\begin{Definition}\label{def:ext_yangian}
The \emph{extended Yangian} $\sX$ is the algebra generated by elements
$\{ t_{i;ab}^{(r)} \mid i\in I, a,b = 1,\dots, \dim F_i, r\ge 1\}$, subject to the
defining $RTT$ relations \eqref{eq:h:5} viewed
as an equality in $\End(F_i\otimes F_j)\otimes \sX[u, u^{-1}]][[v^{-1}]]$.
Here $\sT_{F_i}(u) = \id_{F_i} + \left(\sum_{r} t_{i;ab}^{(r)} u^{-r-1}\right)_{a,b}$
is regarded as an element of $\End(F_i)\otimes\sX[[u^{-1}]]$
under the chosen base of $F_i$.
\end{Definition}

The extended Yangian $\sX$ has a representation on $F$ for one of $\{ F_i\}$ given by
\begin{equation}\label{eq:h:29}
  \rho_F^{\mathrm{ext}}\colon \sX \to \End(F);\qquad
  \sT_{F_i}(u) \mapsto R_{F_i,F}(u).
\end{equation}  
The $RTT$ relation follows from the Yang-Baxter equation.

\begin{NB}
We take a slightly different formulation.
Following \cite[\S5.2]{MR3951025}, we define the \emph{extended Yangian} $\sX$ as the subalgebra of
\begin{equation}\label{eq:h:18}
  \prod_{n=0}^\infty\prod_{i_1,\dots,i_n} \End_{\Bbbk[u_1,u_2,\dots, u_n]}(\scH),
\end{equation}  
generated by coefficients of $\hbar u^{-N-1}$ of matrix entries of
$\sT_F(u)-\id$ for all $N\ge 0$ and all auxiliary spaces $F$.

Since $F$ runs all $\ell$-fundamental representations, our
construction corresponds to choosing the finite dimensional
representation $V = \bigoplus_{i\in I} F_i$ in 
the construction \cite[\S5]{MR3849990} of the extended Yangian $\sX$.
The index set $\mathcal I$, appearing as $X_{\mathcal I}(\fg)$ in the notation of the extended Yangian in \cite{MR3849990},
is nothing but $I$ in our choice of $V$:
By \cite[\S4.3]{MR3849990}, $\mathcal I$ is the index set of a chosen base of $\End_{\sY}(V)$. In our case,
we choose $\{ \id_{F_i}\}_{i\in I}$ as a base of $\End_{\sY}(V)$.

We further embed the extended Yangian in \cite{MR3849990} into
$\prod_{\scH} \End(\scH)$ from the defining representation on $F_i$,
\begin{NB2}
   Given by the $\sT_{F_j}(u) \mapsto R_{F_j,F_i}(u)$. See
   \cite[a paragraph below Remark~5.2]{MR3849990}.
\end{NB2}%
coproduct and the translation automorphism below.
Since the homomorphism from $\sY$ this algebra is injective, 
and 
there is no
difference in this formulation in practice.
\end{NB}%

\begin{Definition}\label{def:filt}
We define an ascending \emph{filtration} on $\sX$ by the degree of $u$,
i.e., $\sX_{\le N}$ is generated by coefficients of $\hbar u^{-n-1}$ with $n\le N$.
\end{Definition}

\begin{Remark}\label{rem:explicitR}
In the conventional $RTT$ approach to the extended Yangian, one starts
with $R$, given as an \emph{explicit matrix}, rather than a \emph{linear operator}
on a vector space, which has no specific choice of a base. One can avoid
the use of Drinfeld's definition of $\sY$, and define the extended Yangian
directly in this approach. See \cite{MR2355506}.
In this approach, people usually assume $\fg$ is of classical type, and take
$V$ as a vector representation of $\fg$.
In particular, this construction \emph{explains} that the vector representation
is an example of $\ell$-fundamental representations, which
are also fundamental representations of $\fg$.
%
%
See \cref{rem:smaller_ext-Yang} below.
\end{Remark}

\subsection{Hopf algebra structure}\label{subsec:coproduct}

One has a Hopf algebra structure on $\sX$ with coproduct $\Delta$,
antipode $S$, and counit $\ve$ given by
\begin{equation*}
  \Delta(\sT_{F_i}(u)) = (\sT_{F_i}(u)\otimes 1)(1\otimes\sT_{F_i}(u)),\quad
  S(\sT_{F_i}(u)) = \sT_{F_i}(u)^{-1}, \quad
  \ve(\sT_{F_i}(u)) = \id_{F_i}.
\end{equation*}

\begin{NB}
The tensor product $\scH_1\otimes\scH_2$ of two spaces $\scH_1$, $\scH_2$ of form \eqref{eq:h:11} is again
of the same form. The projection
\begin{equation*}
   \prod_{\scH} \End(\scH) \to \prod_{\scH_1,\scH_2} \End(\scH_1\otimes\scH_2)
\end{equation*}
gives a coproduct
\begin{equation*}
  \Delta\colon \sX\to \sX\otimes\sX
\end{equation*}
by sending $\sT_F(u)$ to $(\sT_F(u)\otimes 1)(1\otimes\sT_F(u))$.
\begin{NB2}
  We multiply $F$ factor, i.e., $t_{ij}(u)\mapsto \sum_k t_{ik}(u)t_{kj}(u)$.
\end{NB2}%
It is clear that $\tilde\Phi\colon \sX\to \sY$
intertwines the coproducts.

We define the counit $\delta\colon \sX\to\Bbbk$ by the projection to the choice $\scH = \Bbbk$ for $n=0$ in \eqref{eq:h:11}.

Since the leading term is $\id$, $\sT_F(u)$ is invertible. We denote the inverse by $\sT(u)^{-1}$.
(Note that it is denoted by $\sT_F^{-1}(u)$ in \cite{MR2355506}.) 
\begin{NB2}
  If $\sT(u) = \id + \hbar\sum_{N=1}^\infty \sT^{(N)} u^{-N}$,
  then $\sT(u)^{-1} = \id - \hbar\sum_{N=1}^\infty \sT^{(N)} u^{-N} + \hbar^2\sum_{N=1}^\infty (\sT^{(N)})^2 u^{-2N} - \cdots$.
  Therefore $\sT^{(1)}\mapsto -\sT^{(1)}$, $\sT^{(2)}\mapsto -\sT^{(2)} + \hbar(\sT^{(1)})^2$, \dots.
\end{NB2}%
As in \cite[\S1.5]{MR2355506}, it gives the antipode $S$, which makes
$\sX$ a Hopf algebra. One shows that $\tilde\Phi$ is a surjective Hopf
algebra homomorphism. See \cite[Lemma~6.1]{MR3849990}.
\end{NB}%

\subsection{Translation automorphism}\label{subsec:translation}

We have a family of automorphisms $\tau_\zeta\colon \sX\to \sX$ for $\zeta\in\CC$ given by
\begin{equation*}
  \sT_{F_i}(u)\mapsto \sT_{F_i}(u-\zeta).
\end{equation*}

\begin{NB}
The space $\scH$ in \eqref{eq:h:11} has an automorphism
$\tau_c\colon \scH\to\scH$ given by $u_i\mapsto u_i + c$. It
induces an automorphism of $\sX$, denoted by the same
symbol $\tau_c$. 
\end{NB}%

By the coproduct and translation automorphisms, we equip $\scH=F_{i_1}[u_1]\otimes F_{i_2}[u_2]\otimes\dots\otimes F_{i_n}[u_n]$ in \eqref{eq:h:11} with a $\sX$-module
structure. Thus we have a homomorphism
\begin{equation*}
  \prod_{\scH}\rho^{\mathrm{ext}}_\scH \colon \sX\to 
    \prod_{\scH}
  \End_{\Bbbk[u_1,u_2,\dots, u_n]}(
  \scH  
  ).
\end{equation*}
In \cite{MR3951025} the extended Yangian $\sX$ is defined as
a subalgebra of the right hand side of the above equation.
Since the above homomorphism $\prod_{\scH}\rho^{\mathrm{ext}}_\scH$ is injective, say by 
\cite[\S5.5.3]{MR3951025}, there is no difference in practice.

\subsection{Homomorphism to the Yangian}

We have a homomorphism $\tilde\Phi\colon \sX\to \sY$ given by
\begin{equation}\label{eq:h:31}
  \tilde\Phi\colon \sT_{F_i}(u) \mapsto (\rho_{F_i}\otimes 1)(\scR^{\mathrm{uni}}(u)).
\end{equation}
Recall $\rho_{F_i}\colon \sY\to \End(F_i)$ is the representation of $\sY$ on $F_i$. This is a surjective homomorphism
of Hopf algebras by \cite[Lemma~6.1]{MR3849990}.
It is also a filtered homomorphism, as shown in the proof of \cite[Th.~6.2]{MR3849990}.
It intertwines the translation automorphisms $\tau_\zeta$ of $\sX$ and $\sY$, thanks
to a property of the universal $R$-matrix \cite[(3.10)]{MR3849990}.

Remark that the definition above uses the universal $R$-matrix, not the normalized one.
This is necessary since we want to get a homomorphism to $\sY$.

Let us define
\begin{equation}\label{eq:h:30}
  Z_{F_i}(u) = S^2(\sT_{F_i}(u)) \sT_{F_i}(u+\frac\hbar2 c_\fg)^{-1}
  \in \End(F_i)\otimes \sX[[u^{-1}]].
\end{equation}
By \eqref{eq:h:28}, it is killed by $\tilde\Phi$, i.e., $\tilde\Phi(Z_{F_i}(u)) = 1$.

Since our normalized $R$-matrix differs from the universal one by a rescaling
by $f(u)$, we see that $Z_{F_i}(u)$ is $f(u) f(u+\frac{\hbar}2 c_\fg)^{-1}$ times
the identity operator in the representation $F_i$ of $\sX$ defined by \eqref{eq:h:29}. 
We write $Z_{F_i}(u) = \id_{F_i}\otimes z_{F_i}(u)$ with $z_{F_i}(u)\in \sX[[u^{-1}]]$.
It has the expansion $z_{F_i}(u) = 1 + z_i^{(2)} u^{-2} + z_i^{(3)} u^{-3} + \cdots$.
There is no term of $u^{-1}$ by the definition. See \cite[Lemma~7.5]{MR3849990}.
Since $z_{F_i}(u)$ commutes with any elements of $\sX$ in $\scH$,
it is contained in the center.
See also \cite[Th.~1.9.9]{MR2355506} for $\fg = \algsl_N$.

Furthermore, we have
\begin{Theorem}[\protect{\cite[Th.~6.2, Prop.~7.6, Th.~7.3]{MR3849990}}]\label{thm:ext-Yang-center}
  \textup{(1)}
  Let $\sY_R$ be the quotient $\sX/(z_{F_i}(u) - 1)$.
  Here $(z_{F_i}(u) - 1)$ is the ideal generated by matrix entries of coefficients
  of $z_{F_i}(u) - 1$ for all $i\in I$.
  Then $\tilde\Phi$ induces an isomorphism 
  \begin{equation*}
  \sY_R \xrightarrow[\cong]\Phi\sY.
  \end{equation*}

  \textup{(2)} The center of $\sX$ is isomorphic to the polynomial
  ring of generators $z_i^{(r)}$ for $i\in I$, $r\ge 2$, appeared in the expansion
  of $z_{F_i}(u)$.

  \textup{(3)} The extended Yangian $\sX$ is isomorphic to the tensor
  product
  \begin{equation*}
    \sY_R\, \otimes\, \Bbbk[z_i^{(r)}\mid i\in I, r\ge 2].
  \end{equation*}
\end{Theorem}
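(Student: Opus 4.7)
The plan is to establish the three assertions by a filtered-graded comparison between $\sX$ and $\sY$, leveraging the explicit description of $\tilde\Phi$ through the universal $R$-matrix together with the identity $\tilde\Phi(Z_{F_i}(u)) = 1$ that is already in hand. All three parts will ultimately rest on a PBW-type statement for $\sX$ whose associated graded matches $\bU(\fg[u])\otimes \Bbbk[z_i^{(r)}]$.

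For part \textup{(1)}, I would first check that $\tilde\Phi$ factors through $\sY_R$. This is immediate: by \eqref{eq:h:28} we have $S^2 = \tau_{-\hbar c_\fg/2}$ on $\sY$, and $\tilde\Phi$ intertwines the translation automorphisms, so applying $\tilde\Phi$ to \eqref{eq:h:30} yields $1$, showing $z_{F_i}(u)-1 \in \ker\tilde\Phi$. Surjectivity of $\tilde\Phi$ onto $\sY$ is already recorded as \cite[Lemma~6.1]{MR3849990}; concretely one reads off generators $X\in\fg$ and $J(X)$ from the leading terms of $\scR^{\mathrm{uni}}(u) = 1 + \hbar\Omega u^{-1} + O(u^{-2})$ evaluated on $F_i$, and then combines with the filtered surjectivity. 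The remaining point is injectivity of the induced map $\bar\Phi\colon\sY_R\to\sY$. I would prove this by taking associated graded algebras with respect to the filtration of Definition~\ref{def:filt} on $\sX$, showing the filtration descends to $\sY_R$, and comparing $\gr\sY_R$ with $\gr\sY \cong \bU(\fg[u])$ via a PBW argument; since $\bar\Phi$ is surjective and both graded pieces have the same size in each degree, injectivity follows.

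For parts \textup{(2)} and \textup{(3)}, the centrality of each $z_i^{(r)}$ comes from the $RTT$ relation \eqref{eq:h:5} applied to the pair $(F_i,F_j)$: as $Z_{F_i}(u) = \id_{F_i}\otimes z_{F_i}(u)$ is a scalar operator on the first tensor factor, the $RTT$ relation collapses to $[z_{F_i}(u),\sT_{F_j}(v)] = 0$, and since the $\sT_{F_j}(v)$ generate $\sX$ by \cref{def:ext_yangian}, $z_{F_i}(u)$ is central. To identify the \emph{full} center and extract the tensor-product decomposition, I would inspect the top symbols of the $z_i^{(r)}$ in $\gr\sX$: they should be algebraically independent central generators, and the remaining generators of $\gr\sX$ together with the $\gr z_i^{(r)}$ should account for all of $\gr\sX$. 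Algebraic independence reduces to an explicit computation of leading symbols at each filtration degree, and the tensor product decomposition then follows from a dimension count combined with the fact that $\bar\Phi$ is a filtered isomorphism modulo the ideal generated by $z_{F_i}(u)-1$.

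The main obstacle will be the PBW-type statement for $\sX$ and the verification that the central subalgebra generated by the $z_i^{(r)}$ meets the image of $\sY_R\hookrightarrow\sX$ trivially in a graded-compatible way, yielding the clean tensor product in \textup{(3)}. In the $\fg=\algsl_N$ case this is handled in \cite[Th.~1.9.9]{MR2355506} by exploiting the relation with the quantum determinant, and the general ADE story proceeds analogously using $Z_{F_i}(u)$ in place of the determinant; pinning down the correct leading-symbol calculation so as to guarantee algebraic independence of the $z_i^{(r)}$, and to identify the quotient by them precisely with $\sY$ rather than a larger algebra, is where the bulk of the technical work will lie.
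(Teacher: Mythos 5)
This theorem is not proved in the paper at all: it is quoted verbatim from \cite[Th.~6.2, Prop.~7.6, Th.~7.3]{MR3849990}, so there is no in-paper argument to compare against. Your outline does follow the same general strategy as that reference (factor $\tilde\Phi$ through the quotient using $\tilde\Phi(Z_{F_i}(u))=1$, establish centrality of the $z_i^{(r)}$ from the $RTT$ relation, and then settle everything by a filtered--graded comparison against a PBW theorem for $\sX$), and the individual steps you do spell out --- the descent of $\tilde\Phi$ to $\sY_R$ via \eqref{eq:h:28}, and the collapse of the $RTT$ relation to $[z_{F_i}(u),\sT_{F_j}(v)]=0$ because $Z_{F_i}(u)$ is scalar on the auxiliary factor --- are correct.

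However, as a proof the proposal is incomplete in exactly the places where the real content lies, and you acknowledge as much in your final paragraph. First, the entire argument is conditioned on a PBW theorem identifying $\gr\sX$ with $\bU(\fg^{\mathrm{ext}}[u])$; in this paper that statement is itself only quoted (as \cref{thm:PBW}, from the same reference), so you cannot treat it as available input without proving it, and it is the hardest single ingredient. Second, for the injectivity of $\bar\Phi\colon\sY_R\to\sY$ your ``same size in each degree'' step hides a standard subtlety: $\gr(\sX/I)$ is a quotient of $\gr\sX$ by the ideal of \emph{all} symbols of elements of $I$, which a priori may be strictly larger than the ideal generated by the symbols of your chosen generators $z_i^{(r)}$. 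To close the argument you must show that the leading symbols of the $z_i^{(r)}$ are algebraically independent and generate precisely the ideal corresponding to $\mathfrak z_I[u]\subset\fg^{\mathrm{ext}}[u]$, so that the quotient of $\gr\sX$ by them is exactly $\bU(\fg[u])$ and no collapse occurs in higher degrees. This leading-symbol computation is also what drives (2) (the identification of the \emph{full} center, not merely the containment of the $z_i^{(r)}$ in it) and (3) (the splitting as a tensor product rather than some nontrivial extension). Since you defer precisely these computations, the proposal is a sound plan but not yet a proof; the appropriate resolution here is the one the paper takes, namely to cite \cite{MR3849990}.
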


\subsection{Extended Lie algebra \texorpdfstring{$\fg^{\mathrm{ext}}$}{g^ext}}
\label{subsec:ext_Lie}

Let $\fg^{\mathrm{ext}}$ be the linear span of matrix entries of coefficients of $\hbar u^{-1}$ of $\sT(u) - \id$.
From the expansion of the $R$-matrix as
\begin{equation*}
  R_{F_i,F_j}(u) = \id + \frac{\hbar}{u} \sr_{F_i,F_j} + O(u^{-2}),
\end{equation*}
where $\sr_{F_i,F_j}\in\End(F_i\otimes F_j)$ is the \emph{classical $R$-matrix},\footnote{We follow
the \emph{unconventional} terminology of the classical $R$-matrix,
taken in \cite[\S4.8.4]{MR3951025}.}
one can show that $\fg^{\mathrm{ext}}$ forms a Lie subalgebra.
See \cite[\S4]{MR3849990}, \cite[\S5.3]{MR3951025}.
One of important ingredients in the proof is that the classical $R$ for the
universal $\scR^{\mathrm{uni}}(u)$ is the Casimir operator $\Omega$ in \eqref{eq:h:38}.

Indeed, the extended Lie algebra $\fg^{\mathrm{ext}}$ is 
$\fg\oplus\mathfrak z_{I}$ where $\mathfrak z_I$ is the
abelian Lie algebra of dimension $|I|$.
See \cite[(4.25)]{MR3849990}.
\begin{NB}
  See also \cite[\S4.2.1]{MR3849990} for $\mathfrak z_{\mathcal J}$, which
  is the Poisson version.
\end{NB}%
Note that $\fg^{\mathrm{ext}}$ is naturally equipped with a
representation on $V = \bigoplus_{i\in I} F_i$. We define $\delta_i\in
\mathfrak z_I\subset \fg^{\mathrm{ext}}$ as the identity
operator on $F_i$, extended by $0$ to the other $F_j$ for $j\neq i$.
It means that $\fg^{\mathrm{ext}}$ is the \emph{maximal} realization
of the Lie algebra, used in the context of Kac-Moody algebras.
See \cite[Remark~2.1.2]{MR1881971}. ($d_i$ there is
$\delta_i + \sum \bC^{ij} h_j$ where $(\bC^{ij})$ is the inverse of the
Cartan matrix $\bC = (\bC_{ij})$.)
\begin{NB}
  $\langle \delta_i + \sum \bC^{ij} h_j,\alpha_k\rangle
  = \sum \bC^{ij} a_{jk} = \delta_{ik}$.
\end{NB}%
See also \cite[\S5.3.4]{MR3951025}. The Cartan subalgebra $\fh^{\mathrm{ext}}$ of $\fg^{\mathrm{ext}}$ has a basis
$\{ h_i, \delta_i\}_{i\in I}$. It is convenient if we regard
the fundamental weights $\varpi_i$ and simple roots $\alpha_i$ as linearly independent
elements of $(\fh^{\mathrm{ext}})^*$ by $\langle \delta_i, \varpi_j\rangle = \delta_{ij}$,
$\langle \delta_i, \alpha_j\rangle = 0$, while
$\langle h_i, \varpi_j\rangle = \delta_{ij}$, $\langle h_i,\alpha_j\rangle = a_{ij}$.

\begin{NB}
It contains a subalgebra $\overline{\fh}_Q$, which acts by linear functions on $\bv$, $\bw$.
More precisely, let $\bw = \sum_p \varpi_{i_p}$ in \eqref{eq:h:11}, and
decompose $\scH$ as $\scH = \bigoplus_{\bw_i} H^*_{\CC^\times_\hbar\times\sA}(\fM(\bv,\bw)^\sA)$.
We define $h_i$, $\delta_i\in\overline{\fh}_Q$ for each $i\in I$ such that
$\langle \delta_i, \varpi_j\rangle = \delta_{ij}$, $\langle \delta_i, \alpha_j\rangle = 0$,
$\langle h_i, \varpi_j\rangle = \delta_{ij}$, $\langle h_i, \alpha_j\rangle = a_{ij}$.
Then $\delta_i$, $h_i$ acts on 
a summand $H^*_{\CC^\times_\hbar\times\sA}(\fM(\bv,\bw)^\sA)$ of $\scH$ so
that it is a weight space of weight $\bw - \bv$.
Therefore we have $[h_j, e_i] = a_{ji} e_j$, $[\delta_i, e_j] = \langle \delta_i, \alpha_j\rangle e_j = 0$.

  Let $\overline{\bC} = \begin{pmatrix}
    -\bC & 1 \\ 1 & 0
  \end{pmatrix}$. Therefore we have $\overline{\bC}^{-1} = \begin{pmatrix}
    0 & 1 \\ 1 & -\bC
  \end{pmatrix}$. The dual basis with respect to $\overline{\bC}$ is given by
  $h^i$, $\delta^i$ acting on $H^*_{\CC^\times_\hbar\times\sA}(\fM(\bv,\bw)^\sA)$ by
  $\bv_i - \sum_j \bC^{ij}\bw_j$, $\sum_j \bC^{ij}\bw_j$ respectively.
  Therefore
  \begin{equation*}
    \begin{split}
      &\phantom{{}={}} \sum h_i\otimes h^i + \sum \delta_i \otimes \delta^i
    = \sum_i (\bw_i - \sum_j \bC_{ij} \bv_j)\otimes (\bv_i - \sum_j \bC^{ij} \bw_j))
    + \sum_i \bw_i \otimes \sum_j \bC^{ij} \bw_j\\
    &= \sum_i \bw_i\otimes \bv_i + \bv_i\otimes\bw_i - \sum_j \bC_{ij} \bv_j\otimes\bv_i.
    \end{split}
  \end{equation*}
  This is the diagonal part of the classical $R$-matrix. See \cite[(4.21)]{MR3951025}.
\end{NB}%

In this situation, contrary to more general cases in \cite{MR3951025},
the extended Lie algebra $\fg^{\mathrm{ext}}$ is defined over $\CC$, instead of $\Bbbk$.

We have the following result, which has been known for many versions of Yangian.

\begin{Theorem}[\protect{\cite[Th.~7.7]{MR3849990}}]\label{thm:PBW}
The associated graded algebra of $\sX$ with respect to the filtration defined in
\cref{def:filt} is isomorphic to the universal enveloping algebra $\bU(\fg^{\mathrm{ext}}[u])$
of the current algebra $\fg^{\mathrm{ext}}[u] = \fg^{\mathrm{ext}}\otimes_\CC \Bbbk[u]$.
\end{Theorem}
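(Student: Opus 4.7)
The plan is to construct a surjective filtered algebra homomorphism $\bU(\fg^{\mathrm{ext}}[u])\twoheadrightarrow \gr\sX$ and then upgrade it to an isomorphism by comparing with the central decomposition in \cref{thm:ext-Yang-center}. Throughout I write $\bar{x}$ for the principal symbol of $x\in\sX$ in the appropriate graded piece.

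First, I would build the map on generators. By \cref{subsec:ext_Lie}, the matrix entries of the coefficient of $\hbar u^{-1}$ in $\sT_{F_i}(u)-\id_{F_i}$, as $i$ ranges over $I$, span a copy of the Lie algebra $\fg^{\mathrm{ext}}$ sitting inside $\sX_{\le 0}$. For $X\in\fg^{\mathrm{ext}}$ and $r\ge 0$, I send $X\otimes u^r$ to the class in $\sX_{\le r}/\sX_{\le r-1}$ of the corresponding matrix entry of the coefficient of $\hbar u^{-r-1}$ of $\sT(u)$. To see this extends to an algebra homomorphism out of $\bU(\fg^{\mathrm{ext}}[u])$, expand the $RTT$ relation \eqref{eq:h:5} using $R_{F_i,F_j}(u-v)=\id+\tfrac{\hbar}{u-v}\sr_{F_i,F_j}+O\bigl((u-v)^{-2}\bigr)$ and keep the leading $\hbar$-contribution. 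Passing to symbols yields a current-algebra type identity
\begin{equation*}
  [\bar t^{(r)}_{(1)},\bar t^{(s)}_{(2)}]=\text{entries of }\bigl[\sr_{F_i,F_j},\bar t^{(r+s)}_{(1)}+\bar t^{(r+s)}_{(2)}\bigr]\pmod{\sX_{\le r+s-1}}.
\end{equation*}
Since $\sr_{F_i,F_j}$ is the restriction of the Casimir $\Omega$ of \eqref{eq:h:38} and the Lie bracket on $\fg^{\mathrm{ext}}$ is the one induced from $\Omega$, this translates into $[X\otimes u^r,Y\otimes u^s]=[X,Y]\otimes u^{r+s}$ in $\gr\sX$. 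Surjectivity is automatic, because the symbols of the matrix entries of coefficients of $\hbar u^{-r-1}$ generate $\gr\sX$ by \cref{def:filt}.

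For injectivity, I would invoke \cref{thm:ext-Yang-center}(3), which realizes $\sX\cong \sY_R\otimes \Bbbk[z_i^{(r)}\mid i\in I,\,r\ge 2]$ as filtered algebras. Combining \cref{thm:ext-Yang-center}(1) with Drinfeld's PBW theorem for $\sY$ recalled in \cref{subsec:Yangian} gives $\gr\sY_R\cong\gr\sY\cong\bU(\fg[u])$, while the algebraic independence of the $z_i^{(r)}$ from part (2), together with the identification of $\mathfrak z_I$ as a summand of $\fg^{\mathrm{ext}}$, identifies the associated graded of the central polynomial factor with $\bU(\mathfrak z_I[u])$; the missing $\delta_i\in\mathfrak z_I$ at $u^0$ is accounted for by the identity action $\id_{F_i}\in\sY$, since $z_{F_i}(u)$ has no $u^{-1}$-term by construction. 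Taking the tensor product yields $\gr\sX\cong\bU(\fg[u])\otimes\bU(\mathfrak z_I[u])\cong\bU(\fg^{\mathrm{ext}}[u])$, and a Hilbert-series comparison shows the surjection from step one has trivial kernel.

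The main technical obstacle is the leading-term analysis of the $RTT$ relation: one must verify that the subleading terms of $R_{F_i,F_j}(u-v)$, and the cross-contributions from $\sT_{F_i}(u)\sT_{F_j}(v)$, genuinely drop filtration degree, and that the Lie bracket induced from $\Omega$ agrees with the abstract bracket on $\fg^{\mathrm{ext}}$ coming from the extension procedure of \cite[\S4]{MR3849990} rather than a centrally twisted version. The cleanest way is to run the classical-$r$-matrix calculation in the style of \cite[\S5.3]{MR3951025}, transported from $\fg$ to $\fg^{\mathrm{ext}}=\fg\oplus\mathfrak z_I$, and then appeal to the decomposition in \cref{thm:ext-Yang-center} both to match Hilbert series and to reduce the extended case to the already-known Yangian PBW theorem.
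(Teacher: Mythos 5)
The paper does not actually prove this statement: it is quoted directly from \cite[Th.~7.7]{MR3849990}, so there is no internal argument to compare against. Your overall strategy in the second step — decompose $\sX\cong\sY_R\otimes\Bbbk[z_i^{(r)}]$, quote the PBW theorem for $\sY$, and treat the central polynomial factor separately — is in fact the strategy of that reference, so the plan is sound. However, the two places where the real work happens are asserted rather than proved.

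First, the map $X\otimes u^r\mapsto{}$``the corresponding matrix entry of the coefficient of $\hbar u^{-r-1}$'' is not well defined as stated. An element $X\in\fg^{\mathrm{ext}}$ is a linear combination of the degree-zero symbols $\bar t^{(1)}_{i;ab}$, and such combinations are highly non-unique since $\sum_i(\dim F_i)^2$ far exceeds $\dim\fg^{\mathrm{ext}}$. To transport a combination to level $r$ you must show that every relation $\sum c_{i;ab}\,\bar t^{(1)}_{i;ab}=0$ forces $\sum c_{i;ab}\,\bar t^{(r+1)}_{i;ab}=0$ in $\gr_r\sX$ — equivalently, that the symbol of $T^{(r+1)}\in\End(F_i)\otimes\sX_{\le r}$ lies in $\rho_{F_i}(\fg^{\mathrm{ext}})\otimes\gr_r\sX$. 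That is the inductive heart of any direct proof (it is what bounds $\gr_r\sX$ from above), it does not follow from the leading term of the $RTT$ relation alone, and your ``surjectivity is automatic'' remark conflates generation of $\gr\sX$ by all the symbols $\bar t^{(r)}_{i;ab}$ with generation by the image of $\fg^{\mathrm{ext}}[u]$.

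Second, the degree-$0$ bookkeeping in the central factor does not close. The generators $z_i^{(r)}$, $r\ge 2$, are coefficients of $u^{-r}$ and so a priori sit in filtration degree $r-1\ge 1$; meanwhile $\gr\sY_R\cong\bU(\fg[u])$ contains no copy of $\mathfrak z_I$. So the tensor product as you have set it up is missing $\mathfrak z_I\otimes u^0$ in degree $0$, and the sentence ``the missing $\delta_i$ at $u^0$ is accounted for by the identity action $\id_{F_i}\in\sY$'' is not an argument ($\id_{F_i}$ is an element of $\End(F_i)$, not of $\sY$, and contributes nothing to $\gr_0$ beyond scalars). What actually saves the count is that $z_i^{(2)}$, despite being a $u^{-2}$-coefficient, lies in $\sX_{\le 0}$: in the geometric realization it acts on $\scH$ by the scalar $-\tfrac{\hbar^2c_\fg}{2}\sum_j(\varpi_j,\varpi_i)\bw_j$, i.e.\ it is an invertible linear combination of the $\delta_j$ up to the factor $\hbar^2$, so the symbols $\bar z_i^{(r)}$ really live in degrees $r-2$ and their degree-$0$ span is (a rescaling of) $\mathfrak z_I$. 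This needs to be proved, and over $\Bbbk=\CC[\hbar]$ the $\hbar^2$ is a genuine wrinkle, since the $\Bbbk$-algebra generated by the $z_i^{(2)}$ does not literally contain $\delta_i$. Relatedly, ``a Hilbert-series comparison'' cannot be run naively: the $u$-graded pieces of $\bU(\fg^{\mathrm{ext}}[u])$ are infinite dimensional, so one must compare degree by degree in $u$ together with the PBW filtration, or argue via an induced surjective graded endomorphism of $\fg^{\mathrm{ext}}[u]$ itself. Once these two points are supplied the argument goes through, but as written both are gaps.
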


\subsection{Recovering the Yangian}\label{subsec:recoverY}

Let $f_i(u) = 1 + f_i^{(1)} u^{-1} + f_i^{(2)} u^{-2} + \cdots \in 1 + u^{-1}\Bbbk[[u^{-1}]]$ be a formal power series in $u^{-1}$
starting with $1$. 
We consider a collection $\bff(u) = (f_i(u))_{i\in I}$ of such series.
It defines an automorphism of $\sX$ by
\begin{equation}\label{eq:h:26}
  m_\bff\colon \sT_{F_i}(u) \mapsto f_i(u) \sT_{F_i}(u) \quad\text{for $i\in I$}.
\end{equation}
See \cite[Lemma~5.3]{MR3849990}.

In our realization of $\sX$ in $\prod_{\scH} \End(\scH)[[u^{-1}]]$,
the action of $\sT_F(u)$ on 
the summand $\scH = F_{i_1}[u_1]\otimes \dots \otimes F_{i_n}[u_n]$ is multiplied by
$f_{i_1}(u-u_1) f_{i_2}(u-u_2) \dots f_{i_n}(u-u_n)$, as $\sT_F(u)$ acts
through the coproduct $\Delta$ in \cref{subsec:coproduct}.

On $\fg^{\mathrm{ext}}$, more precisely, the lowest part of the filtration $\sX_{\le 0}$,
it is given by
\begin{equation*}
  \text{elements in $\fg$ is fixed}, \quad
  \delta_i \mapsto \delta_i + f_i^{(1)}\id.
\end{equation*}
\begin{NB}
On the summand $\scH = F_{i_1}[u_1]\otimes \dots \otimes F_{i_n}(u_n)$, $\delta_i$
acts by $n(\delta_i + f_{i,1})$ where $n$ is the number of $i_j = i$.
\end{NB}%
In particular, the $m_{\bff}$-fixed part of $\sX_{\le 0}$ 
is the corresponding part in Drinfeld Yangian $\sY$.

In order to extend this observation to the whole $\sX$, we prepare
rescaled operators of $\sT_{F_i}(u)$ as follows.

We introduce a formal power series $y_{F_i}(u) = 1 + y_i^{(1)} u^{-1} + y_i^{(2)} u^{-2} + \dots$, valued in the center of $\sX$, for each $i\in I$,
by solving the equation
\begin{equation*}
  y_{F_i}(u) = z_{F_i}(u) y_{F_i}(u+\frac\hbar2 c_\fg).
\end{equation*}
One see easily that $y_i^{(r)} = z_i^{(r+1)}$ modulo terms with $z_i^{(s)}$ for $s\le r$,
and we have the unique solution. In \cite[\S7.1]{MR3849990}, it is defined as
a matrix $\mathcal Y(u)$, but it is a scalar matrix in our case, as $Z_{F_i}(u)$ is so.

Under $m_\bff$, the element $z_{F_i}(u)$ is multiplied by $f_i(u) f_i(u+\frac\hbar{2}c_\fg)^{-1}$
from \eqref{eq:h:30}. Therefore $y_{F_i}(u)$ is multiplied by $f_i(u)$.

\begin{Theorem}[\protect{\cite[Th.~7.3, Th.~7.11]{MR3849990}}]
\textup{(1)} Let $\tilde\sY_R$ be the subalgebra of $\sX$ generated by matrix entries of coefficients of
$y_{F_i}(u)^{-1} \sT_{F_i}(u)$ for $i\in I$. It is isomorphic to $\sY$ under the restriction of $\tilde\Phi$.

\textup{(2)} The subalgebra $\tilde\sY_R$ is equal to the $m_{\bff}$-fixed subalgebra
  $\{ x\in \sX \mid m_{\bff}(x) = x\}$
  of $\sX$.%
\end{Theorem}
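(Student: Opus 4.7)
The plan is to treat the two parts separately, with (2) relying on the tensor-product decomposition of \cref{thm:ext-Yang-center}(3).

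For part (1), I would first verify that $\tilde\Phi$ sends $y_{F_i}(u)^{-1}\sT_{F_i}(u)$ to $\tilde\Phi(\sT_{F_i}(u))$. Applying $\tilde\Phi$ to the defining equation $y_{F_i}(u) = z_{F_i}(u)\, y_{F_i}(u+\tfrac\hbar2 c_\fg)$ and using $\tilde\Phi(z_{F_i}(u)) = 1$ from \cref{thm:ext-Yang-center}(1), we find $\tilde\Phi(y_{F_i}(u)) = \tilde\Phi(y_{F_i}(u+\tfrac\hbar2 c_\fg))$. A formal power series in $u^{-1}$ beginning with $1$ and invariant under a nontrivial translation must be $1$, so $\tilde\Phi(y_{F_i}(u)) = 1$. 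Hence $\tilde\Phi|_{\tilde\sY_R}$ is surjective onto $\sY\cong\sY_R$. For injectivity, I would invoke the decomposition $\sX\cong\sY_R\otimes\Bbbk[z_i^{(r)}]$ from \cref{thm:ext-Yang-center}(3): it realizes $\sY_R$ as a subalgebra of $\sX$ complementary to the central polynomial part, and matching on generators (using the filtration and \cref{thm:PBW} to control the images modulo the center in $\gr\sX \cong \bU(\fg^{\mathrm{ext}}[u])$) shows that $\tilde\sY_R$ coincides with this subalgebra.

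For part (2), the inclusion $\tilde\sY_R \subseteq \sX^{m_\bff}$ is immediate from the transformation laws: by \eqref{eq:h:26}, $\sT_{F_i}(u)\mapsto f_i(u)\sT_{F_i}(u)$, while the computation in \cref{subsec:recoverY} gives $y_{F_i}(u)\mapsto f_i(u)\, y_{F_i}(u)$, so the ratio $y_{F_i}(u)^{-1}\sT_{F_i}(u)$ is fixed, and therefore so is the subalgebra it generates. For the reverse inclusion, I would write any $x\in\sX$ uniquely as $x = \sum_\alpha a_\alpha p_\alpha$ with $a_\alpha\in\tilde\sY_R$ and $p_\alpha$ monomials in the central generators $z_i^{(r)}$, using the identification of $\tilde\sY_R$ with the $\sY_R$ factor of \cref{thm:ext-Yang-center}(3). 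Since each $a_\alpha$ is $m_\bff$-fixed, $m_\bff(x) = \sum_\alpha a_\alpha\, m_\bff(p_\alpha)$; linear independence of the $a_\alpha$ over the center then forces $m_\bff(p_\alpha) = p_\alpha$ for every $\bff$, reducing the problem to showing that the $m_\bff$-fixed subalgebra of $\Bbbk[z_i^{(r)}]$ is just $\Bbbk$.

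The last reduction is the main technical step. Under $m_\bff$, the central series $z_{F_i}(u)$ is rescaled by $f_i(u) f_i(u+\tfrac\hbar2 c_\fg)^{-1}$, which, for a suitable one-parameter choice such as $f_i(u) = 1 + c u^{-N}$ (all other $f_j = 1$), produces a nontrivial action on the coefficients $z_i^{(r)}$. Expanding and comparing orders in $c$ and $u^{-1}$, a would-be fixed polynomial $p\in\Bbbk[z_i^{(r)}]$ has its highest-degree monomial in $z_i^{(r)}$ shifted nontrivially, forcing $p$ to be a scalar. The main obstacle I anticipate is making the tensor-product decomposition argument fully rigorous when $x$ is expressed as an infinite sum of homogeneous pieces; running the argument filtration piece by filtration piece, using \cref{thm:PBW} at each step to ensure finiteness and linear independence, should circumvent this.
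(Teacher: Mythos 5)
First, a point of comparison: the paper does not prove this statement at all — it is quoted verbatim from \cite[Th.~7.3, Th.~7.11]{MR3849990}, so there is no internal proof to measure your argument against. Your reconstruction follows the general lines of the reference (kill $y_{F_i}(u)$ under $\tilde\Phi$, control things by the filtration of \cref{thm:PBW}, and use the rescaling automorphisms $m_\bff$), and several pieces are sound: the functional-equation argument showing $\tilde\Phi(y_{F_i}(u))=1$ (valid because $\sY$ is $\hbar$-torsion-free, so invariance under the shift $u\mapsto u+\frac{\hbar}{2}c_\fg$ forces the series to be $1$), hence surjectivity of $\tilde\Phi|_{\tilde\sY_R}$; the inclusion $\tilde\sY_R\subseteq\sX^{m_\bff}$; and the computation that suitable $f_i(u)=1+cu^{-N}$ translate $z_i^{(N+1)}$ by the nonzero scalar $cN\frac{\hbar}{2}c_\fg$ while fixing the lower $z_i^{(s)}$, which does force the $m_\bff$-fixed part of $\Bbbk[z_i^{(r)}]$ to be $\Bbbk$.

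The substantive gap is in the injectivity half of (1). \cref{thm:ext-Yang-center}(3) asserts only an \emph{abstract} isomorphism $\sX\cong\sY_R\otimes\Bbbk[z_i^{(r)}]$; it does not single out a subalgebra of $\sX$ as "the $\sY_R$-factor" (any twist by a central character gives another complement to the center). Identifying $\tilde\sY_R$ with such a complement is precisely the content of the theorem you are trying to prove, so "matching on generators" cannot be waved through. What actually has to be done is the associated-graded computation you only gesture at: show that the degree-$r$ symbol of the $u^{-r-1}$-coefficient of $y_{F_i}(u)^{-1}\sT_{F_i}(u)$ lies in $\bU(\fg[u])\subset\bU(\fg^{\mathrm{ext}}[u])$, i.e., that subtracting $y_i^{(r+1)}\cdot\id$ (and the cross terms) exactly cancels the $\mathfrak z_I$-component $\delta_i u^{r}$ of the symbol of $t_i^{(r)}$. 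This requires knowing the filtration degree and leading symbol of $z_i^{(s)}$, hence an explicit expansion of $Z_{F_i}(u)=S^2(\sT_{F_i}(u))\sT_{F_i}(u+\frac{\hbar}{2}c_\fg)^{-1}$ — none of which is in your sketch. Once that cancellation is established, injectivity does follow cleanly (and without invoking (3)): $\gr\tilde\Phi$ restricts to an isomorphism on $\bU(\fg[u])$, and injectivity on the associated graded gives injectivity.

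There is also a smaller slip in (2): in the expansion $x=\sum_\alpha a_\alpha p_\alpha$ over monomials $p_\alpha$, the coefficients $a_\alpha\in\tilde\sY_R$ need not be linearly independent, so $\sum a_\alpha(m_\bff(p_\alpha)-p_\alpha)=0$ does not force $m_\bff(p_\alpha)=p_\alpha$ termwise. The fix is standard: regroup $x=\sum_j a_j b_j$ with the $a_j$ linearly independent over $\Bbbk$ and $b_j\in\Bbbk[z_i^{(r)}]$; freeness of $\sX$ over $\tilde\sY_R$ (which again presupposes the identification above) then gives $m_\bff(b_j)=b_j$ for each $j$, and your translation argument finishes. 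So the architecture is right, but the load-bearing step — the symbol computation pinning down $\tilde\sY_R$ inside $\gr\sX$ — is missing, and both halves of the theorem lean on it.
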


We thus recover the original Yangian $\sY$ both as a quotient and subalgebra of the extended
Yangian $\sX$.

The following will be used later in the last part of \cref{subsec:MOyangian}.
\begin{Lemma}\label{lem:tildeSYR}
  \textup{(1)} Let $F$ be one of $\{ F_i\}_{i\in I}$.
  The following diagram commutes:
  \begin{equation*}
  \begin{CD}
     \tilde\sY_R @>{\rho_F^{\mathrm{ext}}|_{\tilde\sY_R}}>> \End(F) \\
      @V{\tilde\Phi|_{\tilde\sY_R}}V{\cong}V @| \\
      \sY @>>{\rho_F}> \End(F)
  \end{CD}
  \end{equation*}
  Here ${}|_{\tilde\sY_R}$ means the restriction of homomorphisms to $\tilde\sY_R$.
  The same result holds if we replace $F$ by $\scH$ in \eqref{eq:h:11}.

  \textup{(2)} The image of $\tilde\sY_R$ in $\prod_\scH \End(\scH)$
  under the injective homomorphism $\prod_\scH \rho^{\mathrm{ext}}_\scH$ 
  coincides with the image of $\sY$.
\end{Lemma}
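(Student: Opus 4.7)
The plan is to identify the discrepancy between $\rho_F^{\mathrm{ext}}$ and $\rho_F\circ\tilde\Phi$, as maps on the whole of $\sX$, as a multiplier automorphism $m_{\bff}$, and then use the characterization of $\tilde\sY_R$ as the common $m_{\bff}$-fixed subalgebra to deduce \textup{(1)} for $F=F_i$. The general $\scH$-case will follow via the Hopf-algebra and translation compatibility of $\tilde\Phi$, and \textup{(2)} is then formal.

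For \textup{(1)} with $F=F_i$: by \cref{fact:R-matrix} there is a scalar series $g_{F_i,F}(u)\in 1+u^{-1}\Bbbk[[u^{-1}]]$ with $(\rho_{F_i}\otimes\rho_F)(\scR^{\mathrm{uni}}(u))=g_{F_i,F}(u)\,R_{F_i,F}(u)$. Set $\bff=(g_{F_i,F}^{-1})_{i\in I}$ and consider the multiplier automorphism $m_{\bff}$ of \eqref{eq:h:26}. Evaluating on the generators $\sT_{F_i}(u)$ yields
\begin{equation*}
  (\rho_F\circ\tilde\Phi\circ m_{\bff})(\sT_{F_i}(u))
  = g_{F_i,F}(u)^{-1}(\rho_{F_i}\otimes\rho_F)(\scR^{\mathrm{uni}}(u))
  = R_{F_i,F}(u)
  = \rho_F^{\mathrm{ext}}(\sT_{F_i}(u)),
\end{equation*}
so the identity $\rho_F^{\mathrm{ext}}=(\rho_F\circ\tilde\Phi)\circ m_{\bff}$ holds throughout $\sX$. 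Part~(2) of the theorem preceding this lemma (\cite[Th.~7.11]{MR3849990}) says that $\tilde\sY_R$ is the common $m_{\bff}$-fixed subalgebra; hence $m_{\bff}|_{\tilde\sY_R}=\id$ and therefore $\rho_F^{\mathrm{ext}}|_{\tilde\sY_R}=(\rho_F\circ\tilde\Phi)|_{\tilde\sY_R}$, which is the desired commutativity.

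For general $\scH=F_{i_1}[u_1]\otimes\cdots\otimes F_{i_n}[u_n]$: both $\rho^{\mathrm{ext}}_\scH$ and $\rho_\scH$ are built by tensoring the representations on each $F_{i_j}[u_j]$ via the iterated coproduct $\Delta^{(n-1)}$ together with the translations $\tau_{-u_j}$. Since $\tilde\Phi$ is a Hopf algebra homomorphism intertwining $\tau_\zeta^\sX$ and $\tau_\zeta^\sY$, one gets
\begin{equation*}
  \rho_\scH\circ\tilde\Phi
  = \bigl(\textstyle\bigotimes_j\rho_{F_{i_j}}\circ\tilde\Phi\circ\tau_{-u_j}^\sX\bigr)\circ\Delta_\sX^{(n-1)},
\end{equation*}
and similarly without $\tilde\Phi$ for $\rho^{\mathrm{ext}}_\scH$. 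Applying the $F=F_{i_j}$ case of \textup{(1)} to each factor reduces the $\scH$ case to what has just been proved, provided one knows that $\tilde\sY_R$ is a Hopf subalgebra of $\sX$ stable under the translations $\tau_\zeta^\sX$. Both closure properties follow from $\tilde\Phi|_{\tilde\sY_R}\colon\tilde\sY_R\iso\sY$ being a Hopf algebra isomorphism together with the fact that $\tilde\Phi$ intertwines translations. Part~\textup{(2)} is now immediate: the commutativity of \textup{(1)} for every $\scH$ gives $(\prod_\scH\rho^{\mathrm{ext}}_\scH)|_{\tilde\sY_R}=(\prod_\scH\rho_\scH)\circ\tilde\Phi|_{\tilde\sY_R}$, and the isomorphism $\tilde\Phi|_{\tilde\sY_R}$ identifies the two images in $\prod_\scH\End(\scH)$.

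The main obstacle is the closure of $\tilde\sY_R$ under the coproduct $\Delta_\sX$. The multiplier $m_{\bff}$ is not itself a coalgebra morphism: on $\sT_{F_i}(u)$ one finds that $\Delta_\sX\circ m_{\bff}$ and $(m_{\bff}\otimes m_{\bff})\circ\Delta_\sX$ differ by a factor of $f_i(u)$. Hence the Hopf substructure of $\tilde\sY_R$ cannot be read off tautologically from the fixed-subalgebra description; it must instead be extracted from the explicit generators $y_{F_i}(u)^{-1}\sT_{F_i}(u)$, using that $y_{F_i}(u)$ is central in $\sX$ and $\tilde\Phi(y_{F_i}(u))=1$.
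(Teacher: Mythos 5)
Your proposal is correct and follows essentially the same route as the paper: both identify the discrepancy between $\rho_F^{\mathrm{ext}}$ and $\rho_F\circ\tilde\Phi$ as the rescaling by $f(u)$, i.e.\ the automorphism $m_{\bff}$, absorb it on the common $m_{\bff}$-fixed subalgebra $\tilde\sY_R$, and then pass to general $\scH$ via the Hopf-algebra and translation compatibility of $\tilde\Phi$. The coproduct-closure of $\tilde\sY_R$ that you flag as the main obstacle is indeed glossed over in the paper's one-line treatment of the $\scH$ case, and your proposed resolution via the central elements $y_{F_i}(u)$ (which are group-like, so that $y_{F_i}(u)^{-1}\sT_{F_i}(u)$ has coproduct in $\tilde\sY_R\otimes\tilde\sY_R$) is the right one.
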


\begin{proof}
By \eqref{eq:h:31}, the composition passing through the left bottom corner is
given by $(\rho_{F_i}\otimes \rho_F)(\scR^{\mathrm{uni}}(u))$. On the other hand,
the composition passing through the right top corner is given by
$R_{F_i,F}(u)$ by \eqref{eq:h:29}.
The two $R$-matrices differ by a rescaling by $f(u)$, but it can be absorbed
if we restrict to the $m_{\bff}$-fixed subalgebra $\tilde\sY_R$.

The same is true for $\scH$, as $\tilde\Phi$ is a Hopf algebra isomorphism,
compatible with translation automorphisms $\tau_\zeta$.

(2) follows from (1).
\end{proof}

\begin{Remark}\label{rem:smaller_ext-Yang}
  The definition of the extended Yangian $\sX$ depends on the choice of a finite
  dimensional vector space $V = \bigoplus_{i\in I} F_i$.
  We can replace it by a nontrivial irreducible representation, say a single $F_i$.
  Let $\sX'$ be the corresponding extended Yangian. We have a homomorphism $\sX'\to \sX$
  sending $\sT_F(u)$ to $\sT_{F_i}(u)$. It is isomorphic to $\sY\otimes \Bbbk[z_i^{(r)}\mid r\ge 2]$
  as in \cref{thm:ext-Yang-center}. (Here $i$ is fixed.)
  Moreover, we can recover $\sY$ from $\sX'$ either as a quotient or a subalgebra. 
\end{Remark}

\subsection{Unitarity}

\begin{NB}
The $R$-matrix satisfies the unitarity
\begin{equation}\label{eq:h:3}
  R_{21}(u) = R(-u)^{-1},
\end{equation}
where the subscript in $R_{21}(u)$ means the permutation of the tensor
factors. More precisely, let $(12)$ denote the exchange of the first and
second factors. Then $R_{21}(u) = (12) R(u) (12)$.
See \cite[\S4.5]{MR3951025}.

We consider the diagram \eqref{eq:h:4} with $F_1[u]$, $F_2[v]$
exchanged, and compose it with $(12)$ to the top and bottom:
\begin{equation*}
  \begin{CD}
    F_1[u] \otimes F_2[v]\otimes \scH @>{\Delta^{\mathrm{op}}}>> F_1[u] \otimes F_2[v]\otimes \scH \\
    @V{(12)}VV @VV{(12)}V  \\
    F_2[v]\otimes F_1[u]\otimes \scH @>{\Delta}>> F_2[v]\otimes F_1[u]\otimes \scH \\
    @V{R_{F_2 F_1}(v-u)}VV @VV{R_{F_2 F_1}(v-u)}V \\
    F_2[v]\otimes F_1[u]\otimes \scH @>{\Delta^{\mathrm{op}}}>> F_2[v]\otimes F_1[u]\otimes \scH \\
    @V{(12)}VV @VV{(12)}V  \\
    F_1[u] \otimes F_2[v]\otimes \scH @>{\Delta}>> F_1[u] \otimes F_2[v]\otimes \scH\rlap{.}
  \end{CD}
\end{equation*}
Here we suppress the $1\otimes\varsigma_{v-u}$, etc in the notation of $\Delta$, $\Delta^{\mathrm{op}}$.
Comparing this diagram with \eqref{eq:h:4}, both $R_{F_1 F_2}(u-v)$
and $(12) R_{F_2 F_1}(v-u)^{-1} (12)$ intertwine $\Delta$ and
$\Delta^{\mathrm{op}}$. Moreover, both are normalized so that both
fix the highest weight vector. Therefore we are lead to the
unitarity equation
\begin{equation}\label{eq:h:9}
  (12) R_{F_2F_1}(v-u) (12) = R_{F_1 F_2}(u-v)^{-1}.
\end{equation}
This is a detailed version of the unitarity equation \eqref{eq:h:3}.
\end{NB}%

Consider $R_{F_2,F_1}(u_2-u_1)^{-1}(12)$ as a rational operator
$F_1[u_1]\otimes F_2[u_2]\to F_2[u_2]\otimes F_1[u_1]$.
It intertwines the $\sY$-module structures, hence
must coincide with $R_{F_1,F_2}(u_1-u_2)$ as the uniqueness
of the $R$-matrix, and also the normalization condition on the tensor product of the $\ell$-highest weight vectors.
We have the \emph{unitarity equation}:
\begin{equation}\label{eq:h:25}
  R_{F_2,F_1}(v-u)_{21} \defeq
  (12) R_{F_2,F_1}(v-u) (12) = R_{F_1,F_2}(u-v)^{-1}.
\end{equation}
See \cite[\S4.5]{MR3951025}. The subscript $21$ means the exchange
of the first and second factors, that is the conjugation by $(12)$.

\section{Involutions on quiver varieties}\label{sec:inv-quiver}

In this section, we will define an involution on a quiver variety.
This has been done already in \cite{MR3900699}, but we will explain from
the scratch, as our main example was only briefly mentioned in \cite[Remark~9.2.4]{MR3900699}.

{This section, as well as the appendix~\cref{sec:ADHM}, is an edit of an article \cite{2018arXiv180106286N},
written in response to \cite{MR3900699}. The article was not submitted to 
any journal.}

\subsection{Quiver varieties}\label{subsec:quiver_variety}

Let $(I,\Omega)$ be a quiver of type ADE. Let $\bv = (\bv_i)$,
$\bw = (\bw_i)\in\ZZ_{\ge 0}^{I}$ be dimension vectors for gauge
nodes and framing nodes respectively. Let $\fM_\zeta(\bv,\bw)$ be the
corresponding quiver variety. When $\bv$, $\bw$ are clear from the context, we simply denote it by $\fM_\zeta$.

We will use the convention about quiver varieties in \cite{Na-reflect} in
`holomorphic' description. In particular,
$\zeta = (\zeta_\RR,\zeta_\CC)\in (\sqrt{-1}\RR\times \CC)^I$. The
complex part $\zeta_\CC$ appears in the moment map equation
$\mu = -\zeta_\CC$, while the real part $\zeta_\RR$ appears in the
stability condition.
We drop the subscript $\CC$ from the complex part of the moment map $\mu_\CC$,
as we only use the holomorphic description. We also denote $\prod_i \GL(V_i)$ by
$G_{\bv}$ instead of $G^\CC_{\bv}$.
We say $\zeta$ is \emph{generic} if it is not contained in any of 
$(\sqrt{-1}\RR\times\CC)\otimes D_\theta$ for a hyperplane $D_\theta$
given by a positive root $\theta$ with $\theta\le\bv$.
cf.\ \cite[Prop.~2.6]{Na-reflect}.
If $\zeta$ is generic, $\fM_\zeta(\bv,\bw)$ is smooth of dimension
$\bv\cdot (2\bw - \bC\bv)$, where $\bC$ is the Cartan matrix.

A standard stability condition used in \cite[Def.~3.9]{Na-alg} is
one satisfying $\sqrt{-1}\zeta_{i,\RR} > 0$ for all $i\in I$.

The symplectic vector space, used to define $\fM_\zeta(\bv,\bw)$, is
denoted by $\bM$. When we want to emphasize the underlying $I$-graded vector
spaces $V = \bigoplus_i V_i$, $W=  \bigoplus_i W_i$, we denote it by $\bM(V,W)$.

We associate weights $\lambda$, $\mu$ of $\fg$ to $\bw$, $\bv$ by
\begin{equation}\label{eq:h:43}
  \lambda = \sum_{i\in I} \bw_i \varpi_i,\quad
  \mu = \lambda - \sum_{i\in I} \bv_i \alpha_i
  = \sum_{i\in I} (\bw_i - \sum_j \bC_{ij} \bv_j) \varpi_i
\end{equation}
as usual. We can also regard $\lambda$, $\mu$ as weights of the extended
Lie algebra $\fg^{\mathrm{ext}}$ in \cref{subsec:ext_Lie}.

The assumption that $(I,\Omega)$ is of type ADE is restrictive,
but it is necessary for the involution to be well-defined on
the quiver variety with \emph{generic} parameter $\zeta$, as the involution
first sends $\zeta$ to $-\zeta$, and then sends back to the original
$\zeta$ by the longest element of the Weyl group $w_0$ plus a diagram automorphism.
It means that we need to deal with possibly singular quiver varieties with non-generic $\zeta$,
more general $\sigma$-quiver varieties. For example, $\SO$ or $\grpSp$-instanton moduli
spaces on $\CC^2$ are $\sigma$-quiver varieties, but with parameter $\zeta = 0$.
See \cite{MR3508922}. More general $\sigma$-quiver varieties will be focus of the future work.

We have a natural projective morphism
$\pi\colon \fM_\zeta(\bv,\bw)\to \fM_{(0,\zeta_\CC)}(\bv,\bw)$, defined
in \cite[\S4]{Na-quiver} and \cite[(3.18)]{Na-alg}.
Here $(0,\zeta_\CC)$ is the parameter
obtained by replacing the real part $\zeta_\RR$ of $\zeta$ by $0$, but
keeping the complex parameter.

We also use the notation $\fM_\zeta(\bw)$, which is the disjoint union of
$\fM_\zeta(\bv,\bw)$ for all $\bv$ with fixed $\bw$.

\begin{Remark}[convention on $\CC_{\hbar}^\times$-action]\label{rem:CCx-hbar}
  Suppose $\zeta_\CC=0$. Then $\fM_\zeta(\bv,\bw)$ has a natural
  $\CC^\times$ action, which scales all linear maps in $\bM(V,W)$ by weight $1$.
  We denote this $\CC^\times$ by $\CC^\times_\hbar$ to distinguish it from the
  other $\CC^\times$-actions. 

  Let $\bq^m$ denote the character of $\CC_{\hbar}^\times$ with weight $m\in\ZZ$.
  We denote the Lie algebra element in the corresponding to $\bq$ by
  $\hbar/2$, i.e., $\bq^m = \exp(m\hbar/2)$. The division `$/2$' is necessary
  to make the geometric convention compatible with the standard defining relations
  of Yangian. See the formula below \cite[(4.1)]{Varagnolo}.
\end{Remark}

\subsection{Involution defined by McKay correspondence}\label{subsec:diagram}

Let $\Gamma$ be a finite subgroup of $\SU(2)$. Recall that the McKay correspondence
associate an irreducible representation $\rho_i$ of $\Gamma$ to each vertex $i$ of the Dynkin diagram of affine type ADE.

\begin{Definition}
  Let us define an involution $\invast$\footnote{
  This involution $\invast$ was denoted by $*$ in \cite[\S9]{Na-reflect}.
} 
  on the Dynkin diagram of affine type ADE by
  $\rho_i^*\cong\rho_{\invast[i]}$, where $\rho_i^*$ is the dual representation of $\rho_i$.
\end{Definition}

It fixes the trivial representation $\rho_0$, and hence induces a
diagram involution on the Dynkin diagram of finite type ADE. It is the same
diagram involution given by the longest element $w_0$ of the Weyl
group as $-w_0(\alpha_i) = \alpha_{\invast[i]}$, where $\alpha_i$ is the
simple root corresponding to the vertex $i$. 
Explicitly, it is given as follows according to the type of the Dynkin diagram
as in \cref{fig:diagram-involution}.
It is the identity for other types, not listed in \cref{fig:diagram-involution}.

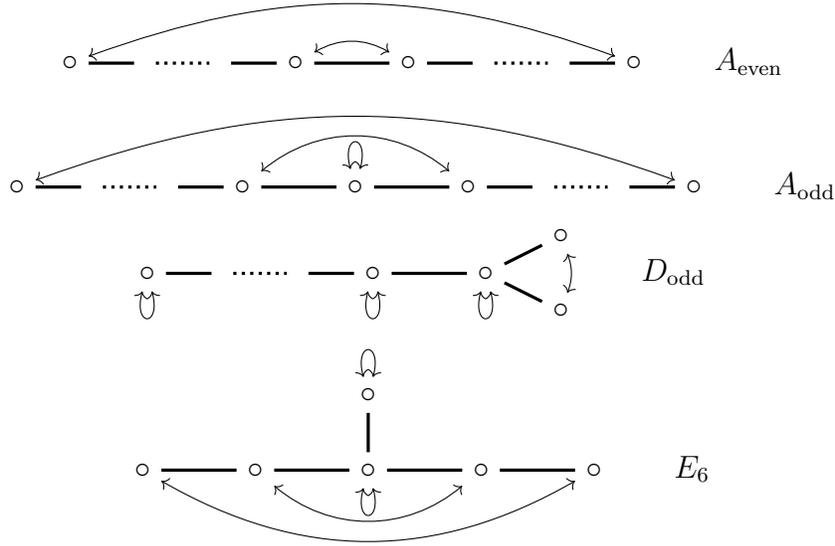
\begin{figure}[htbp]
\centering
\begin{gather*}
\begin{tikzpicture}[baseline=(a1.base)]
\node (a1) at (0,0) {$\circ$};
\node (a2) at (1,0) {};
\node (a3) at (2,0) {};
\node (a4) at (3,0) {$\circ$};
\node (a5) at (4.5,0) {$\circ$};
\node (a6) at (5.5,0) {};
\node (a7) at (6.5,0) {};
\node (a8) at (7.5,0) {$\circ$};
\draw[very thick] (a1) -- (a2);
\draw[dotted,very thick] (a2) -- (a3);
\draw[very thick] (a3) -- (a4) -- (a5) -- (a6);
\draw[dotted,very thick] (a6) -- (a7);
\draw[very thick] (a7) -- (a8);
\draw[<->, bend left=20] (a1) to (a8);
\draw[<->, bend left=30] (a4) to (a5);
\end{tikzpicture}
\qquad \text{$A_{\mathrm{even}}$}
\\
\begin{tikzpicture}[baseline=(a1.base)]
\node (a1) at (0,0) {$\circ$};
\node (a2) at (1,0) {};
\node (a3) at (2,0) {};
\node (a4) at (3,0) {$\circ$};
\node (a5) at (4.5,0) {$\circ$};
\node (a6) at (6,0) {$\circ$};
\node (a7) at (7,0) {};
\node (a8) at (8,0) {};
\node (a9) at (9,0) {$\circ$};
\draw[very thick] (a1) -- (a2);
\draw[dotted,very thick] (a2) -- (a3);
\draw[very thick] (a3) -- (a4) -- (a5) -- (a6) -- (a7);
\draw[dotted,very thick] (a7) -- (a8);
\draw[very thick] (a8) -- (a9);
\draw[<->, bend left=20] (a1) to (a9);
\draw[<->, bend left=40] (a4) to (a6);
\draw[<->] (a5) to[loop above] (a5);
\end{tikzpicture}
\qquad \text{$A_{\mathrm{odd}}$}
\\
\begin{tikzpicture}[baseline=(a1.base)]
  \node (a1) at (0,0.5) {$\circ$};
  \node (a2) at (1,0.5) {};
  \node (a3) at (2,0.5) {};
  \node (a4) at (3,0.5) {$\circ$};
  \node (a5) at (4.5,0.5) {$\circ$};
  \node (a6) at (5.5,0) {$\circ$};
  \node (a7) at (5.5,1) {$\circ$};
  \draw[very thick] (a1) -- (a2);
  \draw[dotted,very thick] (a2) -- (a3);
  \draw[very thick] (a3) -- (a4) -- (a5) -- (a6);;
  \draw[very thick] (a5) -- (a7);
  \draw[<->,bend right=20] (a6) to (a7);
  \draw[<->] (a1) to[loop below] (a1);
  \draw[<->] (a4) to[loop below] (a4);
  \draw[<->] (a5) to[loop below] (a5);
\end{tikzpicture}
\qquad \text{$D_{\mathrm{odd}}$}
\\
\begin{tikzpicture}[baseline=(a1.base)]
  \node (a1) at (0,0) {$\circ$};
  \node (a2) at (1.5,0) {$\circ$};
  \node (a3) at (3,0) {$\circ$};
  \node (a4) at (4.5,0) {$\circ$};
  \node (a5) at (6,0) {$\circ$};
  \node (a6) at (3,1) {$\circ$};
  \draw[very thick] (a1) -- (a2) -- (a3) -- (a4) -- (a5);
  \draw[very thick] (a3) -- (a6);
  \draw[<->] (a3) to[loop below] (a3);
  \draw[<->] (a6) to[loop above] (a6);
  \draw[<->, bend right=30] (a1) to (a5);
  \draw[<->, bend right=40] (a2) to (a4);
\end{tikzpicture}
\qquad \text{$E_6$}
\end{gather*}
\caption{The involution induced by the longest element of the Weyl group}
\label{fig:diagram-involution}
\end{figure}

\begin{NB}
In the labeling in
\cite[Ch.~4]{Kac}, it is given by $\invast[i] = \ell-i+1$ for type $\mathrm{A}_\ell$,
$\invast[1] = 5$, $\invast[2] = 4$, $\invast[3] = 3$, $\invast[4] = 2$, $\invast[5] = 1$, $\invast[6] = 6$ for
type $\mathrm{E}_6$ respectively.
For type $\mathrm{D}_\ell$ with odd $\ell$, it is given by
\begin{equation*}
    \invast[i] =
    \begin{cases}
        \ell -1 & \text{if $i=\ell$}, \\
        \ell & \text{if $i=\ell-1$},\\
        i & \text{otherwise},
    \end{cases}
\end{equation*}
In other words: it exchanges two `tails' for $D_\ell$ with odd $\ell$. It is a reflection at the
middle for $A_{\ell}$ and $E_6$.
It is the identity for other types.  
\end{NB}%

\begin{Lemma}
  Suppose $\invast[i] = i$. The irreducible representation $\rho_i$
  has either an orthogonal or a symplectic form, which is $\Gamma$-invariant.  
\end{Lemma}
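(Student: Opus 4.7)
The plan is to deduce the existence of the invariant form from the standard Schur-lemma dichotomy (essentially the Frobenius--Schur trichotomy restricted to the self-dual case). The hypothesis $\invast[i]=i$ means, by definition of $\invast$, that $\rho_i^* \cong \rho_i$ as $\Gamma$-representations.

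First I would fix a $\Gamma$-equivariant isomorphism $\varphi\colon \rho_i \iso \rho_i^*$. Such a $\varphi$ exists by the self-duality and is unique up to a nonzero scalar by Schur's lemma, since $\rho_i$ is irreducible. From $\varphi$ I produce the non-degenerate $\Gamma$-invariant bilinear form
\begin{equation*}
  B(v,w) \defeq \langle v, \varphi(w)\rangle, \qquad v,w\in \rho_i.
\end{equation*}
Non-degeneracy is immediate from the fact that $\varphi$ is an isomorphism, and $\Gamma$-invariance follows from the equivariance of $\varphi$ together with the natural $\Gamma$-action on $\rho_i^*$.

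Next I would show $B$ is either symmetric or skew-symmetric. Define $B^t(v,w) \defeq B(w,v)$. This is again a non-degenerate $\Gamma$-invariant form, hence corresponds to another equivariant isomorphism $\rho_i \to \rho_i^*$. By Schur's lemma, there exists $\lambda\in\CC^\times$ with $B^t = \lambda B$. Taking the transpose once more yields $B = (B^t)^t = \lambda^2 B$, so $\lambda^2=1$ and therefore $\lambda = \pm 1$. The two cases give respectively the orthogonal ($\lambda=1$) and symplectic ($\lambda=-1$) form.

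The argument is essentially formal and I do not expect a genuine obstacle; the only subtle point is ensuring one may treat $B$ as an honest bilinear form (rather than sesquilinear), which is justified because we work with $\rho_i^*$ as an algebraic dual and $\varphi$ is complex-linear. For a $\Gamma\subset\SU(2)$ one could additionally remark, using the Frobenius--Schur indicator $\nu(\rho_i) = |\Gamma|^{-1}\sum_{g\in\Gamma}\chi_{\rho_i}(g^2)$, which of the two alternatives occurs for each self-dual $\rho_i$, but this refinement is not needed for the present statement.
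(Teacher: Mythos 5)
Your argument is correct and is essentially identical to the paper's: the paper fixes the same equivariant isomorphism $\varphi\colon\rho_i\to\rho_i^*$, compares it with its transpose $\lsp{t}\varphi$ via Schur's lemma to get a constant $\varepsilon$ with $\varepsilon^2=1$, and reads off the orthogonal or symplectic form accordingly — your passage from the map $\varphi$ to the form $B$ and from $\lsp{t}\varphi$ to $B^t$ is just a change of packaging. No discrepancy to report.
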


\begin{proof}
By the assumption we have an isomorphism
$\rho_i\cong\rho_i^*$ of $\Gamma$-modules. Let us fix it and denote it
by $\varphi$. We take
$\lsp{t}\varphi\colon (\rho_i^*)^* = \rho_i\to \rho_i^*$. Since
$\rho_i$ is irreducible, there is a nonzero constant $\varepsilon$ with
$\lsp{t}\varphi= \varepsilon\varphi$. Since
$\lsp{t}{(\lsp{t}\varphi)} = \varphi$, $\varepsilon^2 = 1$, hence $\varepsilon = \pm 1$.
If $\varepsilon=1$ (resp.\ $-1$), $\varphi$ gives an orthogonal (resp.\
symplectic) form on $\rho_i$.
\end{proof}

The sign $\varepsilon$ above is \emph{not} a choice from the proof above, and is
automatically determined by the McKay graph as follows:
The trivial representation, assigned to $i=0$, is orthogonal.
For type $\mathrm{A}_{\ell}$ with odd $\ell$ with $i= {(\ell+1)/2}$, it is
orthogonal. This is because $\rho_i$ is $1$-dimensional.
For other types, $\rho_{i_0}$ for the vertex $i_0$ adjacent to the vertex $i=0$ 
in the affine Dynkin diagram is the
representation given by the inclusion $\Gamma\subset\SU(2)$. Therefore
$\rho_{i_0}\cong \rho_{i_0}^*$ is symplectic. If $i$ is adjacent to
$i_0$ and $\invast[i] = i$, then $\rho_i\cong\rho_i^*$ is orthogonal. 
This is because $\rho_i$ is a direct summand of $\rho_{i_0}\otimes\rho_{i_0}$
which has $\Gamma$-invariant orthogonal form.
If $j$ is adjacent to such an $i$ with $\invast[j]=j$, then $\rho_j\cong\rho_j^*$ is
symplectic, and so on.
As a side remark, two vertices $i\neq \invast[i]$ at the tail in type
$D_{\mathrm{odd}}$ satisfy this rule: If they were fixed by $\invast$,
then they must be symplectic vertices. But $\rho_i$,
$\rho_{\invast[i]}$ are $1$-dimensional representations of
$\Gamma$. Hence they cannot be fixed by $\invast$.

\begin{NB}
We choose an orientation $\varepsilon$ of the affine Dynkin diagram such that
$\varepsilon(\invast[h]) = \varepsilon(h)$ for all $h$, or
$\varepsilon(\invast[h]) = -\varepsilon(h)$ for all $h$. Then we have an induced isomorphism
$\fM_\zeta(\bv,\bw)\to \fM_{\invast[\zeta]}(\invast[\bv],\invast[\bw])$, where
$\invast[\zeta]_i = \zeta_{\invast[i]}$, $\invast[\bv]_i = \bv_{\invast[i]}$, $\invast[\bw]_i = \bw_{\invast[i]}$.
It is given by sending $B_h$, $a_i$, $b_i$ to $B_{\invast[h]}$, $a_{\invast[i]}$, $b_{\invast[i]}$ when
$\varepsilon(\invast[h]) = \varepsilon(h)$ for all $h$. Otherwise, we send them to
$-B_{\invast[h]}$, $a_{\invast[i]}$, $b_{\invast[i]}$.
\end{NB}%

We have an induced isomorphism
$\fM_\zeta(\bv,\bw)\to \fM_{\invast[\zeta]}(\invast[\bv],\invast[\bw])$, where
$\invast[\zeta]_i = \zeta_{\invast[i]}$, $\invast[\bv]_i = \bv_{\invast[i]}$, $\invast[\bw]_i = \bw_{\invast[i]}$.
It is given by sending $B_h$, $a_i$, $b_i$ to $B_{\invast[h]}$, $a_{\invast[i]}$, $b_{\invast[i]}$.
%
We also change the orientation $\Omega$ to $\invast[\Omega]$, hence $\varepsilon$, by
$\invast$, i.e., we use $\varepsilon_{\invast[\Omega]}$ instead of
$\varepsilon$ where
$\varepsilon_{\invast[\Omega]}(h) = \varepsilon(\invast[h])$.

\subsection{Transpose}\label{subsec:transpose}

We replace vector spaces $V = \bigoplus_i V_i$, 
$\bigoplus_i W_i$ by their dual spaces $V^* =
\bigoplus_i V_i^*$, $W^* = \bigoplus_i W_i^*$,
and define the symplectic vector space $\bM(V^*,W^*)$ as in 
\cite[\S2(ii)]{Na-reflect}. 
We also replace the orientation $\Omega$ by the opposite one $\bar\Omega$.
This is used for the definition of the symplectic form on $\bM(V^*,W^*)$, and
hence the definition of the moment map $\mu$.
\begin{NB}
Therefore we use $\varepsilon_{\overline\Omega}(h) = -\varepsilon(h)$ in
place of $\varepsilon$ in the definition of quiver varieties.
\end{NB}

\begin{Definition}
We define an isomorphism $t\colon \bM(V,W)\to \bM(V^*,W^*)$ by
replacing linear maps $B_h$, $a_i$, $b_i$ to
\begin{equation*}
  \lsp{t}B_{\overline{h}}\colon V_{\vout{h}}^*\to V_{\vin{h}}^*,
  \quad
  - \lsp{t}b_i \colon W_i^*\to V_i^*,
  \quad
  \lsp{t}a_i\colon V_i^*\to W_i^*.
\end{equation*}  
\end{Definition}

The parameter $\zeta$ for the hyperk\"ahler moment map 
equation is replaced by $-\zeta$.
\begin{NB}
  $\lsp{t}{\left(\varepsilon(h) B_h B_{\overline{h}}\right)}
  = \varepsilon(h) \lsp{t}B_{\overline{h}} \lsp{t}B_h
  = -\varepsilon_{\bar\Omega}(h) \lsp{t}B_{\overline{h}}\,\lsp{t}B_h$
\end{NB}%
Hence this construction gives an involutive isomorphism
\begin{equation*}
  t\colon \fM_\zeta(\bv,\bw)\to
  \fM_{-\zeta}(\bv,\bw).
\end{equation*}

A little more precisely, the space $\fM_\zeta(\bv,\bw)$
depends on the $I$-graded vector space $W$, not only on its dimension
vector $\bw$. (On the other hand, it does not depend on $V$ as we take
the quotient by $\prod\GL(V_i)$.) 
Thus $\fM_{-\zeta}(\bv,\bw)$ is associated with $W^*$, instead of $W$.

Let us combine this isomorphism with one in
\cref{subsec:diagram}, i.e., consider
$t\circ\invast = \invast\circ t$. Then we replace
a $\Gamma$-representation
$\bigoplus W_i\otimes\rho_i$ with its dual
$\bigoplus W_i^*\otimes\rho_i^* = \bigoplus W_i^* \otimes\rho_{\invast[i]}$.
It is more natural in view of \cite{KN}.

\subsection{Reflection functor}

Let $S_{w_0}$ be the reflection functor associated with the longest element $w_0$ of the Weyl group.
(It was denoted by $\mathcal F$ in \cite[(9.1)]{Na-reflect}.)
We use the definition given in \cite{Hyogen1992,Na-reflect}. See \cite{MR1775358,MR1990675} for other definitions.

If $w_0 = s_{i_1}\dots s_{i_\nu}$
($i_p\in I$) is a reduced expression, we have
$S_{w_0} = S_{i_1} \dots S_{i_\nu}$, where $S_{i_p}$ is the reflection functor for a
simple reflection $s_{i_p}\equiv s_i$ defined as follows. We assume
$(\zeta_{i,\RR},\zeta_{i,\CC})\neq(0,0)$, and
choose $z_1$, $z_2\in\CC$ so that
$z_1 z_2 = \zeta_{i,\CC}$,
$-|z_1|^2 + |z_2|^2 = 2\sqrt{-1}\zeta_{i,\RR}$. They are unique up to
$(tz_1, t^{-1}z_2)$ for $t\in \U(1)$. The ambiguity can be absorbed by
the action of the gauge group $\prod_i \GL(V'_i)$ used in the definition of quiver varieties.

We consider a complex
\begin{equation}\label{eq:33}
  V_i \xrightarrow{\alpha_i} \widetilde V_i \defeq V_i\oplus \bigoplus_{\substack{h\in H\\ \vin{h}=i}} V_{\vout{h}} \oplus W_i
  \xrightarrow{\beta_i} V_i
\end{equation}
where
\begin{equation*}
  \alpha_i =
  \begin{bmatrix}
    z_1 \id_{V_i} \\ \bigoplus
    B_{\overline{h}} \\ b_i
  \end{bmatrix},
  \qquad
  \beta_i =
  \begin{bmatrix}
    z_2\id_{V_i} & \sum
    \varepsilon(h) B_{h} & a_i
  \end{bmatrix}.
\end{equation*}
Then this is indeed a complex, moreover $\alpha_i$ is injective, and $\beta_i$
is surjective. We define $V'_i \defeq \Ker\beta_i/\Ima\alpha_i$. We
further define $B'_{h}$, $B'_{\overline{h}}$ for $\vin{h}=i$ as
\begin{equation*}
  \begin{gathered}
  B'_{h} =
  \begin{bmatrix}
    B_h \\ -\varepsilon(h) z_2 \iota_h\\ 0
  \end{bmatrix}\colon V_{\vout{h}} \to \widetilde V_i, \quad
  B'_{\overline{h}} =
  \begin{bmatrix}
    B_{\overline{h}} & -z_1 \pi_h & 0
  \end{bmatrix}\colon \widetilde V_i \to V_{\vout{h}}, \\
  a'_i =
  \begin{bmatrix}
    a_i \\ 0 \\ -z_2 \iota_{W_i}
  \end{bmatrix}\colon W_i \to \widetilde V_i, \quad
  b'_i =
  \begin{bmatrix}
    b_i & 0 - z_1 \pi_{W_i}
  \end{bmatrix}\colon \widetilde V_i\to W_i,
  \end{gathered}
\end{equation*}
where $\iota_h$, $\iota_{W_i}$ are the natural inclusions
of $V_{\vout{h}}$ and $W_i$ into $\widetilde V_i$, and
$\pi_h$, $\pi_{W_i}$ are the natural projections from $\widetilde V_i$ to $V_{\vout{h}}$ and $W_i$ respectively.
One see that $\beta_i B'_h = 0$, $B'_{\overline{h}} \alpha_i = 0$,
$\beta_i a'_i = 0$, $b'_i \alpha_i = 0$. Therefore they are well-defined linear maps
between $V'_i$ and $V_{\vout{h}}$, $W_i$.
\begin{NB}
  We have
  \begin{equation*}
    C'_{a-\frac12} D'_{a-\frac12} =
    \begin{bmatrix}
      C_{a-\frac12} D_{a-\frac12} & -z_1 C_{a-\frac12} & 0 \\
      -z_2 D_{a-\frac12} & z_1 z_2 & 0 \\
      0 & 0 & 0
    \end{bmatrix},\quad
    D'_{a+\frac12} C'_{a+\frac12} =
    \begin{bmatrix}
      D_{a+\frac12} C_{a+\frac12} & 0 & - z_1 D_{a+\frac12} \\
      0 & 0 & 0 \\
      z_2 C_{a+\frac12} & 0 & -z_1z_2
    \end{bmatrix}.
  \end{equation*}
  Suppose $\left[
  \begin{smallmatrix}
    v_i \\ v_{a-1} \\ v_{a+1}
  \end{smallmatrix}\right]$ is in $\ker\beta_i$, i.e.,
  $z_2 v_i + C_{a-\frac123} v_{a-1} - D_{a+\frac12} v_{a+1} = 0$. Then
  \begin{equation*}
    (C'_{a-\frac12} D'_{a-\frac12} - D'_{a+\frac12} C'_{a+\frac12})
    \begin{bmatrix}
    v_i \\ v_{a-1} \\ v_{a+1}
  \end{bmatrix} =
  \begin{bmatrix}
    0 \\ -z_2 D_{a-\frac12} v_i + z_1z_2 v_{a-1} \\
    -z_2 C_{a+\frac12} v_i + z_1 z_2 v_{a+1}
  \end{bmatrix}.
\end{equation*}
This is equal to $(\zeta_{a+\frac12,\CC}-\zeta_{a-\frac12,\CC})\left[
  \begin{smallmatrix}
    v_i \\ v_{a-1} \\ v_{a+1}
  \end{smallmatrix}\right]$ modulo
$\Ima \alpha_i$.
\end{NB}%
Moreover they satisfy the moment map equation
$\mu(B',a',b') = -s_i(\zeta_\CC)$, as well as
the $s_i(\zeta_\RR)$-stability condition. See \cite[Cor.~3.7]{Na-reflect}.
This definition literally applies to the last operator $S_{i_\nu}$ for
$i=i_\nu$. For the next $S_{i_{\nu-1}}$ we use the above definition
with the parameter $s_{i_\nu}\zeta$. And so on.

The composition $S_{w_0} = S_{i_1}\dots S_{i_\nu}$ gives an isomorphism
\begin{equation*}
  \fM_{\zeta} \to \fM_{-\invast[\zeta]}
\end{equation*}
as $\zeta = w_0(-\invast[\zeta])$. In general, $S_{w_0}$ changes
dimensions of vector spaces: $S_{w_0}$ does not change
${\bw}$, but sends ${\bv}$ to
$w_0\ast {\bv}$ given by
${\bw} - \mathbf C(w_0\ast {\bv}) =
w_0({\bw}-\mathbf C{\bv})$.
In terms of weights $\lambda$, $\mu$, they are changed as
$\lambda\mapsto \lambda$, $\mu\mapsto w_0(\mu) = -\invast[\mu]$ respectively.
Here $\invast$ on weights is defined in an obvious way.
%

When $\zeta_\RR = 0$, $\fM_\zeta$ is an affine algebraic variety. The
definition of a reflection functor in \cite{MR1775358} is defined for
the coordinate ring of $\fM_\zeta$ and \emph{does} make sense even
when $\zeta$ is not necessarily generic.

Suppose $\zeta$ is generic again. Recall $(0,\zeta_\CC)$ the parameter
obtained by replacing the real part $\zeta_\RR$ of $\zeta$ by $0$, but
keeping the complex parameter. It may be generic or not. We have a
projective morphism $\pi\colon \fM_\zeta\to \fM_{(0,\zeta_\CC)}$. The
reflection functor in \cite{MR1775358} is defined for
$\fM_{(0,\zeta_\CC)}$ even if $(0,\zeta_\CC)$ is not generic, as we
remarked above. Moreover $\pi$ commutes with reflection functors
\cite[\S8]{Na-reflect}.
Furthermore, the reflection functor in \cite{MR1775358} is
the identity when $\zeta_\CC = 0$.

\begin{Example}
  Consider a type $\mathrm{A}_1$ quiver variety with dimension vector $\bv$,
  $\bw$ with $\bv\le \bw$. Suppose $\zeta_\CC\neq 0$. By
  $\fM_\zeta(\bv,\bw)\ni [(a,b)] \mapsto ba\in\End(W)$, we see that
  $\fM_\zeta(\bv,\bw)$ is isomorphic to the adjoint orbit of
  semisimple matrices with eigenvalue $0$ with multiplicity $\bw-\bv$,
  and eigenvalue $-\zeta$ with multiplicity $\bv$. More concretely,
  the eigenspace for the eigenvalue $0$ (resp.\ $\zeta$) is $\Ker a$
  (resp.\ $\Ima b$). By the reflection functor $S$, we get
  \begin{equation*}
    \End(W)\ni b'a' = ba + \zeta,  \quad
    V' \cong W/\Ima b \cong \Ker a.
  \end{equation*}
  Note $ba + \zeta$ has eigenvalues $\zeta$ and $0$, and their eigenspaces
  are identified with $\Ker a$ and $\Ima b$ respectively.

  Next suppose $\zeta_\CC = 0$ and $\sqrt{-1}\zeta_\RR > 0$. Then $b$
  is injective, and $\fM_\zeta(\bv,\bw)$ is isomorphic to the
  cotangent bundle of the Grassmannian of $\bv$-dimensional subspaces
  in $W$. The isomorphism is given by $(ba,\Ima b)$ while $ba$ is
  viewed as $\Hom(W/\Ima b, \Ima b)$. Under the reflection functor
  $S$, $\fM_\zeta(\bv,\bw)$ is sent to $\fM_{-\zeta}(\bw-\bv,\bw)$,
  which is the cotangent bundle of the Grassmannian of
  $(\bw-\bv)$-dimensional \emph{quotients} of $W$. Then $S$ is given by
  $(ba, W/\Ima a)$.
\end{Example}

\subsection{Tautological bundles under the reflection functor}

Let $\CV_i$ be the tautological bundle on $\fM_\zeta(\bv,\bw)$
associated with the vertex $i$. Similarly, let $\CW_i$ be the
tautological bundle associated with the framing node $i$.
Under the reflection $S_i$ at $i$, $\CV_i$ is changed to $\CV'_i$.
By the definition of the reflection functor, we have the following
relation in the K-group of vector bundles on
$\fM_\zeta(\bv,\bw)$:
\begin{equation*}
  \CV'_i = \CW_i \oplus 
  \bigoplus_{\substack{h\in H\\\vin{h}=i}} \CV_{\vout{h}}
  \ominus \CV_i,
\end{equation*}
where the left hand side is regarded as a class on $\fM_\zeta(\bv,\bw)$
by the pull-back by $S_i$.
We use the Cartan matrix $\mathbf C = (a_{ij})$ to define a
K-theory class
\begin{equation}\label{eq:h:13}
  \CU_i \defeq \CW_i + \sum_j (-a_{ij})\CV_j
  = \CW_i\oplus\bigoplus_{\substack{h\in H\\\vin{h}=i}} 
  \CV_{\vout{h}} \ominus \CV_i^{\oplus 2}.
\end{equation}
Then we have $\CU'_j = \CU_j - \sum_j a_{ij} \CU_i$, i.e., 
$\CU_j$ changes according to the usual formula for change of
weights under the simple reflection.
By the composition, we have 
\begin{equation*}
  \CU_i' = -\CU_{\invast[i]},
\end{equation*}
where $\CU_i'$ is defined on $\fM_{-\invast[\zeta]}(w_0\ast\bv, \bw)$,
and pull-backed to $\fM_\zeta(\bv,\bw)$ by $S_{w_0}$ in this
equation.

We would like to consider $\CC_{\hbar}^\times$-equivariant K-classes
when $\zeta_\CC = 0$ (and generic $\zeta_\RR$).
Recall that $\bq^m$ denote the character of $\CC_{\hbar}^\times$ with weight $m\in\ZZ$.
We consider it as an equivariant $K$-theory class.

Let us first note that $z_1 = 0$ or $z_2=0$ is $0$ if $\zeta_{i,\CC} = 0$
according to the sign of $\sqrt{-1}\zeta_{i,\RR}$ in the definition of $S_i$.
If $z_2 = 0$, we modify \eqref{eq:33} as
\begin{equation*}
  \bq^{-2} V_i \xrightarrow{\alpha_i} 
  \bq^{-2} V_i \oplus \bigoplus \bq^{-1} V_{\vout{h}} 
    \oplus \bq^{-1} W_i
  \xrightarrow{\beta_i} V_i.
\end{equation*}
It becomes a $\CC_{\hbar}^\times$-equivariant complex. If $z_1 = 0$,
the modification is almost the same, but put $V_i$ instead of
$\bq^{-2} V_i$ in the middle term.
The difference arises as we need to make $z_1\id_{V_i}$ or $z_2\id_{V_i}$
$\CC_{\hbar}^\times$-equivariant.
Then we define $V'_i$ as $\bq^2\Ker\beta_i/\Ima\alpha_i$
if $z_2 = 0$, and $\Ker\beta_i/\Ima\alpha_i$ if $z_1 = 0$.
This makes $S_i$ a $\CC_{\hbar}^\times$-equivariant isomorphism.

\begin{NB}
  Note also that $\begin{bmatrix} \bigoplus B_{\overline{h}} \\ b_i \end{bmatrix}$
  is injective if $z_1 = 0$ (i.e., $\sqrt{-1}\zeta_i^\RR > 0$), while $\begin{bmatrix}
    \sum \varepsilon(h) B_{h} & a_i
  \end{bmatrix}$ is surjective if $z_2 = 0$ (i.e., $\sqrt{-1}\zeta_i^\RR < 0$).
  This is useful to check which stability condition is used.
\end{NB}%

We modify \eqref{eq:h:13} as
\begin{equation}\label{eq:h:44}
  \CU_i \defeq \bq^{-1}\CW_i + \sum_j \bq^{-1}[-a_{ij}]\CV_j,
\end{equation}
where $[m]$ is the $q$-integer defined by
$[m] = \frac{\bq^m - \bq^{-m}}{\bq-\bq^{-1}}$.
In practice, $m= 0$, $-1$, or $2$, hence $[0] = 0$, $[-1] = -1$,
$[2] = \bq + \bq^{-1}$.
We have
\begin{equation}\label{eq:h:14}
  \CU'_j
  = \CU_j - \sum_j \bq^{\pm}[a_{ij}] \CU_i,
\end{equation}
where $\pm$ is $+$ if $z_2 = 0$, $-$ if $z_1=0$.
This computation was given in \cite[(4.6.3)]{VV-can}.

For $S_{w_0}$, we suppose $\fM_\zeta(\bv,\bw)$ appearing in the
domain of $S_{w_0}$ is one for $\sqrt{-1}\zeta_{i,\RR} < 0$ for all $i$.
Then it follows from \eqref{eq:h:14} that
\begin{equation}\label{eq:h:49}
  \CU_i'
  = - \bq^\bc\,\CU_{\invast[i]},
\end{equation}
where $\bc$ is the Coxeter number of the Dynkin diagram.
See \cite[Lemma~4.6]{VV-can}.\footnote{
Note that $\omega$ in \cite[Lemma~4.6]{VV-can} is the composition of $S_{w_0}$ and the transpose $t$. Therefore 
the formula involves the dual of $\CU_i$.}

\begin{NB}
  We need to understand the tangent bundle of the $\sigma$-quiver variety
  $\fM_\zeta^\sigma(\bv,\bw)$. The following might be useful.
\begin{equation*}
  T\fM \cong \sum_{i,j} \bC_q^{ij}\Hom(\CW_i,\CW_j)
  - \bC_q^{ij}\Hom(\CU_i, \CU_j). 
\end{equation*}
\end{NB}%

\begin{Example}
  Consider type $\mathrm{A}_{\ell-1}$ quiver variety with $\bv = 0$, $\bw = \varpi_1$,
  where the numbering of vertices is $1$, $2$, \dots, $\ell-1$ from the left.
  Then $\fM_\zeta$ is a single point. 
  Suppose $\sqrt{-1}\zeta_{i,\RR} < 0$ for all $i$ as above. Then
  $\fM_{-\invast[\zeta]}(w_0\ast\bv,\bw)$ is given in the notation of
  \eqref{eq:31} by $D_{1/2} = D_{3/2} = \dots = D_{\ell-3/2} = 1$,
  $C_{1/2} = C_{3/2} = \dots = C_{\ell-3/2} = 0$,
  where $W_1 = V_1 = \dots = V_{\ell-1} = \CC$, $W_{\ell-1} = 0$.
  We have
  \begin{equation*}
    \CU_1 = \bq^{-1}, \qquad
    \CU_{\ell-1}' = -\bq^{\ell-1}.
  \end{equation*}
  Thus \eqref{eq:h:49} is checked. If we use the opposite stability parameter
  $\sqrt{-1}\zeta_{i,\RR} > 0$ for all $i$, we have $D_{1/2} = D_{3/2} = \dots = D_{\ell-3/2} = 0$,
  $C_{1/2} = C_{3/2} = \dots = C_{\ell-3/2} = 1$ instead. Then
  $\CU_1 = \bq^{-1}$ is unchanged, but $\CU_{\ell-1}' = -\bq^{-\ell-1}$.
\end{Example}

\subsection{Orthogonal/symplectic forms on framing vector spaces
}\label{subsec:form}

Let $\sigma'$ be a diagram automorphism. Except the $D_{\mathrm{even}}$ case, it is either $\id$ or $\invast$.
There is a nontrivial diagram automorphism for $D_{\mathrm{even}}$, which is
given by swap of the two tails as in the $D_{\mathrm{odd}}$ case. Therefore
$\sigma'$ is either $\id$ or this nontrivial automorphism.
The diagram automorphism is a choice. A different choice will give a different
involution on $\fM_\zeta(\bv,\bw)$.

\begin{Definition}\label{def:involution}
We consider the composite
$\sigma \defeq \sigma'\circ S_{w_0}\circ\invast\circ t$, which is an
isomorphism
\begin{equation*}
  \fM_\zeta(\bv,\bw) \xrightarrow{\cong}
  \fM_{\sigma(\zeta)}(\sigma(\bv),\sigma(\bw)),
\end{equation*}
where
\begin{equation*}
    \sigma(\zeta) = \sigma'(\zeta),\quad
    \sigma(\bv) = \sigma'(w_0\ast\invast[\bv]),\quad
    \sigma(\bw) = \sigma'(\invast[\bw]).
\end{equation*}
\end{Definition}

For the change of $\zeta$, we have used $w_0(-\invast[\zeta]) = \zeta$.
Changes of $\bv$, $\bw$ are obvious. In terms of weights,
\begin{equation}\label{eq:h:50}
  \sigma(\lambda) = \sigma'(\invast[\lambda]), \qquad
  \sigma(\mu) = -\sigma'(\mu).
\end{equation}

In order to make $\sigma$ an involution, we assume
$\sigma(\zeta) = \zeta$, $\sigma(\bv) = \bv$, $\sigma(\bw) = \bw$ hereafter.

Since the quiver varieties depend on the $I$-graded vector space $W$, the equality $\sigma(\bw) = \bw$
is not sufficient. We need to fix an identification between $W_i$ and
$W^*_{\sigma'(\invast[i])}$. This will be done in the
remainder of this subsection.
\begin{NB}
Here we view $\sigma(W)$ as a vector space $W^*$ equipped with the $I$-grading
given by $\sigma(W)_{i} = W_{\sigma'(\invast[i])}^*$. We cannot view
$\sigma$ as a map from $W$ to $W^*$, contrary to the notation.
\end{NB}%

We first choose an orthogonal $(+)$ or symplectic $(-)$ type, which determines
signs $\varepsilon_i$ on vertices $i$ by the rule described below.
If we exchange $(+)$ and $(-)$, each $\varepsilon_i$ changes the sign for
all $i$ simultaneously. This is the consequence of the rule below.

Consider the composition $\sigma'\circ\invast$. Looking at
\cref{fig:diagram-involution}, we find that the non
$(\sigma'\circ\invast)$-fixed edges forms a subdiagram of type
$A_{\mathrm{odd}}$ if $Q\neq A_{\mathrm{even}}$ and
$\sigma'\circ\invast\neq\id$. Let us first consider this case.

Consider a vertex $i$, which is fixed by $\sigma'\circ\invast$. Since
we assume $\sigma'\circ\invast\neq\id$, we have $\sigma'=\id$ if $Q$
is not of type $\mathrm{D}_{\mathrm{even}}$, and $\invast=\id$ if $Q$ is of
type $\mathrm{D}_{\mathrm{even}}$. Therefore the vertex $i$ is also fixed by
$\invast$.
Recall that we have an isomorphism $\rho_i\cong\rho_i^*$ for
$i=\invast[i]$ gives either an orthogonal or symplectic form on
$\rho_i$. We put an $\varepsilon_i$-form on $W_i$ so that the
tensor product form on $W_i\otimes\rho_i$ is the given type $(+)$ or
$(-)$.
Forms on $W_i$ and $W_j$ are alternative when $i$ and $j$ are adjacent
and $\invast[i] = i$, $\invast[j] = j$.
We have an isomorphism $\varphi_{i}\colon W_i\to W_i^*$
given by the form. It satisfies $\lsp{t}\varphi_{i} = \varepsilon_i \varphi_{i}$.

Let $i_1$ be the middle of the subdiagram. It is fixed by
$\sigma'\circ\invast$, hence we have already assigned an $\varepsilon_{i_1}$-form
on $W_{i_1}$.
For other vertices $i$ in the subdiagram of type $\mathrm{A}_{\mathrm{odd}}$, we
assign the form of the \emph{same} type as $W_{i_1}$ on
$W_i\oplus W_{\sigma'(\invast[i])}$ so that
$W_i\perp W_i$, $W_{\sigma'(\invast[i])}\perp W_{\sigma'(\invast[i])}$.
It means that we choose a pair of isomorphisms
$\varphi_i\colon W_i\to W_{\sigma'(\invast[i])}^*$,
$\varphi_{\sigma'(\invast[i])}\colon W_{\sigma'(\invast[i])}\to W_i^*$ such that
$\lsp{t}\varphi_i = \varepsilon_i \varphi_{\sigma'(\invast[i])}$. The sign $\varepsilon_i$ is
equal to $\varepsilon_{i_1}$ given by the form on $W_{i_1}$.

Next consider the case $Q=A_{\mathrm{even}}$ and
$\sigma'\circ\invast\neq\id$. There is no vertex fixed by
$\sigma'\circ\invast$, but the rule is similar to above: We assign an
$\varepsilon_i$-form on $W_i\oplus W_{\sigma'(\invast[i])}$ according to the
given type $(\pm)$.
\begin{NB}
  For type $\mathrm{A}_{\mathrm{odd}}$, the middle vertex $i_0$ is an
  orthogonal type.  Therefore, the rule above for $\mathrm{A}_{\mathrm{even}}$
  is the same as for $\mathrm{A}_{\mathrm{odd}}$.
\end{NB}%

The rest is $\sigma'\circ\invast=\id$.
Suppose $Q$ is not of type $\mathrm{A}$.
We put an $\varepsilon_i$-form on $W_i$ so that
the tensor product form on $W_i\otimes\rho_i$ is given type when
$\invast[i] = i$.
We then continue to put $\varepsilon_i$-forms on $W_i$ for the remaining
vertices alternatively, i.e., the type of the forms on $W_i$ and $W_j$
are opposite if $i$ and $j$ are adjacent.
Next suppose $Q$ is of type $\mathrm{A}_{\ell-1}$. There is a vertex, denoted
by $1$, which gives a vector representation of the corresponding Lie
algebra of type $\algsl_\ell$. It is leftmost or rightmost one
depending on the convention.
We assign an $\varepsilon_1$-form on $W_1$ according to the given type.
Then we assign $\varepsilon_i$-forms alternatively.

\begin{Definition}\label{def:framing-involution}
  Let $G_\bw = \prod_i \GL(W_i)$. It acts on $\fM_\zeta(\bv,\bw)$.

  We introduce the corresponding group for $\sigma$ version: Let $G_\bw^\sigma$ denote
  \begin{equation*}
    \prod_{i: \sigma'(\invast[i])=i} G_{\ve_i}(W_i)
    \times \prod_{\text{$\{ i, j \}$ as below}} \GL(W_i),
  \end{equation*}
  where the product in the second factor runs over \emph{unordered} pairs
  $\{ i,j\}$ with $\sigma'(\invast[i])=j$, but $i\neq j$.
\end{Definition}

The notation is justified, as $G_\bw^\sigma$ is the fixed point subgroup
of $G_\bw$ with respect to an involution induced from $\sigma$:
it sends $g_i\in\GL(W_i)$ to $\varphi_i^{-1}\,\lsp{t}g_j^{-1}\,\varphi_i$ with $j = \sigma'(\invast[i])$.
If it is fixed by $\sigma$, $g_j$ is determined by $g_i$, hence the factor in $G_\bw^\sigma$ is $\GL(W_i)$.
We can choose $j$ from the pair instead.

\begin{NB}
  Old version:

Suppose $\sigma'$ is the identity first.
Recall that we have an isomorphism $\rho_i\cong\rho_i^*$ for $i=\invast[i]$ gives
either an orthogonal or symplectic form on $\rho_i$. We put
an orthogonal or symplectic form on $W_i$ so that the tensor product
form on $W_i\otimes\rho_i$ is the given type $(+)$ or $(-)$.
Forms on $W_i$ and $W_j$ are alternative when $i$ and $j$ are adjacent
and $\invast[i] = i$, $\invast[j] = j$.
We have an isomorphism $\varphi_{W_i}\colon W_i\to W_i^*$
given by the form. We have $\lsp{t}\varphi_{W_i} = \pm \varphi_{W_i}$.

Except in type $\mathrm{A}_{\mathrm{even}}$, non $\invast$-fixed edges form a subdiagram of type $\mathrm{A}_{\mathrm{odd}}$. The middle vertex $i_1$ of the subdiagram
is fixed by $\invast$, hence we have an orthogonal or symplectic form
on $W_{i_1}$. We then assign a form of the same type on
$W_i\oplus W_{\invast[i]}$ so that $W_i\perp W_i$, $W_{\invast[i]}\perp W_{\invast[i]}$.
It means that we choose a pair of isomorphisms $\varphi_{W_i}\colon W_i\to W_{\invast[i]}^*$,
$\varphi_{W_{\invast[i]}}\colon W_{\invast[i]}\to W_i^*$
such that $\lsp{t}\varphi_{W_i} = \pm \varphi_{W_{\invast[i]}}$
according to the type of the form on $W_{i_1}$.
For type $\mathrm{A}_{\mathrm{even}}$, we put an orthogonal or symplectic form on $W_i\oplus W_{\invast[i]}$
according to the given type $(+)$ or $(-)$.

Next suppose $Q$ is of type $\mathrm{D}_{\mathrm{even}}$ and $\sigma'$ is nontrivial.
This is very similar to the case of type $\mathrm{D}_{\mathrm{odd}}$ with $\sigma' = \id$.
We put an orthogonal or symplectic form on $W_i$ when $i=\sigma'(i)$
according to the type of $\rho_i$ and the given type of the quiver.
We then put an orthogonal or symplectic form on $W_i\oplus W_{\sigma'(i)}$
exactly as in type $\mathrm{D}_{\mathrm{odd}}$ by looking at the form $W_{i_0}$
at the middle of the subdiagram of type $\mathrm{A}_3$.

Finally, suppose $Q$ is not of type $\mathrm{D}_{\mathrm{even}}$ and $\sigma'$ is nontrivial.
Then $\sigma'\circ\invast$ is the identity.
We put an orthogonal or symplectic form on $W_i$ alternatively,
starting from the leftmost vertex. The type of the form on the leftmost
vertex is the given one. The adjacent one is the opposite type, and so on.
\end{NB}%

\begin{NB}
The following computation is not precise enough, as we fix $V_i$ implicitly,
though we are working on the quotient space.

We now compose $\varphi_i \colon W_i\to W^*_{\sigma'(\invast[i])} = \sigma(W)_i$ as
$\sigma(a)_i \circ \varphi_i$, $\varphi_i^{-1}\sigma(b)_i$ so that linear maps
are defined for $W$. After the composition, $\sigma^2$ sends as
\begin{equation}\label{eq:h:15}
  \sigma^2(a)_i = - a_i \circ \lsp{t}\varphi_{\sigma'(\invast[i])}^{-1}\varphi_i
  = - \varepsilon_i a_i,
  \quad
  \sigma^2(b)_i = - \varphi_i^{-1}\lsp{t}\varphi_{\sigma'(\invast[i])} \circ b_i
  = - \varepsilon_i b_i.
\end{equation}
  Note $\lsp{t}\varphi_{\sigma'(\invast[i])}\colon W_i\to W_{\sigma'(\invast[i])}^*$.
\end{NB}%

\subsection{Orientation}\label{subsec:orientation}

The quiver varieties depend on the choice of the orientation, hence
the domain and target of $\sigma$ are still different spaces. We fix
this issue in this subsection.

We choose an orientation $\Omega$ according to $\invast$ and $\sigma'$
as follows.

Suppose that either (i) $\sigma'\circ\invast$ contains a subdiagram of type A
of non-fixed edges,
or (ii) $Q$ is of type $\mathrm{A}_{\mathrm{even}}$ and $\sigma' = \id$.
In either case, we choose a linear orientation on the subdiagram
or the entire diagram in case (ii).
For other parts, including the case $\sigma' \circ \invast = \id$, 
we choose an arbitrary orientation.

Under the transpose $t$, we change the orientation to the opposite
one. Then we change it further by $\sigma'\circ\invast$, as
$S_{w_0}$ does not change the orientation. Therefore the linear
orientation of the subdiagram of type A is preserved under $\sigma$.

On the other hand, the orientation in the $\sigma'\circ\invast$-fixed
part is still opposite. Therefore we compose $\sigma$ with the multiplication of $-\varepsilon(h)$ 
 for edges $h$, not in the subdiagram. See \eqref{eq:h:52}.
\begin{NB}
  Note
  $\varepsilon(h) (-\varepsilon(h) B_h)\, (-\varepsilon(\overline{h})
  B_{\overline{h}}) = -\varepsilon(h) B_h B_{\overline{h}}
  = \varepsilon_{\overline{\Omega}}(h) B_h B_{\overline{h}}$. Hence the moment
  map equation is preserved. Moreover, by the same reason, the symplectic
  form (defined via $\varepsilon$ vs $\varepsilon_{\overline{\Omega}}$) is
  preserved.
\end{NB}%
We denote the composite also by $\sigma$ for brevity. Thus we have defined
an automorphism $\sigma$ on $\fM_\zeta(\bv,\bw)$.

\subsection{Categorical quotients}

Let us calculate $\sigma$ on $\fM_0(\bv,\bw)$, the categorical quotient at the
level set $\mu = 0$. Then $S_{w_0}$ is the identity, hence the calculation is greatly simplified.
In particular, we define an involution $\sigma$ on the symplectic vector space $\bM$
(see \cite[\S2(i)]{Na-reflect}), which is used to define $\fM$ as a symplectic quotient.

We put forms on $V_i$ or $V_i\oplus V_{\sigma'(\invast[i])}$, according to whether $\sigma'(\invast[i]) = i$ or not, as in the previous subsection.
The rule is that the sign is the \emph{opposite} of $W_i$ or $W_i\oplus W_{\sigma'(\invast[i])}$.
We have $\varphi_{V_i}\colon V_i\to V_{\sigma'(\invast[i])}^*$ given by the form. 
It satisfies $\lsp{t}\varphi_{V_i} = - \varepsilon_i \varphi_{V_{\sigma'(\invast[i])}}$.

Given a collection of linear maps $B$, $a$, $b$ in $\bM$, we define
a new collection $\sigma(B)$, $\sigma(a)$, $\sigma(b)$ in $\bM$ by
\begin{equation}\label{eq:h:52}
\begin{gathered}
  \sigma(B)_h = \begin{cases}
  B^*_{\sigma'(\invast[\overline{h}])} & \text{if $h$ in the subdiagram in \cref{subsec:orientation}} \\
  -\varepsilon(h) B^*_{\sigma'(\invast[\overline{h}])} & \text{otherwise},
  \end{cases}
  \\
  \sigma(a)_i = - b^*_{\sigma'(\invast[i])}, \quad
  \sigma(b)_i = a^*_{\sigma'(\invast[i])}.
\end{gathered}
\end{equation}
This is related to the above definition by, for example,
\begin{equation*}
  a^*_{\sigma'(\invast[i])} = 
  \varphi_i^{-1} \circ \lsp{t}a_{\sigma'(\invast[i])} \circ \varphi_{V_i}.
\end{equation*}
This formula is a consequence of the definition of the adjoint. 
The choice of $\varphi_{V_i}$ is irrelevant when we define $\sigma$ on $\fM$ as it is a quotient space.
We have the induced involution $\sigma$ on $\fM_0(\bv,\bw)$, and
the projective morphism $\pi\colon \fM_{(\zeta_\RR,0)}(\bv,\bw)\to \fM_0(\bv,\bw)$ intertwines $\sigma$.

Now let us show that $\sigma$ is an involution.
Recall that $(C^*)^* = C$ if $C$ is a linear map between vector spaces with
forms with the same type. On the other hand, $(C^*)^* = -C$ if forms are opposite types.
In the above formula, we use that $V_i$ and $W_i$ have the opposite form,
hence $\sigma^2(a) = a$, $\sigma^2(b) = b$.
As for $B_h$, we note $V_{\vout{h}}$ and $V_{\vin{h}}$ have the same types
if $h$ is in the subdiagram, and the opposite type otherwise.
Together with $\sigma'(\invast[\overline{h}]) = \overline{h}$ when
$h$ is in the subdiagram, we see $\sigma^2(B)_h = B_h$.
\begin{NB}
  $\varepsilon(h)\varepsilon(\overline{h}) = -1$. 
\end{NB}%

Since $\pi$ is a birational morphism onto its image, we see that
$\sigma^2 = \id$ on $\fM_\zeta(\bv,\bw)$ as well.

This finishes the definition of the involution $\sigma$ on $\fM_\zeta(\bv,\bw)$. Let us denote
the fixed point subset by $\fM_\zeta^\sigma(\bv,\bw)$. It is smooth
and symplectic if $\zeta$ is generic (as in \cite[Prop.~2.6]{Na-reflect}),
as $\sigma$ preserves the symplectic form by the construction.
If $\zeta$ is not generic, we equip $\fM_\zeta^\sigma(\bv,\bw)$ with the fixed point subscheme
structure, i.e., the quotient by ideals generated by functions of forms $f - \sigma^*(f)$.

Following \cite{MR3900699}, we call $\fM_\zeta^\sigma(\bv,\bw)$ a \emph{$\sigma$-quiver variety}.


By $\sigma = \sigma'\circ S_{w_0}\circ\invast \circ t$, we have
\begin{equation}\label{eq:h:51}
  \CU_i' = -\bq^\bc\,\CU_{\sigma'(i)}^\vee,\qquad
  \CW_i' = \CW_{\sigma'(\invast[i])}^\vee
\end{equation}
where $\vee$ in the right hand side means the dual (virtual) vector bundle. Compare with the formula
\eqref{eq:h:50}.

\begin{NB}
\begin{equation*}
    c_k(\CV_i^\vee) = (-1)^k c_k(\CV_i).
\end{equation*}
\end{NB}%

\subsection{The fixed point set as a symplectic
  reduction}\label{subsec:cat_quotient}

It is instructive to study the fixed point set $\bM^\sigma$ in $\bM$ before
going to the quotient space $\fM_0(\bv,\bw)$.

\subsubsection{Orthosymplectic quiver}\label{subsub:orthsymplectic-quiver}
The description of $\bM^\sigma$ is particularly simple when $\sigma'\circ\invast = \id$.
Then $V_i$, $W_i$ have $(-\varepsilon_i)$, $\varepsilon_i$-forms, and the signs are alternating.
Following physics conventions, we draw the corresponding diagram as follows:
\begin{equation*}
  \begin{tikzpicture}[scale=.5,anchor=base, baseline]
    \node[SO,circle] (SO1) at (0,0) {};
    \node[Sp,circle] (Sp1) at (2,0) {};
    \node[SO,circle] (SO2) at (4,0) {};
    \node[Sp,circle] (Sp2) at (6,0) {};
    \node[SO,circle] (SO3) at (8,0) {};
    \node[Sp,circle] (Sp3) at (10,0) {};
    \node[SO,circle] (SO4) at (12,0) {};
    \node[Sp,circle] (Sp4) at (8,2) {};
    \draw[thick] (SO1) -- (Sp1) -- (SO2) -- (Sp2) -- (SO3) -- (Sp3) -- (SO4);
    \draw[thick] (SO3) -- (Sp4);
    \node[Sp,rectangle] (SO1f) at (0,-2) {};
    \node[SO,rectangle] (Sp1f) at (2,-2) {};
    \node[Sp,rectangle] (SO2f) at (4,-2) {};
    \node[SO,rectangle] (Sp2f) at (6,-2) {};
    \node[Sp,rectangle] (SO3f) at (8,-2) {};
    \node[SO,rectangle] (Sp3f) at (10,-2) {};
    \node[Sp,rectangle] (SO4f) at (12,-2) {};
    \node[SO,rectangle] (Sp4f) at (8,4) {};
    \draw[thick] (SO1) -- (SO1f);
    \draw[thick] (Sp1) -- (Sp1f);
    \draw[thick] (SO2) -- (SO2f);
    \draw[thick] (Sp2) -- (Sp2f);
    \draw[thick] (SO3) -- (SO3f);
    \draw[thick] (Sp3) -- (Sp3f);
    \draw[thick] (SO4) -- (SO4f);
    \draw[thick] (Sp4) -- (Sp4f);
  \end{tikzpicture}
\end{equation*}
Here the circled nodes are $V_i$'s, and the rectangular nodes are $W_i$'s.
The $\pm$ signs in circles and rectangles are the types of the forms on $V_i$ and $W_i$.
For each edge in this quiver, we assign $B_h$, choosing $h$ in the orientation.
We usually regard it as a vector in $V_{\vin{h}}\otimes V_{\vout{h}}$
via $\varphi_{V_{\vout{h}}}^{-1}\colon V_{\vout{h}}^* \cong V_{\vout{h}}$ given by the form on $V_{\vout{h}}$.
The opposite component $B_{\overline{h}}$ is determined by $B_h$, as
we consider a point in the fixed point set $\bM^\sigma$.
The same rule applies for edges between circled and rectangular nodes.
The group assigned to the vertex $i$ is $G_{-\varepsilon_i}(V_i)$, where
$\varepsilon_i$ is the type of the form on $V_i$.
The symplectic space $\bM^\sigma$ is the direct sum of tensor
products $V_{\vin{h}}\otimes V_{\vout{h}}$ for edges $h$,
and $V_i\otimes W_i$.
It is a symplectic representation of $G_\bv^\sigma \defeq \prod G_{-\varepsilon_i}(V_i)$.
This subgroup $G_\bv^\sigma$ is the fixed point subgroup with respect to an involution on $G_\bv$ 
defined by $(g_i)\mapsto (\lsp{t}g_i^{-1})$ as in \cref{def:framing-involution}.
We have a natural residual action of $G_\bw^\sigma = \prod G_{\varepsilon_i}(W_i)$
on the framing vector spaces.

The above diagram is an \emph{orthosymplectic quiver}, appeared first in \cite{MR694606}
(in type A).
It was used in order to realize nilpotent orbits in the classical Lie algebras
there.
It was also the main example in \cite{MR3900699}.
The corresponding Coulomb branches will be studied in \cite{FHN}.

\subsubsection{Mixture of orthosymplectic and usual quivers}
In type $\mathrm{A}_{\mathrm{odd}}$ with $\sigma' = \id$,
we only need to record the left half of the original quiver,
as the right half is determined by the left. This leads to the following diagram:
\begin{equation}\label{eq:h:23}
  \begin{tikzpicture}[scale=.5,anchor=base, baseline]
    \node[GL,circle] (SO1) at (0,0) {};
    \node[GL,circle] (Sp1) at (2,0) {};
    \node[GL,circle] (SO2) at (6,0) {};
    \node[Sp,circle] (SO3) at (8,0) {};
    \draw[thick] (SO1) -- (Sp1);
    \draw[thick,dotted] (Sp1) -- (SO2);
    \draw[thick] (SO2) -- (SO3);
    \node[GL,rectangle] (SO1f) at (0,-2) {};
    \node[GL,rectangle] (Sp1f) at (2,-2) {};
    \node[GL,rectangle] (SO2f) at (6,-2) {};
    \node[SO,rectangle] (SO3f) at (8,-2) {};
    \draw[thick] (SO1) -- (SO1f);
    \draw[thick] (Sp1) -- (Sp1f);
    \draw[thick] (SO2) -- (SO2f);
    \draw[thick] (SO3) -- (SO3f);
  \end{tikzpicture}
\end{equation}
Here nodes without $\pm$ signs are just vector spaces, and
we assign $\Hom(V_{\vout{h}}, V_{\vin{h}})\oplus
\Hom(V_{\vin{h}}, V_{\vout{h}})$ as usual. The same
applies to the rightmost edge between $\begin{tikzpicture}
  \node[GL,circle] at (0,0) {};
\end{tikzpicture}$ and $\begin{tikzpicture}
  \node[Sp,circle] at (0,0) {};
\end{tikzpicture}$.
We assign $V_i\otimes W_i$ to the rightmost vertical
edge between  $\begin{tikzpicture}
  \node[Sp,circle] at (0,0) {};
\end{tikzpicture}$ and $\begin{tikzpicture}
  \node[SO,rectangle] at (0,0) {};
\end{tikzpicture}$.
The group assigned to the vertex is $\GL(V_i)$ except
the rightmost one is $\grpSp(V_{i_1})$. Hence $\bM^\sigma$ is a symplectic representation
of $G_\bv^\sigma = \prod_{i\neq i_1} \GL(V_i)\times \grpSp(V_{i_1})$.
We have the residual action of $G_\bw^\sigma = \prod_{i\neq i_1} \GL(W_i)\times \grpO(W_{i_1})$.
This example is the case when the type is $(+)$. If
the type is $(-)$, we exchange $+$ and $-$ in the quiver.
In type D, $\mathrm{E}_6$ with $\sigma'\circ\invast\neq\id$, we
also have a similar combination of the usual and orthosymplectic quivers.

\subsubsection{Second exterior/symmetric power at the end of the quiver}

Finally, we consider type $\mathrm{A}_{\mathrm{even}}$ with $\sigma' = \id$.
Let $\ell$ be the number of vertices, and $\ell/2$ and $\ell/2+1$ be the middle vertices,
which are exchanged by $\invast$.
We only need to record the left half of the original quiver as above,
but we have an additional contribution from the middle edge.
The diagram is as follows:
\begin{equation}\label{eq:h:22}
  \begin{tikzpicture}[scale=.5,anchor=base, baseline]
    \node[GL,circle] (SO1) at (0,0) {};
    \node[GL,circle] (Sp1) at (2,0) {};
    \node[GL,circle] (SO2) at (6,0) {};
    \node[GL,circle] (SO3) at (8,0) {};
    \draw[thick] (SO1) -- (Sp1);
    \draw[thick,dotted] (Sp1) -- (SO2);
    \draw[thick] (SO2) -- (SO3);
    \draw[thick] (SO3) to[out=45,in=315,loop, "$\Wedge^2$"] (SO3) ;
    \node[GL,rectangle] (SO1f) at (0,-2) {};
    \node[GL,rectangle] (Sp1f) at (2,-2) {};
    \node[GL,rectangle] (SO2f) at (6,-2) {};
    \node[GL,rectangle] (SO3f) at (8,-2) {};
    \draw[thick] (SO1) -- (SO1f);
    \draw[thick] (Sp1) -- (Sp1f);
    \draw[thick] (SO2) -- (SO2f);
    \draw[thick] (SO3) -- (SO3f);
  \end{tikzpicture}
\end{equation}
Here $\Wedge^2$ means the second exterior power of the vector space $V_{\ell/2}$
at the rightmost vertex, which was $\ell/2$ in the original quiver. 
We add its cotangent space $T^*\Wedge^2 V_{\ell/2}
= \Wedge^2 V_{\ell/2}\oplus (\Wedge^2 V_{\ell/2})^*$ to $\bM$ for the usual quiver variety.
This summand comes from the endomorphisms between $V_{\ell/2}$ and $V_{\ell/2+1}$ in the original quiver,
under the identification $V_{\ell/2+1}= V_{\ell/2}^*$.
Note that $T^*\Wedge^2 V_{\ell/2}$ is a symplectic representation of $\GL(V_{\ell/2})$. 
This is the case when the type is $(+)$. For the type $(-)$, we replace $\Wedge^2$ by
the second symmetric power $\Sym^2$.
\begin{NB}
  This means that we add $\Wedge^2$ for orthogonal case, while $\Sym^2$ for symplectic case.
\end{NB}%
Groups are $G_\bv^\sigma = \prod \GL(V_i)$, $G_\bw^\sigma = \prod \GL(W_i)$,
where $i$ runs over the vertices in the above diagram.

Coulomb branches of gauge theories in (B,C) will be studied in \cite{Examples_of_S-dual}. See \cref{rem:coulombC,rem:coulombD}.
Just before submitting this manuscript to arXiv, we learn that there is a proposal \cite{2025arXiv251010652L}
to relate Coulomb branches of gauge theories associated with $(G^\sigma_\bv,\bM^\sigma)$
to fixed point subschemes in generalized affine Grassmannian slices with respect to involutions.

Let us return back to the general case.

\begin{Definition}\label{def:cat_quot}
  Let $\bM^\sigma\tslash G_\bv^\sigma$ be the categorical quotient
  of $\mu^{-1}(0)\cap\bM^\sigma$ by the action of $G_\bv^\sigma$.
\end{Definition}

Since the moment map for $\bM^\sigma$ is the restriction of $\mu$,
this quotient is regarded as a symplectic quotient of $\bM^\sigma$.

By a generalization of the LeBryun-Procesi theorem \cite{MR958897} in 
\cite{Lusztig-On-Quiver} (with framing) and \cite{MR3900699} (orthosymplectic),
we have a closed immersion
\begin{equation*}
  \iota\colon \bM^\sigma\tslash G_\bv^\sigma \to \fM_0^\sigma(\bv,\bw).
\end{equation*}
See \cite[Prop.~9.2.1]{MR3900699}.

By the argument in \cite[\S Fixed point set in the ordinary bow variety]{FHN},
the closed immersion is a set theoretical bijection, whose restriction to the
regular locus consisting of $0$-stable orbits in $\bM^\sigma\tslash G_\bv^\sigma$
is an isomorphism. Here the $0$-stability is defined with respect to the
action of the larger group $G_\bv$ (with parameter $\zeta_\RR = 0$), and we take its intersection with
$\bM^\sigma$.

\begin{Remark}\label{rem:framing-symmetry}
Note that the reflection functor $S_{w_0}$ has no effect on $W$. Therefore
we have a well-defined action of $G^\sigma_\bw$ on $\fM^\sigma_\zeta(\bv,\bw)$ for
arbitrary $\zeta$.  
\end{Remark}

\subsection{Unknown properties of cohomology/K-theory of \texorpdfstring{$\sigma$}{σ}-quiver varieties}

One important properties of the equivariant $K$-theory of quiver varieties
is freeness over the equivariant $K$-theory of a point, established
in \cite[\S7]{Na-qaff}. See also \cite[Th.~3.4]{Na-Tensor}. It depends on the decomposition of the diagonal
of the quiver variety, found earlier in \cite{Na-alg}. Indeed, by using the
action of a maximal torus $A_\bw$ in $G_\bw^\sigma$, we can reduce the problem to the case when
$A_\bw$ is trivial, i.e., $G_\bw^\sigma = \prod\grpO(1)$. (Here we use \cite[\S7]{Na-qaff} to deal with
to deal with quiver varieties in factors of torus fixed points.)
Even in this special case, it is not clear that the decomposition of the diagonal
is available. Probably, it is not true.
Since we work on the cohomology groups, rather than $K$-theory, we can use Kaledin's theorem \cite{MR2283801}
to conclude that odd degree cohomology of $\fM_\zeta^\sigma(\bv,\bw)$ vanishes. Therefore,
this difficulty disappears.

Another important property is the so-called Kirwan surjectivity, which says
$H^*_{G}(\mathrm{pt})\to H^*(\fM_\zeta(\bv,\bw))$ is surjective. For quiver varieties,
this was proved by McGerty and Nevins \cite{MR3773791}. However, it is not clear
whether this is still true for $\fM^\sigma_\zeta(\bv,\bw)$. 

\section{Stable envelopes for quiver varieties and \texorpdfstring{$\sigma$}{σ}-quiver varieties}
\label{sec:stable-envelope}

We assume $\zeta_\CC = 0$, $\sqrt{-1}\zeta_\RR > 0$ in this section. We drop $\zeta$ from the notation
of quiver varieties and $\sigma$-quiver varieties, as $\fM(\bv,\bw)$ and $\fM^\sigma(\bv,\bw)$.
On the other hand, we keep $\fM_0(\bv,\bw)$ and $\fM_0^\sigma(\bv,\bw)$
for $\zeta = 0$. We have projective morphisms $\pi\colon \fM(\bv,\bw)\to \fM_0(\bv,\bw)$ and
its restriction $\pi^\sigma\colon \fM^\sigma(\bv,\bw)\to \fM_0^\sigma(\bv,\bw)$.

It is also useful to introduce the notation $\fM(\bw) = \bigsqcup_\bv \fM(\bv,\bw)$
and $\fM_0(\bw) = \bigcup_\bv \fM_0(\bv,\bw)$, and similarly for $\sigma$-quiver varieties.

We have $\CC^\times_\hbar$-action on $\fM(\bv,\bw)$, $\fM^\sigma(\bv,\bw)$ and also on their $\zeta=0$ versions by
\cref{rem:CCx-hbar}. The morphisms $\pi$ and $\pi^\sigma$ are equivariant with respect to this action.

The inverse image $\pi^{-1}(0)$ and $(\pi^\sigma)^{-1}(0)$ in $\fM(\bv,\bw)$ and
$\fM^\sigma(\bv,\bw)$ are denoted by $\fL(\bv,\bw)$ and $\fL^\sigma(\bv,\bw)$ respectively.
It is known that $\fL(\bv,\bw)$ is a lagrangian subvariety of $\fM(\bv,\bw)$.
But it was observed that $\fL^\sigma(\bv,\bw)$ may be smaller dimension by
\cite[Rem.~6.2.2(3)]{MR3900699}.

Suppose $X$ is either $\fM(\bv,\bw)$ or $\fM^\sigma(\bv,\bw)$.
Similarly $X_0$ is either $\fM_0(\bv,\bw)$ or $\fM_0^\sigma(\bv,\bw)$.
In this notation, we drop $\sigma$ from $\pi^\sigma$, hence
$\pi\colon X\to X_0$.
We take a torus in $G_\bw$ or $G_\bw^\sigma$, according to
$X = \fM(\bv,\bw)$ or $X = \fM^\sigma(\bv,\bw)$. Denote it by $A$.
The choice of $A$ will be explained below.
%


\subsection{Torus fixed points}\label{subsec:torus-fixed-points}

For $X = \fM(\bv,\bw)$, we choose
a basis of $W$, compatible with the decomposition $W = \bigoplus W_i$. Then we choose
a maximal torus $A_\bw$ of $G_\bw$ as the group of diagonal matrices.
The choice of the basis gives an isomorphism $A_\bw\cong(\CC^\times)^n$. Let $(u_1,\dots, u_n)$
be the coordinates of the Lie algebra $\mathfrak a_\bw$ of $A_\bw$ given by this isomorphism.
The $a$-th factor $\CC^\times$ of $A_\bw$ acts on the scaling with weight $1$  
on a $1$-dimensional subspace
$W^a \subset W$, corresponding to the $a$-th element in the basis.
We denote by $i_a$ the vertex corresponding to $W^a$, i.e., $W^a\subset W_{i_a}$.
Then the fixed point set $X^{A_\bw}$ is the disjoint union of
\begin{equation}\label{eq:h:32}
  \fM(\bv^1,\bw^1)\times \fM(\bv^2,\bw^2) \times \cdots \times \fM(\bv^n,\bw^n),
\end{equation}
where $\bw^a$ is the dimension vector of $W^a$, hence the corresponding weight is $\varpi_{i_a}$ by the definition of $i_a$.
We take the union over all possible choices of $\bv^a$ with constraint $\bv = \sum_{a=1}^n \bv^a$.
See \cite[Lemma~3.2]{Na-Tensor}.

Next consider the case $X = \fM^\sigma(\bv,\bw)$. We choose a base of $W$
compatible with the decomposition $W = \bigoplus W_i$ as above, and
also compatible with forms as follows: the $a$-th factor $\CC^\times$ of $A_\bw$
acts on a basis element in $W_{i_a}$ by weight $1$,
and on another basis element in $W_{\sigma'(\invast[i_a])}$ by weight $-1$.
The value of the form between these two basis elements is nonzero, and 
fixed by the $\CC^\times$-action.
Other basis elements are fixed by this $\CC^\times$. 
They are orthogonal to the above two basis elements with respect to the form,
hence $\CC^\times$ preserves the form.

We denote the $1$-dimensional
subspace spanned by the first basis element by $W^a$ as before. Its dimension vector
is denoted by $\bw^a$.
The second basis element gives another $1$-dimensional subspace,
whose dimension vector is $\sigma(\bw^a) = \sigma'(\invast[\bw^a])$.
We have a coordinate system $(u_1,\dots, u_n)$ of $\mathfrak a_\bw$ as above.

Looking at the definition of $G_\bw^\sigma$ in \cref{def:framing-involution}, we see
an additional subspace $W^0$ coming from vertices such that $G_{\ve_i}(W_i) = \grpO(\mathrm{odd})$.
Each such a vertex $i$ contributes a $1$-dimensional subspace $W^0_i$ to $W^0$.
If there are no such vertices, we set $W^0 = 0$.
It is the weight $0$ subspace of $W$ with respect to $A_\bw$-action. Let $\bw^0$ be its dimension vector.

Note also that $A_\bw$ is a maximal torus in the connected component $(G_\bw^\sigma)^0$ of $G_\bw^\sigma$.

The fixed point set $X^{A_\bw}$ is the disjoint union of
\begin{equation}\label{eq:h:33}
  \fM(\bv^1,\bw^1)\times \fM(\bv^2,\bw^2)\times\cdots\times \fM(\bv^n,\bw^n)
  \times\fM^\sigma(\bv^0,\bw^0).
\end{equation}
Note that the last factor is a $\sigma$-quiver variety, while all preceding factors are ordinary quiver varieties.
It is because $\fM(\bv^a,\bw^a)$ appears through the embedding
\begin{equation*}
  \fM(\bv^a,\bw^a) \to \fM^\sigma(\bv^a + \sigma(\bv^a), \bw^a + \sigma(\bw^a)).
\end{equation*}
Here $\sigma(\bv^a)$, $\sigma(\bw^a)$ are defined as in \cref{def:involution}. The embedding
is given by $\fM(\bv^a,\bw^a)\ni x\mapsto x\oplus \sigma(x)$, where the direct sum
is given by direct sum of linear maps. See \cite[Prop.~5.2.1]{MR3900699} for details.\footnote{
The assumption $\bw^2 = a(\bw^2)$ in \cite{MR3900699}, which corresponds to $\bw^a = \sigma(\bw^a)$ in our setting, is unnecessary.}
When $\bw^0 = 0$, we understand $\fM^\sigma(\bv^0,\bw^0)$ is a point if $\bv^0 = 0$,
and $\emptyset$ otherwise.

Dimension vectors $\bv^a$ satisfy a constraint:
\begin{equation*}
    \bv^0 + \sum_{a=1}^n \left(\bv^a + \sigma(\bv^a)\right) = \bv.
\end{equation*}
Note also $\sigma(\bv^0) = \bv^0$, $\sigma(\bw^0) = \bw^0$, as $\fM^\sigma(\bv^0,\bw^0)$ does not
make sense otherwise.

Note that we place the factor $\fM^\sigma(\bv^0, \bw^0)$ at the end of the product. It is because
we want to make the twisted Yangian as a \emph{left} coideal subalgebra of the Yangian.
Equivariant cohomology of \eqref{eq:h:33} will be a module of the twisted Yangian,
which is a tensor product of the equivariant cohomology of factors.
The factor $\fM^\sigma(\bv^0,\bw^0)$ is a genuine $\sigma$-quiver variety without torus action, hence
not much is known about it.

\begin{Example}\label{ex:no-solution}
  Consider type A quiver with $\sigma' = \id$. For $\mathrm{A}_{\mathrm{even}}$, there is no
  vertex $i$ fixed by $\sigma'\circ\invast$. Therefore $W^0 = 0$, hence $\bv^0 = \bw^0 = 0$.
  For $\mathrm{A}_{\mathrm{odd}}$, we have a vertex $i_1$ in the middle of the quiver, fixed by $\sigma'\circ\invast$.
  We may have a $1$-dimensional $W^0$ at the vertex $i_1$ a priori, but
  there is no $\bv^0$ for which $\fM^\sigma(\bv^0,\bw^0)$ can be defined. Indeed, 
  we must have $\varpi_{i_1} = \bC \bv^0$ from the equation $\sigma(\bv^0) = \bv^0$, but
  there is no integral solution $\bv^0$ to this equation.
\end{Example}

For the case of $\sigma' = \invast$, an example of $\bw^0$ is given by slices to \emph{distinguished}
nilpotent orbits. For example, we have the following.

\begin{NB}
\begin{Example}
  Consider the type $\mathrm{A}_8$ quiver with $\sigma' = \invast[]$. We take the
  type $(+)$. So we put orthogonal forms on $W_i$ for odd $i$.
  We choose $\bw = (1,0,1,0,1,0,0,0)$, $\bv = (2,3,4,4,4,3,2,1)$. The
  condition $\sigma(\bw) = \bw$ is automatically satisfied, and
  $\sigma(\bv) = \bv$ is satisfied by our choice of $\bv$.
  We also choose $\zeta$ so that $\sigma(\zeta) = \zeta$. Thus
  $G_\bw^\sigma = \grpO(1)\times\grpO(1)\times\grpO(1)$, hence $A_\bw = \{1\}$.
  By \cite[Th.~8.3.3]{MR3900699}, $\fM^\sigma(\bv,\bw)$ is the inverse image of the Slodowy
  slice to the nilpotent orbit for the partition $(5,3,1)$ in the
  cotangent bundle of the flag variety of $\SO(9)$.
\end{Example}
\end{NB}%

\begin{Example}\label{ex:distinguished}
  Consider the type $\mathrm{A}_9$ quiver with $\sigma' = \invast[]$. We take the
  type $(-)$. So we put orthogonal forms on $W_i$ for even $i$.
  We choose $\bw = (0,0,0,1,0,1,0,0,0)$, $\bv = (1,2,3,4,4,4,3,2,1)$,
  $\bv' = (1,2,3,3,4,5,3,2,1)$). The
  condition $\sigma(\bw) = \bw$ is automatically satisfied, and
  $\sigma(\bv) = \bv$, $\sigma(\bv') = \bv'$ are satisfied by our choice of $\bv$, $\bv'$.
  We also choose $\zeta$ so that $\sigma(\zeta) = \zeta$. Thus
  $G_\bw^\sigma = \grpO(1)\times\grpO(1)$, hence $A_\bw = \{1\}$.
  By \cite[Th.~8.3.3]{MR3900699}, $\fM^\sigma(\bv,\bw)$ is the inverse image of the Slodowy
  slice to the nilpotent orbit for the partition $(6,4)$ in the
  cotangent bundle of the flag variety of $\grpSp(10)$.
  Similarly, $\fM^\sigma(\bv',\bw)$ is the inverse image of the Slodowy
  slice to the same nilpotent orbit in the cotangent bundle of the $9$-step isotropic flag variety
  of type $(1^32\,0^21^3)$, where the numbers are difference of dimensions of consecutive subspaces in the flag.
  Note that entries of $(6,4)$ have multiplicity $1$. Therefore they are distinguished nilpotent orbits.

  Comparing $\bv'$ with $\bv$, we see $\bv_4$ decreases by $1$ while $\bv_6$ increases by $1$.
  This suggests there should be a natural correspondence in the product
  $\fM^\sigma(\bv,\bw)\times\fM^\sigma(\bv',\bw)$. Indeed, both are inverse images
  of the \emph{same} Slodowy slice, there is one from ambient spaces. In view of the construction below,
  we want to interpret this correspondence as a matrix entry of an $R$-matrix.
\end{Example}

\subsection{Attracting sets}

Let $\rho\colon\CC^\times\to A_\bw$ be a cocharacter. The \emph{torus roots} are the $A_\bw$-weights
$\{\alpha \}$ occurring in the normal bundle to $X^{A_\bw}$. See \cite[Def.~3.2.1]{MR3951025}.
For $X = \fM(\bv,\bw)$, they are contained in $\{ u_a - u_b \mid a\neq b\}$.
For $X = \fM^\sigma(\bv,\bw)$, they are contained in $\{ u_a - u_b, u_a \}$. 
Indeed, $u_a$ appears in the normal bundle to
\begin{equation*}
 \prod_{b\neq a} \fM(\bv^b,\bw^b) \times \fM^\sigma(\bv^0 + \bv^a + \sigma(\bv^a), \bw^0 + \bw^a + \sigma(\bw^a))
\end{equation*}
unless this is equal to $X^{A_\bw}$.

The root hyperplanes divide the Lie algebra $\mathfrak a_{\bw,\RR}$ of a maximal compact subgroup of $A_\bw$ into open chambers.
They are type A Weyl chambers for $X = \fM(\bv,\bw)$, and type B/C Weyl chambers for $X = \fM^\sigma(\bv,\bw)$.
For example $\mathfrak C = \{ u_1 > u_2 > \cdots > u_n > 0\}$ is an example for the latter case.
When $G^\sigma_\bw$ is an even orthogonal group, we should consider the Weyl chambers with respect to
the full $\grpO$, instead of $\SO$. Therefore, we get type B/C Weyl chambers.

For a cocharacter $\rho$, we replace $A_\bw$ by the Zariski closure
$\overline{\rho(\CC^\times)}$, and denote it by $A$. In other words,
$A$ is a subtorus of the maximal torus $A_\bw$ of $G_\bw$ or
$G_\bw^\sigma$, and $\rho$ is generic in $A$.

We define its attracting set by
\begin{equation*}
  X^{\rho\ge 0} \defeq \left\{ x\in X \,\middle|\, \lim_{t\to 0}\rho(t)x \text{   exists}\right\}.
\end{equation*}
It was introduced under the notation $\mathfrak Z$ in \cite{Na-Tensor}.
Similarly, we define the repelling set $X^{\rho\le 0}$ by the attracting set of $\rho^{-1}$.
%

It decomposes into the union of locally closed subsets 
\begin{equation*}
  X^{\rho\ge 0}(F) \defeq \left\{ x\in X \,\middle|\, \lim_{t\to 0}\rho(t)x \in F\right\},
\end{equation*}
according to the decomposition of $X^A$ into
connected components $X^A = \bigsqcup F$.
For $X = \fM(\bv,\bw)$ and maximal $A = A_\bw$, they are various choices of decompositions $\bv^a$ with
$\sum \bv^a = \bv$, as it is known that $\fM(\bv^a,\bw^a)$ is connected.
For $X = \fM^\sigma(\bv,\bw)$, we do not know $\fM^\sigma(\bv^0,\bw^0)$ is connected,
but we can still decompose $X^A$ and $X^{\rho\ge 0}$ according to the decomposition
$\bv = \bv^0 + \sum (\bv^a + \sigma(\bv^a))$ anyway.

We have an order $\preceq$ on the index set $\{ F\}$ of the decomposition so that
$\bigcup_{F'\preceq F} X^{\rho\ge 0}(F')$ is closed in $X$.

\subsection{Polarization}

Suppose $F$ is a fixed point component of $X^A$. 
Consider the equivariant Euler class $e(N_F)$ of normal bundle $N_F$ to $F$ in $X$. Let 
$e(N_F)|_{H^*_A(\mathrm{pt})}$ be its
$H^{2\codim F}_A(\mathrm{pt})$-part with respect to the decomposition
$H^{2\codim F}_A(F) = \bigoplus_i H^{2i}_A(\mathrm{pt})\otimes H^{2\codim F - 2i}(F)$.
Since $X$ has a symplectic form preserved by the $A$-action, weights appear as pairs
$\alpha$, $-\alpha$ in the normal bundle. Therefore,
\(
  (-1)^{\codim F/2} e(N_F)|_{H^*_A(\mathrm{pt})}
\)
is a perfect square. A choice $\ve$ of a square root of this element is called a \emph{polarization}:
\begin{equation*}
  \ve^2 = (-1)^{\codim F/2} e(N_F)|_{H^*_A(\mathrm{pt})}.
\end{equation*}

A choice of $\rho$ gives a decomposition $N_F = N_F^{\rho> 0}\oplus N_F^{\rho< 0}$ into 
positive and negative parts with respect to $\rho$. Since $N_F^{\rho < 0}$ is the dual
of $N_F^{\rho > 0}$ with respect to the symplectic form, we have $e(N_F) = (-1)^{\codim F/2} e(N_F^{\rho < 0})^2$.
We define the sign in $\pm e(N_F^{\rho < 0})$ by $\pm e(N_F^{\rho < 0})_{H^*_A(\mathrm{pt})} = \ve$.

For quiver varieties, there is a natural choice of the polarization given by the decomposition
$T\fM(\bv,\bw) = T^{1/2} \oplus (T^{1/2})^\vee$ in the equivariant $K$-theory, where
$T^{1/2}$ is defined by a choice of an orientation of the quiver:
\begin{equation*}
  T^{1/2} = \sum_{h\in\Omega} \Hom(\mathcal V_{\vout{h}}, \mathcal V_{\vin{h}})
  - \sum_{i\in I} \End(\mathcal V_i)
  + \sum_{i\in I} \Hom(\mathcal W_i,\mathcal V_i).
\end{equation*}
See \cite[\S2.2.7]{MR3951025}.
For $\sigma$-quiver varieties, it is not clear how to define $T^{1/2}$. For example,
$\bM^\sigma$ in \cref{subsec:cat_quotient}\ref{subsub:orthsymplectic-quiver} is not a cotangent type
in general.
We will choose an ad~hoc polarization for $\fM^\sigma$ in Examples in \cref{sec:ex1,sec:ex2}. 

\subsection{Stable envelopes}

Fix $\rho$ and $\ve$ as above.
Let $\TT$ be the product $\CC^\times_\hbar\times A$.

The $A$-degree of a class in $H^*_\TT(F)$ is the degree with respect
to $H^*_A(\mathrm{pt})=\CC[\mathfrak a]$ in the isomorphism
$H^*_\TT(F) = H^*_{\CC^\times_\hbar}(F)\otimes H^*_A(\mathrm{pt})$.

\begin{Theorem}[\protect{\cite[Th.~3.3.4]{MR3951025}}]\label{thm:stable-envelope}
  There exists a unique map of $H^*_\TT(\mathrm{pt})$-modules
  \begin{equation*}
    \Stab_{\mathfrak C,\ve}\colon H^*_\TT(X^A)\to H^*_\TT(X)
  \end{equation*}
  such that for any connected component $F$ of $X^A$ and any class
  $\gamma\in H^*_{\TT/A}(F)$, it satisfies
  \begin{enumerate}[label=\textup{(\roman*)}]
  \item $\Stab_{\mathfrak C,\ve}(\gamma)$ is contained in the kernel
    of the restriction homomorphism
    $H^*_\TT(X)\to H^*_\TT(X\setminus \bigcup_{F'\preceq F} X^{\rho\ge
      0}(F'))$.
  \item The restriction of $\Stab_{\mathfrak C,\ve}(\gamma)$ to $F$ is equal to
    $\pm e(N_F^{\rho < 0})\cup \gamma$, where the sign is determined by the polarization.
  \item The $A$-degree of the restriction of $\Stab_{\mathfrak C,\ve}(\gamma)$ to $F'$ is less than
    $\frac12 \codim F'$ for any $F'\prec F$.
  \end{enumerate}
\end{Theorem}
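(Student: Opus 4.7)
The plan is to follow the Maulik-Okounkov strategy of \cite{MR3951025}, treating $X = \fM(\bv,\bw)$ and $X = \fM^\sigma(\bv,\bw)$ on equal footing once smoothness and a polarization $\ve$ are in hand. The key ingredients I would exploit are equivariant formality (so that $H^*_\TT(X)$ is free over $H^*_\TT(\mathrm{pt})$ and localization applies, which holds by the invocation of Kaledin's theorem mentioned earlier in the paper), properness of $\pi\colon X \to X_0$, and the Bia\l ynicki-Birula-type presentation of each attracting cell $X^{\rho \ge 0}(F)$ as an affine bundle over $F$.

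For uniqueness, suppose $S_1(\gamma)$ and $S_2(\gamma)$ both satisfy (i)--(iii), and set $D = S_1(\gamma) - S_2(\gamma)$. Then $D$ is supported on $\bigcup_{F' \preceq F} X^{\rho \ge 0}(F')$, it vanishes on $F$ itself, and on each $F' \prec F$ its $A$-degree is strictly less than $\tfrac12 \codim F'$. The core input is the following support-versus-degree principle: any class in $H^*_\TT(X)$ whose support lies in $\overline{X^{\rho \ge 0}(F')}$ has its restriction to $F'$ divisible by $e(N_{F'}^{\rho < 0})$, so in particular its top $A$-degree term has degree at least $\tfrac12 \codim F'$. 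Starting from a minimal $F'$ with $D|_{F'} \ne 0$ and proceeding upward in the poset $(\{F\},\preceq)$, this divisibility contradicts the degree bound of (iii). Hence $D|_{F'} = 0$ for every component, and localization forces $D = 0$.

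For existence I would argue inductively along the same poset. For a minimal $F_0$, the class $\pm e(N_{F_0}^{\rho < 0}) \cup \gamma$ on $F_0$ extends to $X$ as the refined fundamental class of a Lagrangian correspondence in $X^A \times X$ supported on the graph of the limit map $X^{\rho \ge 0}(F_0) \to F_0$; since no smaller $F'$ exists, no correction is needed. For a general $F$, I would first produce any class supported on $\overline{X^{\rho \ge 0}(F)}$ whose restriction to $F$ is $\pm e(N_F^{\rho < 0}) \cup \gamma$, and then cancel the high-$A$-degree contributions on smaller $F' \prec F$ by subtracting $\sum_{F' \prec F} \Stab_{\mathfrak C, \ve}(\gamma'_{F'})$, where each $\gamma'_{F'}$ is determined recursively (and uniquely, by the uniqueness argument above) from the offending top-degree part of the current restriction to $F'$. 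Since the poset is finite, the procedure terminates.

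The hardest step is the construction of the initial extension of $\pm e(N_F^{\rho < 0}) \cup \gamma$ from $F$ to the closure $\overline{X^{\rho \ge 0}(F)}$, with the correct sign dictated by $\ve$. In the ordinary quiver case this is clean because the tangent bundle admits a cotangent polarization $T^{1/2}$, giving a canonical sign, and the Bia\l ynicki-Birula cell is smooth of the expected dimension. For $\sigma$-quiver varieties no canonical $T^{1/2}$ is available, so one must check that the ad hoc polarization $\ve$ fixed in \cref{subsec:induced-polarization} propagates consistently along each attracting cell; this compatibility is precisely what Assumption~\ref{assum:induced-polarization} is designed to guarantee, which is why the theorem is formulated with $\ve$ as external data rather than as a canonical choice.
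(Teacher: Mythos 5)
The paper does not prove this statement: it is quoted directly from Maulik--Okounkov, and the only commentary offered is that their proof is written for arbitrary symplectic resolutions and therefore applies to smooth $\sigma$-quiver varieties. Your sketch reconstructs the Maulik--Okounkov argument along the standard lines (uniqueness from the tension between the support condition and the $A$-degree bound at a minimal component where the difference survives; existence by induction on the poset of fixed components, starting from a Lagrangian correspondence and correcting lower-order terms), and that outline is sound.

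Two points need correction. First, the existence step you call ``the hardest'' is indeed where the content lies, and your sketch does not supply it: the attracting cell $X^{\rho\ge 0}(F)$ is only locally closed, its closure is in general singular, and producing \emph{any} class supported on that closure whose restriction to $F$ is $\pm e(N_F^{\rho<0})\cup\gamma$ is precisely what the Maulik--Okounkov construction accomplishes; asserting that a refined fundamental class works for the minimal component and that a recursive correction handles the rest leaves the extension-over-the-boundary problem unaddressed for non-minimal $F$. Second, Assumption~\ref{assum:induced-polarization} plays no role in this theorem. The statement requires only a polarization $\ve$ --- a choice of square root of $(-1)^{\codim F/2}e(N_F)|_{H^*_A(\mathrm{pt})}$ --- as external data, and existence and uniqueness hold for any such choice. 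That assumption concerns compatibility of polarizations under restriction to subtori, which is needed only later, for the factorization $\Stab_{\mathfrak C,\ve} = \Stab_{\mathfrak C',\ve''}\circ\Stab_{\mathfrak C/\mathfrak C',\ve'}$ and hence for the Yang--Baxter and reflection equations, not for the well-definedness of a single stable envelope.
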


Main examples in \cite{MR3951025} were quiver varieties, but the proof
was given for symplectic resolution, hence is applicable for
$\sigma$-quiver varieties, as noted in \cite{MR3900699}. Moreover,
$\Stab_{\mathfrak C,\ve}(\gamma)|_{F'}$ for $F'\prec F$ is divisible
by $\hbar$.

The stable envelope $\Stab_{\mathfrak C,\ve}$ is induced from a lagrangian
cycle in $X\times X^A$ supported on
\begin{equation}\label{eq:h:45}
  (\pi\times\pi)^{-1}
  \left\{ (x,y) \in X_0\times X^A_0 \,\middle|\,
  \lim_{t\to 0} \rho(t)x = y \right\},
\end{equation}
where the second $\pi$ in $\pi\times\pi$ is the restriction of $\pi$ to $X^A$.
See \cite[\S3.3]{tensor2}, where the above subvariety is denoted by $Z_{\mathfrak T}$.

\subsection{Induced polarization}\label{subsec:induced-polarization}

One of fundamental properties of stable envelopes is a compatibility with respect to
the restriction to the subtorus $A'\subset A$. This is guaranteed by the uniqueness
property of stable envelopes. See \cite[\S3.6]{MR3951025}.

Let $\mathfrak C$ be a chamber and $\mathfrak C'\subset\mathfrak C$ be a face of some codimension.
Let $A'$ be the subtorus of $A$ spanned by $\mathfrak C'$, and $\mathfrak a'$ be its Lie algebra.
We consider $X^A \subset X^{A'}\subset X$. Consider a polarization $\ve$ of the normal bundle $N_F$
of a component $F$ of $X^A$ in $X$. We factor it as $\ve' \ve''$ into weights that are $0$ and
nonzero on $\mathfrak a'$ respectively. Then $\ve'$ is a polarization of the normal bundle
$N'_F$ of $F$ in $X^{A'}$.

A connected component $F'$ of $X^{A'}$ contains $F_1$, $F_2$, \dots as connected components
of $(X^{A'})^{A/A'} = X^A$. We suppose $F_1$, $F_2$, \dots have polarization $\ve_1$, $\ve_2$, \dots
and decomposes them as $\ve_1'\ve_1''$, $\ve_2'\ve_2''$, \dots as above. We say
$F'$ has the \emph{induced polarization} $\ve''$ if $\ve_1''$, $\ve_2''$, \dots are equal to $\ve''$, when
they are restricted to $\mathfrak a'$.

When all components of $X^{A'}$ have the induced polarization, the stable envelope factorizes as
\begin{equation*}
  \Stab_{\mathfrak C,\ve} = \Stab_{\mathfrak C',\ve''} \circ \Stab_{\mathfrak C/\mathfrak C',\ve'}.
\end{equation*}
See \cite[Lemma~3.6.1]{MR3951025}.

Although it was not emphasized in \cite[\S3.6]{MR3951025}, polarizations of $F_1$, $F_2$, $\cdots$ must
be chosen consistent so that $\ve''$ is well-defined. This may not be possible in general.
See \cref{subsec:K-matrix}.

\subsection{Geometric construction of the \texorpdfstring{$R$}{R}-matrix}
\label{subsec:geom_R}

By the localization theorem in equivariant cohomology, $\Stab_{\mathfrak C,\ve}$ becomes
an isomorphism if we take tensor products with the fractional field
$\operatorname{Frac}H^*_\TT(\mathrm{pt})$ of $H^*_\TT(\mathrm{pt})$.

For two chambers $\mathfrak C$, $\mathfrak C'$ in $\mathfrak a_\RR$, we define
\begin{equation*}
  R_{\mathfrak C',\mathfrak C} = \Stab_{\mathfrak C',\ve}^{-1}\circ \Stab_{\mathfrak C,\ve}
  \in \End(H^*_\TT(X^A))\otimes \operatorname{Frac} H^*_\TT(\mathrm{pt}).
\end{equation*}

Consider the case $X = \fM(\bv,\bw)$ and $A$ is a three dimensional
torus with coordinates $(u_1,u_2,u_3)$. We may divide the diagonal
$\CC^\times$ as it acts trivially on $X$. We have the chamber
structure divided by root hyperplanes $u_a - u_b = 0$ ($a\neq b$). Let
us denote the $R$-matrix from the chamber $\{ u_a > u_b\}$ to
$\{ u_a < u_b\}$ by $R_{ab}(u_a-u_b)$. It is the $R$-matrix defined by
the stable envelope for the subtorus generated by $u_a$, $u_b$,
if $X^{\CC^\times_{u_a=u_b}}$ has the induced polarization in the sense of \cref{subsec:induced-polarization}.

We have two ways to move from the chamber $\{ u_1 > u_2 > u_3 \}$ to
the opposite chamber $\{ u_1 < u_2 < u_3\}$ by crossing root
hyperplanes. Composites of $R$-matrices are the same for either choice
by the definition of the $R$-matrix. It leads to the \emph{Yang-Baxter
  equation}:
\begin{equation}\label{eq:h:35}
  R_{12}(u_1 - u_2) R_{13}(u_1 - u_3) R_{23}(u_2 - u_3)
  = R_{23}(u_2 - u_3) R_{13}(u_1 - u_3) R_{12}(u_1 - u_2).
\end{equation}
This is the fundamental observation in \cite[\S4.1.9]{MR3951025}.

\subsection{Maulik-Okounkov Yangian and extended Yangian}\label{subsec:MOyangian}

We assume the following hereafter.

\begin{Assumption}\label{assum:induced-polarization-1}
  We have a polarization of components of $\fM(\bw)^{A_\bw}$ for all $\bw$ so that
  all components of $\fM(\bw)^{A'}$ for any subtorus $A'$ from a face of a chamber
  have the induced polarization.
\end{Assumption}

In particular, the $R$-matrix $R_{F_i,F_j}(u_1-u_2)$ is defined for $H^*_{\TT_{\varpi_i}}(\fM(\varpi_i))\otimes
H^*_{\TT_{\varpi_j}}(\fM(\varpi_j))$ and satisfies the Yang-Baxter equation.

The $R$-matrices, or rather $R$-linear operators in view of \cref{rem:explicitR},
defines the extend Yangian $\sX$, as its definition only requires the $R$-linear operators
satisfying the Yang-Baxter equation \eqref{eq:h:35}.
This is the Maulik-Okounkov Yangian $\sX_{\mathrm{MO}}$ in \cite{MR3951025}.

In fact, Varagnolo \cite{Varagnolo} constructed representations of $\sY$
on the equivariant cohomology of quiver varieties 
in the same way as the author did earlier in \cite{Na-qaff}
for the equivariant $K$-theory, long before \cite{MR3951025}.
Therefore it is natural to expect that $\sX_{\mathrm{MO}}$ is $\sX$
when the quiver is of type ADE. 

\begin{Theorem}\label{thm:MO-Yangian = ext-Yangian}
  Assume the quiver is of type ADE.

\textup{(1)} Maulik-Okounkov Yangian $\sX_{\mathrm{MO}}$ is isomorphic to the extended Yangian $\sX$.
The isomorphism is given by the identification of the $R$-matrix of $\sY$ with the
geometric $R$-matrix in \cref{subsec:geom_R} under the identification $F_i = H^*_{\CC^\times_\hbar}(\fM(\varpi^i))$ in \eqref{eq:h:48}, reviewed below.

\textup{(2)} The core Yangian $\mathbb Y$, a subalgebra of $\sX_{\mathrm{MO}}$ defined below, is equal to $\tilde\sY_R$ under the
isomorphism of \textup{(1)}.

\textup{(3)} The center of $\sX_{\mathrm{MO}}$ is generated by $\ch_k (\CW_i)$ \textup($i\in I$, $k\in\ZZ_{\ge 0}$\textup),
 where $\CW_i$ is the tautological bundle over $\fM(\bv,\bw)$ associated with the framing node $i$.
\end{Theorem}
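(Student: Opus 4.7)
The plan is to deduce all three parts from the RTT presentation of $\sX$ in \cref{def:ext_yangian} together with Varagnolo's construction of a $\sY$-action on the equivariant cohomology of quiver varieties and the uniqueness of $R$-matrices in \cref{fact:R-matrix}. The central task for (1) is to identify the geometric $R$-matrix $R^{\mathrm{geom}}_{F_i,F_j}(u_1-u_2) \defeq \Stab^{-1}_{\mathfrak C',\ve}\circ\Stab_{\mathfrak C,\ve}$ from \cref{subsec:geom_R} with the normalized algebraic $R$-matrix $R_{F_i,F_j}(u_1-u_2)$ acting on $F_i\otimes F_j$. Once that identification is in place, both $\sX$ and $\sX_{\mathrm{MO}}$ are defined as the RTT algebras attached to the very same $R$-matrix, so they are canonically isomorphic, proving~(1).

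To identify the two $R$-matrices, I would first verify that $F_i = H^*_{\CC^\times_\hbar}(\fM(\varpi_i))$ as constructed by Varagnolo is isomorphic to the $\ell$-fundamental representation of $\sY$: the distinguished class $1\in H^*_{\CC^\times_\hbar}(\fM(0,\varpi_i))$ is an $\ell$-highest weight vector whose Drinfeld polynomials are $P_j(u) = u^{\delta_{ij}}$, which pins down the representation up to isomorphism. Next, by a core compatibility of stable envelopes, $R^{\mathrm{geom}}_{F_i,F_j}(u_1-u_2)$ intertwines the two $\sY$-module structures on $F_i[u_1]\otimes F_j[u_2]$ and $F_j[u_2]\otimes F_i[u_1]$ coming from the two orderings. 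Hence by the uniqueness clause of \cref{fact:R-matrix}, $R^{\mathrm{geom}}$ differs from $R_{F_i,F_j}$ by a scalar rational factor; but both are normalized to act as the identity on the tensor product of $\ell$-highest weight vectors, and on the geometric side that vector is the fundamental class of the smallest fixed point component $\fM(0,\varpi_i)\times\fM(0,\varpi_j) = \mathrm{pt}$, so the scalar is $1$ and the two $R$-matrices coincide. Part~(2) then follows immediately from \cref{lem:tildeSYR}(2): the core Yangian $\mathbb Y$ is by construction the image of $\sY$ acting through Varagnolo on $\prod_\scH \End(\scH)$, and the lemma says this image equals the image of $\tilde\sY_R$, so $\mathbb Y = \tilde\sY_R$ under the isomorphism of~(1).

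For~(3), the Chern characters $\ch_k(\CW_i)$ lie in $H^*_{\CC^\times_\hbar\times A_\bw}(\mathrm{pt})$ because $\CW_i$ is the trivial bundle $W_i\otimes\cO_{\fM(\bv,\bw)}$ equipped with its $A_\bw$-equivariant structure; they therefore act by multiplication by elements of the equivariant base ring and are manifestly central in $\sX_{\mathrm{MO}}$. To see they exhaust the center, I would match them with the central generators $z_i^{(r)}$ of \cref{thm:ext-Yang-center}: the scalar function $z_{F_i}(u)$ defined in \eqref{eq:h:30} coincides, as the paper notes, with the ratio $f(u) f(u+\tfrac{\hbar}{2}c_\fg)^{-1}$ between the normalized and universal $R$-matrices, and a direct computation of this ratio on the vacuum summand of the geometric tensor product representation shows that its expansion coefficients are precisely elementary symmetric functions in the $A_\bw$-weights of $W_i$, i.e., generating coefficients for the Chern characters of $\CW_i$. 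The tensor decomposition $\sX \cong \sY_R\otimes\Bbbk[z_i^{(r)}]$ of \cref{thm:ext-Yang-center}(3) then implies that these elements generate the entire center. The hardest technical step will be this last computation: while centrality is transparent, identifying $z_{F_i}(u)$ term-by-term with a generating series of Chern characters of $\CW_i$ demands careful control of the asymptotics of the monodromy $\sT_{F_i}(u)$ as $u\to\infty$ on the lowest-weight component of the geometric representation, together with a precise determination of the scalar $f(u)$ distinguishing the two $R$-matrix normalizations.
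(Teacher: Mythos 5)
Your overall strategy for part (1) — identify $F_i$ with the $\ell$-fundamental representation via Drinfeld polynomials, then invoke uniqueness of the $R$-matrix plus the common normalization on $\ell$-highest weight vectors — is the same as the paper's, but it has a genuine gap at the step you dismiss as ``a core compatibility of stable envelopes.'' There is no formal reason why $\Stab_{\mathfrak C',\ve}^{-1}\circ\Stab_{\mathfrak C,\ve}$ should intertwine the $\sY$-module structures given by $\Delta$ and $\Delta^{\mathrm{op}}$: the stable envelope is characterized purely geometrically (support, restriction to the fixed locus, $A$-degree bound), while the Varagnolo action is defined by correspondences and Chern classes of $\CU_i$. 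Bridging the two is precisely the content of \cref{lem:alg_stable-envelope}, which shows that the algebraic tensor-product isomorphism $\Psi$ of \cref{thm:tensor} (a $\sY$-module map by construction) satisfies the three characterizing properties of \cref{thm:stable-envelope} and hence equals the stable envelope by uniqueness. Properties (i) and (iii) are soft, but property (ii) — that the restriction of $\Psi(\gamma)$ to a fixed component $F$ is $\pm e(N_F^{\rho<0})\cup\gamma$ — requires a double induction over $\bv$ and over the stratification $\fM_{k;n}(\bv,\bw)$, using the Grassmann bundle $p\colon\fM_{k;n}(\bv,\bw)\to\fM_{k;0}(\bv-n\alpha_k,\bw)$ and its compatibility with $\fM^{\rho\ge 0}$. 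Without this, your deduction that the two $R$-matrices agree up to a scalar does not get off the ground. Part (2) is essentially the paper's argument via \cref{lem:tildeSYR}, though you should also record the comparison of the normalized tautological classes $\widehat{\CV}_i$ with the $\CU_j$ of \eqref{eq:h:44} and the identification $\fg_Q'=\fg$, which is what justifies calling $\mathbb Y$ ``the image of $\sY$'' in the first place.

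For part (3) you take a genuinely different and substantially harder route than the paper, and as stated it cannot work term by term. You propose to identify the coefficients of $z_{F_i}(u)$ with elementary symmetric functions in the $A_\bw$-weights of $W_i$; but $z_{F_i}(u)=1+z_i^{(2)}u^{-2}+\cdots$ has no $u^{-1}$ coefficient, whereas $\ch_0(\CW_i)$ and $\ch_1(\CW_i)$ are nontrivial central operators on $\prod_\scH\End(\scH)$, so no such literal matching exists; at best one could hope the two families generate the same subalgebra, which would still require controlling the normalization scalar $f(u)$ explicitly. The paper avoids all of this: centrality of the $\ch_k(\CW_i)$ is immediate, and exhaustion follows by passing to the associated graded $\gr\sX\cong\bU(\fg^{\mathrm{ext}}[u])$ (\cref{thm:PBW}), whose center is $\bU(\mathfrak z_I[u])$, generated by the coefficients of $\delta_i[u]=\sum_k\ch_k(\CW_i)u^k$. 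I recommend replacing your computation of $f(u)$ by this associated-graded argument.
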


Indeed, slightly weaker statements $\sY\subset\sX_{\mathrm{MO}}$
and $\fg_{\mathrm{MO}}^{\mathrm{ext}} = \fg^{\mathrm{ext}}$
were shown by
McBreen in his thesis \cite{McBreen_thesis}. 
Here $\fg_{\mathrm{MO}}^{\mathrm{ext}}$ is the extended Lie algebra constructed
from the geometric $R$-matrix as in \cref{subsec:ext_Lie}.
We will not use his argument, which is a calculation of some matrix coefficients of the geometric $R$-matrix.

Let us first recall the construction \cite{Varagnolo} briefly. It was given by
defining generators of $\sY$ as operators on $H^*_{\TT_\bw}(\fM(\bw))$, and checking
the defining relations of $\sY$ are satisfied. Here $\TT_\bw = \CC^\times_\hbar\times A_\bw$.

Among generators in the Lie algebra $\fg$, $h_i$ is just given by claiming
the direct summand $H^*_{\TT_\bw}(\fM(\bv,\bw))$ has weight $\mu$ in \eqref{eq:h:43}.
The generators $x_{i,0}^\pm$ is given by an correspondence in
$\fM(\bv,\bw)\times\fM(\bv+\alpha_i,\bw)$, introduced in \cite[(10.4)]{Na-quiver}.
See also \cite[\S5]{Na-alg}. Finally, $h_{i,r}$ is given by
equivariant Chern classes of the complex $\CU_i$ in \eqref{eq:h:44}.
More general $x_{i,r}^\pm$ with $r > 0$ are given by commutator of $x_{i,0}^\pm$ with $h_{i,r}$,
hence unnecessary to define separately, but they are given by powers of the first Chern class
of a natural line bundle over the correspondence.
See \cite[(4.2)]{Varagnolo} for the detail.
\begin{NB}
  $h_{i,r}$ is the coefficient of $\hbar z^{-r-1}$ in 
  \begin{equation*}
    \left(
      -1 + \sum \Delta_* \frac{\lambda_{-1/z} (\CU_i)}{\lambda_{-1/z} (\bq^2 \CU_i)}
    \right)^-,
  \end{equation*}
  where the sum is over all $\bv$. We have $\lambda_{-1/z}(\CU_i) = 1 - \frac1z c_1(\CU_i) + \cdots$,
  $\lambda_{-1/z}(\bq^2\CU_i) = 1 - \frac1z (c_1(\CU_i) + \hbar\operatorname{rank}(\CU_i))+ \cdots$.
  Therefore the above is 
  \begin{equation*}
    \frac{\hbar}z \operatorname{rank}(\CU_i) + \frac{\hbar c_1(\CU_i) + \frac12 \hbar^2 
    \operatorname{rank}(\operatorname{rank}+1)(\CU_i)}{z^2} + \cdots.
  \end{equation*}
\end{NB}%

\begin{Theorem}[\protect{\cite{Varagnolo}}]\label{thm:Varagnolo}
  Operators $x_{i,r}^\pm$, $h_{i,r}$ on $H^*_{\TT_\bw}(\fM(\bw))$ satisfy the defining relations of
  $\sY$.
\end{Theorem}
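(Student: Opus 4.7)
The plan is to verify the defining relations of $\sY$ in Drinfeld's new realization directly as operator identities on $H^*_{\TT_\bw}(\fM(\bw))$, following the philosophy already used in \cite{Na-qaff} for the $K$-theoretic case. The relations split into five families: (i) commutativity of the $h_{i,r}$ among themselves, (ii) the weight relations $[h_{i,0}, x_{j,s}^\pm] = \pm a_{ij}\, x_{j,s}^\pm$, (iii) the higher commutation $[h_{i,r+1}, x_{j,s}^\pm] - [h_{i,r}, x_{j,s+1}^\pm] = \pm\frac{\hbar}{2}(\alpha_i,\alpha_j)\{h_{i,r}, x_{j,s}^\pm\}$, (iv) the Drinfeld relation $[x_{i,r}^+, x_{j,s}^-] = \delta_{ij}\, h_{i,r+s}$, and (v) the Serre relations.

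First I would dispose of (i) and (ii). Commutativity of the $h_{i,r}$ is immediate, since they are defined by equivariant Chern classes of the virtual bundles $\CU_i$ of \eqref{eq:h:44} and hence act by cup product with mutually commuting classes. The weight relations (ii) follow from the fact that the Hecke correspondence defining $x_{j,0}^\pm$ changes the dimension vector $\bv$ by $\mp\alpha_j$, so the ranks of $\CU_i$ restricted to the two factors of the correspondence differ by $\mp a_{ij}$, giving exactly the claimed commutator.

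For (iii) and (iv) I would analyze the Hecke correspondence more closely. The operator $x_{j,s}^\pm$ is realized as push-pull with respect to cup product by the $s$-th power of the first Chern class of a tautological line bundle on the correspondence, so (iii) becomes a computation of how this Chern class interacts with the Chern classes of $\CU_i$ pulled back from either factor; the factor $\frac{\hbar}{2}(\alpha_i,\alpha_j)$ emerges from the discrepancy of $\CU_i$ across the correspondence combined with the $\CC^\times_\hbar$-weight convention of \cref{rem:CCx-hbar}. For (iv), one composes the two Hecke correspondences: when $i\neq j$ the intersection is clean and the commutator vanishes, while when $i=j$ an excess intersection calculation on the diagonal identifies the contribution with the generating function of the $h_{i,r}$, in direct parallel with \cite[\S9]{Na-qaff}.

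The hard part will be the Serre relations, which cannot be reduced to a single local correspondence-level identity. Here I would reduce to a pair of adjacent vertices $i\sim j$ and then pursue one of two routes: either (a) restrict via a factorization through the rank-two subquiver spanned by $\{i,j\}$, where the relevant quiver varieties admit an explicit description as iterated Grassmannian bundles and the Serre identity can be proved by a combinatorial identity for iterated Hecke correspondences on nested Grassmannian fibers; or (b) degenerate from the $K$-theoretic Serre relations of \cite{Na-qaff}, using that the filtered Yangian is the $\hbar$-graded limit of the quantum loop algebra and that the Serre relation is a polynomial identity preserved under this limit. Either route requires a delicate combinatorial bookkeeping, and this is where the main technical obstacle lies.
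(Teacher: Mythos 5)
The paper does not prove this statement: Theorem~\ref{thm:Varagnolo} is quoted verbatim from Varagnolo's paper, and the surrounding text only recalls how the operators are defined (Hecke correspondences for $x_{i,0}^\pm$, Chern classes of $\CU_i$ for $h_{i,r}$, powers of the tautological line bundle for $x_{i,r}^\pm$). Your overall strategy --- direct verification of the new-realization relations, in parallel with the $K$-theoretic argument of \cite{Na-qaff} --- is exactly the strategy of the cited source, and your treatment of the commutativity of the $h_{i,r}$, the weight relations, the mixed relation (iii), and the $[x^+,x^-]$ relation is correct in outline. Two points, however, are genuine gaps rather than routine bookkeeping.

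First, your enumeration of the defining relations omits an entire family: the same-sign relations $[x_{i,r+1}^\pm, x_{j,s}^\pm] - [x_{i,r}^\pm, x_{j,s+1}^\pm] = \pm\tfrac{\hbar}{2}(\alpha_i,\alpha_j)\{x_{i,r}^\pm, x_{j,s}^\pm\}$. These are not consequences of (i)--(v); geometrically they require analyzing the composition of two Hecke correspondences of the same sign and comparing the two tautological line bundles living on the composed correspondence, and in the $K$-theoretic prototype this is one of the more delicate computations. Since the paper's convention defines $x_{i,r}^\pm$ for $r>0$ both as iterated commutators with $h_{i,1}$ and as push-pull against $c_1(\mathcal L)^r$, the consistency of these two definitions is itself part of what must be checked, and it is bound up with this missing family.

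Second, neither of your proposed routes to the Serre relations works as stated. Route (a) fails because the quiver varieties $\fM(\bv,\bw)$ for the full quiver do not factor through the rank-two subquiver on $\{i,j\}$: the Serre identity is an identity of operators on the full variety, and there is no restriction functor that reduces it to an identity on iterated Grassmannian bundles. Route (b) is close to circular: transporting the $K$-theoretic Serre relations to cohomology requires a precise degeneration of the quantum loop algebra action to the Yangian action, which is itself a substantial theorem (of Gautam--Toledano Laredo type) and not a formal ``polynomial identity preserved under the limit.'' The argument actually used in the literature is different and simpler: once relations (i)--(iv) and the same-sign relations are established, one observes that $x_{i,0}^\pm$ act locally nilpotently on $H^*_{\TT_\bw}(\fM(\bw))$ (they shift $\bv$ by $\mp\alpha_i$ and, in finite ADE type, only finitely many $\fM(\bv,\bw)$ are nonempty for fixed $\bw$), so the module is integrable for each $\algsl_2$-copy and has a weight decomposition; the Serre relations then follow automatically from the remaining relations by the standard Kac--Moody argument. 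Replacing your step (v) by this integrability argument closes the gap.
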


As we have mentioned \cref{subsec:Yangian}, the $\ell$-highest weight vector is given by the
fundamental class of $\fM(0,\bw) = \{ \mathrm{pt}\}$.

Structures of representations given by equivariant cohomology are parallel to those given by equivariant $K$-theory
studied in \cite{Na-qaff}. 
The isomorphism \eqref{eq:h:48} between $F_i$ and $H^*_{\CC^\times_\hbar}(\fM(\varpi^i))$ follows
as both are irreducible $\sY$-modules, and Drinfeld polynomials are the same.

Another relevant result to us is about the tensor product of representations.

Let us decompose $\bw$ as $\bw^1 + \bw^2$, and take the corresponding
cocharacter $\rho\colon\CC^\times\to A_\bw$ so that $\bw^1$ is weight $0$, and $\bw^2$ is weight $1$.
From the construction of $\sY$-module structure, it is clear that
$H_*^{\TT_{\bw}}(\fM^{\rho\ge 0}(\bw))$, Borel-More homology of $\fM^{\rho\ge 0}(\bw)$ is also a $\sY$-module
so that the pushforward homomorphism with respect to the inclusion
$\fM^{\rho\ge 0}(\bw)\hookrightarrow \fM(\bw)$ is a morphism of $\sY$-modules.
\begin{NB}
  Here $\TT = \CC^\times_\hbar\times \CC^\times$, so that $\rho$ is generic in
  $A = \CC^\times$. 
\end{NB}%
\begin{Theorem}[\protect{\cite[Th.~6.12]{Na-Tensor}}]\label{thm:tensor}
  There exists a unique isomorphism of $\sY$-modules
  \begin{equation*}
    H^*_{\TT_{\bw^1}}(\fM(\bw^1))\otimes H^*_{\TT_{\bw^2}}(\fM(\bw^2))
    \cong H_*^{\TT_{\bw}}(\fM^{\rho\ge 0}(\bw)),
  \end{equation*}
  sending the tensor product of the $\ell$-highest weight vectors to the $\ell$-highest weight vector.
\end{Theorem}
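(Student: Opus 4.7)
The plan is to construct the isomorphism by combining the description of the $A$-fixed locus with a deformation retraction onto the attracting set, then verify compatibility with the $\sY$-action. First, from \eqref{eq:h:32} the $A$-fixed locus splits as $\fM(\bw)^A = \bigsqcup_{\bv^1+\bv^2=\bv}\fM(\bv^1,\bw^1)\times\fM(\bv^2,\bw^2)$. The Künneth formula together with the freeness result of \cite[\S7]{Na-qaff} (and the matching of $A$-weights on the two factors with the $\rho$-grading) identifies
\[
H^*_{\TT_\bw}(\fM(\bw)^A)\cong H^*_{\TT_{\bw^1}}(\fM(\bw^1))\otimes H^*_{\TT_{\bw^2}}(\fM(\bw^2)).
\]

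Next, I would use the retraction $p\colon \fM^{\rho\ge 0}(\bw)\to\fM(\bw)^A$ defined by $x\mapsto\lim_{t\to 0}\rho(t)x$. Over each connected component of the fixed locus the fibers of $p$ are the expected attracting cells, so $p^*$ is an isomorphism on equivariant cohomology; passing to Borel--Moore homology via Poincar\'e duality, with a shift by the rank of the positive part of the normal bundle, yields the desired isomorphism of $H^*_{\TT_\bw}(\mathrm{pt})$-modules. Kaledin's theorem ensures purity and the absence of odd cohomology, so no extension problems arise.

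The main obstacle is to verify that this isomorphism intertwines the $\sY$-action. The right-hand side carries the action induced by restricting the Varagnolo correspondences of \cref{thm:Varagnolo} to the attracting set, while the left-hand side is acted on via Drinfeld's coproduct $\Delta$. One checks this on a set of generators: the matching of $h_{i,0}$ reduces to a weight count using \eqref{eq:h:43}, and the matching of $x_{i,0}^\pm$ is a direct geometric calculation on Hecke-type correspondences intersected with the attracting set. The delicate case is $J(h_i)$, equivalently $h_{i,1}$, where the coproduct carries the Casimir correction $\tfrac{\hbar}{2}[h_{i,0},\Omega]$; geometrically, this correction arises from the cross-terms in Chern classes of $\CU_i$ in \eqref{eq:h:44} when the tautological bundles $\CV_i$ split according to the fixed-point decomposition, and identifying the full sum of such cross-terms with the Casimir $\Omega$ requires matching every off-diagonal root-vector contribution, one root at a time.

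Uniqueness is then automatic. Both sides are cyclic $\sY$-modules under the actions of the relevant lower-triangular subalgebra: on the left, this is inherited from the cyclicity of each tensor factor around its own $\ell$-highest weight vector; on the right, the distinguished cyclic vector is the fundamental class of the point $\fM^{\rho\ge 0}(0,\bw)=\{\mathrm{pt}\}$, which by construction coincides with the image of the tensor product of $\ell$-highest weight vectors. Any $\sY$-module homomorphism sending the latter to the former is therefore determined uniquely.
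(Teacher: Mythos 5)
There is a genuine gap, and it sits in the very first step: the map you construct is not the isomorphism of the theorem. You propose to identify $H^*_{\TT_{\bw^1}}(\fM(\bw^1))\otimes H^*_{\TT_{\bw^2}}(\fM(\bw^2))$ with $H_*^{\TT_\bw}(\fM^{\rho\ge 0}(\bw))$ by pulling back along the retraction $p$ and applying Poincar\'e duality. But $\fM^{\rho\ge 0}(\bw)$ is neither smooth nor irreducible (it is a union of attracting cells $X^{\rho\ge 0}(F)$ with nontrivial closure relations), so Poincar\'e duality does not apply to it, and $p$ is not a fibration, so $p^*$ being an isomorphism already needs the filtration by $\bigcup_{F'\preceq F}X^{\rho\ge 0}(F')$ together with odd-cohomology vanishing. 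Worse, a Borel--Moore class on the locally closed stratum $X^{\rho\ge 0}(F)$ does not canonically extend to the closed union of strata: every choice of extension differs by classes supported on lower strata, and it is exactly this ambiguity that the stable envelope axioms (support, diagonal normalization by $e(N_F^{\rho<0})$, and the degree bound on lower strata) are designed to resolve. The unique $\sY$-module isomorphism of the theorem is the stable envelope (this is \cref{lem:alg_stable-envelope}), which has nonzero, $\hbar$-divisible restrictions to components $F'\prec F$; any ``block-diagonal'' Thom-type identification is therefore a \emph{different} map and cannot intertwine the actions. Concretely, your own generator check would expose this: the Casimir correction $\tfrac{\hbar}{2}[h_{i,0},\Omega]$ in $\Delta J(h_i)$ contains off-diagonal terms $x^+_\alpha\otimes x^-_\alpha$ mixing the two tensor factors, and these match the lower-triangular corrections of the stable envelope, not the cross-terms of Chern classes of $\CU_i$ under a diagonal splitting.

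Two smaller points. First, cyclicity of the tensor product is not ``inherited from the cyclicity of each factor'': a tensor product of cyclic modules need not be cyclic, and for Yangians the cyclicity of $F_{i_1}[u_1]\otimes F_{i_2}[u_2]$ over the tensor product of $\ell$-highest weight vectors holds only for generic spectral parameters; this is why the paper derives uniqueness by first tensoring with $\operatorname{Frac}H^*_{\TT_\bw}(\mathrm{pt})$. Second, the actual content of the theorem, as the paper emphasizes, is not the localized statement at all but the integrality claim --- that the image of the non-localized left-hand side is exactly $H_*^{\TT_\bw}(\fM^{\rho\ge 0}(\bw))$ as a lattice. This is proved in \cite{Na-Tensor} by descending induction using the Grassmann bundles $\fM_{k;n}(\bv,\bw)\to\fM_{k;0}(\bv-n\alpha_k,\bw)$ and the compatibility of the Hecke correspondences with the attracting set (the argument recalled in the proof of \cref{lem:alg_stable-envelope}); your proposal does not address this step.
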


Here the $\sY$-module structure on the left hand side is given by the coproduct $\Delta$.
As remarked in \cite[Remark~6.14]{Na-Tensor}, the existence of the coproduct must be shown
independently from quiver varieties, and certain properties must be checked. A written proof
of the existence of the coproduct was given in \cite{2017arXiv170105288G}, but it was known
to experts before. The necessary properties were checked even earlier in \cite[\S2.8]{MR1103907}.

The uniqueness is an obvious consequence of the localization theorem in equivariant cohomology.
Namely, the isomorphism is the unique isomorphism of $\sY$-modules after tensoring with
the fractional field $\operatorname{Frac} H^*_{\TT_\bw}(\mathrm{pt})$.
See \cite[Lemma~6.4]{Na-Tensor}. Therefore the point of the proof is to check that
the image of the left hand side is the equivariant cohomology of $\fM^{\rho\ge 0}(\bw)$
as a \emph{non-localized} module.

The isomorphism with respect to the opposite coproduct is given by
$H^{\TT_\bw}_*(\fM^{\rho\le 0}(\bw))$ instead. Therefore the $R$-matrix is 
given via the isomorphism
\begin{equation*}
  H^{\TT_\bw}_*(\fM^{\rho\ge 0}(\bw))\otimes \operatorname{Frac} H^{\TT_\bw}_*(\mathrm{pt})
  \cong H^{\TT_\bw}_*(\fM^{\rho\le 0}(\bw))\otimes \operatorname{Frac} H^{\TT_\bw}_*(\mathrm{pt}),
\end{equation*}
defined via the inclusions $\fM^{\rho\ge 0}(\bw)$, $\fM^{\rho\le 0}(\bw)\hookrightarrow \fM(\bw)$.

Since the statement of \cite[Th.~6.12]{Na-Tensor} is about equivariant $K$-theory of
lagrangian subvarieties, 
the isomorphism in \cref{thm:tensor} is replaced by
\begin{equation}\label{eq:h:46}
  \Psi\colon 
  H_*^{\TT_{\bw^1}}(\fL(\bw^1))\otimes H_*^{\TT_{\bw^2}}(\fL(\bw^2)) 
  \xrightarrow{\cong} H_*^{\TT_{\bw}}(\widetilde{\fM}^{\rho\ge 0}(\bw)),
\end{equation}
where $\widetilde{\fM}^{\rho\ge 0}(\bw)$ is the subvariety $\{ x \in \fM^{\rho\ge 0}(\bw) \mid
\lim_{t\to 0} \rho(t)x \in \fL(\bw^1)\times\fL(\bw^2)\}$.
We use the perfect pairing between corresponding modules (\cite[Th.~7.3.5]{Na-qaff}, \cite[Th.~3.13]{Na-Tensor}) to get the above statement.

Next we prove the following:
\begin{Lemma}\label{lem:alg_stable-envelope}
The isomorphism $\Psi$ in \eqref{eq:h:46} is given by the stable envelope in 
\cref{thm:stable-envelope}.
\end{Lemma}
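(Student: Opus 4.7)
The plan is to invoke the uniqueness clause in \cref{thm:stable-envelope} and verify that $\Psi$ satisfies its three defining axioms. First, I would translate $\Psi$ into a map between equivariant cohomology groups of the ambient varieties: using the perfect pairing between $H^*_{\TT_\bw}(\fM(\bw))$ and $H_*^{\TT_\bw}(\fL(\bw))$ of \cite[Th.~7.3.5]{Na-qaff} (together with its counterpart on $\fM(\bw)^{A}=\fM(\bw^1)\times\fM(\bw^2)$) recalled just above, the isomorphism $\Psi$ becomes an operator
\begin{equation*}
  \tilde\Psi\colon H^*_{\TT_\bw}\!\bigl(\fM(\bw)^{A}\bigr)\longrightarrow H^*_{\TT_\bw}\!\bigl(\fM(\bw)\bigr),
\end{equation*}
realized as convolution with the fundamental class of $\widetilde{\fM}^{\rho\ge 0}(\bw)$, viewed as a Lagrangian cycle in $\fM(\bw)\times \fM(\bw)^{A}$ via the inclusion and the limit map $x\mapsto \lim_{t\to 0}\rho(t)x$. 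Thus both $\tilde\Psi$ and $\Stab_{\mathfrak C,\ve}$ arise from Lagrangian cycles supported in the subvariety of \eqref{eq:h:45}.

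Next I would verify axioms (i)--(iii) of \cref{thm:stable-envelope}. Axiom (i), the support condition, is immediate because $\tilde\Psi$ factors through the pushforward from $\widetilde{\fM}^{\rho\ge 0}(\bw)$. Axiom (ii) is a local computation: restricting the correspondence to a fixed component $F$ collapses $\widetilde{\fM}^{\rho\ge 0}(\bw)$ onto the zero-section of $N_F^{\rho<0}\to F$, and the normal-bundle self-intersection produces $\pm e(N_F^{\rho<0})\cup\gamma$; the sign is governed by the polarization induced by the cotangent structure $T^{1/2}$ of \cite[\S2.2.7]{MR3951025}, which is precisely the one required by \cref{assum:induced-polarization-1}.

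The main obstacle I foresee is axiom (iii), the strict degree bound on the restriction to smaller fixed components $F'\prec F$, which is not explicitly stated in \cite[Th.~6.12]{Na-Tensor}. One direct route is to control the cohomological degree of $\tilde\Psi(\gamma)|_{F'}$ via projectivity of $\pi$ and a dimension estimate on $\widetilde{\fM}^{\rho\ge 0}(\bw)\cap \pi^{-1}(U)$ for a small neighborhood $U$ of $\pi(F')$ in $\fM_0(\bw)^{A}$. A cleaner route, which I would pursue in parallel, bypasses (iii) by representation-theoretic uniqueness: after tensoring with $\operatorname{Frac} H^*_{\TT_\bw}(\mathrm{pt})$, both $\tilde\Psi$ and $\Stab_{\mathfrak C,\ve}$ become bijective $\sY$-module intertwiners sending the tensor product of $\ell$-highest weight vectors to the $\ell$-highest weight vector---for $\tilde\Psi$ this is part of \cref{thm:tensor}, and for the stable envelope it is built into the construction of \cite[\S4]{MR3951025}. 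Schur's lemma forces the two maps to coincide over $\operatorname{Frac} H^*_{\TT_\bw}(\mathrm{pt})$; axiom (i), already established for $\tilde\Psi$, implies that the identification descends to the non-localized cohomology, and (iii) then holds automatically since it does for $\Stab_{\mathfrak C,\ve}$.
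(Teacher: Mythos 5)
Your overall strategy --- verify axioms (i)--(iii) of \cref{thm:stable-envelope} for $\Psi$ and conclude by the uniqueness clause --- is the same as the paper's, and your treatment of axiom (i) agrees with it. But two of your steps do not hold up. First, you begin by asserting that $\Psi$ is ``realized as convolution with the fundamental class of $\widetilde{\fM}^{\rho\ge 0}(\bw)$.'' That is not how $\Psi$ is defined: by \cref{thm:tensor} and \eqref{eq:h:46} it is the unique $\sY$-module isomorphism (for Varagnolo's action and the coproduct $\Delta$) sending the tensor product of $\ell$-highest weight vectors to the $\ell$-highest weight vector, and identifying it with a specific geometric correspondence is a large part of what the lemma asserts. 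Consequently your verification of axiom (ii) --- ``the normal-bundle self-intersection produces $\pm e(N_F^{\rho<0})\cup\gamma$'' --- is only justified for the minimal component $F_{\min}=\fM(0,\bw^1)\times\fM(\bv,\bw^2)$, where the construction of $\Psi$ really is the Thom isomorphism of $X^{\rho\ge 0}(F_{\min})=N_{F_{\min}}^{\rho>0}$. For all other components the paper has to work: it propagates (ii) from $F_{\min}$ by a descending induction on the strata $\fM_{k;n}(\bv,\bw)$ and an induction on $\bv$, using the Grassmann bundle $p\colon\fM_{k;n}(\bv,\bw)\to\fM_{k;0}(\bv-n\alpha_k,\bw)$, the compatibility of $\fM^{\rho\ge 0}$ and $\widetilde{\fM}^{\rho\ge 0}$ with $p$, and the fact that $\Psi$ intertwines the divided powers of $x_{k,0}^-$, so that $\Psi(\gamma)$ is a pull-back of $\Psi(\gamma')$ capped with Chern classes and the relevant Euler classes match. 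Your proposal contains no substitute for this induction.

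Second, your ``cleaner route'' to axiom (iii) is circular. You apply Schur's lemma to two localized $\sY$-module intertwiners, claiming that the stable envelope is a $\sY$-intertwiner ``built into the construction of \cite[\S4]{MR3951025}.'' What is built into that construction is that stable envelopes intertwine the Maulik--Okounkov Yangian $\sX_{\mathrm{MO}}$; that they intertwine Varagnolo's $\sY$-action --- equivalently, that the geometric $R$-matrix coincides with the algebraic one --- is precisely the conclusion the paper draws \emph{from} this lemma in the proof of \cref{thm:MO-Yangian = ext-Yangian}, so it cannot be assumed here. The paper instead deduces (iii) from the cocommutativity of $\Delta$ at $\hbar=0$: the algebraic $R$-matrix is the identity at $\hbar=0$ and the map in question is a block Gauss factor of it, which forces the off-diagonal restrictions to be divisible by $\hbar$ and hence of $A$-degree less than $\frac12\codim F'$. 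Your alternative ``dimension estimate via projectivity of $\pi$'' is too vague to assess and would need to be carried out in detail to serve as a replacement.
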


\begin{proof}
The property (i) of \cref{thm:stable-envelope} is guaranteed as the image is contained
in \linebreak[4] $H_*^{\TT_{\bw}}(\widetilde{\fM}^{\rho\ge 0}(\bw))$. 

The property (iii) is a consequence of the fact
that $\Delta$ is cocommutative at $\hbar = 0$. The $R$-matrix is the identity at $\hbar=0$,
and the stable envelope is a block Gauss decomposition of the $R$-matrix. See \cite[\S4.9]{MR3951025}.

For the property (ii), we need to recall a part of the proof of \cref{thm:tensor}.
%

Consider a minimum component $F = F_{\min}$ with respect to the order $\preceq$,
which is $\fM(0,\bw^1)\times\fM(\bv,\bw^2)$.
%
For $F_{\min}$,
the isomorphism is given by the Thom isomorphism of 
$X^{\rho\ge 0}(F) = \fM^{\rho\ge 0}(\fM(0,\bw^1)\times\fM(\bv,\bw^2))$,
denoted by ${\mathfrak Z}_1$ in \cite[(5.4)]{Na-Tensor}. 
%
It is the total space of the vector bundle $N_{F_{\min}}^{\rho > 0}$.
Therefore the restriction to $F_{\min}$ is given by the multiplication
by the equivariant Euler class $e(N_{F_{\min}}^{\rho < 0})$ of
the normal bundle of $X^{\rho\ge 0}(F_{\min})$ in $X$.
\begin{NB}
  $\mathfrak Z_1$ is $\fM^{\rho\ge 0}$ direction. Hence the normal
  bundle is $\fM^{\rho < 0}$ direction.
\end{NB}%
This implies the assertion for $H^{\TT_{\bw^1}}_*(\fL(0,\bw^1))\otimes H^{\TT_{\bw^2}}_*(\fL(\bv,\bw^2))$.

For a cycle $\gamma$ supported in a lagrangian of other components $F$, we use the proof of \cite[Prop.~6.3]{Na-Tensor}, which
turns out to be the argument in \cite[12.3.2]{Na-qaff}. 
Let $\fM_{k;n}(\bv,\bw)$ be the subset of $\fM(\bv,\bw)$ defined in
\cite[(2.11)]{Na-Tensor} or \cite[(5.4.1)]{Na-qaff}. It is a closed subset
in an open subset $\fM_{k;\le n}(\bv,\bw) = \bigcup_{n'\le n} \fM_{k;n'}(\bv,\bw)$.
Irreducible lagrangians in $F'$ intersects with $\fM_{k;n}(\bv,\bw)$ for some $k\in I$ and $n > 0$
in their open dense subsets unless $\bv=0$, and the complements are contained in $\fM_{k;> n}(\bv,\bw) = \bigcup_{n' > n} \fM_{k;n'}(\bv,\bw)$.
By the descending induction on $n$, we may assume that the property (ii) holds for
cycles $\gamma'$ whose images under $\Psi$ are supported on $\fM_{k;> n}(\bv,\bw)$, as $\fM_{k; > n}(\bv,\bw) = \emptyset$ for $n \ge \dim V_k$.

We have the Grassmann bundle structure $p\colon \fM_{k;n}(\bv,\bw) \to \fM_{k;0}(\bv-n\alpha_k,\bw)$
(\cite[Prop.~2.13]{Na-Tensor}, \cite[Prop.~5.4.3]{Na-qaff} or \cite[Prop.~4.5]{Na-alg}).
By construction, $\fM^{\rho\ge 0}$ and $\widetilde{\fM}^{\rho \ge 0}$ are
compatible with $p$, i.e., the inverse images of $p$ of their intersection with $\fM_{k;0}(\bv-n\alpha_k,\bw)$ are equal to the intersection with $\fM_{k;n}(\bv,\bw)$.
Also the Grassmann bundle is the restriction of the correspondence used in the definition
of $x^-_{k,0}$ in \cite{Varagnolo}. 
(More precisely, we use the version which corresponds to the $n$-th divided power of $x_{k,0}^-$.)
Since \eqref{eq:h:46} is constructed as a $\sY$-homomorphism, the image $\Psi(\gamma)$
\begin{NB}
  restricted to the open subset $\fM_{k;\le n}(\bv,\bw)$, more precisely,
\end{NB}%
is the pull-back of $\Psi(\gamma')$ in $\fM_{k;0}(\bv-n\alpha_k,\bw)$,
capped with Chern classes of tautological bundles of the Grassmann bundle.

By the induction on $\bv$, we may assume that the property (ii) holds for $\gamma'$,
as the assertion is trivial for $\bv = 0$.
Let $F'$ be the component containing $\gamma'$.
The normal bundle $N^{\rho < 0}(F)$ of $\fM^{\rho\ge 0}(F)$ in $\fM(\bv,\bw)$ contains
the pull-back of $N^{\rho < 0}(F')$ by $p$ as a subbundle by the compatibility
of $\fM^{\rho\ge 0}$ with $p$, and the quotient 
$N^{\rho < 0}(F)/p^* N^{\rho < 0}(F')$ is 
the normal bundle of the Grassmann bundle $\fM_{k;n}(\bv,\bw)$ in $\fM_{k;\le n}(\bv,\bw)$.
Therefore the property (ii) for $\Psi(\gamma')$ implies that for $\Psi(\gamma)$.

Here we did not care the polarization $\ve$ in the proof, but it is contained in the
definition of generators $x_{i,r}^\pm$ in \cite{Varagnolo}.
\end{proof}

Thus the $R$-matrix defined by the stable envelope coincides with the
$R$-matrix defined as the $\sY$-module. Therefore, we conclude That
\begin{equation}
  \sX_{\mathrm{MO}} \cong \sX.
\end{equation}

Recall that the extended Lie algebra $\fg^{\mathrm{ext}}$ was introduced
in \cref{subsec:ext_Lie}, from the expansion of the $R$-matrix. This is
the same definition of $\fg_Q$ in \cite[\S5.3]{MR3951025}. Therefore,
$\fg_Q$ is nothing but $\fg^{\mathrm{ext}}$.

In \cite[\S6.1]{MR3951025}, a certain Lie subalgebra $\fg_Q'$ of $\fg_Q$ was
introduced. For finite type quivers, it is the subalgebra generated by
matrix elements of the normalized classical $r$-matrix
\begin{equation*}
  \sr - \sum_{i,j} \bC^{ij} \bw_i \otimes \bw_j,
\end{equation*}
where $\bC^{ij}$ is the inverse of the Cartan matrix.
(Note that The Cartan matrix is invertible for finite type quivers.)
The diagonal part of the classical $r$-matrix is
\begin{equation*}
\sum_i \bw_i\otimes \bv_i + \bv_i\otimes\bw_i - \sum_j \bC_{ij} \bv_j\otimes\bv_i.
\end{equation*}
See \cite[(4.21)]{MR3951025}. Therefore $\fg_Q'$ is nothing but the original Lie algebra $\fg$.

The \emph{core Yangian} $\mathbb Y$ in \cite[\S6.1]{MR3951025} is the subalgebra of $\sX_{\mathrm{MO}}$ generated by $\fg_Q'$ and
the operators of cup products by $\ch_k \widehat{\mathcal V}_i$ for $k \ge 0$ and $i\in I$.
(There is a typo in \cite[Th.~6.1.4]{MR3951025}: $\ch_0 \widehat{\mathcal V_i} = \bw_i - \sum_j \bC_{ij} \bv_j$
should be included.)
The normalized tautological bundle $\widehat{\mathcal V}_i$ is defined
by using the inverse of the $q$-analog of the Cartan matrix $\bC^{ij}_q$.
See \cite[(6.3)]{MR3951025}.
Comparing \eqref{eq:h:44} with \cite[(6.3)]{MR3951025}, we
find that $\widehat{\mathcal V}_i$ is
$\sum_j \bC^{ij}_q \bq \CU_j$ with $\CU_j$ defined in \eqref{eq:h:44}.
In other words, the core Yangian $\mathbb Y$, as the subalgebra
of $\prod_\scH \End(\scH)$, is generated by operators from
the original Yangian $\sY$.

By \cref{lem:tildeSYR}, operators on $F_i$ coming from the original Yangian $\sY$
are coming from the subalgebra $\tilde\sY_R$ of $\sX$.
The same is true for all $\scH$ in \eqref{eq:h:11}, as $\tilde\Phi$ is a Hopf algebra
homomorphism. Since this holds for all $\scH$ simultaneously, $\tilde\sY_R$ is the subalgebra
of $\prod_\scH \End(\scH)$ consisting of operators coming from the original Yangian $\sY$.
Therefore the core Yangian $\mathbb Y$ is the equal to $\tilde\sY_R$
as the subalgebra of $\sX \cong \sX_{\mathrm{MO}}$.

Finally, let us show that the center of $\sX = \sX_{\mathrm{MO}}$ is generated by
$\ch_k(\CW_i)$. It is clear that $\ch_k(\CW_i)$ are central.
\begin{NB}
  $\Delta \ch_k(\CW_i) = \ch_k(\CW_i)\otimes 1 + 1\otimes \ch_k(\CW_i)$, As
  $\CW_i$ decomposes according to the tensor product decomposition. Note
  $\bw = \bw^1 + \bw^2$ in \eqref{eq:h:46}. And $\ch_k(\CW_i)$ on $F_i[u]$ is
  $\ch_k(\CW_i\otimes \shfO(u)) = \ch_k(\CW_i) + \ch_{k-1}(\CW_i) u + \cdots$.
\end{NB}%
 In the associated graded, $\sX = \sX_{\mathrm{MO}}$ degenerates to $\bU(\fg^{\mathrm{ext}}[u])$ by \cref{thm:PBW}.
 By \cite[Lemma~1.7.4]{MR2355506}, the center of $\bU(\fg^{\mathrm{ext}}[u])$ is
 $\bU(\mathfrak z_I[u])$. The latter is generated by coefficients of $\delta_i[u] = \sum_k \ch_k(\CW_i) u^k$
in $u$. Thus the center of $\sX = \sX_{\mathrm{MO}}$ is generated by $\ch_k(\CW_i)$.

The proof of \cref{thm:MO-Yangian = ext-Yangian} is complete.

\subsection{Yangian representations on cohomology of quiver varieties}\label{subsec:Y_rep_coh}

By the identification $\sX_{\mathrm{MO}}\cong \sX$, we have a representation
of $\sX_{\mathrm{MO}}$ on $\scH = F_{i_1}[u_1]\otimes F_{i_2}[u_2]\otimes\dots\otimes F_{i_n}[u_n]$.
The space $\scH$ is identified, via \eqref{eq:h:48}, with $H^*_{\TT_{\varpi_{i_1}}}(\fM(\varpi_{i_1}))\otimes 
H^*_{\TT_{\varpi_{i_2}}}(\fM(\varpi_{i_2}))\otimes\dots\otimes 
  H^*_{\TT_{\varpi_{i_n}}}(\fM(\varpi_{i_n}))$.
By a straight forward generalization of \cref{thm:tensor}, this space is further
identified with $H_*^{\CC^\times_\hbar\times A_\bw}(\fM^{\rho\ge 0}(\bw))$, where $\bw = \sum_k \varpi_{i_k}$ and
$\rho\colon\CC^\times\to A_\bw$ is an appropriate cocharacter.
By the construction of \cref{thm:Varagnolo}, 
pushforward homomorphisms with respect to the inclusion
$\fL(\bw)\hookrightarrow\fM^{\rho\ge 0}(\bw)\hookrightarrow \fM(\bw)$
\begin{equation*}
  H_*^{\TT_\bw}(\fL(\bw)) \to
  H_*^{\TT_\bw}(\fM^{\rho\ge 0}(\bw)) \to
  H_*^{\TT_\bw}(\fM(\bw))
\end{equation*}
are homomorphisms of $\sX$-modules.

This assertion can be also deduced directly from the definition of $\sX_{\mathrm{MO}}$. It is a consequence
of the factorization property of the stable envelopes (see \cref{subsec:induced-polarization} and \cite[\S4.2.4]{MR3951025}).
The $R$-matrix $R_{F[u],H^*_{\TT_\bw}(\fM(\bw))}$ for $F = F_i = H^*_{\CC^\times_\hbar}(\fM(\varpi_i))$
and $H^*_{\TT_\bw}(\fM(\bw))$ is given by the product of $R$-matrices, exactly as in \eqref{eq:h:12}
under the \cref{assum:induced-polarization-1}.
As the expansion of the $R$-matrix at $u=\infty$, coefficients of the $R$-matrix
define operators on \emph{non-localized} equivariant homology groups.
This explains the assertion for $H_*^{\TT_\bw}(\fM(\bw))$.

Moreover, in \cite[\S4 and Remark~4]{tensor2} we view the stable envelope as
a class in $H^{\TT_\bw\times \CC^\times_u}_*(Z_{\mathcal T})$, where
$Z_{\mathcal T}$ is the subvariety in \eqref{eq:h:45} with
$A = \CC^\times_u$, 
$X = \fM(\bw + \varpi_i)$, $X^A = \fM(\bw)\times \fM(\varpi_i)$.
Therefore the $R_{F[u],H^*_{\TT_\bw}(\fM(\bw))} = 
\Stab_{-\mathfrak C,\ve}^{-1}\circ \Stab_{\mathfrak C,\ve}$ is given by
the convolution product class in
$H^{\TT_\bw\times \CC^\times_u}(Z_{\mathcal T}^\tau\circ Z_{\mathcal T})$,
where the superscript $\tau$ in $Z_{\mathcal T}^\tau$ refers to a permutation of factors $X$ and $X^A$.
See \cite[Th.~4.4.1]{MR3951025} for the assertion 
$\Stab_{-\mathfrak C,\ve}^{-1} = \Stab_{\mathfrak C,\ve}^\tau$.

The convolution product involves the pushforward homomorphism with respect to the projection $p_{13}\colon X^A\times X\times X^A \to X^A\times X^A$,
which is not proper, even after we restrict it to $p_{12}^{-1}(Z_{\mathcal T,21})\cap p_{23}^{-1}(Z_{\mathcal T})$.
Therefore $p_{13*}$ is defined by the localization. However, $p_{13}$ becomes
proper after we restrict it the $A$-fixed points. Therefore, coefficients of $R$,
expanded at $u=\infty$, are \emph{non-localized} classes in
$H^{\TT_\bw\times \CC^\times_u}(Z_{\mathcal T}^\tau\circ Z_{\mathcal T})$.
Since $Z_{\mathcal T}^\tau\circ Z_{\mathcal T}\subset X^A\times_{X_0^A} X^A$,
the usual Steinberg type variety, coefficients of $R$ preserves
$H_*^{\TT_\bw}(\fL(\bw))$.

The stable envelopes used above are $A\times G_\bw$-equivariant. Therefore,
we can replace $\TT_\bw$-equivariant homology groups by $\CC^\times_\hbar\times G_\bw$-equivariant
ones.

\subsection{Affine types}\label{subsec:affine}

\cite{2023arXiv230902377A} generalized \cref{fact:R-matrix} to the case when $\fg$ is an affine Lie algebra
except $A_1^{(1)}$ and $A_2^{(2)}$.
The coproduct on the Yangian $\sY$ was defined in \cite{2017arXiv170105288G}.

\cref{thm:tensor} and \cref{lem:alg_stable-envelope} remain true in this case with the same proof.
Therefore the Maulik-Okounkov $R$-matrix in this case is also given by
\cite{2023arXiv230902377A}, once it is normalized so that it acts as the identity on
the tensor product of the $\ell$-highest weight vectors.
Also, the Maulik-Okounkov Yangian $\sX_{\mathrm{MO}}$ is isomorphic to the extended Yangian $\sX$,
which is defined by a straightforward modification of \cref{subsec:ext_yangian}.
It is interesting to understand the core Yangian $\mathbb Y$ in the way similar to \cref{subsec:recoverY}.

\section{Twisted Yangian}\label{sec:twisted-yangian}

After reviewing the stable envelope and the derivation of Yang-Baxter equations,
let us switch to the case of $X = \fM^\sigma(\bv,\bw)$.

We keep the notation $\TT_\bw$ for $\CC^\times_\hbar\times A_\bw$ from the previous section.
For usual quiver varieties, $A_\bw$ is a maximal torus of $G_\bw = \prod_i \GL(W_i)$.
For $\sigma$-quiver varieties, $A_\bw$ is a maximal torus of $(G_\bw^\sigma)^0$,
the group introduced in \cref{def:framing-involution}.

\subsection{Dual representation}\label{subsec:dual_rep}

This subsection is a preparation for the subsequent subsections.

Following \cite[\S2.17]{MR1103907}, we define the left dual representation $\lsp{t}V$ of a representation $V$ by
\begin{equation*}
  (x\cdot f)(v) = f(S(y)\cdot v), \quad x\in\sX, f\in\lsp{t}V, v\in V.
\end{equation*}
We also define the left dual for a representation $V$ of $\sY$ in the same way.

\begin{Proposition}[\protect{\cite[Prop.~3.1, 3.2]{MR1103907}}]\label{prop:dual_rep}
  Let $V$ be an irreducible representation of $\sY$ with Drinfeld polynomial $P_{i,V}(u)$.
  Then the left dual representation $\lsp{t}V$ is also irreducible, and its Drinfeld polynomial $P_{i,\lsp{t}V}$ is
  given by
  \begin{equation*}
    P_{i,\lsp{t}V}(u) = P_{\invast[i],V}(u+\frac{\hbar}4 c_\fg).
  \end{equation*}
\end{Proposition}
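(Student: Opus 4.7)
The plan is to identify the $\ell$-highest weight vector of $\lsp{t}V$ explicitly and read off its Drinfeld polynomial, using the antipode formula and the Weyl-group--diagram-involution identity $-w_0(\alpha_j)=\alpha_{\invast[j]}$. Because $S(X)=-X$ for $X\in\fg$, the pairing with $V$ reverses $\fh$-weights, so a functional $f^-\in\lsp{t}V$ which is nonzero only on the $\fg$-lowest weight line $\CC v^-$ of $V$ has $\fh$-weight equal to $-w_0(\lambda)=\invast[\lambda]$, where $\lambda$ is the $\fg$-highest weight of $V$. A short argument using the Drinfeld triangular decomposition shows that $f^-$ is killed by $x^+_{j,r}$ for all $j$ and $r\ge 0$, hence is the $\ell$-highest weight vector of $\lsp{t}V$.

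Next I would compute the eigenvalues of $h_j(u)=1+\hbar\sum_{r\ge0}h_{j,r}u^{-r-1}$ on $f^-$, reducing to
\begin{equation*}
  (h_j(u)\cdot f^-)(v^-) = f^-\bigl(S(h_j(u))\,v^-\bigr).
\end{equation*}
This breaks into two tasks. First, one must compute $S(h_j(u))$. The antipode formulas $S(X)=-X$, $S(J(X))=-J(X)+\tfrac{\hbar}4 c_\fg X$ together with $h_{j,1}=J(h_j)-\hbar v_j$ give the leading-order shift; the crucial global identity $S^2=\tau_{-\hbar c_\fg/2}$ of \eqref{eq:h:28} then forces, by a "square-root" argument, that $S$ sends $h_j(u)$ to an expression obtained from $h_j(u+\tfrac{\hbar}4 c_\fg)$ by an appropriate inversion. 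Second, one must relate the action of $h_{j,r}$ on a $\fg$-lowest weight vector to that on a $\fg$-highest weight vector. For this I would use that $v^-$ lies in the Weyl orbit of $v^+$ under $w_0$ and that $-w_0$ induces the diagram involution $\invast$ on $I$; consequently $h_j(u)$ acts on $v^-$ by the series that $h_{\invast[j]}(u)$ acts on $v^+$ (up to the inversion already noted).

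Combining the two computations yields
\begin{equation*}
  h_j(u)\cdot f^- = \left(\frac{P_{\invast[j],V}\!\bigl(u+\tfrac{\hbar}4 c_\fg+\tfrac{\hbar}2\tfrac{(\alpha_j,\alpha_j)}2\bigr)}{P_{\invast[j],V}\!\bigl(u+\tfrac{\hbar}4 c_\fg-\tfrac{\hbar}2\tfrac{(\alpha_j,\alpha_j)}2\bigr)}\right)^{\!-}f^-,
\end{equation*}
which by definition means $P_{j,\lsp{t}V}(u)=P_{\invast[j],V}(u+\tfrac{\hbar}4 c_\fg)$. Irreducibility of $\lsp{t}V$ is then automatic: $\lsp{tt}V$ is isomorphic to the pullback of $V$ by $S^2=\tau_{-\hbar c_\fg/2}$, which is irreducible, and any proper subrepresentation of $\lsp{t}V$ would contradict this after a second dualization.

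The main obstacle is the first half of Step~2, namely controlling $S$ on the full series $h_j(u)$ and not merely its linear term. Two routes seem viable: either work inside the universal $R$-matrix framework and use the compatibility of $\scR^{\mathrm{uni}}(u)$ with $S$ and with translations $\tau_\zeta$ (which makes the $\tfrac{\hbar}4 c_\fg$-shift transparent as "half" of the shift in $S^2$), or proceed inductively on $r$, exploiting that $S$ intertwines the generators $x^\pm_{j,r}$ up to a sign and that $h_{j,r}$ is essentially the commutator $[x^+_{j,r},x^-_{j,0}]$, so the full shift is forced by its known value on $h_{j,1}$.
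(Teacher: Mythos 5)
The paper does not prove this statement: it is quoted from Chari--Pressley \cite{MR1103907}, so there is no in-paper argument to compare against, and your proposal has to stand on its own. Its skeleton is the standard one and the structural steps are sound: the functional $f^-$ dual to the $\fg$-lowest weight line is annihilated by all $x^+_{j,r}$ (a pure weight argument, since $S$ preserves the root grading and $V_{w_0\lambda-\alpha_j}=0$), it is a joint eigenvector of the $h_{j,r}$ because $\dim V_{w_0\lambda}=\dim V_\lambda=1$ by Weyl-invariance of weight multiplicities, and the double-dual argument for irreducibility via $S^2=\tau_{-\hbar c_\fg/2}$ is correct.

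The gap lies in the two computations you combine, and one of them is false as stated. Your claim that $h_j(u)$ acts on $v^-$ by the inverse of the series by which $h_{\invast[j]}(u)$ acts on $v^+$ fails already for $\fg=\algsl_2$ and $V$ the two-dimensional module with $P(u)=u-a$: the eigenvalue on $v^+$ is $\Psi(u)=\frac{u-a+\hbar/2}{u-a-\hbar/2}$, while the eigenvalue on $v^-$ is $\frac{u-a-3\hbar/2}{u-a-\hbar/2}=\Psi(u-\tfrac{\hbar}{4}c_\fg)^{-1}$, not $\Psi(u)^{-1}$. The identity $-w_0(\alpha_j)=\alpha_{\invast[j]}$ controls only the $\fh$-weight of $v^-$, not its $\ell$-weight; the Weyl group does not act on a Yangian module intertwining $h_j(u)$ with $h_{w(j)}(u)$, and the passage from the highest to the lowest $\ell$-weight --- which is where both $\invast$ and a spectral shift of $-\tfrac{\hbar}{4}c_\fg$ enter --- is a theorem of essentially the same depth as the proposition itself (proved via $q$-characters, the braid-group action on extremal $\ell$-weight vectors, or the quantum comatrix formula in the $RTT$ realization). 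Correspondingly, the ``square-root of $S^2$'' determination of $S(h_j(u))$ is not valid: $S$ does not preserve the commutative subalgebra generated by the $h_{j,r}$, and an identity of the form $S(h_j(u))\equiv h_j(u+s)^{-1}$ holds only modulo the one-sided ideal annihilating $v^-$ (i.e.\ after a Harish-Chandra-type projection), so it cannot be squared and compared with $\tau_{-\hbar c_\fg/2}$; in the rank-one example the correct shift is $s=\tfrac{\hbar}{2}c_\fg$, the full translation, not half of it. Your two inaccuracies --- taking $s=\tfrac{\hbar}{4}c_\fg$ in the antipode step and no shift in the lowest-weight step, where the correct values are $\tfrac{\hbar}{2}c_\fg$ and $-\tfrac{\hbar}{4}c_\fg$ --- happen to cancel, which is why your displayed formula is nevertheless the right answer; but neither intermediate lemma can be proved as you state it, so the argument as written does not actually establish the shift $\tfrac{\hbar}{4}c_\fg$.
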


\subsection{Reflection equation}\label{subsec:reflection-equation}

Suppose $A$ is two dimensional, and consider the decomposition \eqref{eq:h:33} with
$n=2$. We do not need to assume $\bw^a$ ($a = 1,2$) is $1$-dimensional
here. We suppose $W$ decomposes as
$W^0\oplus W^1\oplus W^{-1}\oplus W^2\oplus W^{-2}$
and $u_a$ acts with weight $1$ (resp.\
$-1$) on $W^a$ (resp.\ $W^{-a}$) and trivially on
other summands. Moreover, pairs $W^1$ and $W^{-1}$, $W^2$ and $W^{-2}$
are dual to each other with respect to the form on $W$, and
other pairs are orthogonal.
The fixed point set is the union of
\begin{equation}\label{eq:h:53}
  \fM(\bv^1,\bw^1)\times\fM(\bv^2,\bw^2)\times\fM^\sigma(\bv^0,\bw^0).
\end{equation}
Root hyperplanes are $u_1 - u_2 = 0$, $u_1 + u_2 = 0$ and $u_a = 0$
($a=1,2$) in this case.
The fixed point sets with respect to the subtori defined by these roots are
\begin{equation}\label{eq:h:54}
  \begin{gathered}
 \fM(\bv^1+\bv^2,\bw^1+\bw^2)\times \fM^\sigma(\bv^0,\bw^0),\quad
 \fM(\bv^1+\sigma(\bv^2),\bw^1+\sigma(\bw^2))\times \fM^\sigma(\bv^0,\bw^0),
 \\\text{and}\quad
 \fM(\bv^b,\bw^b)\times \fM^\sigma(\bv^0 + \bv^a + \sigma(\bv^a),\bw^0 + \bw^a + \sigma(\bw^a))
  \end{gathered}
\end{equation}
respectively, where $\{ a,b\} = \{1, 2\}$.

\begin{Assumption}\label{assum:induced-polarization}
We assume 
\begin{itemize}
\item \eqref{eq:h:54} have the induced polarization in the sense of \cref{subsec:induced-polarization}.
\item the polarizations of \eqref{eq:h:53} in \eqref{eq:h:54} are independent of connected components
  the common factor $\fM^\sigma(\bv^0,\bw^0)$ (in the first two cases), and
  $\fM(\bv^b,\bw^b)$ (in the last case). When $\bv^0$ or $\bv^b$ could change, they are also independent of
  $\bv^0$ or $\bv^b$.
\end{itemize}
\end{Assumption}

It is customary to call the $R$-matrix from the chamber $\{ u_a > 0\}$
to $\{ u_a < 0\}$ \emph{$K$-matrix} and denoted by $K(u_a)$.
It is the $R$-matrix defined by the stable envelope for the subtorus generated by $u_a$.
On the other hand, we denote other $R$-matrices by $R(u_1-u_2)$,
$R(u_1+u_2)$ according to the root hyperplane $u_1 - u_2 = 0$ and
$u_1 + u_2 = 0$. 
This abuse of notation is justified, as these $R$-matrices
are coming from ordinary quiver varieties
$\fM(\bv^1,\bw^1)\times\fM(\bv^2,\bw^2)$ with respect to polarizations
in $\fM(\bv^1+\bv^2,\bw^1+\bw^2)$ given by Assumption~\ref{assum:induced-polarization}.

The analog of the Yang-Baxter equation is the
\emph{reflection equation}:
\begin{equation}\label{eq:h:34}
  K(u_2) R(u_1+u_2) K(u_1) R(u_1-u_2) =
  R(u_1 - u_2) K(u_1) R(u_1 + u_2) K(u_2).
\end{equation}
This is due to Li in \cite[Prop.~5.5.3]{MR3900699}.
By Assumption~\ref{assum:induced-polarization}, $K(u_1)$ acts by the identity on
$H^*_{\TT_{\bw^2}}(\fM(\bv^2,\bw^2))$, and similarly for $K(u_2)$, $R(u_1-u_2)$ and $R(u_1+u_2)$ appropriately.

Let us draw the corresponding chambers:
\begin{equation}\label{eq:h:36}
  \begin{CD}
    \{ u_1 > u_2 > 0\}
    @>{K(u_2)}>> \{ u_1 > 0 > u_2, u_1 + u_2 > 0\} \\
    @V{R(u_1-u_2)}VV @VV{R(u_1+u_2)}V \\
    \{ u_2 > u_1 > 0\} @. \{ u_1 > 0 > u_2, u_1 + u_2 < 0\} \\
    @V{K(u_1)}VV @VV{K(u_1)}V \\
    \{ u_2 > 0 > u_1, u_1 + u_2 > 0\}
    @. \{ 0 > u_1 > u_2\} \\
    @V{R(u_1 + u_2)}VV
    @VV{R(u_1 - u_2)}V \\
    \{ u_2 > 0 > u_1, u_1 + u_2 < 0\}
    @>>{K(u_2)}> \{ 0 > u_2 > u_1 \}\rlap{.}
  \end{CD}
\end{equation}

The reflection equation \eqref{eq:h:34} is an equality in
\(
  \End\bigl(H^*_{\TT_{\bw^1}}(\fM(\bw^1))\otimes \allowbreak
  H^*_{\TT_{\bw^2}}(\fM(\bw^2))\otimes\allowbreak
    H^*_{\TT_{\bw^0}}(\fM^\sigma(\bw^0))\bigr)
  \otimes\operatorname{Frac}H^*_{\TT_{\bw}}(\mathrm{pt}).
\)
However, we often want to understand $K$ and $R$-matrices as
\emph{matrices}, rather than linear maps. See \cref{rem:explicitR}.
It is a nontrivial task as $H^*_\TT(\fM(\bw^1))$, \ldots have no
preferred base.
We will do this computation in two examples in \cref{sec:ex1,sec:ex2}.

As a preliminary step, let us consider on which vector spaces $K$ and
$R$-matrices operate more carefully.
Let us introduce notations:
\begin{equation*}
  F_1[u_1] \defeq H^*_{\TT_{\bw^1}}(\fM(\bw^1)), \quad
  F_2[u_2] \defeq H^*_{\TT_{\bw^2}}(\fM(\bw^2)), \quad
  \fH \defeq H^*_{\TT_{\bw^0}}(\fM^\sigma(\bw^0)).
\end{equation*}
By the discussion around \eqref{eq:h:33}, we have two more natural spaces
\begin{equation*}
  F_1^\sigma[-u_1] \defeq H^*_{\TT_{\bw^1}}(\fM(\sigma(\bw^1))),\quad
  F_2^\sigma[-u_2] \defeq H^*_{\TT_{\bw^2}}(\fM(\sigma(\bw^2))).
\end{equation*}
Although $F_a[u_a]$ and $F_a^\sigma[-u_a]$ are isomorphic by $\sigma$, they
naturally appear when we interpret $R(u_1+u_2)$ as the $R$-matrix
for \emph{usual} quiver varieties.

Indeed, $R(u_1+u_2)$ in the right hand side of \eqref{eq:h:34} is
naturally interpreted as
\begin{equation*}
  R_{F_1,F_2^\sigma}(u_1+u_2)\in \End\left(
    F_1[u_1]\otimes F_2^\sigma[-u_2]\right)\otimes
  \operatorname{Frac}H^*_{\TT_{\bw^1+\bw^2}}(\mathrm{pt}),
\end{equation*}
as the fixed point set with respect to the subtorus
$\{ u_1 + u_2 = 0\}$ is
$\fM(\bv^1 + \sigma(\bv^2), \bw^1 + \sigma(\bw^2))$. This
consideration suggests to interpret $K(u_2)$, the rightmost term in
the right hand side of \eqref{eq:h:34} as
\begin{equation}\label{eq:h:47}
  K_{F_2}(u_2) \in \Hom( F_2[u_2]\otimes\fH, F_2^\sigma[-u_2]\otimes\fH)
  \otimes\operatorname{Frac}H^*_{\TT_{\bw^2}}(\mathrm{pt}).
\end{equation}

A further consideration leads us the following diagram:
\begin{equation}\label{eq:h:37}
  \begin{CD}
    F_1[u_1]\otimes F_2[u_2]\otimes\fH
    @>{K_{F_2}(u_2)_{23}}>> F_1[u_1]\otimes F_2^\sigma[-u_2]\otimes\fH \\
    @V{R_{F_1, F_2}(u_1-u_2)_{12}}VV @VV{R_{F_1, F_2^\sigma}(u_1+u_2)_{12}}V \\
    F_1[u_1]\otimes F_2[u_2]\otimes\fH
    @. F_1[u_1]\otimes F_2^\sigma[-u_2]\otimes\fH \\
    @V{K_{F_1}(u_1)_{13}}VV @VV{K_{F_1}(u_1)_{13}}V \\
    F_1^\sigma[-u_1]\otimes F_2[u_2]\otimes\fH
    @. F_1^\sigma[-u_1]\otimes F_2^\sigma[-u_2]\otimes\fH \\
    @V{R_{F_2, F_1^\sigma}(u_1 + u_2)_{21}}VV
    @VV{R_{F_2^\sigma, F_1^\sigma}(u_1 - u_2)_{21}}V \\
    F_1^\sigma[-u_1]\otimes F_2[u_2]\otimes\fH
    @>>{K_{F_2}(u_2)_{23}}> 
    F_1^\sigma[-u_1]\otimes F_2^\sigma[-u_2]\otimes\fH\rlap{.}
  \end{CD}
\end{equation}
Here subscripts $12$, $13$, $23$ indicate tensor factors in which
operators act.
Let us explain subscripts $21$ in the last row. The $R$-matrix for
$F_1^\sigma[-u_1]\otimes F_2[u_2]$ goes from the chamber
$\{ -u_1 > u_2\}$ to $\{ -u_1 < u_2\}$ by convention for $R$ of usual
quiver varieties in \cref{subsec:geom_R}. 
It goes the opposite direction, compared with \eqref{eq:h:36}.
Also, the argument of the
$R$-matrix is $-u_1 - u_2$. This $R$-matrix is
$R_{F_1^\sigma,F_2}(-u_1-u_2)$, hence we replace it by
$R_{F_2, F_1^\sigma}(u_1 + u_2)_{21}$ by the unitarity \eqref{eq:h:25}.
This explains the entry in the left column. For the right column,
$R_{F_1^\sigma,F_2^\sigma}(-u_1+u_2)$ goes from $\{ -u_1 > -u_2\}$ to
$\{ -u_1 < -u_2\}$. It goes the opposite direction. 
We use the unitarity \eqref{eq:h:25} to change it to
$R_{F_2^\sigma,F_1^\sigma}(u_1-u_2)$.

Thus we arrive at the reflection equation, which says \eqref{eq:h:37}
is commutative:
\begin{equation}\label{eq:h:1}
  \begin{multlined}
  K_{F_2}(u_2) R_{F_2,F_1^\sigma}(u_1+u_2)_{21} K_{F_1}(u_1)
  R_{F_1,F_2}(u_1-u_2) \qquad\\
  \qquad = R_{F_2^\sigma,F_1^\sigma}(u_1-u_2)_{21} K_{F_1}(u_1) R_{F_1,F_2^\sigma}(u_1+u_2) K_{F_2}(u_2).
  \end{multlined}
\end{equation}
Here we omit most of subscripts. The only exception is $21$ which is essential.

This type of the reflection equation appeared in \cite{MR774205,
  MR953215}. See also \cite[\S6.2]{MR1193836} containing \emph{four}
types of $R$-matrices. See \cite{2022arXiv220316503A} for a modern
reference.\footnote{The convention of the spectral parameter in the $R$-matrix
  is different from ours. Their $R(z)$ (trigonometric case) should be
  compared with our $R(-u)$. The reflection equation in \cite[Theorem~5.1.2]{2022arXiv220316503A}
  is different from \eqref{eq:h:1} for spectral parameters in $R$-matrices, and also for the left vs right
  coideals.}

\begin{Remark}\label{rem:shift_of_R}
  Suppose $\sigma' = \id$.
  By \cref{prop:dual_rep}, $F_a^\sigma[-u_a]$ is isomorphic to $(\lsp{t}F_a)[-u_a-\frac{\hbar}4 c_\fg]$
  as a representation of $\sY$. Thus the $R$-matrix $R_{F_1,F_2^\sigma}(u_1+u_2)$
  is identified with $R_{F_1,\lsp{t}F_2}(u_1+u_2+\frac{\hbar}4 c_\fg)$.

  If $\sigma'\neq \id$, $F_a^\sigma[-u_a]$ is $(\lsp{t}F_a)[-u_a-\frac{\hbar}4 c_\fg]$, pull-backed 
  by the automorphism of $\sY$ given by the diagram automorphism $\sigma'$.

  If $F_a$ is $F_i$ for $i\in I$ and $\invast[i]=i$, then $F_a^\sigma = F_i$. This happens
  for example when $Q$ is of type A and $\sigma'\neq\id$. 

  In either way, we can understand the reflection equation in terms of representations of $\sY$,
  without invoking $\sigma$-quiver varieties.
\end{Remark}

\subsection{Unitarity}\label{subsec:unitarity}

The $K$-matrix $K(u)$ satisfies the unitarity $K(u)K(-u) = \id$ as
observed in \cite[Prop.~5.5.2]{MR3900699}.
In our framework of understanding the $K$-matrix as a rational homomorphism
from $F[u]\otimes \mathfrak H$ to $F^\sigma[-u]\otimes \mathfrak H$ as \eqref{eq:h:47},
the unitarity says
\begin{equation*}
  K_{F^\sigma}(-u) K_F(u) = \id.
\end{equation*}

\subsection{Definition}

Recall \eqref{eq:h:11}, which has now a geometric construction via quiver varieties:
\begin{equation*}
  \begin{split}
  \scH &= F_{i_1}[u_1]\otimes F_{i_2}[u_2]\otimes\dots\otimes F_{i_n}[u_n] \\
  &= H^*_{\TT_{\varpi_{i_1}}}(\fM(\varpi_{i_1}))\otimes H^*_{\TT_{\varpi_{i_2}}}(\fM(\varpi_{i_2}))\otimes\dots\otimes 
  H^*_{\TT_{\varpi_{i_n}}}(\fM(\varpi_{i_n})).
  \end{split}
\end{equation*}

Choosing an auxiliary space $F$ from $\{ F_i\}$ and
$\mathfrak H = H^*_{\TT_{\bw^0}}(\fM^\sigma(\bw^0))$ as in \cref{subsec:reflection-equation},
we consider
\begin{equation}\label{eq:h:40}
  \begin{multlined}
      \sS_F(u) \defeq 
       (R_{F_{i_1}, F^\sigma})(u+u_1)_{21} 
      \cdots
      (R_{F_{i_n}, F^\sigma})(u+u_n)_{21}\qquad\qquad\qquad\\
      \qquad\qquad\qquad
      K_F(u)\,
      R_{F, F_{i_n}}(u-u_n)
      \cdots
      R_{F, F_{i_1}}(u-u_1).
  \end{multlined}
\end{equation}
For example, the composite of the vertical arrows in the left column of \eqref{eq:h:37}
is an example of $\sS_F(u)$.
We interpret $K_F(u)$ as a rational operator in 
$\Hom(F[u]\otimes \mathfrak H, F^\sigma[-u]\otimes \mathfrak H)$ as in \cref{subsec:reflection-equation}, and
acting by $\id$ on the factor $\scH$.
Then $\sS_F(u)$ is a rational operator in
\begin{equation*}
 \Hom(F[u]\otimes\scH\otimes \mathfrak H, 
 F^\sigma[-u]\otimes\scH\otimes \mathfrak H). 
\end{equation*}
We expand it around $u=\infty$ as for $\sT_F(u)$ in \eqref{eq:h:12}.
The constant term is $\sigma\otimes\id_{\scH\otimes\mathfrak H}$.
This is the $R$-matrix from the chamber $\{ u > u_1 > u_2 > \cdots > u_n > 0 > -u_n > \cdots
   > -u_2 > -u_1\}$ to the chamber $\{ u_1 > u_2 > \cdots > u_n > 0 > -u_n > \cdots
   > -u_2 > -u_1 > u \}$. 

\begin{Definition}
   We define the \emph{extended twisted Yangian} $\sX^{\mathrm{tw}}$ as the subalgebra of
  \begin{equation*}
    \prod_{\mathfrak H} \prod_{\scH}
    \End_{\Bbbk[u_1,\ldots,u_n]}(\scH\otimes\mathfrak H)
  \end{equation*}
  generated by coefficients of $\hbar u^{-N-1}$ of matrix entries of $\sS_F(u) - \sigma\otimes\id_{\scH\otimes\mathfrak H}$
  for all $N\ge 0$ and all auxiliary spaces $F$.
\end{Definition}

\begin{Remark}
  The definition of the extended twisted Yangian $\sX^{\mathrm{tw}}$ is 
  due to Li \cite[\S5.6]{MR3900699}. We slightly modify it by interpreting
  linear operators in a different way, as explained in \cref{subsec:reflection-equation}.
  We also add the additional factor over $\mathfrak H = H^*_{\TT_{\bw^0}}(\fM^\sigma(\bw^0))$, 
  as it could potentially give interesting representations of $\sX^{\mathrm{tw}}$, as
  indicated in \cref{ex:distinguished}.
\end{Remark}

Comparing \eqref{eq:h:40} with \eqref{eq:h:12}, we get
\begin{equation}\label{eq:h:39}
  \sS_F(u) = \sT_{F^\sigma}(-u)^{-1} K_F(u) \sT_F(u),
\end{equation}
where $\sT_{F^\sigma}(-u)$ is defined by \eqref{eq:h:12} with $F$, $u$ replaced by
$F^\sigma$, $-u$ respectively.

When $\bw^0 = 0$, we have $\mathfrak H = \Bbbk$. Therefore we have a projection
$\sX^{\mathrm{tw}} \to \prod_{\scH} \End(\scH)$.
By \eqref{eq:h:39}, the image of this projection is contained in the image
of the extended Yangian $\sX$. As remarked in \cref{subsec:translation},
the homomorphism of $\sX$ is injective. Therefore we have a homomorphism
\begin{equation}\label{eq:h:41}
  \sX^{\mathrm{tw}} \to \sX.
\end{equation}

\begin{Remark}\label{rem:smaller_ext-twist-Yang}
  (1) 
  Even if $\bw^0 = 0$, $K_F(u) - \sigma \in \frac{\hbar}{u}\Hom(F, F^\sigma)[[u^{-1}]]$ could be nonzero.
  Indeed, we will see that $K_F(u) - \sigma$ is zero when the type is $(+)$,
  but nonzero when the type is $(-)$ in \cref{sec:ex2}.
  In particular, the homomorphism \eqref{eq:h:41} depends on the chosen type.

  (2) Viewing the image of $\sX^{\mathrm{tw}}$ in \eqref{eq:h:41} as a subalgebra of $\sX$,
  the definition has similarity with one in \cite[Def.~3.1]{MR3545488}.
  %
 %
  
  (3)
As in \cref{thm:MO-Yangian = ext-Yangian}(3), we expect the center $Z(\sX^{\mathrm{tw}})$
of $\sX^{\mathrm{tw}}$ is the polynomial ring in variables $\ch_{2k}(\CW_i)$ from
$i$ in the first factor of $G_\bw^\sigma$,
and $\ch_k(\CW_i) + \ch_k(\CW_j^*)$ from $\{ i,j\}$
in the second factor of $G_\bw^\sigma$ in \cref{def:framing-involution}.
Here we only consider even degree parts in the first case, as $\CW_i\cong\CW_i^*$.
We then expect that $\sX^{\mathrm{tw}}$ decomposes as the tensor product
$\sY^{\mathrm{tw}}\otimes Z(\sX^{\mathrm{tw}})$, where $\sY^{\mathrm{tw}}$
is the inverse image of $\tilde\sY_R$ under the homomorphism \eqref{eq:h:41}.

  (4) In view of \cref{rem:smaller_ext-Yang}, we can choose and fix $i\in I$,
  and consider only tensor products of $F_i[u]$ to define the extended twisted Yangian.
  Namely we define $\sX'$ from a single $F_i$ as in \cref{rem:smaller_ext-Yang}, and
  define $\sX^{\prime\mathrm{tw}}$ by the image of $\sT_{F_i^\sigma}(-u)^{-1} K_{F_i}(u) \sT_{F_i}(u)$.
  We can even restrict $\mathfrak H$ only to $\bw^0$. This version of $\sX^{\prime\mathrm{tw}}$
  is closer to the twisted Yangian defined in \cite{MR3545488}.
  See \cref{thm:O_twisted,thm:MR_refl} below.
\end{Remark}

\subsection{Coideal property}

The tensor product $\scH_1\otimes\scH_2$ of two spaces $\scH_1$, $\scH_2$ of form \eqref{eq:h:11} is again
of the same form. The projection
\begin{equation*}
   \prod_{\mathfrak H} \prod_{\scH} \End(\scH\otimes\mathfrak H)
    \to \prod_{\mathfrak H}\prod_{\scH_1,\scH_2} 
    \End(\scH_1\otimes\scH_2\otimes\mathfrak H)
\end{equation*}
gives a coproduct
\begin{equation*}
  \Delta\colon \sX^{\mathrm{tw}}\to \sX\otimes\sX^{\mathrm{tw}}
\end{equation*}
by sending $\sS_F(u)$ to $\sT_{F^\sigma}(-u)^{-1}\sT_F(u)\otimes \sS_F(u)$.
This is compatible with \eqref{eq:h:41}, i.e., \eqref{eq:h:41} sends
this coproduct to the coproduct of the extended Yangian $\sX$.
Although we do not know \eqref{eq:h:41} is injective as we add factors
$\mathfrak H$, the compatibility means that $\sX^{\mathrm{tw}}$ is
a \emph{left coideal subalgebra} of $\sX$.

The coideal property implies that the category of $\sX^{\mathrm{tw}}$-modules
is a \emph{module category} over the monoidal category of $\sX$-modules.

\subsection{Twisted current algebra as the associated graded}

Recall that $\sX$ has an ascending filtration defined in \cref{def:filt}. 
The associated graded algebra is $\bU(\fg^{\mathrm{ext}}[u])$ by \cref{thm:PBW}.

Viewing $\sX$ as $\sX_{\mathrm{MO}}$ by \cref{thm:MO-Yangian = ext-Yangian}, the involution $\sigma$
on $\fM(\bw)$ induces an involution on $\sX$, and also $\fg^{\mathrm{ext}}[u]$. Let us denote all of them by $\sigma$ for simplicity.
\begin{NB}
  We probably need to assume some conditions on polarization to ensure the above claim.
\end{NB}%
Recall further that $\fg^{\mathrm{ext}}$ is $\fg\oplus\mathfrak z_I$, and $\mathfrak z_I$ is spanned
by $\delta_i$, $i\in I$.

\begin{Lemma}\label{lem:sigma_on_g}
  \textup{(1)} The involution $\sigma$ on $\fg^{\mathrm{ext}}$ is given by
  \begin{equation*}
    \sigma(h_i) = -h_{\sigma'(i)}, \quad \sigma(\delta_i) = \delta_{\sigma'(\invast[i])}, \quad
    \sigma(e_i) = \pm f_{\sigma'(i)}, \quad \sigma(f_i) = \pm e_{\sigma'(i)}.
  \end{equation*}

  \textup{(2)} The involution $\sigma$ on $\fg^{\mathrm{ext}}[u]$ is given by
  $\sigma(x u^k) = \sigma(x) (-u)^k$ for $x\in\fg^{\mathrm{ext}}$.
\end{Lemma}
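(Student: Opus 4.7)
My plan is to identify $\sigma$ on $\fg^{\mathrm{ext}}\subset\sX\cong\sX_{\mathrm{MO}}$ by tracking how it acts on the geometric realization on equivariant cohomology of quiver varieties, then pass to the current algebra via \cref{thm:PBW}.

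For part (1), I first compute $\sigma$ on the Cartan $\fh^{\mathrm{ext}}$. The involution $\sigma$ maps the summand $H^*_{\TT_\bw}(\fM(\bv,\bw))$ of weight $\mu$ and framing $\lambda$ to the summand of weight $-\sigma'(\mu)$ and framing $\sigma'(\invast[\lambda])$ by \eqref{eq:h:50}. Using that $h_i$ acts on this summand by $\langle h_i,\mu\rangle$ and $\delta_i$ by $\langle \delta_i,\lambda\rangle$, together with the dual $\sigma'$-equivariance of the pairings $\langle h_i,\alpha_j\rangle=a_{ij}$, $\langle\delta_i,\varpi_j\rangle=\delta_{ij}$, gives the formulas $\sigma(h_i)=-h_{\sigma'(i)}$ and $\sigma(\delta_i)=\delta_{\sigma'(\invast[i])}$. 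For $e_i,f_i$, these are realized by the Hecke correspondences of \cite{Varagnolo} on $\fM(\bv,\bw)\times\fM(\bv+\alpha_i,\bw)$; since $\sigma$ exchanges such a correspondence with a correspondence on $\fM(\sigma(\bv),\sigma(\bw))\times\fM(\sigma(\bv)-\alpha_{\sigma'(i)},\sigma(\bw))$ — the latter defining $f_{\sigma'(i)}$ — we get $\sigma(e_i)=\pm f_{\sigma'(i)}$, and the sign for $\sigma(f_i)$ is forced by $\sigma^2=\id$ and the Chevalley involution structure.

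For part (2), recall that the filtration on $\sX$ by degree of $u$ (\cref{def:filt}) geometrically corresponds to polynomial degree in the equivariant parameter of an auxiliary $A\subset A_\bw$ (cf.\ \cref{subsec:Y_rep_coh} and \eqref{eq:h:12}). In the setup of \cref{subsec:reflection-equation}, $\sigma$ exchanges the weight spaces $W^a$ and $W^{-a}$ of the $A$-action, so it acts on this equivariant parameter by $u\mapsto -u$. This is also visible from the $K$-matrix: by \eqref{eq:h:47} and \eqref{eq:h:39}, $K_F(u)$ intertwines $F[u]\otimes\fH$ with $F^\sigma[-u]\otimes\fH$, so in the associated graded the involution on $\fg^{\mathrm{ext}}[u]$ induced from $\sigma$ must send $xu^k$ to $\sigma(x)(-u)^k$. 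Combined with part (1), this gives the stated formula.

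The main obstacle will be pinning down the $\pm$ signs in $\sigma(e_i)$ and $\sigma(f_i)$ consistently across all $i\in I$ so as to match the Chevalley involution structure underlying \cref{subsec:form}, and to verify that these signs are compatible with the polarization choices of \cref{assum:induced-polarization} and the orientation rule of \cref{subsec:orientation}. This is careful bookkeeping with structure constants of $\fg$ twisted by $\varepsilon(h)$ factors, rather than a genuinely new geometric input.
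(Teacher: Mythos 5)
Your proposal is correct in substance, and for the Cartan part it coincides with the paper's proof: both derive $\sigma(h_i)=-h_{\sigma'(i)}$ and $\sigma(\delta_i)=\delta_{\sigma'(\invast[i])}$ directly from \eqref{eq:h:50}. Where you diverge is in the treatment of $e_i,f_i$ and of part (2). For $e_i,f_i$ you argue geometrically that $\sigma$ carries the Hecke correspondence defining $e_i$ to the one defining $f_{\sigma'(i)}$; the paper instead argues abstractly that the root spaces of $\fg^{\mathrm{ext}}$ are $1$-dimensional, so weight considerations alone give $\sigma(e_i)=c\,f_{\sigma'(i)}$ for some nonzero constant, and then the \emph{integral structure} on homology (with $e_i,f_i$ integral generators) forces $c\in\{\pm1\}$, after which $\sigma^2=\id$ matches the signs for $e_i$ and $f_i$. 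This is the one soft spot in your write-up: $\sigma^2=\id$ by itself only gives $cc'=1$ and does not pin $c$ to $\pm1$, so you need either the integrality argument or a genuine verification that $\sigma$ maps the correspondence cycle to the correspondence cycle with coefficient $\pm1$ (which is exactly the ``bookkeeping with $\varepsilon(h)$ factors'' you defer). The paper's route avoids that bookkeeping entirely. For part (2), you argue via the flip of the framing-torus parameter and the form of the $K$-matrix $F[u]\otimes\fH\to F^\sigma[-u]\otimes\fH$; the paper instead observes that it suffices to check $x=h_i,\delta_i$, $k=1$, where $h_iu$, $\delta_iu$ are first Chern classes of $\CU_i$, $\CW_i$, and \eqref{eq:h:51} gives $\CU_i\mapsto-\bq^{\bc}\CU_{\sigma'(i)}^{\vee}$, $\CW_i\mapsto\CW_{\sigma'(\invast[i])}^{\vee}$, whence $c_1\mapsto-c_1$. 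Your argument is morally the same statement but less self-contained (the $K$-matrix is itself built from $\sigma$), whereas the Chern-class computation is concrete and also explains why checking $k=1$ generators suffices.
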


\begin{proof}
  (1) By the formula of $\sigma(\mu)$ in \eqref{eq:h:50}, the formula for $\sigma(h_i)$ follows,
as $h_i$ is $\langle h_i,\mu\rangle = \operatorname{rank}\CU_i$ on $\fM(\bv,\bw)$ with $\mu = \bw - \bC\bv$.
On the other hand, $\delta_i$ counts dimension of $W_i$, hence the formula of $\sigma(\lambda)$ in
\eqref{eq:h:50} implies the formula for $\sigma(\delta_i)$.

Elements $e_i$, $f_i$ are generators of root subspaces of $\fg^{\mathrm{ext}}$, which is $1$-dimensional.
Considering their weights, we have the above formula up to nonzero multiplicative constants. We have
natural integral structures on homology groups, and $e_i$, $f_i$ are integral generators. Therefore
constants must be $\pm 1$. Being $\sigma$ an involution, $\pm$ must be the same for $e_i$ and $f_i$.

(2) It is enough to check the assertion for $x = h_i$, $\delta_i$, $k=1$. Then $h_i u$, $\delta_i u$ are
given by the first Chern classes of $\CU_i$, $\CW_i$ respectively. Since $\CU_i$, $\CW_i$ are
replaced by $-\CU_{\sigma'(i)}^\vee$, $\CW_{\sigma'(\invast[i])}^\vee$ under $\sigma$ by \eqref{eq:h:51},
we have the assertion.
\end{proof}

\begin{Remark}\label{rem:symmetric-pair}
  From the computation of $\sigma$ on $\fg^{\mathrm{ext}}$, we can compute which type of symmetric pair
  $(\fg,\fg^\sigma)$ in \cite[Table~1]{MR3900699} appears from the $\sigma$-quiver variety. More precisely,
  $\Gamma$ and $a$ there are the Dynkin diagram of $\fg$ and $\sigma'\circ\invast$
  respectively.
\end{Remark}

Let us introduce a filtration $\sX^{\mathrm{tw}}_{\le 0}\subset \sX^{\mathrm{tw}}_{\le 1}\subset\cdots$ on $\sX^{\mathrm{tw}}$ from the filtration on $\sX$ via
the homomorphism \eqref{eq:h:41}.
As in \cite[Prop.~3.3]{MR3545488}, we have the following:
\begin{Proposition}
  The image of the associated graded algebra $\gr\sX^{\mathrm{tw}}$ of $\sX^{\mathrm{tw}}$ 
  under \eqref{eq:h:41} is isomorphic to the universal enveloping algebra
  $\bU(\fg^{\mathrm{ext}}[u]^\sigma)$ of the $\sigma$-twisted current algebra
  $\fg^{\mathrm{ext}}[u]^\sigma$.
\end{Proposition}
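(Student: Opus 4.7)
The plan is to compute the image of $\sS_F(u) - \sigma$ in the associated graded $\gr\sX \cong \bU(\fg^{\mathrm{ext}}[u])$ (via \cref{thm:PBW,thm:MO-Yangian = ext-Yangian}) and identify the leading terms with generators of $\bU(\fg^{\mathrm{ext}}[u]^\sigma)$. The starting point is the factorization
\begin{equation*}
  \sS_F(u) = \sT_{F^\sigma}(-u)^{-1}\, K_F(u)\, \sT_F(u)
\end{equation*}
from \eqref{eq:h:39}, together with the identification of $F^\sigma$ as a (shifted) left dual of $F$ (\cref{rem:shift_of_R,prop:dual_rep}). Since $K_F(u)$ has constant term $\sigma_F \colon F \to F^\sigma$, it is natural to study $\sigma_F^{-1}\sS_F(u) - \id_F$, which lives in $\frac{\hbar}{u}\End(F)[[u^{-1}]]$ tensored with $\sX^{\mathrm{tw}}$.

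The first step is to compute the leading term of $\sigma_F^{-1}\sS_F(u) - \id_F$ at filtration degree $r$. Writing $\sT_F(u) = \id + \hbar\sum_r T_r u^{-r-1}$, where in the associated graded $T_r$ is the action on $F$ of an element $X_r \cdot u^r$ with $X_r \in \fg^{\mathrm{ext}}$, I analyze each factor separately. The operator $\sigma_F^{-1}\sT_{F^\sigma}(-u)^{-1}\sigma_F$ corresponds in the associated graded to the action of $\sT_F(u)$ twisted by the involution $\tilde\sigma$ on $\fg^{\mathrm{ext}}[u]$ of \cref{lem:sigma_on_g}; the substitution $u\mapsto -u$ combined with the antipode sign (which in $\gr\sX$ reduces to $X\mapsto -X$ on $\fg^{\mathrm{ext}}$) and the $\sigma'\circ\invast\circ t$ twist together produce the rule $Xu^r \mapsto \sigma(X)(-u)^r$. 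Combining this with the contribution from $\sT_F(u)$ itself, the leading term at filtration degree $r$ becomes the action on $F$ of
\begin{equation*}
  X_r \cdot u^r + \sigma(X_r)\cdot (-u)^r \;\in\; \fg^{\mathrm{ext}}[u]^\sigma,
\end{equation*}
which by \cref{lem:sigma_on_g} lies in the twisted current algebra. The contribution from $K_F(u) - \sigma_F$ must be analyzed separately and shown to contribute only to central or higher-filtration terms; this relies on the explicit form of the $K$-matrix computed in the examples of \cref{sec:ex1,sec:ex2}.

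The second step is to upgrade the generator-level statement to the level of enveloping algebras. As $F$ ranges over the $\ell$-fundamental representations $\{F_i\}$ and $X_r$ ranges over a basis of $\fg^{\mathrm{ext}}$, the leading terms exhaust a spanning set $\{Xu^r + \sigma(X)(-u)^r\}$ of $\fg^{\mathrm{ext}}[u]^\sigma$, provided one uses the injectivity of the representation $\prod_\scH \rho^{\mathrm{ext}}_\scH$ discussed in \cref{subsec:translation}. Since $\gr\sX^{\mathrm{tw}}$ is a subalgebra of $\gr\sX = \bU(\fg^{\mathrm{ext}}[u])$ and the containment in the $\sigma$-fixed part is manifest from the coideal structure of \cref{subsec:reflection-equation}, the image equals $\bU(\fg^{\mathrm{ext}}[u]^\sigma)$ as soon as we exhibit these generators, by the PBW theorem applied to the Lie subalgebra $\fg^{\mathrm{ext}}[u]^\sigma \subset \fg^{\mathrm{ext}}[u]$.

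The principal obstacle is controlling the $K$-matrix correction $\sigma_F^{-1}K_F(u) - \id_F$. Unlike in the pure Yangian case, this term carries genuine content, as \cref{rem:smaller_ext-twist-Yang}(1) already warns: $K_F(u) - \sigma$ can be nonzero (as it is for type $(-)$). One must show that its expansion coefficients either land in the center of $\gr\sX^{\mathrm{tw}}$ or are of strictly lower filtration degree than the main term, so that they do not disturb the identification of the generators. This should follow from a geometric analysis of the stable envelope across the root hyperplanes $u_a = 0$, using the compatibility condition \cref{assum:induced-polarization} on the polarization. A sign- and filtration-level cross-check against \cite[Prop.~3.3]{MR3545488}, which establishes the analogous PBW-type result for the Molev--Ragoucy reflection equation algebra, provides a reliable template.
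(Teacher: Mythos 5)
Your proposal is correct and follows essentially the same route as the paper, which establishes this Proposition by deferring to the standard filtration argument of \cite[Prop.~3.3]{MR3545488} that you reconstruct: expand $\sS_F(u)=\sT_{F^\sigma}(-u)^{-1}K_F(u)\sT_F(u)$, identify the symbol of the degree-$r$ coefficient with $Xu^r+\sigma(X)(-u)^r$ (which spans the degree-$r$ piece of $\fg^{\mathrm{ext}}[u]^\sigma$ as $X$ varies), and conclude by PBW. Your worry about the $K$-matrix correction resolves more easily than you suggest: since the statement concerns the image under \eqref{eq:h:41}, one has $\mathfrak H=\Bbbk$ there, so the coefficients of $K_F(u)-\sigma$ are scalar matrices acting trivially on $\scH$ and contribute only products of strictly lower filtration degree, with no further geometric analysis of the stable envelope required.
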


\begin{Corollary}[\protect{cf.\ \cite[Cor.~3.3]{MR3545488}}]
  There is an isomorphism between $\bU((\fg^{\mathrm{ext}})^\sigma)$
  and the image of $\sX^{\mathrm{tw}}_{\le 0}$ under \eqref{eq:h:41}.
\end{Corollary}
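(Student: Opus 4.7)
The plan is to deduce the Corollary directly from the preceding Proposition by passing to the lowest filtration piece on both sides. Under the filtration of \cref{def:filt}, the bottom piece $\sX_{\le 0}$ of the extended Yangian is naturally isomorphic to $\bU(\fg^{\mathrm{ext}})$: by \cref{thm:PBW}, $\gr\sX \cong \bU(\fg^{\mathrm{ext}}[u])$ whose degree-zero component is $\bU(\fg^{\mathrm{ext}})$, and the filtration is strict at the bottom (there are no generators of negative degree in \cref{def:filt}), so $\gr_0\sX = \sX_{\le 0} = \bU(\fg^{\mathrm{ext}})$. The filtration on $\sX^{\mathrm{tw}}$ is by definition the one pulled back from \eqref{eq:h:41}, hence it is also strict at the bottom, and the image $I^{\mathrm{tw}}_{\le 0}$ of $\sX^{\mathrm{tw}}_{\le 0}$ in $\sX_{\le 0} = \bU(\fg^{\mathrm{ext}})$ coincides with its own degree-zero associated-graded piece.

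Next, I would invoke the preceding Proposition, which asserts that the image of $\gr\sX^{\mathrm{tw}}$ in $\gr\sX = \bU(\fg^{\mathrm{ext}}[u])$ is $\bU(\fg^{\mathrm{ext}}[u]^\sigma)$. In particular, its degree-zero component is the degree-zero part of $\bU(\fg^{\mathrm{ext}}[u]^\sigma)$. By \cref{lem:sigma_on_g}(2), an element $xu^k \in \fg^{\mathrm{ext}}[u]$ lies in $\fg^{\mathrm{ext}}[u]^\sigma$ iff $\sigma(x) = (-1)^k x$; at $k = 0$ this reduces to $x \in (\fg^{\mathrm{ext}})^\sigma$. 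Therefore
\[
\bigl(\bU(\fg^{\mathrm{ext}}[u]^\sigma)\bigr)_{\le 0} = \bU((\fg^{\mathrm{ext}})^\sigma),
\]
and combining with the previous paragraph identifies $I^{\mathrm{tw}}_{\le 0}$ with $\bU((\fg^{\mathrm{ext}})^\sigma)$, which is the claim.

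The only point meriting care is the passage from ``image of $\gr\sX^{\mathrm{tw}}$'', as controlled by the Proposition, to the image of the actual filtered piece $\sX^{\mathrm{tw}}_{\le 0}$. This is automatic here because at the lowest level of a strict filtration the graded and ungraded pieces agree, so no genuine obstruction arises. Thus the Corollary is essentially the degree-zero shadow of the Proposition, and the argument reduces to invoking \cref{thm:PBW}, the Proposition, and \cref{lem:sigma_on_g}(2) in sequence.
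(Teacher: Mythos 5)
Your proposal is correct and is essentially the argument the paper intends: the Corollary is stated as the degree-zero consequence of the preceding Proposition (following \cite[Cor.~3.3]{MR3545488}), obtained exactly as you do by noting that $\gr_0\sX=\sX_{\le 0}\cong\bU(\fg^{\mathrm{ext}})$ via \cref{thm:PBW}, that the pulled-back filtration makes the image of $\sX^{\mathrm{tw}}_{\le 0}$ equal to the degree-zero part of the image of $\gr\sX^{\mathrm{tw}}$, and that \cref{lem:sigma_on_g}(2) identifies the degree-zero part of $\fg^{\mathrm{ext}}[u]^\sigma$ with $(\fg^{\mathrm{ext}})^\sigma$. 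No gap.
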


\subsection{Representations on cohomology groups of \texorpdfstring{$\sigma$}{σ}-quiver varieties}

By its definition, $\sX^{\mathrm{tw}}$ has a representation
$\scH\otimes\mathfrak H = H^*_{\TT_{\varpi_{i_1}}}(\fM(\varpi_{i_1}))\otimes
\cdots\otimes H^*_{\TT_{\varpi_{i_n}}}(\fM(\varpi_{i_n}))\otimes
H^*_{\TT_{\bw^0}}(\fM^\sigma(\bw^0))$.
As in \cref{thm:tensor}, it is isomorphic to $H^{\TT_\bw}_*(\fM^\sigma(\bw)^{\rho\ge 0})$,
where $\bw = \sum_k \varpi_{i_k} + \sigma(\varpi_{i_k}) + \bw^0$ and
$\rho\colon \CC^\times\to A_\bw$ is an appropriate character.
As in \cref{subsec:Y_rep_coh}, we have the following:
\begin{Theorem}
  Pushforward homomorphisms with respect to the inclusion
  $\fL^\sigma(\bw)\hookrightarrow \fM^\sigma(\bw)^{\rho\ge 0}\hookrightarrow \fM^\sigma(\bw)$
  \begin{equation*}
    H^{\TT_\bw}_*(\fL^\sigma(\bw)) \to H^{\TT_\bw}_*(\fM^\sigma(\bw)^{\rho\ge 0})\to
    H^{\TT_\bw}_*(\fM^\sigma(\bw)) 
  \end{equation*}
  are homomorphisms of $\sX^{\mathrm{tw}}$-modules. The leftmost and rightmost ones, after
  replacing $\TT_\bw$ by $\CC^\times_\hbar\times G_\bw^\sigma$, are also $\sX^{\mathrm{tw}}$-modules,
  and forgetting homomorphisms $H^{\CC^\times_\hbar\times G_\bw^\sigma}(\fL^\sigma(\bw)\text{ or } \fM^\sigma(\bw)) \to H^{\TT_\bw}_*(\fL^\sigma(\bw)\text{ or } \fM^\sigma(\bw))$
  are homomorphisms of $\sX^{\mathrm{tw}}$-modules.
\end{Theorem}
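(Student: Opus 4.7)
The strategy is to mirror the argument of \cref{subsec:Y_rep_coh}, which treats the completely analogous statement for $\sX = \sX_{\mathrm{MO}}$ acting on equivariant (co)homology of ordinary quiver varieties. By \eqref{eq:h:39}, the generating series for $\sX^{\mathrm{tw}}$ factors as
\begin{equation*}
  \sS_F(u) = \sT_{F^\sigma}(-u)^{-1}\, K_F(u)\, \sT_F(u),
\end{equation*}
in which the outer $\sT$-factors are Yangian operators whose action on the non-localized equivariant homology of the ordinary quiver varieties occurring as $A_\bw$-fixed components of $\fM^\sigma(\bw)$, and of their products with $\fM^\sigma(\bw^0)$, has already been analyzed in \cref{subsec:Y_rep_coh}; the auxiliary factor $\mathfrak H = H^*_{\TT_{\bw^0}}(\fM^\sigma(\bw^0))$ is inert under them by Assumption~\ref{assum:induced-polarization}. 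The entire problem is therefore to establish three properties for the middle factor $K_F(u)$: that its expansion coefficients at $u = \infty$ (a) act on the non-localized equivariant homology of $\fM^\sigma(\bw)$, (b) preserve the cycles pushed forward from $\fL^\sigma(\bw)$ and from the attracting set $\fM^\sigma(\bw)^{\rho\ge 0}$, and (c) are $G_\bw^\sigma$-equivariant.

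For (a) and (b) I would adapt the cycle-theoretic argument of \cref{lem:alg_stable-envelope}. By \cref{thm:stable-envelope} together with \cite[\S4 and Remark~4]{tensor2}, each of the two stable-envelope factors in
\begin{equation*}
  K_F(u) = \Stab^{-1}_{-\mathfrak C, \varepsilon}\circ \Stab_{\mathfrak C,\varepsilon} = \Stab^{\tau}_{\mathfrak C, \varepsilon}\circ \Stab_{\mathfrak C,\varepsilon}
\end{equation*}
(using the unitarity \cite[Th.~4.4.1]{MR3951025}) is represented by the Lagrangian class in $X\times X^A$ supported on \eqref{eq:h:45}, where $X = \fM^\sigma(\bw)$ and $A = \CC^\times_u$ is the rank-one torus whose chamber interchanges $F[u]$ with $F^\sigma[-u]$. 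The convolution product $Z^\tau\circ Z$ lands inside the Steinberg-type variety $X^A\times_{X_0^A} X^A$, so although the convolution projection $p_{13}$ is not proper globally it becomes proper upon restriction to $A$-fixed points; hence the coefficients of the expansion of $K_F(u)$ at $u = \infty$ are genuine non-localized Borel--Moore classes on $X^A\times_{X_0^A} X^A$. Such Steinberg-type classes act by convolution on $H_*^{\TT_\bw}(\fM^\sigma(\bw))$, preserve the cycles supported on $\fL^\sigma(\bw) = \pi^{-1}(0)$, and send $H_*^{\TT_\bw}(\fM^\sigma(\bw)^{\rho\ge 0})$ into itself, since this attracting set is the $\pi$-preimage of the corresponding attracting set in $\fM_0^\sigma(\bw)$.

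For (c), the stable envelope defining $K_F(u)$ uses only the rank-one subtorus $A\subset G^\sigma_\bw$ together with $\CC^\times_\hbar$, while the underlying Lagrangian cycle \eqref{eq:h:45} is $G^\sigma_\bw$-invariant; so the entire construction lifts to $\CC^\times_\hbar\times G^\sigma_\bw$-equivariant homology and the forgetting homomorphism automatically intertwines the $\sX^{\mathrm{tw}}$-action. The main obstacle I anticipate lies in step (b): in the Yangian setting $\fL(\bw)$ is known to be Lagrangian so that the Steinberg-type formalism is standard, whereas here $\fL^\sigma(\bw)$ may fail to be Lagrangian by \cite[Rem.~6.2.2(3)]{MR3900699}. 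One must therefore argue directly from the scheme-theoretic containment $Z^\tau\circ Z\subset X^A\times_{X_0^A} X^A$ read off from \eqref{eq:h:45}, and in particular verify $\pi$-compatibility of all the attracting sets that appear in the reflection-equation diagram \eqref{eq:h:36}; this is the technically delicate part of the argument.
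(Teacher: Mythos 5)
Your proposal is correct and is essentially the paper's own argument: the paper gives no separate proof but simply says ``As in \cref{subsec:Y_rep_coh}'', and your write-up is exactly that argument transported to the twisted setting --- factor $\sS_F(u)$ via \eqref{eq:h:39}, handle the $\sT$-factors by \cref{subsec:Y_rep_coh}, and treat $K_F(u)$ by the same Steinberg-type support and properness-on-fixed-points reasoning applied to the rank-one stable envelopes. One small remark: your worry about $\fL^\sigma(\bw)$ failing to be Lagrangian is not actually an obstacle, since the convolution argument only uses $\fL^\sigma(\bw)=\pi^{-1}(0)$ together with the containment of the support in $X^A\times_{X_0^A}X^A$ coming from \eqref{eq:h:45}, not any dimension count.
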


\subsection{Three additional $RTT$ relations}

The $RTT$ relations for $F_1[u]\otimes F_2^\sigma[-v]\otimes \scH$,
$F_1^\sigma[-u]\otimes F_2[v]\otimes \scH$,
$F_1^\sigma[-u]\otimes F_2^\sigma[-v]\otimes \scH$ are
\begin{align}
  R_{F_1, F_2^{\sigma}}(u+v) \sT_{F_1}(u) \sT_{F_2^\sigma}(-v) &=
    \sT_{F_2^\sigma}(-v) \sT_{F_1}(u) R_{F_1, F_2^{\sigma}}(u+v),\label{eq:h:6}\\
  R_{F_1^{\sigma},F_2}(-u-v) \sT_{F_1^\sigma}(-u) \sT_{F_2}(v) &=
    \sT_{F_2}(v)\sT_{F_1^\sigma}(-u) R_{F_1^{\sigma},F_2}(-u-v),\label{eq:h:7}\\
  R_{F_1^\sigma, F_2^\sigma}(-u+v) \sT_{F_1^\sigma}(-u) \sT_{F_2^\sigma}(-v) &=
    \sT_{F_2^\sigma}(-v)\sT_{F_1^\sigma}(-u) R_{F_1^\sigma, F_2^\sigma}(-u+v).\label{eq:h:8}
\end{align}
\begin{NB}
  Old notation:

Here $R^{\bullet\sigma}(u-v) = R_{F_1,F_2^\sigma}(u-v)$, 
$R^{\sigma\bullet}(u-v)=R_{F_1^\sigma,F_2}(u-v)$, 
$R^{\sigma\sigma}(u-v) = R_{F_1^\sigma,F_2^\sigma}(u-v)$.
\end{NB}%

Indeed, \eqref{eq:h:7} can be deduced from \eqref{eq:h:6} as follows.
Conjugating \eqref{eq:h:6} by $(12)$ and exchanging names of $F_1$, $F_2$,
we get
\begin{NB}
\begin{equation*}
  R^{\bullet\sigma}(u-v)_{21} \sT_2(u)\sT_1^\sigma(v) =
  \sT_1^\sigma(v) \sT_2(u) R^{\bullet\sigma}(u-v)_{21}.
\end{equation*}
\end{NB}%
\begin{equation*}
  R_{F_2, F_1^{\sigma}}(u+v)_{21} \sT_{F_2}(u)\sT_{F_1^\sigma}(-v) =
  \sT_{F_1^\sigma}(-v) \sT_{F_2}(u) R_{F_2, F_1^{\sigma}}(u+v)_{21}.
\end{equation*}
By the unitarity \eqref{eq:h:25}, we have
$R_{F_2, F_1^{\sigma}}(u+v)_{21} = R_{F_1^{\sigma}, F_2}(-u-v)^{-1}$, hence
\begin{NB}
\begin{equation*}
  \sT_2(u)\sT_1^\sigma(v) R^{\sigma\bullet}(v-u) =
  R^{\sigma\bullet}(v-u) \sT_1^\sigma(v) \sT_2(u).
\end{equation*}
\end{NB}%
\begin{equation*}
  \sT_{F_2}(u)\sT_{F_1^\sigma}(-v) R_{F_1^{\sigma}, F_2}(-u-v) =
  R_{F_1^{\sigma}, F_2}(-u-v) \sT_{F_1^\sigma}(-v) \sT_{F_2}(u).
\end{equation*}
By exchanging $u$, $v$, we get \eqref{eq:h:7}.

\subsection{Reflection algebra}
\label{subsec:reflection-algebra}

\begin{NB}
The $K$-matrices $\{ K_{F}(u), K_{F_1}(u_1) \}$ satisfies the
reflection equation. That means that the following diagram commutes:
\begin{equation*}
  \begin{CD}
    F[u]\otimes F_1[u_1] @>{1\otimes K_{F_1}(u_1)}>> F[u]\otimes F_1^\sigma[-u_1] \\
    @V{R_{F, F_1}(u-u_1)}VV @VV{R_{F, F_1^\sigma}(u+u_1)}V \\
    F[u]\otimes F_1[u_1] @. F[u]\otimes F_1^\sigma[-u_1] \\
    @V{K_{F}(u)\otimes 1}VV @VV{K_{F}(u)\otimes 1}V \\
    F^\sigma[-u]\otimes F_1[u_1] @. F^\sigma[-u]\otimes F_1^\sigma[-u_1] \\
    @V{(R_{F_1, F^\sigma})_{21}(u + u_1)}VV @VV{(R_{F_1^\sigma, F^\sigma})_{21}(u - u_1)}V \\
    F^\sigma[-u]\otimes F_1[u_1] @>>{1\otimes K_{F_1}(u_1)}> F^\sigma[-u]\otimes F_1^\sigma[-u_1]\rlap{.}
  \end{CD}
\end{equation*}
We define the operator $\sS_{F}(u)$ as the composition of the three vertical arrows
on the left and right hand sides. The left and right sides are equal up to exchange of $F_1[u_1]$ and $F_1^\sigma[-u_1]$.
Therefore the use of this notation is justified. More formally, we consider $\sS_{F}(u)$ as
collection of operators $F[u]\otimes \scH \to F^\sigma[-u]\otimes \scH$ for all $\scH$
soon below. The above two vertical arrows are $F_1$ and $F_1^\sigma$ components of $\sS_{F}(u)$.

We use bases of $F$ and $F^\sigma$ to regard matrix entries of $\sS_{F}(u)$ as generating functions
of operators on $F_1[u_1]$.
Then we interpret the above diagram as
$K_{F_1}(u_1)\colon F_1[u_1]\to F_1^\sigma[-u_1]$ intertwines operators $\sS_{F}(u)$.

A bit more generally, let us increase the number of factors in $\scH$.

We move from the chamber $\{ u > u_1 > u_2 > \cdots > u_n > 0 > -u_n > \cdots
   > -u_2 > -u_1\}$ to the chamber $\{ u_1 > u_2 > \cdots > u_n > 0 > -u_n > \cdots
   > -u_2 > -u_1 > u \}$. A special case is $\{ u > 0\}$ to $\{ 0 > u\}$.
   It happens on the module $F[u]\otimes\scH$, where $\scH$ is
   $F_{i_1}[u_1]\otimes F_{i_2}[u_2]\otimes\dots\otimes F_{i_n}[u_n]$.
   Say $n=0$, we have
    \begin{equation*}
      \begin{CD}
        F[u] @>x>> F[u] \\
        @V{K_F(u)}VV @VV{K_F(u)}V \\
        F^\sigma[-u] @>>x> F^\sigma[-u]
      \end{CD}
    \end{equation*}
    for $x\in\sX^{\mathrm{tw}}$, which is a matrix coefficient of $\sS_{F'}(u)$
    above with $F$, $F_1$ are replaced by $F'$, $F$ respectively.
    
    For $n=1$, we have
    \begin{equation*}
      \begin{CD}
        F[u]\otimes F_1[u_1] @>{\Delta(x)=\sum x_{(1)}\otimes x_{(2)}}>> F[u]\otimes F_1[u] \\
        @V{(12)R_{F, F_1}(u-u_1)}VV @VV{(12)R_{F, F_1}(u-u_1)}V \\
        F_1[u_1]\otimes F[u] @>>{\sum x_{(1)}\otimes x_{(2)}}> F_1[u]\otimes F[u] \\
        @V{1\otimes K_F(u)}VV @VV{1\otimes K_F(u)}V \\
        F_1[u_1]\otimes F^\sigma[-u] @>>{\sum x_{(1)}\otimes x_{(2)}}> F_1[u_1]\otimes F^\sigma[-u] \\
        @V{(12) R_{F_1, F}(u_1+u)}VV @VV{(12) R_{F_1, F}(u_1+u)}V \\
        F^\sigma[-u]\otimes F_1[u_1] @>>{\sum x_{(1)}\otimes x_{(2)}}> F^\sigma[-u]\otimes F_1[u_1]\rlap{.}
      \end{CD}
    \end{equation*}
    We now stop exchange of factors:
    \begin{equation*}
      \begin{CD}
        F[u]\otimes F_1[u_1] @>{\Delta(x)=\sum x_{(1)}\otimes x_{(2)}}>> F[u]\otimes F_1[u] \\
        @V{R_{F, F_1}(u-u_1)}VV @VV{(12)R_{F F_1}(u-u_1)}V \\
        F[u]\otimes F_1[u_1]@>>{\Delta^{\mathrm{op}}(x) = \sum x_{(2)}\otimes x_{(1)}}> F[u]\otimes F_1[u] \\
        @V{K_F(u)\otimes 1}VV @VV{K_F(u)\otimes 1}V \\
        F^\sigma[-u]\otimes F_1[u_1] @>>{\Delta^{\mathrm{op}}(x) = \sum x_{(2)}\otimes x_{(1)}}> F^\sigma[-u]\otimes F_1[u_1] \\
        @V{(R_{F_1, F^\sigma})_{21}(u + u_1)}VV @VV{(R_{F_1, F^\sigma})_{21}(u + u_1)}V \\
        F^\sigma[-u]\otimes F_1[u_1] @>>{\Delta(x)=\sum x_{(1)}\otimes x_{(2)}}> F^\sigma[-u]\otimes F_1[u_1]\rlap{.}
      \end{CD}
    \end{equation*}
    Here we have used $(12)R_{F^\sigma F_1}(u+u_1)(12)
    = (R_{F_1 F^\sigma})_{21}(u+u_1)$ by the definition.

    Now inductively, we consider
    \begin{equation*}
      \sS_F(u) \defeq 
       (R_{F_{i_1} F^\sigma})_{21}(u+u_1) 
      \cdots
      (R_{F_{i_n} F^\sigma})_{21}(u+u_n)\,K_F(u)\,
      R_{F, F_{i_n}}(u-u_n)
      \cdots
      R_{F, F_{i_1}}(u-u_1).
    \end{equation*}
    As for $\sT_F(u)$, we consider operators for all possible choices of $\scH$ simultaneously.
    Note that $K_F(u)$ does not depend on the choice of $\scH$. Therefore we
    get $\sT_{F^\sigma}(-u)^{-1} K_F(u) \sT_F(u) - \id \in \frac{\hbar}u 
    \Hom_{\Bbbk}(F, F^\sigma)\otimes\prod_{\scH}\End_{\Bbbk[u_1,u_2,\dots, u_n]}(\scH)[[u^{-1}]]$.
    Here $\sT_F^\sigma(-u)$ is defined as in \eqref{eq:h:12} with $F$ replaced by $F^\sigma$:
    \begin{equation*}
      \begin{split}
       \sT_{F^\sigma}(-u) & \defeq 
      R_{F^\sigma,F_{i_n}}(-u-u_n) R_{F^\sigma, F_{i_{n-1}}}(-u-u_{n-1})\cdots
      R_{F^\sigma,F_{i_1}}(-u-u_1) \\
      &= \left[(R_{F_{i_1},F^\sigma})_{21}(u+u_1) (R_{F_{i_2},F^\sigma})_{21}(u+u_2)\cdots
      (R_{F_{i_n},F^\sigma})_{21}(u+u_n)\right]^{-1}.
      \end{split}
    \end{equation*}

    More generally, we could have a $\sigma$-quiver variety $\fM^\sigma_\zeta(\bw)$
    with $\CC^\times$-action, where $\CC^\times$ is given by a cocharacter in $G^\sigma_\bw$.
    Then the fixed point set is of form $\fM^\sigma_\zeta(\bw^0)\times \fM_\zeta(\bw^1)$
    with $\bw = \bw^0 + \bw^1 + \sigma(\bw^1)$. We have a $K$-matrix
    \begin{equation*}
      K(u) \in \End(H^*_{\CC^\times_\hbar}(\fM^\sigma_\zeta(\bw^0))\otimes
      H^*_{\CC^\times_\hbar}(\fM_\zeta(\bw^1)))(u).
    \end{equation*}
    It could be possible that the first factor have at most only $\grpO(1)^N$-action,
    but still nontrivial. An example should be written *****.
\end{NB}%

By the construction of the $R$-matrix from the stable envelope, $\sS(u)$ apparently
satisfies the reflection equation
\begin{NB}
\begin{NB2}
\begin{equation}
  R_{21}^{\sigma\sigma}(u-v)\sS_2(v) R^{\sigma\bullet}(-u-v) \sS_1(u)
  = \sS_1(u) R^{\bullet\sigma}_{21}(-u-v) \sS_2(v) R(u-v),
\end{equation}
\end{NB2}%
\begin{equation}
  R_{21}^{\sigma\sigma}(u-v)\sS_1(u) R^{\bullet\sigma}(u+v) \sS_2(v)
  = \sS_2(v) R^{\bullet\sigma}_{21}(u+v) \sS_1(u) R(u-v),
\end{equation}
\end{NB}%
\begin{equation}\label{eq:h:42}
  \begin{multlined}
  \sS_{F_2}(u_2) R_{F_2,F_1^{\sigma}}(u_1+u_2)_{21} \sS_{F_1}(u_1)
   R_{F_1, F_2}(u_1-u_2) \qquad\qquad\qquad\\
 \qquad\qquad\qquad
  = R_{F_2^\sigma,F_1^\sigma}(u_1-u_2)_{21}\sS_{F_1}(u_1) 
  R_{F_1, F_2^{\sigma}}(u_1+u_2) \sS_{F_2}(u_2),    
  \end{multlined}
\end{equation}
as (rational) operator from $F_1[u_1]\otimes F_2[u_2]\otimes \scH\otimes\mathfrak H$
to $F_1^\sigma(-u_1)\otimes F_2^\sigma(-u_2)\otimes\scH\otimes\mathfrak H$.

\begin{NB}
The right hand side of \eqref{eq:h:1} gives an isomorphism
$F_1[u]\otimes F_2[v]\otimes \scH\to F_1^\sigma[-u]\otimes
F_2^\sigma[-v]\otimes \scH$ intertwines $\sY^\sigma$ actions as
\begin{equation*}
  \begin{CD}
    F_1[u] \otimes F_2[v] \otimes \scH @>{(\Delta\otimes\id)\Delta}>>
    F_1[u] \otimes F_2[v] \otimes \scH\\
    @V{R_{F_1F_2}(u-v)}VV @VV{R_{F_1F_2}(u-v)}V  \\
    F_1[u]\otimes F_2[v] \otimes \scH @>{(\Delta^{\mathrm{op}}\otimes\id)\Delta}>>
    F_1[u]\otimes F_2[v] \otimes \scH\\
    @V{\sS_{F_1}(u)_{13}}VV @VV{\sS_{F_1}(u)_{13}}V \\
    F_1^\sigma[-u]\otimes F_2[v] \otimes \scH @>{(\Delta^{\mathrm{op}}\otimes\id)\Delta}>>
    F_1^\sigma[-u]\otimes F_2[v] \otimes \scH\\
    @V{R^{\bullet\sigma}_{21}(u+v)}VV @VV{R^{\bullet\sigma}_{21}(u+v)}V  \\
    F_1^\sigma[-u] \otimes F_2[v] \otimes \scH @>{(\Delta\otimes\id)\Delta}>>
    F_1^\sigma[-u] \otimes F_2[v] \otimes \scH\\
    @V{\sS_{F_2}(v)_{23}}VV @VV{\sS_{F_2}(v)_{23}}V \\
    F_1^\sigma[-u] \otimes F_2^\sigma[-v] \otimes \scH @>{(\Delta\otimes\id)\Delta}>>
    F_1^\sigma[-u] \otimes F_2^\sigma[-v] \otimes \scH
    \rlap{.}
  \end{CD}
\end{equation*}
Note that $\sY^\sigma$ is a left coideal subalgebra with respect to
the coproduct $\Delta$, hence $(\Delta^{\mathrm{op}}\otimes\id)\Delta$ in the second
horizontal arrow is in $\sY\otimes\sY^\sigma$, when applied to $F_2[v]\otimes (F_1[u]\otimes \scH)$
after conjugating by $(12)$. Therefore it commutes
with $\sS_{F_1}(u)_{13} = (12)(\id\otimes\sS_{F_1}(u))(12)$. The same remark applies, more obviously, to the bottom
commutative square.

Similarly the left hand side gives an intertwiner
\begin{equation*}
  \begin{CD}
    F_1[u] \otimes F_2[v] \otimes \scH @>{\Delta}>>
    F_1[u] \otimes F_2[v] \otimes \scH\\
    @V{\sS_{F_2}(v)_{23}}VV @VV{\sS_{F_2}(v)_{23}}V  \\
    F_1[u]\otimes F_2^\sigma[-v] \otimes \scH @>{\Delta}>>
    F_1[u]\otimes F_2^\sigma[-v] \otimes \scH \\
    @V{R^{\bullet\sigma}(u+v)}VV @VV{R^{\bullet\sigma}(u+v)}V \\
    F_1[u]\otimes F_2^\sigma[-v] \otimes \scH @>{\Delta^{\mathrm{op}}}>>
    F_1[u]\otimes F_2^\sigma[-v] \otimes \scH \\
    @V{\sS_{F_1}(u)_{13}}VV @VV{\sS_{F_1}(u)_{13}}V  \\
    F_1^\sigma[-u] \otimes F_2^\sigma[-v] \otimes \scH
    @>{\Delta^{\mathrm{op}}}>> F_1^\sigma[-u] \otimes F_2^\sigma[-v]\otimes \scH \\
    @V{R^{\sigma\sigma}_{21}(u-v)}VV @VV{R^{\sigma\sigma}_{21}(u-v)}V \\
    F_1^\sigma[-u] \otimes F_2^\sigma[-v] \otimes \scH @>{\Delta}>>
    F_1^\sigma[-u] \otimes F_2^\sigma[-v] \otimes \scH
    \rlap{.}
  \end{CD}
\end{equation*}
The reflection equation \eqref{eq:h:1} means that two intertwiners
coincide.
\end{NB}%

Let us formally derive the reflection equation
from the basic one \eqref{eq:h:1} and the $RTT$ relations, 
though the argument is well-known.
Namely we show the following lemma, then \eqref{eq:h:42} follows by induction.

\begin{Lemma}\label{lem:embedding}
  If $\{ \sS_F(u)\}$ is a solution of the reflection equation \eqref{eq:h:42}, then
\begin{equation*}
  \begin{NB}
    \text{old version} 
    \sT^\sigma(-u)\sS(u)\sT(u)^{-1}\qquad\end{NB}%
    \sT_{F^\sigma}(-u)^{-1}\sS_F(u)\sT_F(u)
\end{equation*}
is also a solution of the reflection equation.
\end{Lemma}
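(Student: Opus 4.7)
The plan is to substitute $\sS'_F(u) = \sT_{F^\sigma}(-u)^{-1}\sS_F(u)\sT_F(u)$ into both sides of the reflection equation \eqref{eq:h:42} and reduce to the reflection equation for $\sS$ itself by commuting the $\sT$ and $\sT^\sigma$ factors past the four $R$-matrices, using the four $RTT$-type relations \eqref{eq:h:5}, \eqref{eq:h:6}, \eqref{eq:h:7}, \eqref{eq:h:8} together with the unitarity \eqref{eq:h:25}. This is the standard Sklyanin-type ``dressing'' argument in the $RTT$ framework for twisted Yangians, cf.\ \cite[Prop.~2.2.1, Th.~2.2.2]{MR2355506}.

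After substitution, both sides of \eqref{eq:h:42} become alternating products of the operators $\sT$, $\sT^\sigma{}^{-1}$, $\sS$, and $R$. Note that $\sS_{F_i}(u_i)$ acts only on the $F_i$ tensor factor and on the ``$\sigma$-part'' $\mathfrak H$, while $\sT_{F_j}(u_j)$ and $\sT_{F_j^\sigma}(-u_j)^{-1}$ act on $F_j$ and on the ``ordinary'' part $\scH$. In particular, for $i\ne j$ the $\sS$-factors commute with the corresponding $\sT$'s; so the only genuine non-commutativity that needs to be managed is between the $\sT$'s and the four $R$-matrices appearing in \eqref{eq:h:42}, and this is exactly what the $RTT$ relations encode. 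The unitarity \eqref{eq:h:25} is used to rewrite $R_{F_2,F_1^\sigma}(u_1+u_2)_{21}$ and $R_{F_2^\sigma,F_1^\sigma}(u_1-u_2)_{21}$ as inverses of the standardly ordered $R$-matrices appearing in \eqref{eq:h:7} and \eqref{eq:h:8}, after which the RTT moves apply directly.

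Pushing systematically every $\sT_{F_i^\sigma}(-u_i)^{-1}$ to the far left and every $\sT_{F_i}(u_i)$ to the far right through each $R$-matrix and each $\sS$-factor it meets, the LHS of \eqref{eq:h:42} for $\sS'$ becomes
\[
\bigl[\,\text{outer string of }\sT_{F_i^\sigma}(-u_i)^{-1}\text{'s}\,\bigr]\ \cdot\ [\text{LHS of }\eqref{eq:h:42}\text{ for }\sS]\ \cdot\ \bigl[\,\text{outer string of }\sT_{F_i}(u_i)\text{'s}\,\bigr].
\]
Performing the analogous manipulation on the RHS yields the same outer strings (possibly after a further application of \eqref{eq:h:5} or \eqref{eq:h:8} to reorder the two $\sT$-factors within each string), with the bracketed middle factor replaced by the RHS of \eqref{eq:h:42} for $\sS$. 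The reflection equation for $\sS$ then implies the reflection equation for $\sS'$.

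The main obstacle, and the part requiring careful bookkeeping, is to verify that the outer $\sT$-strings produced on the two sides are indeed \emph{identical}, rather than merely conjugate by some intervening $R$-matrix. This is forced by the specific pattern of spectral parameters and the $21$-flips in \eqref{eq:h:42}: the very structure of the reflection equation, combined with the unitarity \eqref{eq:h:25}, is precisely what ensures that each $RTT$ move on the LHS has a matching move on the RHS that produces the same rearrangement of $\sT$'s outside the bracket. Once this matching is verified --- which is a direct computation following the template of \cite[Th.~2.2.2]{MR2355506} --- the lemma follows.
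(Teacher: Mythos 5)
Your proposal is correct and follows essentially the same route as the paper's proof: substitute the dressed operator into \eqref{eq:h:42}, use the commutativity of $\sS$-factors with $\sT$-factors acting on disjoint tensor slots, and move the $\sT$ and $\sT^\sigma{}^{-1}$ factors past the $R$-matrices via the $RTT$ relations \eqref{eq:h:5}--\eqref{eq:h:8} (with \eqref{eq:h:7} itself obtained from \eqref{eq:h:6} and unitarity \eqref{eq:h:25}) until the reflection equation for $\sS$ can be applied in the middle. The paper carries out exactly the bookkeeping you defer to \cite[Th.~2.2.2]{MR2355506}, with the minor cosmetic difference that the intermediate expressions keep the $\sT$-factors interleaved rather than fully pushed to outer strings.
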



\begin{proof}
  Let us introduce short-hands notations:
  \begin{gather*}
  R^{\bullet\sigma}(u+v) = R_{F_1,F_2^\sigma}(u+v), 
  R^{\sigma\bullet}_{21}(u+v)=R_{F_2,F_1^\sigma}(u+v)_{21},
  \\
  R^{\sigma\sigma}_{21}(u-v) = R_{F_2^\sigma,F_1^\sigma}(u-v)_{21},
  \\
  \sS_a(u) = \sS_{F_a}(u),
  \sT_a^\sigma(-u) = \sT_{F_a}(u), \sT_a(u) = \sT_{F_a}(u)
  \quad (a=1,2).
  \end{gather*}
\begin{NB}
  We need to show 
  \begin{equation}\label{eq:h:2}
    \begin{multlined}
      R_{21}^{\sigma\sigma}(u-v)\sT_2^\sigma(-v)\sS_2(v)\sT_2(v)^{-1}
      R^{\sigma\bullet}(-u-v)
      \sT_1^\sigma(-u)\sS_1(u) \sT_1(u)^{-1}\\
      = \sT_1^\sigma(-u) \sS_1(u) \sT_1(u)^{-1} R^{\bullet\sigma}_{21}(-u-v)
      \sT_2^\sigma(-v) \sS_2(v)\sT_2(v)^{-1}R(u-v).
    \end{multlined}
  \end{equation}
  We change the left hand side by first applying \eqref{eq:h:7}
  \begin{NB2}
  \(
    \sT_2(v)^{-1} R^{\sigma\bullet}(-u-v) \sT_1^\sigma(-u)
    = \sT_1^\sigma(-u) R^{\sigma\bullet}(-u-v) \sT_2(v)^{-1}
  \)
  \end{NB2}%
  to the middle part $\sT_2(v)^{-1} R^{\sigma\bullet}(-u-v) \sT_1^\sigma(-u)$,
  second using
  $\sS_2(v)\sT_1^\sigma(-u) = \sT_1^\sigma(-u)\sS_2(v)$,
  $\sT_2(v)^{-1}\sS_1(u) = \sS_1(u)\sT_2(v)^{-1}$%
  \begin{NB2}
    \begin{equation*}
      R^{\sigma\sigma}_{21}(u-v)\sT_2^\sigma(-v)
      \sT_1^\sigma(-u)\sS_2(v) R^{\sigma\bullet}(-u-v) \sS_1(u)
      \sT_2(v)^{-1}\sT_1(u)^{-1}
    \end{equation*}
  \end{NB2}%
  , then applying $RTT$ relation \eqref{eq:h:8} to the first three
  factors. The result is
  \begin{equation*}
    \sT_1^\sigma(-u)\sT_2^\sigma(-v) R^{\sigma\sigma}_{21}(u-v)
    \sS_2(v) R^{\sigma\bullet}(-u-v) \sS_1(u)
    \sT_2(v)^{-1}\sT_1(u)^{-1}.
  \end{equation*}
  We apply \eqref{eq:h:1} to the middle to get
  \begin{equation*}
    \sT_1^\sigma(-u)\sT_2^\sigma(-v) \sS_1(v) R^{\bullet\sigma}_{21}(-u-v)
    \sS_2(v) R(u-v) \sT_2(v)^{-1}\sT_1(u)^{-1}.
  \end{equation*}
  We apply the $RTT$ relation \eqref{eq:h:5} to the last three factors
  $R(u-v)\sT_2(v)^{-1}\sT_1(u)^{-1}$, and then use
  $\sT_2^\sigma(-v)\sS_1(v) = \sS_1(v) \sT_2^\sigma(-v)$. The result is
  \begin{equation*}
    \sT_1^\sigma(-u) \sS_1(v) \sT_2^\sigma(-v) R^{\bullet\sigma}_{21}(-u-v)
    \sS_2(v) \sT_1(u)^{-1} \sT_2(v)^{-1} R(u-v).
  \end{equation*}
  We apply $\sS_2(v) \sT_1(u)^{-1} = \sT_1(u)^{-1} \sS_2(v)$ and \eqref{eq:h:7}
  \begin{NB2}
  \begin{equation*}
    \sT_2^\sigma(-v)R_{21}^{\bullet\sigma}(-u-v) \sT_1(u)^{-1}
    = \sT_1(u)^{-1} R^{\bullet\sigma}_{21}(-u-v) \sT_2^\sigma(-v)
  \end{equation*}
  \end{NB2}%
  to this expression. The final result is the right hand side of
  \eqref{eq:h:2}.  
  \end{NB}

  We need to show 
  \begin{equation}\label{eq:h:2e}
    \begin{multlined}
      R_{21}^{\sigma\sigma}(u-v)\sT_1^\sigma(-u)^{-1}\sS_1(u)\sT_1(u)
      R^{\bullet\sigma}(u+v)
      \sT_2^\sigma(-v)^{-1} \sS_2(v) \sT_2(v)\\
      = \sT_2^\sigma(-v)^{-1} \sS_2(v) \sT_2(v) R^{\sigma\bullet}_{21}(u+v)
      \sT_1^\sigma(-u)^{-1} \sS_1(u)\sT_1(u)R(u-v).
    \end{multlined}
  \end{equation}
  We change the left hand side by first applying \eqref{eq:h:6}
  \begin{NB}
  \(
    \sT_1(u) R^{\bullet\sigma}(u+v) \sT_2^\sigma(-v)^{-1}
    = \sT_2^\sigma(-v)^{-1} R^{\bullet\sigma}(u+v) \sT_1(u)
  \)
  \end{NB}%
  to the middle part $\sT_1(u) R^{\bullet\sigma}(u+v) \sT_2^\sigma(-v)^{-1}$,
  second using
  $\sS_1(u)\sT_2^\sigma(-v)^{-1} = \sT_2^\sigma(-v)^{-1}\sS_1(u)$,
  $\sT_1(u)\sS_2(v) = \sS_2(v)\sT_1(u)$%
  \begin{NB}
    \begin{equation*}
      R^{\sigma\sigma}_{21}(u-v)\sT_1^\sigma(-u)^{-1}
      \sT_2^\sigma(-v)^{-1}\sS_1(u) R^{\bullet\sigma}(u+v) \sS_1(v)
      \sT_1(u)\sT_2(v)
    \end{equation*}
  \end{NB}%
  , then using $RTT$ relation \eqref{eq:h:8}. The result is
  \begin{equation*}
    \sT_2^\sigma(-v)^{-1}\sT_1^\sigma(-u)^{-1}R^{\sigma\sigma}_{21}(u-v)
    \sS_1(u) R^{\bullet\sigma}(u+v) \sS_2(v)
    \sT_1(u)\sT_2(v).
  \end{equation*}
  We apply \eqref{eq:h:42} to the middle to get
  \begin{equation*}
    \sT_2^\sigma(-v)^{-1}\sT_1^\sigma(-u)^{-1} \sS_2(v) R^{\bullet\sigma}_{21}(u+v) \sS_1(u) R(u-v)
    \sT_1(u)\sT_2(v).
  \end{equation*}
  We apply the $RTT$ relation \eqref{eq:h:5} to the last part
  $R(u-v)\sT_1(u)\sT_2(v)$, and then use
  $\sT_1^\sigma(-u)^{-1} \sS_2(v) = \sS_2(v) \sT_1^\sigma(-u)^{-1}$. We get
  \begin{equation*}
    \sT_2^\sigma(-v)^{-1} \sS_2(v) \sT_1^\sigma(-u)^{-1} R^{\bullet\sigma}_{21}(u+v)
    \sS_1(u) 
    \sT_2(v) \sT_1(u) R(u-v).
  \end{equation*}
  We apply $\sS_1(u) \sT_2(v) = \sT_2(v) \sS_1(u)$ and \eqref{eq:h:7}
  \begin{NB}
  \begin{equation*}
    \sT_1^\sigma(-u)^{-1} R_{21}^{\sigma\bullet}(u+v) \sT_2(v)
    = \sT_2(v) R^{\bullet\sigma}_{21}(u+v) \sT_1^\sigma(-u)^{-1}
  \end{equation*}
  \end{NB}%
  to this expression. The final result is the right hand side of
  \eqref{eq:h:2e}.
\end{proof}

Following \cite{MR3545488}, we define the extended reflection algebra.
\begin{Definition}
  The \emph{extended reflection algebra} $\sX^{\mathrm{ref}}$ is the algebra
  generated by $\{ s^{(r)}_{i;ab}\mid i\in I, a,b = 1,\dots, \dim F_i,
  r\ge 1\}$, subject to the relations \eqref{eq:h:42}. Here
  \(
    S_{F_i}(u) = \sigma + \left(\sum_{r\ge 1} s^{(r)}_{i;ab} u^{-r-1} \right)_{a,b}
  \)
  is regarded as an element of $\Hom(F_i, F_i^\sigma)\otimes \sX^{\mathrm{ref}}$ under
  the bases of $F_i$, $F_i^\sigma$.
\end{Definition}

When $\fg = \mathfrak{gl}_N$, this is introduced in \cite[\S2.13]{MR2355506} under
the name of \emph{extended twisted Yangian}. The terminology is
not consistent with ours, as our extended twisted Yangian is called just
twisted Yangian in \cite{MR2355506}. 

Since we have checked the reflection equation \eqref{eq:h:42} both geometric and
formal ways, we have a surjective homomorphism $\sX^{\mathrm{ref}}\twoheadrightarrow\sX^{\mathrm{tw}}$.

The kernel of $\sX^{\mathrm{ref}}\twoheadrightarrow\sX^{\mathrm{tw}}$ has been
understood in many examples, at least when $\sX^{\mathrm{tw}}$ is a version
in \cref{rem:smaller_ext-twist-Yang}(4), e.g., \cite[Th.~2.13.4]{MR2355506} for Olshanski
twisted Yangian, 
\cite[\S2.16, Examples 4-6]{MR2355506} for reflection equation algebras,\footnote{ 
$\sX^{\mathrm{ref}}$, $\sX^{\mathrm{tw}}$ are denoted by
$\mathrm{X}(\fg_N)$, $\mathrm{Y}(\fg_N)$ in \cite[Th.~2.13.4]{MR2355506} 
$\tilde{\mathcal B}(N,l)$, $\mathcal B(N,l)$ in \cite[\S2.16, Examples 4-6]{MR2355506}
respectively.
}
and
\cite[Th.~5.2]{MR3545488} for twisted Yangian
for symmetric pairs of type B,C,D.
But we do not pursue this direction here.

\begin{NB}
\begin{Example}
  The reflection equation for the twisted Yangian $\sY(\fg_N)$ (see
  \cite[(2.11)]{MR2355506}) is
  \begin{equation*}
    R(u-v) \sS_1(u) R^t(-u-v) \sS_2(v) = \sS_2(v) R^t(-u-v) \sS_1(u) R(u-v),
  \end{equation*}
  where $R^t$ is the transpose of $R$ with respect to the first or
  second factor. 
  The \eqref{eq:h:7} requires
\begin{equation*}
  R^{\sigma\bullet}(u-v) \sT_1^\sigma(u) \sT_2(v) = \sT_2(v)\sT_1^\sigma(u) R^{\sigma\bullet}(u-v).
\end{equation*}
If we look for such an equation in \cite{MR2355506}, the closest one is \cite[(1.30)]{MR2355506}:
\begin{equation*}
  \sT_2(v) R^t(u-v) \sT_1^t(u) = \sT_1^t(u) R^t(u-v) \sT_2(v).
\end{equation*}
Therefore it is natural to identify
\begin{equation}\label{eq:h:10}
  R^{\sigma\bullet}(u-v) = (R^t(u-v))^{-1},\quad
  \sT^\sigma(u) = \left(\sT^t(u)\right)^{-1}.
\end{equation}



%

Now we substitute \eqref{eq:h:10}, 
via $R^{\bullet\sigma}(u-v)= R^{\sigma\bullet}_{21}(v-u)^{-1}
= R^t(v-u)$,
into \eqref{eq:h:1}. We obtain
the defining equation of the coideal subalgebra in \cite[(2.11)]{MR2355506}, written above.

\begin{NB2}
  More directly, start from $R(u-v) \sT_1(u) \sT_2(v) = \sT_2(v) \sT_1(u) R(u-v)$, we
  conjugate by $(12)$ to obtain $R_{21}(u-v) \sT_2(u) \sT_1(v) = \sT_1(v) \sT_2(u) R_{21}(u-v)$.
  Take the transpose with respect to the second factor: $\sT_2^t(u) R_{21}(u-v)^t \sT_1(v) = \sT_1(v) R_{21}(u-v)^t \sT_2^t(u)$.
  Therefore $R_{21}(u-v)^t \sT_1(v) (\sT_2^t(u))^{-1} = (\sT_2^t(u))^{-1} \sT_1(v) R_{21}(u-v)^t$.
  We use $R_{21}(u-v)^t = R(u-v)^t$, hence we get \eqref{eq:h:6}.
\end{NB2}%

\cref{lem:embedding} below gives an embedding by
$\sT^\sigma(-u)^{-1}\sS(u)\sT(u)$. This coincides with \cite[Th.~2.4.3]{MR2355506} up to the application of the automorphism $\sT(u)\mapsto \sT^t(-u)$.
\end{Example}
\end{NB}%

\section{Examples -- instanton moduli spaces}\label{sec:ex1}

In this section we give examples of involutions on quiver varieties,
which give moduli spaces of $\SO$ and $\grpSp$ instantons on ALE spaces of type $\mathrm{A}_{\ell-1}$.
We also discuss type D and $\mathrm{E}_6$ briefly.
We compute the $K$-matrices of these examples. From the computation,
we can see that the twisted Yangian is the 
twisted Yangian $\sY(\mathfrak{o}_\ell)$ of Olshanski for $\SO$-instantons.
(See \cite[Chapter~2]{MR2355506} for the definition.)
On the other hand, we cannot choose an induced polarization for $\grpSp$-instantons,
except the case $\ell=2$. 

As a byproduct, we compute their Betti numbers of $\fM_\zeta^\sigma$ in terms of Young tableaux
in these examples.

\subsection{\texorpdfstring{$\grpSp$}{Sp}-instanton moduli spaces}

We consider type $\mathrm{A}_{\ell-1}$ quiver variety and name linear maps as follows:
\begin{equation}\label{eq:31}
  \begin{tikzcd}[execute at end picture={
      \draw[dashed,thin]
      ([yshift=2ex]$(\tikzcdmatrixname-1-3.north)!0.5!
      (\tikzcdmatrixname-1-4.north)$)
      --
      ([yshift=-2ex]$(\tikzcdmatrixname-2-3.north)!0.5!
      (\tikzcdmatrixname-2-4.north)$);
      }]
      V_1 \arrow[r,yshift=.5ex,"C_{\frac32}"] 
      \arrow[d,xshift=.5ex,near end,"D_{\frac12}"]
      &
      V_2 \arrow[l,yshift=-.5ex,"D_{\frac32}"]
      \arrow[r,yshift=.5ex,"C_{\frac52}"]
      &
      \phantom{V}
      \arrow[r,no head,very thick,dotted]
      \arrow[l,yshift=-.5ex,"D_{\frac52}"]
      &
      \phantom{V}
      \arrow[r,yshift=.5ex,"C_{\ell-\frac52}"] &
      V_{\ell-2} \arrow[l,yshift=-.5ex,"D_{\ell-\frac52}"]
      \arrow[r,yshift=.5ex,"C_{\ell-\frac32}"] & V_{\ell-1}
      \arrow[l,yshift=-.5ex,"D_{\ell-\frac32}"]
      \arrow[d,xshift=.5ex,"-C_{\ell-\frac12}"]
      \\
      W_1 
      \arrow[u,xshift=-.5ex, "C_{\frac12}"]
      &&\phantom{V}&\phantom{V}&& 
      W_{\ell-1}
      \arrow[u,xshift=-.5ex, near start, "D_{\ell-\frac12}"]
  \end{tikzcd}
\end{equation}
We also set $V_0 = W_1$, $V_\ell = W_{\ell-1}$ for notational convention.
The involution $\invast$ is given by $\invast[i] = \ell-i$, as indicated in
the middle broken line. We take $\sigma' = \id$.
We choose the linear orientation so that $C_{3/2}$, $C_{5/2}$, \dots,
$C_{\ell-3/2}$ are in the orientation. We put the $(-)$ sign on $C_{\ell-1/2}$
so that the defining equation has a uniform form.

We assume $\dim V_0 = \dim V_1 = \dots = \dim V_\ell$. We set this
number as $\bw_1$. Let
\begin{equation*}
  \bM 
  \defeq \bigoplus_{i=1}^\ell \Hom(V_{i-1},V_i)\oplus \Hom(V_i,V_{i-1}),
  \qquad
  G 
  \defeq \prod_{i=1}^{\ell-1} \GL(V_i).
\end{equation*}
Thus the quiver variety $\fM_\zeta$ is the symplectic reduction of $\bM$ by $G$ 
at the moment map level $\zeta_\CC$ with respect to the stability condition $\zeta_\RR$.
It is not so important, but we use the following convention for $\zeta$. The moment map equations are
\begin{equation*}
  C_{i-\frac12} D_{i-\frac12} - \zeta_{i-\frac12,\CC}
  = D_{i+\frac12} C_{i+\frac12} - \zeta_{i+\frac12,\CC}
  \quad\text{for $i=1,\dots,\ell-1$}.
\end{equation*}
Therefore we have $\ell$ parameters $\zeta_{\frac12,\CC}$, \dots, $\zeta_{\ell-\frac12,\CC}$.
However, only differences $\zeta_{\frac12,\CC} - \zeta_{\frac32,\CC}$, \dots,
$\zeta_{\ell-\frac32,\CC} - \zeta_{\ell-\frac12,\CC}$ matter. Thus we only have $\ell-1$ independent parameters.
In this convention, the reflection functor $S_i$ for a simple reflection at $i$ exchanges
$\zeta_{i-\frac12}$ and $\zeta_{i+\frac12}$, and leaves other parameters fixed.

As in \cref{sec:inv-quiver}, we define an involution $\sigma = S_{w_0}\circ\invast\circ t
$ on $\bM$.
Since $\bw - \bC\bv = 0$ for our choice of dimension vectors, we have $w_0\ast \bv^* = \bv$, hence
the target space of $\sigma$ has the same dimension vectors as the source space.
Next we choose $(-)$ as a type, hence we put a symplectic form $(\ ,\ )$ on
$W_1\oplus W_{\ell-1}$ such that
$W_1$ and $W_{\ell-1}$ are lagrangian subspaces. (Hence $W_1$ to $W_{\ell-1}$ are dual to each other.)
Then $W_{\ell-1}^*\oplus W_1^* \cong W_1\oplus W_{\ell-1}$. Therefore we can identify
the source and target spaces of $\sigma$ as a variety. Therefore $\sigma$ is regarded as an involution
on $\fM_\zeta(\bv,\bw)$.

Concretely, $\sigma$ changes, in the categorical quotient
$\fM_0$, the linear maps as
\begin{equation*}
  \begin{tikzcd}
      W_{\ell-1}^* \arrow[r,yshift=.5ex,"\lsp{t}C_{\ell-\frac12}"] &
      V_{\ell-1}^* \arrow[l,yshift=-.5ex,"\lsp{t}D_{\ell-\frac12}"]
      \arrow[r,yshift=.5ex,"\lsp{t}C_{\ell-\frac32}"]
      &
      \phantom{V}
      \arrow[r,no head,very thick,dotted]
      \arrow[l,yshift=-.5ex,"\lsp{t}D_{\ell-\frac32}"]
      &
      \phantom{V}
      \arrow[r,yshift=.5ex,"\lsp{t}C_{\frac32}"] &
      V_{1}^* \arrow[l,yshift=-.5ex,"\lsp{t}D_{\frac32}"]
      \arrow[r,yshift=.5ex,"\lsp{t}C_{\frac12}"] & 
      W_1^*
      \arrow[l,yshift=-.5ex,"-\lsp{t}D_{\frac12}"]
      \\
      W_1 \arrow[u,"\varphi_{1}","\cong"'] &
      &&&
      &
      W_{\ell-1}\rlap{,} \arrow[near start, u,"\varphi_{{\ell-1}}","\cong"']
  \end{tikzcd}
\end{equation*}
where $\varphi_{1}$, $\varphi_{{\ell-1}}$ are isomorphisms induced by the symplectic form on $W_1\oplus W_{\ell-1}$.
Hence they satisfy $\lsp{t}{\varphi}_{1} = -\varphi_{{\ell-1}}$.

The $\sigma$-quiver variety $\fM^\sigma_\zeta$ is the
moduli space of framed $\grpSp(2\bw_1)$-instantons on an ALE space of
type $\mathrm{A}_{\ell-1}$ with appropriate Chern class and boundary
condition, as reviewed in \cref{sec:ADHM}.

\begin{Example}\label{ex:ALE}
  Suppose $\bw_1 = 1$. Then $\fM_\zeta$ is an ALE space of type
  $\mathrm{A}_{\ell-1}$ with period $\zeta$. The involution $\sigma$ is trivial
  in this case. Let us give two proofs of this statement.

  The first one is computational. We have
  $\pi\colon\fM_\zeta\to\fM_{(0,\zeta)}$, which is a resolution of
  singularities \cite{Kr}. It is enough to check the assertion for
  $\fM_{(0,\zeta)}$. The coordinate ring of $\fM_{(0,\zeta)}$ is
  generated by
  \[
    x \defeq C_{\ell-\frac12}\dots C_{\frac12}, \quad
    y \defeq D_{\frac12} \dots D_{\ell-\frac12}, \quad
    z \defeq D_{\frac12} C_{\frac12}
  \]
  with relation
  $x y = (z + \zeta_{\frac12,\CC})\dots (z +
  \zeta_{\ell-\frac12,\CC})$. Note that $x$, $y$, $z$ are mapped to
  $x$, $y$,
  $C_{\ell-\frac12} D_{\ell-\frac12} = z + \zeta_{\ell-\frac12,\CC} -
  \zeta_{\frac12,\CC}$.
  In view of the definition \cite[\S1.1]{MR1775358}, these are sent to
  $x$, $y$, $z$ by the reflection functor.
  \begin{NB}
    In the symmetric case, $x$, $y$, $z$ are mapped to $-x$, $-y$, $z$.
  \end{NB}%

  We have an alternative, more conceptual explanation as follows: By
  \cite{KN}, $\fM_\zeta$ is the moduli space of framed
  $\U(2)$-instantons with fixed Chern class and boundary condition at
  $\infty$. In this example, the first Chern class $c_1$ is zero, and
  a self-dual boundary condition. Therefore an instanton is
  automatically isomorphic to its dual. 
  %
  $\SU(2)$-instantons are automatically
  $\grpSp(2)$-instantons as well, since $\SU(2)\cong\grpSp(2)$.

  Thus we have $\fM_\zeta^\sigma = \fM_\zeta$ is an ALE space of type $\mathrm{A}_{\ell-1}$ with period $\zeta$ in this case.
\end{Example}

\begin{Remark}[Relation to Coulomb branches]\label{rem:coulombC}
  Consider the following quiver gauge theory of type $\mathrm{C}_{\bw_1}$:
  \begin{equation}\label{eq:h:24}
    \begin{tikzpicture}[baseline=(current bounding box.center),decoration={brace,amplitude=7},scale=.5,
    dot/.style = {draw, circle, thick, minimum size=#1,
              inner sep=0pt, outer sep=0pt},
    dot/.default = 6pt  
                    ] 
    \node[dot=20pt] at (0,0) (v1) {$1$};
    \node[dot=20pt] at (4,0) (v2) {$2$};
    \draw[-] (v1.east) -- (v2.west);
    \draw[-] (v2.east) -- (6,0);
    \draw[-,very thick,dotted] (6.5,0) -- (8.5,0);
    \node[dot=20pt] at (11,0) (v3) {$\scriptstyle\bw_1\!-\!1$};
    \draw[-] (9,0) -- (v3.west);
    \node[dot=20pt] at (15,0) (v4) {$\bw_1$};
    \draw[double,double distance=.3ex,->] (v4.west) -- (v3.east);
    \node[draw,regular polygon,regular polygon sides=4,thick,
    inner sep=0pt, outer sep=0pt,minimum size=24pt]
    at (15,-3) (w4) {$\ell$};
    \draw[-] (v4.south) -- (w4.north);
    \end{tikzpicture}
  \end{equation}
  By \cite{Henrique}, the Coulomb branch of this theory is a symmetric bow variety. By
  the choice of the above dimension vectors, it is nothing but the
  categorical quotient $\bM^\sigma\tslash G^\sigma$ in this case.
  Here
  \begin{equation*}
    G^\sigma = \prod_{a=1}^{\ell/2-1} \GL(V_i)\times\grpO(V_{\ell/2})
    \quad\text{if $\ell$ is even},\qquad
    \prod_{a=1}^{(\ell-1)/2} \GL(V_i)
    \quad\text{if $\ell$ is odd}.
  \end{equation*}
  Indeed, the bow diagram is
  \begin{equation*}
    \begin{tikzpicture}[baseline=0pt,decoration={brace,amplitude=7},scale=.8]
    \draw[-,thick] (0,0) -- (2.5,0);
    \node[label=above:$0$] at (0,0) {};
    \node at (0.5,0) (g1) {$\vphantom{j^X}\boldsymbol\times$};
    \node[label=above:$1$] at (1,0) {};
    \node at (1.5,0) (g2) {$\vphantom{j^X}\boldsymbol\times$};
    \node[label=above:$2$] at (2,0) {};
    \draw[-,thick,dotted] (2.5,0) -- (4.5,0);
    \draw[-,thick] (4.5,0) -- (8.5,0);
    \node[label=above:$\bw_1\!-\!1$] at (4.8,0) {};
    \node at (5.5,0) (g3) {$\vphantom{j^X}\boldsymbol\times$};
    \node[label=above:$\bw_1$] at (6,0) {};
    \node at (6.5,0) (g4) {$\vphantom{j^X}\boldsymbol\medcirc$};
    \node[label=above:$\bw_1$] at (7,0) {};
    \node at (7.5,0) (g5) {$\vphantom{j^X}\boldsymbol\medcirc$};
    \node[label=above:$\bw_1$] at (8,0) {};
    \draw[-,thick,dotted] (8.5,0) -- (9.5,0);
    \draw[dashed,thin] (9.5,-.5) -- (9.5,.5);
    \end{tikzpicture}
  \end{equation*}
  where we only draw the left side of the symmetry axis. The triangle
  parts give $T^*\GL(\bw_1)$, hence it means that we only consider the
  two-way part, and do not take the reduction by $\GL(\bw_1)$. It is
  nothing but \eqref{eq:31}, and the involution coincides with one
  used to define $\bM^\sigma$.

  We can put deformation parameters $\zeta_\CC$ for $\GL$-factors, but the number of parameters
  obtained in this way is at most $\lfloor (\ell-1)/2 \rfloor$. On the other hand,
  the number of parameters given by the flavor symmetry \cite[3(viii)]{2016arXiv160103586B} is
  $\ell-1$. This number of parameters is achieved in the definition of $\fM_\zeta^\sigma$ above.
  Indeed, we will show that $\fM_\zeta^\sigma$ is the deformed Coulomb branch in a separate publication.

  Being the deformed Coulomb branch of a quiver gauge theory of type $\mathrm{C}_{\bw_1}$,
  it is Beilinson-Drinfeld affine Grassmannian slice of type $\mathrm{C}_{\bw_1}$ by
  \cite[\S3(v)]{2016arXiv160403625B}. Its cohomology is related to a representation
  of the Langlands dual Lie algebra, i.e., $\mathfrak{so}(2\bw_1+1)$ by geometric Satake correspondence.
  Since we have $\ell$-dimensional framing at the rightmost vertex, the representation in question
  is the tensor product of $\ell$-copies of the spin representation.
  We expect that the twisted Yangian and $\mathfrak{so}(2\bw_1+1)$ commute,
  like in orthogonal skew Howe duality 
  \cite{2020arXiv200511299W,2022arXiv220809773A,2024arXiv240508126B}.
  This is because $\mathfrak{so}(2\bw_1+1)$ should be defined on
  $\boldsymbol\times$ parts in the bow diagram as in \cite{2018arXiv181004293N}
  while $\sX^{\mathrm{tw}}$ is defined on $\boldsymbol\medcirc$ parts.
\end{Remark}

This remark is a motivation to study this example of $\sigma$-quiver varieties. Indeed, the computation in
the next subsection was done some years ago.

\begin{Remark}\label{rem:SdualC}
  The categorical quotient for \eqref{eq:31} is \eqref{eq:h:23} with $+$, $-$ swapped
  when $\ell$ is even,
  or \eqref{eq:h:22} with $\Wedge^2$ replaced by $\Sym^2$ when $\ell$ is odd.
  Since the $S$-duality
  is expected to be the \emph{duality}, namely the $S$-dual of the $S$-dual is the
  original space, we expect the $S$-dual of (\ref{eq:h:23}, \ref{eq:h:22}) as
  the rightmost part of \eqref{eq:h:24}. However, this is a delicate point,
  as there is no consensus about the definition of Higgs branch of a \emph{non-symmetric}
  quiver gauge theory, as in \eqref{eq:h:24}. See \cite{Examples_of_S-dual}.
\end{Remark}

\subsection{Torus action}\label{subsec:torus_action}

We have an induced action of $G_\bw^\sigma = \GL(W_1)$ on $\fM_\zeta^\sigma$
by \cref{rem:framing-symmetry}.
Note that $g\in\GL(W_1)$ acts by $\lsp{t}g^{-1}$ on $W_{\ell-1}$ via the isomorphism
$W_{\ell-1}\cong W_1^*$.
\begin{NB}
  $C_{1/2}$, $D_{1/2}$ are changed to $C_{1/2} g_0^{-1}$, $g_0
  D_{1/2}$, and $C_{\ell-1/2}$, $D_{\ell-1/2}$ to
  $g_\ell C_{\ell-1/2}$, $D_{\ell-1/2} g_{\ell}^{-1}$.
  \begin{equation*}
    \lsp{t}C_{\ell-\frac12} \lsp{t}g_\ell = \lsp{t}C_{\ell-\frac12} g_0^{-1},
    \quad\text{etc}.
  \end{equation*}
  Therefore $g_\ell = \lsp{t} g_0^{-1}$.
\end{NB}%
It preserves the symplectic form, and the moment map is given by
\begin{equation*}
  \mu([C,D]) = - D_{\frac12} C_{\frac12} + \zeta_{\frac12,\CC}.
\end{equation*}
\begin{NB}
Here we introduce a shift $\zeta_{\frac12,\CC}$ so that it gives a
deformation when we glue this two way part to other parts and take
hamiltonian reduction to introduce a symmetric bow variety.
\end{NB}%
The introduction of the shift $\zeta_{\frac12,\CC}$ is a matter of the convention to match
with other parts of moment maps.

Let us fix a decomposition $W_1 = W_1(1)\oplus\dots \oplus W_1(\bw_1)$
into $1$-dimension subspaces. We have the corresponding decomposition
$W_{\ell-1} = W_{\ell-1}(-1)\oplus\dots\oplus W_{\ell-1}(-\bw_1)$ so that
$W_{\ell-1}(k) \cong W_1(-k)^*$ under $W_{\ell-1} \cong W_1^*$.
We have the maximal torus $A_{\bw_1}$ of $\GL(W_1)$ preserving the
above decomposition.

Let us consider a fixed point $x$ with respect to the
$A_{\bw_1}$-action. Analysis of fixed points in $\fM_\zeta$ is
explained in \cref{subsec:torus-fixed-points}.
If $W_1(k)$ is the eigenspace for an eigenvalue $t_k$, $W_{\ell-1}(-k)$ is
for $t_k^{-1}$. In particular, all eigenvalues in $W_0\oplus W_{\ell-1}$
are distinct, as $t_k\neq \pm 1$ for a generic element in $A_{\bw_1}$.
Therefore, at a fixed point $x = [(C,D)]$, vector spaces
$V_i$ ($i=1,\dots,\ell-1)$ decomposes as
\begin{equation*}
  V_i = V_i(1)\oplus \dots \oplus V_i(\bw_1)
  \oplus V_i(-1)\oplus \dots \oplus V_i(-\bw_1)
\end{equation*}
so that $C$, $D$ sends $V_i(k)$ to $V_{i\pm 1}(k)$. Let us denote
components for $k$ by $C(k)$, $D(k)$.
Then $x_k\defeq [(C(k), D(k))]$ gives a point of the quiver variety
with dimension vectors $(\dim V_1(k),\dots,\dim V_{\ell-1}(k))$,
$(\dim W_1(k),0,\dots,0,\dim W_{\ell-1}(k))$.
For example, if $k > 0$, the data look like
\begin{equation*}
  \begin{tikzcd}
    W_1(k) \arrow[r,yshift=.5ex,"C_{\frac12}(k)"] & V_1(k)
    \arrow[l,yshift=-.5ex,"D_{\frac12}(k)"]
    \arrow[r,yshift=.5ex,"C_{\frac32}(k)"] & \phantom{V} \arrow[r,no
    head,very thick,dotted] \arrow[l,yshift=-.5ex,"D_{\frac32}(k)"] &
    \phantom{V} \arrow[r,yshift=.5ex,"C_{\ell-\frac32}(k)"] &
    V_{\ell-1}(k) \arrow[l,yshift=-.5ex,"D_{\ell-\frac32}(k)"]
    \arrow[r,yshift=.5ex,"C_{\ell-\frac12}(k)=0"] & 0,
    \arrow[l,yshift=-.5ex,"D_{\ell-\frac12}(k)=0"]
  \end{tikzcd}
\end{equation*}
as $W_{\ell-1}(k) = 0$. This is a point in the quiver variety whose
framing dimension vector is $(1,0,\dots,0)$. It is well-known 
that the corresponding quiver variety is nonempty if and only if
\begin{equation}\label{eq:24}
  \dim V_1(k) = \dots = \dim V_i(k) = 1,\quad
  \dim V_{i+1}(k) = \dots = \dim V_{\ell-1}(k) = 0
\end{equation}
for some $i = 0,1,\dots,\ell-1$. Moreover it is a single point if it
is nonempty.
See e.g., \cite[Lemma~5.1]{MR1285530} for a geometric proof. 
In terms of a $\U(2\bw_1)$-instanton moduli space of $\fM_\zeta$, a
fixed point corresponds to a direct sum of line bundles
($\U(1)$-instantons) compatible with the framing.
Alternatively, 
we note that this set of quiver varieties realizes the vector representation $\CC^\ell$ of $\algsl_\ell$
so that the quiver variety for \eqref{eq:24} gives a weight space. Since all weights
are in Weyl group orbits of the highest weight, the corresponding quiver varieties
are points.
  
If the fixed point $x$ is in $\fM_\zeta^\sigma$, the decomposition is
preserved under $\sigma$. More precisely $x$ decomposes as $x_k$
($k= -\bw_1, \dots, -1, 1, \dots, \bw_1$), and the summand $x_k$ is
mapped to $x_{-k}$ under $\sigma$. Therefore if the summand for
$k > 0$ satisfies \eqref{eq:24} for $i$, the corresponding summand for
$-k$ satisfies
\begin{equation}\label{eq:25}
  \begin{multlined}
    \dim V_1(-k) = \dots = \dim V_i(-k) = 0,\\
    \dim V_{i+1}(-k) = \dots = \dim V_{\ell-1}(-k) = 1.
  \end{multlined}
\end{equation}
If these are satisfied, $x_k$ is sent to $x_{-k}$ indeed. In fact,
taking transpose, the diagram automorphism and the reflection functor
are defined more generally for quiver varieties with generic
parameter. Since relevant quiver varieties are single points, we only
need to check that dimension vectors above are exchanged by
$\sigma$. This check is trivial.
\begin{NB}
  Note that we already fix isomorphisms $V_0\to V_\ell^*$,
  $V_\ell\to V_0^*$. Given a line bundle $L$. Take the direct sum of
  its dual $E\defeq L\oplus L^*$. Then we have
  $\varPhi\colon E\to E^* = L^*\oplus L$ by
  $\varPhi = \begin{pmatrix} 0 & \id_{L^*} \\ -\id_L & 0
  \end{pmatrix}$.  Then
  $\varPhi^* = \begin{pmatrix} 0 & -\id_{L^*} \\ \id_L & 0
  \end{pmatrix} = - \varPhi$. Hence $E$ is a symplectic vector bundle.
\end{NB}%
  
In this way, a fixed point assigns an integer $i=0,\dots,\ell-1$ for
each $k=1,\dots, \bw_1$. Therefore we have $\ell^{\bw_1}$ fixed points.

Following \cite{MR1285530}, we assign row-increasing Young tableau $T$
to a fixed point. 
An each row corresponds to the summand $x_k$.
In order to avoid confusion, let us call the row for $x_k$ \emph{the
  $k$-th row}. Then we put $(-\bw_1)$-th row at the top, $(-\bw_1+1)$-th
row one below, and so on. After putting $(-1)$-st row, we put the $1$-st
row next below, $2$-nd row next, and so on. So the numbering of rows is
increasing if we go below.
The length of a row is equal to the numbering of the framed vector
space, which is $1$-dimensional.
Thus the $k$-th row with $k > 0$ (resp.\ $k < 0$) has length
$1$ (resp.\ $\ell-1$).
Each node of $T$ is numbered by integers between $1$ and $\ell$.
In each row, the sequence of numbers is strictly increasing.
A number $i$ appears in the $k$-th row if and only if
\begin{equation*}
  \begin{cases}
    \dim V_{i-1}(k) - \dim V_i(k) = 0 &
    \text{if $i \le $ the length of the row},\\
    \dim V_{i-1}(k) - \dim V_i(k) = 1 & \text{if $i > $ the length of the row}.
  \end{cases}
\end{equation*}
See \cite[p.120]{MR1285530}. In our case, for $k > 0$, the unique node
is numbered by $(i+1)$ when $i$ is given by \eqref{eq:24} for $V(k)$.
On the other hand, the $(-k)$-th row have nodes $1$, \dots,
$\ell$, excluding $(i+1)$ when
$i$ is given by \eqref{eq:25} for $V(-k)$.
The sum $\sum_k \dim V_i(k)$ is $\dim V_i$, hence is fixed. This
condition appears as a constraint
\begin{equation}\label{eq:26}
  \text{the cardinarity of the nodes with $i$ is $\bw_1$.}
\end{equation}
Row increasing Young tableaux $T$ with \eqref{eq:26} is bijective to
torus fixed points in $\fM_\zeta$ (\cite[Lem.~5.9]{MR1285530}).

Since $i$ for \eqref{eq:24} and \eqref{eq:25} are the same when a
fixed point is in $\fM_\zeta^\sigma$, we have an additional constraint
\begin{equation}\label{eq:27}
  \begin{minipage}{.8\linewidth}
    integers appearing the $k$-th row are those which do \emph{not}
  appear in $(-k)$-th row.
  \end{minipage}
\end{equation}
Here is an example:
\begin{equation*}
  \begin{ytableau}
    \none[-2] & \none & 2 & 3 & 4 & 5 \\
    \none[-1] & \none & 1 & 2 & 4 & 5 \\
    \none[1] & \none & 3 \\ \none[2] & \none & 1
  \end{ytableau}
\end{equation*}
with $\bw_1 = 2$, $\ell = 5$. The numbering of rows is written in the
left of the first row.

It is clear that these tableaux are determined by entries of $k$-th
rows with $k > 0$. Therefore we have $\ell^{\bw_1}$ fixed points.
We will record entries for $k < 0$ in order to express the formula of
Betti numbers by using results in \cite{MR1285530}.

Let $T(k,a)$ be the number assigned to the node at the $k$-th row,
the $a$-th column.
For a given pair $k$, $a$, we consider the following conditions on $l\neq k$
(\cite[Def.~5.10]{MR1285530}):
\begin{equation}\label{eq:28}
  \begin{minipage}{.8\linewidth}
    \begin{itemize}
    \item $T(l,a) < T(k,a)$ when $l < k$.
    \item $T(l,a+1) < T(k,a)$ when $l > k$, where we understand
      this inequality is not satisfied if there is no node at $(l,a+1)$.
    \item $T(k,a)$ does not appear in the $l$-th row in either cases.
    \end{itemize}
  \end{minipage}
\end{equation}
For the above tableau, the node $T(1,1)=3$ satisfies this condition
for $l=-1$, but not other $l=-2$, $2$.

In practice, if $k > 0 > l$, this condition is satisfied if and only
if $T(k,1)$ does not appear in the $l$-th row and $T(k,1)\neq 1$.
\begin{NB}
  If $T(k,1)$ does not appear in the $l$-th row, all other numbers between $1$ and $\ell$ appear.
  In particular, $1$ appears as $T(k,1) \neq 1$. Then $T(l,1) = 1 < T(k,1)$.
\end{NB}%

Let us define the \emph{charge} $l(T)$ by
\begin{equation*}
  l(T) = \# \{ (k,l,a) \mid \text{\eqref{eq:28} is satisfied}\}.
\end{equation*}
The Poincar\'e polynomial of $\fM_\zeta$ is equal to
$\sum_T t^{2 l(T)}$ by \cite[Th.~5.15]{MR1285530}. Here the sum is
over row increasing Young tableaux $T$ with \eqref{eq:26}.

For a tableau $T$ with \eqref{eq:27} we define $l_{\grpSp}(T)$ by
\begin{equation}\label{eq:34}
  \begin{aligned}
    l_{\grpSp}(T) = \# \{ &(k,-k,a) \mid \text{\eqref{eq:28} is satisfied}\}
    \\
    &+
    \frac12 \# \{ (k,l,a) \mid \text{$k\neq -l$ and \eqref{eq:28} is satisfied}\}.
  \end{aligned}
\end{equation}
Note that \eqref{eq:28} is satisfied for $(k,l,a)$ if and only if
it is so for $(-l,-k,a)$. Therefore this is an integer.

\begin{Theorem}\label{thm:fixed-point-set}
  \textup{(1)} The $\sigma$-quiver variety $\fM_\zeta^\sigma$ has dimension $\bw_1(\bw_1+1)$.

  \textup{(2)} The Poincar\'e polynomial of $\fM_\zeta^\sigma$ is
  \begin{equation*}
    \sum_{T: \eqref{eq:27}} t^{2l_{\grpSp}(T)}.
  \end{equation*}
  In particular, $\fM_\zeta^\sigma$ is connected.
\end{Theorem}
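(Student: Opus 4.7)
The plan is to prove both statements simultaneously via Białynicki-Birula decomposition of $\fM_\zeta^\sigma$. Since $\sigma$ preserves the symplectic form, $\fM_\zeta^\sigma$ is smooth and symplectic, and Kaledin's theorem \cite{MR2283801} (invoked earlier in the excerpt) ensures vanishing of odd cohomology; consequently the Poincaré polynomial is determined by the dimensions of the attracting cells at the torus fixed points. The action of $A_{\bw_1}$ on $\fM_\zeta^\sigma$ described in \cref{subsec:torus_action} furnishes the required torus.

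First I would invoke the bijection, established earlier in this section, between $A_{\bw_1}$-fixed points of $\fM_\zeta^\sigma$ and tableaux $T$ satisfying \eqref{eq:27}. For such $T$ and a generic cocharacter $\rho$ of $A_{\bw_1}$, the Białynicki-Birula contribution is $t^{2d_T}$ with $d_T = \dim(T_x \fM_\zeta^\sigma)^{\rho>0}$. Using $T_x \fM_\zeta^\sigma = (T_x \fM_\zeta)^\sigma$, the task reduces to identifying the $\sigma$-invariant part of the attracting tangent space of the ambient quiver variety at the fixed point attached to $T$.

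By \cite[Th.~5.15]{MR1285530}, the positive-weight tangent directions at that fixed point are indexed by triples $(k,l,a)$ satisfying \eqref{eq:28}. The involution $(k,l,a) \leftrightarrow (-l,-k,a)$ preserves \eqref{eq:28} and encodes the action of $\sigma$ on these directions, since $\sigma$ swaps the summands $x_k$ and $x_{-k}$ of the fixed point. When $k \neq -l$, the orbit is a pair of one-dimensional weight spaces interchanged by $\sigma$, with a one-dimensional $\sigma$-invariant subspace---this accounts for the factor $\tfrac12$ in \eqref{eq:34}. When $k = -l$, the weight space is already a one-dimensional subspace preserved by $\sigma$, and the main technical step will be to verify that $\sigma$ acts as $+1$ on it (this hinges on the $(-)$-type symplectic form on $W_1 \oplus W_{\ell-1}$ and is the principal obstacle). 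Granted this verification, $d_T = l_{\grpSp}(T)$, proving (2).

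For (1), $\dim \fM_\zeta^\sigma = 2 \max_T l_{\grpSp}(T)$. A direct count over orbits of the involution $(k,l) \mapsto (-l,-k)$ on pairs with $k,l \in \{\pm 1,\dots,\pm\bw_1\}$, $k \neq l$, together with exhibiting a tableau that triggers every orbit in \eqref{eq:28}, yields $\max_T l_{\grpSp}(T) = \bw_1(\bw_1+1)/2$; the cases $\bw_1 \le 2$ can be verified by hand against the formula. Connectedness then follows from the uniqueness of the tableau with $l_{\grpSp}(T) = 0$---namely the tableau whose positive rows are uniformly filled with $1$ and whose negative rows are correspondingly filled with $\{2,\dots,\ell\}$---giving $b_0 = 1$. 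An alternative to the ambient tangent analysis above, which would bypass the sign issue, is to realize $\fM_\zeta^\sigma$ via the symplectic quotient $\bM^\sigma \tslash G_\bv^\sigma$ of \cref{def:cat_quot} and compute the tangent complex directly in that presentation.
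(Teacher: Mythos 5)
Your overall strategy---torus fixed points indexed by tableaux, a Morse/Bia{\l}ynicki-Birula decomposition, and identification of the $\sigma$-invariant part of the tangent space via the involution $(k,l,a)\leftrightarrow(-l,-k,a)$ on the triples of \eqref{eq:28}---is the same as the paper's. But two steps you treat as routine are exactly where the content lies. First, the sign of $\sigma$ on $H^1(\mathcal C(k,-k))$, which you correctly flag as ``the principal obstacle,'' is left unresolved in your proposal; without it the coefficient of the first term of \eqref{eq:34} (and hence both the dimension count and the Poincar\'e polynomial) is undetermined. The paper settles it not by a direct computation with the symplectic form but by observing that $H^1(\mathcal C(k,-k))\oplus H^1(\mathcal C(-k,k))$ is the tangent space of the $\bw_1=1$ model, where $\sigma$ is globally trivial because $\grpSp(2)\cong\SU(2)$ (\cref{ex:ALE}); hence $\sigma=\id$ on that summand. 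You should either reproduce this reduction or supply an explicit matrix computation.

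Second, your derivation of (1) from $\dim\fM_\zeta^\sigma=2\max_T l_{\grpSp}(T)$ is unjustified. That identity holds for the ambient quiver variety because the repelling sets sweep out the lagrangian $\fL$, so the maximal Morse index is half the dimension; but the paper explicitly warns that $\fL^\sigma$ can have \emph{smaller} than half dimension, so the top degree of the Poincar\'e polynomial need not detect $\dim\fM_\zeta^\sigma$. The paper instead computes the full tangent space $\bigoplus_{k,l}H^1(\mathcal C(k,l))^\sigma$ at a fixed point (total dimension $2\bw_1+(\bw_1^2-\bw_1)=\bw_1(\bw_1+1)$) and notes that every connected component contains a fixed point, for which one needs the one-parameter subgroup to have everywhere-convergent flow; your ``generic cocharacter of $A_{\bw_1}$'' must be perturbed by the auxiliary scaling $\CC^\times$ of \cref{subsec:torus_action} (the $t^N$ factor) to guarantee this properness and to make the indices agree with \eqref{eq:28}. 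Finally, your fallback of computing tangent complexes on $\bM^\sigma\tslash G_\bv^\sigma$ does not bypass the sign issue: that quotient describes $\fM_0^\sigma$, whereas the fixed points live on $\fM_\zeta^\sigma$ with generic $\zeta$, where $\sigma$ involves the reflection functor $S_{w_0}$ and is not given by the na\"ive formula on $\bM$.
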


\begin{proof}
%
  By the hyper-K\"ahler rotation, the dimension and homology groups
  are independent of $\zeta$. Therefore we may assume $\zeta$ is of a
  form $(\zeta_{\RR},0)$.

  Let us first introduce a $\CC^\times$-action on $\fM_\zeta$ given by
  multiplication of $t^2$ on $C_{i+\frac12}$ for $i\neq 0$, $\ell-1$ and
  of $t$ on $C_{\frac12}$, $D_{\frac12}$, $C_{\ell-\frac12}$, $D_{\ell-\frac12}$
  for $t\in\CC^\times$. 
  This is the choice of the $\CC^\times$-action in \cite[\S5]{MR1285530}
  with a modification by $t^{-1}$ on all $\GL(V_i)$ ($i\neq 0,\ell$).
  The modification does not change the action on $\fM_\zeta$ as it
  is in $G$.
  It commutes with $\sigma$ on the categorical quotient $\fM_0$.
  Since $\pi$ is a birational map onto its image, the same is true
  for $\fM_\zeta$.
  Hence we have an induced $\CC^\times$-action on
  $\fM_\zeta^\sigma$. We perturb it by an action of a maximal torus
  $A_{\bw_1}$ in $\GL(V_0)$ consisting of diagonal matrices as
  \begin{equation*}
    \rho\colon \CC^\times \ni t \mapsto
    (t^N, t^{\ve_1},\dots, t^{\ve_{\bw_1}})\in
    \CC^\times \times A_{\bw_1}
  \end{equation*}
  with $N\gg \ve_{\bw_1} > \dots > \ve_{1} > 0$. Then
  $(\fM_\zeta^\sigma)^{\rho(\CC^\times)} =
  (\fM_\zeta^\sigma)^{\CC^\times\times A_{\bw_1}}$.
  This action is the restriction of the action used in \cite[\S5]{MR1285530}.
  Moreover $\lambda(t)[(C,D)]$ stays in a compact subset of $\fM_\zeta^\sigma$
  as $t\to 0$, hence converges to a fixed point. Therefore any
  connected component of $\fM_\zeta^\sigma$ contains a fixed
  point. Thus it is enough to show that the dimension of the tangent
  space at a fixed point is $\bw_1(\bw_1+1)$.
  
  Let us consider the three term complex \cite[(3.11)]{Na-alg},
  describing the tangent space at $[(C,D)]$. (The $0$-th and second
  cohomology vanish.)
  According to the decomposition $(C,D) = \bigoplus (C(k),D(k))$, the
  complex decomposes as $\bigoplus_{k,l} \mathcal C(k,l)$
  (cf.~\cite[(6.1)]{Na-alg}). The summand $\mathcal C(k,l)$ computes
  $\Ext^1(x_k,x_l)$,
  the extension between $x_k$ and $x_l$ for framed vector bundles. 
  See \cite[Remark~3.16]{Na-Tensor}.
  The involution $\sigma$ maps
  $\mathcal C(k,l)$ to $\mathcal C(-l,-k)$. In fact, $x_k$ is mapped
  to $x_{-k}$, and the order of the extension is exchanged as $\sigma$
  involves transpose.

  Let us give the dimension of the first cohomology of
  $\mathcal C(k,l)$. When $k > 0$, let $i(k)$ be the number $i$ in
  \eqref{eq:24}. Then
  \begin{equation*}
    \dim H^1(\mathcal C(k,l)) =
    \begin{cases}
      0 & \text{if $kl > 0$ and $i(|k|) = i(|l|)$}, \\
      1 & \text{if $kl > 0$ and $i(|k|) \neq i(|l|)$}, \\
      1 & \text{if $kl < 0$ and $i(|k|) = i(|l|)$}, \\
      0 & \text{if $kl < 0$ and $i(|k|) \neq i(|l|)$}.
    \end{cases}
  \end{equation*}
  This is checked by a direct calculation for the complex.
  Alternatively, we observe that $\dim H^1(\mathcal C(k,l)) = \dim H^1(\mathcal C(l,k))$
  by the duality given by the symplectic form as in \cref{rem:symplectic}(2) below,
  hence $\dim H^1(\mathcal C(k,l))$ is half of the dimension of the quiver variety
  containing the direct sum $x_k\oplus x_l$. The dimension of the relevant quiver variety
  can be computed as $0$ or $2$ by the formula
  $\bv\cdot (2\bw - \bC\bv)$ mentioned in \cref{subsec:quiver_variety}.

  Note that $\mathcal C(k,-k)$ is mapped to itself by $\sigma$,
  Therefore we need to determine the action of $\sigma$ on
  $H^1(\mathcal C(k,-k))$. Note that
  $H^1(\mathcal C(k,-k))\oplus H^1(\mathcal C(-k,k))$ gives the
  tangent space for the special case $\bw_1 = 1$, corresponding to a
  point $x_k\oplus x_{-k}$. We know that the involution is trivial in
  the case $\bw_1 = 1$ by \cref{ex:ALE}. Therefore $\sigma =
  \id$. Thus $\dim H^1(\mathcal C(k,-k))^\sigma = 1$.
  For other cases, $\mathcal C(k,l)$ and
  $\mathcal C(-l,-k)$ are different components
  in the tangent space $\bigoplus_{k,l} H^1(\mathcal C(k,l))$. 
  Hence $\left(H^1(\mathcal C(k,l))\oplus H^1(\mathcal C(-l,-k))\right)^\sigma
  \cong H^1(\mathcal C(k,l))$.
  From these, as well as the formula above, we see that
  $\bigoplus_{k,l} H^1(\mathcal C(k,l))^\sigma$ is
  $\bw_1(\bw_1+1)$-dimensional, as claimed.
  Indeed, the total dimension $\sum H^1(\mathcal C(k,l))$ is $2\bw_1^2$,
  which is $\dim \fM_\zeta$. Among pairs $(k,l)$, we have $\dim H^1(\mathcal C(k,-k))
  = 1$ for all $k$, which gives $2\bw_1$-dimensional contributions in total.
  Other pairs come with $(k,l)$ and $(-l,-k)$ ($k\neq l$), hence contribute
  as $(2\bw_1^2 - 2\bw_1)/2 = \bw_1^2 - \bw_1$. Thus we get
  $2\bw_1 + \bw_1^2 - \bw_1 = \bw_1(\bw_1+1)$.

  Let us turn to (3). The charge $l(T)$ is the Morse index of the
  moment map associated with the $\CC^\times$-action above at the
  fixed point corresponding to $T$ (\cite[Lemma~5.11]{MR1285530}). It
  is also equal to the sum of dimensions of negative weight subspaces
  of the tangent space at the fixed point. The tangent space
  decomposes as $\bigoplus_{k,l} H^1(\mathcal C(k,l))$ as above, and
  the contribution of $\mathcal C(k,l)$ to the index is equal to the
  number of $a$ such that \eqref{eq:28} is satisfied.

  If $k+l=0$, $\sigma$ is $\id$ on $H^1(\mathcal C(k,l))$. Therefore
  the $H^1(\mathcal C(k,l))^\sigma = H^1(\mathcal C(k,l))$, and the
  contribution to the index is the same as for $l(T)$. If $k+l\neq 0$,
  the contribution is half, as $\mathcal C(k,l)$ and
  $\mathcal C(-l,-k)$ are swapped by $\sigma$. Thus we have the
  formula for $l_{\grpSp}(T)$.

  Let us note that $T$ has $l_{\grpSp}(T) = 0$ if and only if the $k$-th row
  with $k>0$ (resp.\ $k < 0$) has node $1$ (resp.\ $2$, $3$, \dots,
  $\ell$). Since $T$ is unique, $\fM_\zeta^\sigma$ is connected.
\end{proof}

\begin{Remark}\label{rem:symplectic}
  (1)
  The above proof used \cref{ex:ALE} to show that $\sigma$ is identity
  on $H^1(\mathcal C(k,-k))$. Since the second argument in \cref{ex:ALE}
  just used $\grpSp(2)\cong\SL(2)$, the formula \eqref{eq:34} remains valid
  for tableaux of more general shapes, in other words, for arbitrary
  framing dimension vectors $\bw$.

  (2) Under the symplectic structure on the tangent space $\bigoplus H^1(\mathcal C(k,l))$,
  subspaces $H^1(\mathcal C(k,l))$ and $H^1(\mathcal C(l,k))$ are dual to each other, as
  they are $\Ext^1(x_k,x_l)$ and $\Ext^1(x_l,x_k)$. 
  (Small remark: $H^1(\mathcal C(k,k)) = 0$, hence we may assume $k\neq l$, so
  $H^1(\mathcal C(k,l))$ and $H^1(\mathcal C(l,k))$ are different, possibly zero, 
  components.))
  The isomorphism
  $(H^1(\mathcal C(k,l))\oplus H^1(\mathcal C(-l,-k)))^\sigma\cong H^1(\mathcal C(k,l))$
  appeared during the proof come with the `partner',
  $(H^1(\mathcal C(l,k))\oplus H^1(\mathcal C(-k,-l)))^\sigma\cong H^1(\mathcal C(l,k))$,
  hence the $\sigma$-fixed subspace is a symplectic subspace. This checks that
  $\fM_\zeta^\sigma$ is symplectic in a naive approach.
\end{Remark}

\subsection{\texorpdfstring{$K$}{K}-matrix}\label{subsec:K-matrix}

We assume $\zeta_\CC = 0$ and $\sqrt{-1}\zeta_{i,\RR} > 0$ for all $i$. We drop $\zeta$ from the notation
in quiver varieties.

Let us take $\bw_1 = 2$ and two dimensional torus $A = A_{\bw_1}$ in $\GL(W_1)$. Let us denote the
coordinates of the Lie algebra of the torus by $u_1$, $u_2$.

\subsubsection{Wall \texorpdfstring{$u_1=u_2$}{u_1=u_2}}\label{subsub:A}

We first consider the $R$-matrix, moving from the chamber $\{ u_1 > u_2\}$ to
$\{ u_1 < u_2\}$.

We may first take the fixed point set with respect to the diagonal $\Delta\CC^\times$ given by $u_1 = u_2$.
In this case, $(\fM^\sigma)^{\Delta\CC^\times}$ is a usual quiver variety of type $\mathrm{A}_{\ell-1}$ with
the framing dimension vector $(2,0,\dots,0)$. The dimension vector $\bv$ is of the form
$(2,2,\dots,2,1,\dots,1,0,\dots, 0)$, i.e., it is empty otherwise. Moreover, it is the cotangent bundle of the $\ell$-step flags in $\CC^2$, with
dimensions of subspaces specified in this manner. Therefore 
$T^*\proj^1$ if $1$ actually appears in the middle, and a point otherwise.
In order to parametrize fixed points $(\fM^\sigma)^A$, we can concentrate on the lower part of
the Young tableau:
\begin{equation}\label{eq:h:16}
\begin{ytableau}
  \none[1] & \none & s \\ \none[2] & \none & t
\end{ytableau}
\end{equation}
By the above discussion, the corresponding fixed point in contained in a single point component of
$(\fM^\sigma)^{\Delta\CC^\times}$ if and only if $s = t$.
It is contained in a component of the form $T^*\proj^1$ if and only if $s\neq t$. In that case,
there is another fixed point with the same tableau, but with $s$ and $t$ exchanged.

The normal bundle of $(\fM^\sigma)^A$ in $(\fM^\sigma)^{\Delta\CC^\times}$ is
$H^1(\mathcal C(1,2))\oplus H^1(\mathcal C(2,1)) \cong H^1(\mathcal C(-2,-1))\oplus H^1(\mathcal C(-1,-2))$,
which is $2$-dimensional if $s\neq t$, and $0$-dimensional if $s=t$.
We choose the polarization in the fiber direction of $T^*\proj^1$ for $s\neq t$. Concretely,
it is $H^1(\mathcal C(1,2))$ if $s < t$, and $H^1(\mathcal C(2,1))$ if $s > t$.
The normal bundle of $(\fM^\sigma)^{\Delta\CC^\times}$ in $\fM^\sigma$ is
$H^1(\mathcal C(1,-1))\oplus H^1(\mathcal C(-1,1)) \oplus H^1(\mathcal C(2,-2))\oplus H^1(\mathcal C(-2,2))
\oplus H^1(\mathcal C(1,-2))\oplus H^1(\mathcal C(-2,1))$. Summands $H^1(\mathcal C(1,-2))$,
$H^1(\mathcal C(-2,1))$ vanish if $s\neq t$, and are $1$-dimensional if $s=t$.
Other four summands are always $1$-dimensional.
\begin{NB}
  We need to fix the polarization.
\end{NB}%
We do not fix the polarization of this normal bundle at this stage. We assume that it is
an induced one. Therefore the choice must be consistent for the points corresponding
to $(s,t)$ and $(t,s)$.

The computation of the $R$-matrix was done in \cite[Ex.~4.1.2]{MR3951025}. It gives Yang's $R$-matrix
$R(u) = 1 - Pu^{-1}$ with $P = \sum_{ij} e_{ij}\otimes e_{ji}$ up to a normalization factor.
For example, if $\ell=2$, and under the base
\ytableausetup{smalltableaux}
\begin{ytableau}
    1 \\ 1
\end{ytableau},
\begin{ytableau}
    1 \\ 2
\end{ytableau},
\begin{ytableau}
    2 \\ 1
\end{ytableau},
\begin{ytableau}
    2 \\ 2
\end{ytableau},
\ytableausetup{nosmalltableaux}
the $R$-matrix is given by
\begin{equation*}
  R(u_1-u_2) = \begin{pmatrix} 1 & 0 & 0 & 0 \\ 0 & \frac{u_1-u_2}{u_1-u_2 + \hbar} & \frac{\hbar}{u_1-u_2 + \hbar} & 0 \\ 0 & \frac{\hbar}{u_1-u_2 + \hbar} &  \frac{u_1-u_2}{u_1-u_2 + \hbar} & 0 \\ 0 & 0 & 0 & 1\end{pmatrix}.
\end{equation*}
The sign of $\hbar$ is opposite, as our action scales
the cotangent fiber by $\bq^2 = \exp(\hbar)$ instead of $\exp(-\hbar)$.

\begin{NB}
In \cite[(1.12)]{MR2355506}, one uses $R(u) = 1 - Pu^{-1}$. Then
\begin{equation*}
 R(u_1-u_2) = \begin{pmatrix} 1 - \frac1{u_1-u_2} & 0 & 0 & 0 \\ 0 & 1 & -\frac{1}{u_1-u_2} & 0 \\ 0 & -\frac{1}{u_1-u_2} &  1 & 0 \\ 0 & 0 & 0 & 1 - \frac1{u_1-u_2}\end{pmatrix}. 
\end{equation*}
Multiplying $(1 - \frac1{u_1-u_2})^{-1} = \frac{u_1 - u_2}{u_1 - u_2 - 1}$, we get the above after
substitution $\hbar = -1$.
\end{NB}%

This computation is compatible with the identification of the Maulik-Okounkov Yangian with $\sY(\gl_\ell)$ in this case,
hence the $R$-matrix is the intertwiner for $\CC^\ell\otimes\CC^\ell$ with the coproduct $\Delta$ and the opposite $\Delta^{\mathrm{op}}$.

\begin{NB}
  Fixed points. we add $-2$, $-1$-st rows:
  \ytableausetup{smalltableaux}
$q_1 = \begin{ytableau}
    2 \\ 2 \\ 1 \\ 1
\end{ytableau}$,
$q_2 = \begin{ytableau}
    1 \\ 2 \\ 1 \\ 2
\end{ytableau}$,
$q_3 = \begin{ytableau}
    2 \\ 1 \\ 2 \\ 1
\end{ytableau}$,
$q_4 = \begin{ytableau}
    1 \\ 1  \\ 2 \\ 2
\end{ytableau}$.
\ytableausetup{nosmalltableaux}
$H^1(\mathcal C(1,-1))\oplus H^1(\mathcal C(-1,1))$ and
$H^1(\mathcal C(2,-2))\oplus H^1(\mathcal C(-2,2))$ are always
$2$-dimensional.
\begin{equation*}
\begin{split}
 &\dim \left(
  H^1(\mathcal C(1,2))\oplus H^1(\mathcal C(2,1))
  \xrightarrow[\sigma]{\cong} H^1(\mathcal C(-2,-1))\oplus H^1(\mathcal C(-1,-2))
  \right) = \begin{cases}
  2 & \text{for $q_2$, $q_3$} \\ 0 & \text{for $q_1$, $q_4$}  
  \end{cases}
  ,\\
&\dim \left(
  H^1(\mathcal C(1,-2))\oplus H^1(\mathcal C(-2,1))
  \xrightarrow[\sigma]{\cong} H^1(\mathcal C(2,-1))\oplus H^1(\mathcal C(-1,2))
  \right) = \begin{cases}
  0 & \text{for $q_2$, $q_3$} \\ 2 & \text{for $q_1$, $q_4$}
  \end{cases}.
\end{split}
\end{equation*}  
\end{NB}%

\begin{NB}
  Polarization : $\dim \fM^\sigma = 6$ as $\bw_1 = 2$.
  We chose the polarization at $q_2$, $q_3$ \emph{partially}:
  fibers direction in $H^1(\mathcal C(1,2))\oplus H^1(\mathcal C(2,1))
  \xrightarrow[\sigma]{\cong} H^1(\mathcal C(-2,-1))\oplus H^1(\mathcal C(-1,-2))$ ($2$-dim).
  More precisely, $H^1(\mathcal C(1,2)) \cong H^1(\mathcal C(-2,-1))$ on $q_2$, and
  $H^1(\mathcal C(2,1))\cong H^1(\mathcal C(-1,-2))$ on $q_3$.
  Both are the $C$-direction. And also simultaneously the cotangent fiber direction.
  Not specified in
  $H^1(\mathcal C(1,-1))\oplus H^1(\mathcal C(-1,1))$ ($2$-dim),
  $H^1(\mathcal C(2,-2))\oplus H^1(\mathcal C(-2,2))$ ($2$-dim),
  $H^1(\mathcal C(1,-2))\oplus H^1(\mathcal C(-2,1))
  \xrightarrow[\sigma]{\cong} H^1(\mathcal C(2,-1))\oplus H^1(\mathcal C(-1,2))$ ($0$-dim).
  A change of the polarization in these direction gives conjugation by
  $\operatorname{diag}(1,1,1,-1)$, $\operatorname{diag}(-1,1,1,1)$, but it does not change the $R$-matrix.

  Since $q_2$ and $q_3$ are contained in the same component of
  $(\fM^\sigma)^{\Delta\CC^\times}$, the polarization in the remaining directions
  must be the same for $q_2$ and $q_3$: it is the polarization of
  $(\fM^\sigma)^{\Delta\CC^\times}$ in $\fM^\sigma$.
  On $(\fM^\sigma)^{\Delta\CC^\times}$, the decomposition
  $H^1(\mathcal C(1,-1))\oplus H^1(\mathcal C(2,-2))$ does not make sense, as
  the decomposition $x_1\oplus x_2$ is not well-defined. On the other hand,
  the decomposition $(x_1\oplus x_2) \oplus (x_{-1}\oplus x_{-2})$ is well-defined.
  Hence we can distinguish $H^1(\mathcal C(1,-1))\oplus H^1(\mathcal C(2,-2))$ 
  and $H^1(\mathcal C(-1,1))\oplus H^1(\mathcal C(-2,2))$. The polarization must
  be the either choice.
\end{NB}%

\subsubsection{Wall \texorpdfstring{$u_1= -u_2$}{u_1= - u_2}}\label{subsub:B}
Next consider the $R$-matrix for the chamber $\mathfrak C = \{ u_1 + u_2 > 0\}$ to $-\mathfrak C = \{ u_1 + u_2 < 0\}$.
We first take the fixed point set with respect to the \emph{anti-diagonal}
$\Delta'\CC^\times$ given by $u_1 + u_2 = 0$. In this case, $(\fM^\sigma)^{\Delta'\CC^\times}$ is a usual quiver variety of type $\mathrm{A}_{\ell-1}$ with
the framing dimension vector $(1,0,\dots,0,1)$. It realizes the representation $\CC^\ell\otimes(\CC^\ell)^*$ of
$\algsl_\ell$ from the corresponding tensor product quiver variety.
When the dimension vector $\bv$ is $(1,\dots, 1)$, it is the ALE space of type $\mathrm{A}_{\ell-1}$, and
either point or empty otherwise.

We focus on the $(-2)$-nd and $1$-st rows of the Young tableaux:
\begin{equation}\label{eq:h:19}
  \begin{ytableau}
    \none[-2] & \none & t_1 & t_2 & t_3 & t_4 \\
    \none[1] & \none & s 
  \end{ytableau}
  \qquad (\ell = 5)
\end{equation}
If $s$ does not appear in the $(-2)$-nd row, the dimension vector is $(1,\dots, 1)$. Hence
it belongs to a component with $\bv = (1,\dots, 1)$. There are $\ell$ such tableaux, given by
the choice $s = 1,2,\dots,\ell$. Other tableaux are contained in a single point component of
$(\fM^\sigma)^{\Delta'\CC^\times}$.
Therefore, we can concentrate on the tableaux in the first case.

Let us name the fixed point corresponding to the tableaux with $s = i$ by $p_i$ ($i=1,\dots,\ell$).
It is easy to compute weights of tangent spaces at $p_i$:
\begin{gather*}
  p_1 : (-u + \frac{\ell\hbar}2, u + \frac{(2-\ell)\hbar}2),\quad
  p_2 : (-u + \frac{(\ell-2)\hbar}2, u + \frac{(4-\ell)\hbar}2),\quad \dots,
  \\
  \dots,\quad
  p_\ell : (-u + \frac{(2-\ell)\hbar}2, u + \frac{\ell\hbar}2),
\end{gather*}
where $u = u_1 + u_2$.
There are $(\ell-1)$ $\proj^1$'s in the exceptional divisor, $\Sigma_1$, \dots, $\Sigma_{\ell-1}$:
$\Sigma_i$ contains $p_i$ and $p_{i+1}$. There is an additional noncompact divisor $F_\infty$ passing
through $p_\ell$. It is the attracting direction, and the weight of its normal bundle is
$-u + \frac{(2-\ell)\hbar}2$. 

The normal bundle of $(\fM^\sigma)^A$ in $(\fM^\sigma)^{\Delta'\CC^\times}$ is
$H^1(\mathcal C(1,-2))\oplus H^1(\mathcal C(-2,1)) \cong H^1(\mathcal C(2,-1))\oplus H^1(\mathcal C(-1,2))$,
which is $2$-dimensional if $s$ does not appear in the $(-2)$-nd row, and $0$-dimensional otherwise.
\begin{NB}
  Their weights are $-u_2-u_1 = -u$ and $u_1 + u_2 = u$ respectively.
\end{NB}%
We choose the polarization in the repelling direction.
\begin{NB}
  Namely $H^1(\mathcal C(1,-2))$.
\end{NB}%
%
Then
\begin{gather*}
  \Stab_{\mathfrak C}(p_\ell) = [F_\infty],\quad \Stab_{\mathfrak C}(p_{\ell-1}) = [F_\infty] + [\Sigma_{\ell-1}], \dots,
  \\
  \Stab_{\mathfrak C}(p_1) = [F_\infty] + [\Sigma_{\ell-1}] + \dots + [\Sigma_1].
\end{gather*}
We have the similar formula for the opposite chamber $-\mathfrak C = \{ u < 0\}$.

The normal bundle of $(\fM^\sigma)^{\Delta'\CC^\times}$ in $\fM^\sigma$ is
$H^1(\mathcal C(1,-1))\oplus H^1(\mathcal C(-1,1)) \oplus
H^1(\mathcal C(2,-2))\oplus H^2(\mathcal C(-2,2)) \oplus
H^1(\mathcal C(1,2))\oplus H^1(\mathcal C(2,1))$.
We do not fix the polarization at this stage. We assume that it is an induced one.
Therefore the choice must be consistent for points $p_1$, \dots, $p_\ell$.

The $(\ell\times\ell)$-entries of the $R$-matrix corresponding to has the following form:
\begin{equation}\label{eq:h:17}
  \frac{1}{u + \frac{\ell \hbar}2}
  \begin{pmatrix}
    u + \frac{(\ell-2)\hbar}2 & -\hbar & -\hbar & \dots \\
    -\hbar & u + \frac{(\ell-2)\hbar}2 & -\hbar & \dots \\
    -\hbar & -\hbar & \ddots & \\
    \vdots & \vdots & & \ddots 
  \end{pmatrix}
  = \id - \frac{\hbar}{u+\frac{\ell \hbar}2}
  \begin{pmatrix}
    1 & 1 & 1 & \dots \\
    1 & 1 & 1 & \dots \\
    1 & 1 & \ddots & \\
    \vdots & \vdots & & \ddots 
  \end{pmatrix}.
\end{equation}
It is different from the $R(u_1-u_2)$ above in $\ell=2$ by sign in off-diagonal entries.
It is because of the choice of the polarization is different.

When we write all the $(\ell^2\times\ell^2)$ entries of the $R$-matrix, rows and columns
are indexed by fixed points $(\fM_\zeta^\sigma)^A$.
Concretely we return back
to Young tableaux \eqref{eq:h:16}, though
\eqref{eq:h:19} is natural in view of geometry.
The above $(\ell\times\ell)$-entries are
those for $s=t=1,\dots, \ell$.
For example, if $\ell=2$, we have
\begin{equation*}
R^{\bullet\sigma}(u_1+u_2) = \begin{pmatrix} \frac{u_1+u_2}{u_1+u_2+\hbar} & 0 & 0 & \frac{-\hbar}{u_1+u_2+\hbar} \\ 0 & 1 & 0 & 0 \\ 0 & 0 & 1 & 0 \\ \frac{-\hbar}{u_1+u_2+\hbar} & 0 & 0 & \frac{u_1+u_2}{u_1+u_2+\hbar} \end{pmatrix}
\end{equation*}

Up to the shift $u\mapsto u + {\ell\hbar}/2$, this coincides with
$R^t(-u_1-u_2) = 1 + Q(u_1+u_2)^{-1}$ with $Q = \sum_{ij} e_{ij}\otimes e_{ij}$
with $\hbar=-1$,
which appears in the reflection equation for the twisted Yangian
in \cite[\S2.2]{MR2355506}.
Note that $R^t(u-v)^{-1}$ is the $R$-matrix for the vector representation
of $\mathrm{Y}(\gl_N)$ and its left dual representation, say by the first displayed
equation in the proof of \cite[Prop.~1.9.8]{MR2355506}.
The shift $\ell\hbar/2$ of the spectral parameter is explained by the
difference of our convention in \cref{rem:shift_of_R}.

\begin{NB}
  We chose the polarization at $q_1$ ($=p_1$), $q_4$ ($=p_2$) \emph{partially}:
  $C$-direction in
  $H^1(\mathcal C(1,-2))\oplus H^1(\mathcal C(-2,1))
  \xrightarrow[\sigma]{\cong} H^1(\mathcal C(2,-1))\oplus H^1(\mathcal C(-1,2))$ ($2$-dim).
  More precisely, $H^1(\mathcal C(1,-2)) \cong H^1(\mathcal C(2,-1))$ on both $q_1$ and $q_4$.
  Both are the $C$-direction. The cotangent fiber directions are
  $H^1(\mathcal C(1,-2))$ for $q_1$ and $H^1(\mathcal C(-2,1))$ for $q_4$.
  Not specified in
  $H^1(\mathcal C(1,2))\oplus H^1(\mathcal C(2,1))
  \xrightarrow[\sigma]{\cong} H^1(\mathcal C(-2,-1))\oplus H^1(\mathcal C(-2,-1))$ ($0$-dim), 
  $H^1(\mathcal C(1,-1))\oplus H^1(\mathcal C(-1,1))$ ($2$-dim),
  $H^1(\mathcal C(2,-2))\oplus H^1(\mathcal C(-2,2))$ ($2$-dim).
  Like in the wall $\{ u_1 = u_2\}$, the change of the polarization in these direction gives conjugation by
  $\operatorname{diag}(1,1,-1,1)$, $\operatorname{diag}(1,-1,1,1)$, but it does not change the $R$-matrix.

  Since $q_1$ and $q_4$ are contained in the same component of
  $(\fM^\sigma)^{\Delta'\CC^\times}$, the polarization in the remaining directions
  must be the same for $q_1$ and $q_4$: it is the polarization of
  $(\fM^\sigma)^{\Delta'\CC^\times}$ in $\fM^\sigma$.

  The decomposition $(x_1\oplus x_{-2}) \oplus (x_2\oplus x_{-1})$ is well-defined, but
  further decomposition $x_1 \oplus x_{-2}$, $x_2\oplus x_{-1}$ is not well-defined.
  Hence we cannot decompose $H^1(\mathcal C(1,-1))\oplus H^1(\mathcal C(-2,2))$,
  but 
  \begin{equation*}
    \left(H^1(\mathcal C(1,-1))\oplus H^1(\mathcal C(-2,2))\right) \oplus
    \left(H^1(\mathcal C(-1,1))\oplus H^1(\mathcal C(2,-2))\right)
  \end{equation*}
  is well-defined.
  The cotangent fiber direction is $H^1(\mathcal C(1,-1))\oplus H^1(\mathcal C(2,-2))$ on $q_1$,
  $H^1(\mathcal C(-1,1))\oplus H^1(\mathcal C(-2,2))$ on $q_4$.
  Thus, the polarization can be chosen either of summands of the above displayed
  equation, or cotangent fiber direction.
\end{NB}%

\subsubsection{Wall \texorpdfstring{$u_1=0$}{u_1=0}}

Finally consider the $K$-matrix, moving from the chamber $\{ u_1 > 0\}$ to $\{ u_1 < 0\}$.
The computation is the same for the $K$-matrix $\{ u_2 > 0\} \to \{ u_2 < 0\}$, just
by exchanging $u_1$ and $u_2$. 

We take the fixed point set with respect to the second $\CC^\times = \CC^\times_{u_2}$ given by $u_1 = 0$,
which is the union of $\ell$ copies of ALE spaces of type $\mathrm{A}_{\ell-1}$. 
We have $\ell$ copies from the fixed points with respect to $u_2$, which is indexed by
the entry $t$ in the $2$-nd row.
We can focus on the $(-1)$-st and $1$-st rows of the Young tableaux, but the $(-1)$-st row
is determined by the $1$-st row anyway. There are $\ell$ tableaux for $t=1,\dots, \ell$.
Since the geometry is the same as for $\{ u_1 + u_2 > 0\}\to \{u_1 + u_2 < 0\}$,
we do not need to repeat the computation. We replace $u = u_1 + u_2$ by $u = u_1 - (-u_1) = 2u_1$ in \eqref{eq:h:17}.
Note that the $K$-matrix has the size $\ell\times\ell$, hence there are no other entries.
We choose the polarization as above, i.e., in the repelling direction of the normal bundle
of $(\fM^\sigma)^A$ in $(\fM^\sigma)^{\CC^\times_{u_2}}$, which is
$H^1(\mathcal C(1,-1))\oplus H^1(\mathcal C(-1,1))$.

The analysis of the wall $u_2 = 0$ is the same.

We still need to consider a polarization of
the normal bundle of $(\fM^\sigma)^{\CC^\times_{u_2}}$ in $\fM^\sigma$, which is
$H^1(\mathcal C(2,-2))\oplus H^1(\mathcal C(-2,2)) \oplus H^1(\mathcal C(1,2))\oplus H^1(\mathcal C(2,1))
\oplus H^1(\mathcal C(-1,2))\oplus H^1(\mathcal C(2,-1))$.
The summands $H^1(\mathcal C(1,2))$, and $H^1(\mathcal C(2,1))$ are $1$-dimensional if
$s\neq t$, and $0$-dimensional if $s=t$. 
Their weights, restricted to $\Lie\CC^\times_{u_2}$, are $u_2$ and $-u_2$ respectively.
On the other hand, the summands
$H^1(\mathcal C(-1,2))$, $H^1(\mathcal C(2,-1))$ are $0$-dimensional if $s\neq t$,
and $1$-dimensional if $s = t$.
Their weights, restricted to $\Lie\CC^\times_{u_2}$, are $u_2$ and $-u_2$ respectively.
If the polarization of the normal bundle of $(\fM^\sigma)^{\CC^\times_{u_2}}$ in $\fM^\sigma$ 
would satisfy Assumption~\ref{assum:induced-polarization}, it must
be constant on connected components of $(\fM^\sigma)^{\CC^\times_{u_2}}$.
Note that the polarization of factors $H^1(\mathcal C(2,-2))\oplus H^1(\mathcal C(-2,2))$ are already
fixed, when we discussed the normal bundle of $(\fM^\sigma)^A$ in $(\fM^\sigma)^{\CC^\times_{u_1}}$.
By the second part of Assumption~\ref{assum:induced-polarization}, this part of the polarization
must be independent of $s$.
Hence the polarization in the remaining factor must be either $u_2$ or $-u_2$ on each component,
i.e., on each fixed $t$.
Since both $s\neq t$ and $s=t$ cases
appear in each component, we should choose the polarization as 
$H^1(\mathcal C(1,2))\oplus H^1(\mathcal C(-1,2))$ or
the opposite $H^1(\mathcal C(2,1))\oplus H^1(\mathcal C(2,-1))$ on each $t$.
\begin{NB}
If this is an induced polarization, the choice must be given by a polarization as
$A$-weights, restricted to $u_1 = 0$. That is, $u_2 - u_1$ for $s\neq t$,
$u_1 + u_2$ for $s=t$ for the choice $H^1(\mathcal C(1,2))\oplus H^1(\mathcal C(-1,2))$.
$u_1 - u_2$ for $s\neq t$, $-u_1 - u_2$ for $s=t$ for the opposite choice.
\end{NB}%

\begin{NB}
  We chose the polarization at $q_1$, $q_2$, $q_3$, $q_4$ \emph{partially}:
  Polarization should be the same for $q_1$, $q_2$ and $q_3$, $q_4$, as they differ only
  $(-2)$, $2$-nd rows.
  We choose $C$-direction in
  $H^1(\mathcal C(1,-1))\oplus H^1(\mathcal C(-1,1))$ ($2$-dim).
  More precisely, $H^1(\mathcal C(1,-1))$ for $q_1$, $q_2$, $q_3$, $q_4$.
  Not specified in
  $H^1(\mathcal C(2,-2))\oplus H^1(\mathcal C(-2,2))$ ($2$-dim),
  $H^1(\mathcal C(1,2))\oplus H^1(\mathcal C(2,1))
  \xrightarrow[\sigma]{\cong} H^1(\mathcal C(-2,-1))\oplus H^1(\mathcal C(-1,-2))$, 
  $H^1(\mathcal C(1,-2))\oplus H^1(\mathcal C(-2,1))
  \xrightarrow[\sigma]{\cong} H^1(\mathcal C(2,-1))\oplus H^1(\mathcal C(-1,2))$,
  where the latter two summands are $0$ and $2$-dimensional for $q_1$, $q_4$,
  $2$ and $0$-dimensional for $q_2$, $q_3$.

  We consider $K(u)\otimes 1$:
  \begin{equation*}
    \begin{pmatrix}
      \frac{2u_1}{2u_1 + \hbar} & 0 & \frac{-\hbar}{2u_1 + \hbar} & 0 \\
      0 & \frac{2u_1}{2u_1 + \hbar} & 0 & \frac{-\hbar}{2u_1 + \hbar} \\
      \frac{-\hbar}{2u_1 + \hbar} & 0 & \frac{2u_1}{2u_1 + \hbar} & 0 \\
      0 & \frac{-\hbar}{2u_1 + \hbar} & 0 & \frac{2u_1}{2u_1 + \hbar}
    \end{pmatrix}
  \end{equation*}

  We \emph{cannot} change the polarization on the part $H^1(\mathcal C(1,2))\oplus H^1(\mathcal C(2,1))$
  for $q_2$ and $q_3$ freely, as the conjugation by
  $\operatorname{diag}(1,-1,1,1)$ or $\operatorname{diag}(1,1,-1,1)$,
  or even $\operatorname{diag}(1,-1,-1,1)$ changes
  $K(u)\otimes 1$.
  The same applies to $H^1(\mathcal C(1,-2))\oplus H^1(\mathcal C(-2,1))$ for 
  $q_1$ and $q_4$.

  The decomposition $(x_1\oplus x_{-1}) \oplus x_2\oplus x_{-2}$ is well-defined,
  but further decomposition $x_1\oplus x_{-1}$ is not well-defined.
  Hence we cannot decompose $H^1(\mathcal C(1,-2))\oplus H^1(\mathcal C(-1,-2))$,
  but
  \begin{equation*}
    \begin{split}
      &\phantom{{}={}}
      \left(H^1(\mathcal C(1,-2))\oplus H^1(\mathcal C(-1,-2))\right) \oplus
      \left(H^1(\mathcal C(-2,1))\oplus H^1(\mathcal C(-2,-1))\right)
      \\
      &\cong
      \left(H^1(\mathcal C(2,-1))\oplus H^1(\mathcal C(2,1))\right) \oplus
      \left(H^1(\mathcal C(-1,2))\oplus H^1(\mathcal C(1,2))\right)
    \end{split}
  \end{equation*}
  and
  \begin{equation*}
    H^1(\mathcal C(2,-2)) \oplus H^1(\mathcal C(-2,2))
  \end{equation*}
  are well-defined.

  On the other hand, on the wall $u_2 = 0$, we chose the polarization
  at $q_1$, $q_2$, $q_3$, $q_4$ \emph{partially}:
  Polarization should be the same for $q_1$, $q_3$ and $q_2$, $q_4$, as they differ only
  $(-1)$, $1$-st rows.
  We choose $C$-direction in
  $H^1(\mathcal C(2,-2))\oplus H^1(\mathcal C(-2,2))$ ($2$-dim).
  More precisely, $H^1(\mathcal C(2,-2))$ for $q_1$, $q_2$, $q_3$, $q_4$.
  Not specified in
  $H^1(\mathcal C(1,-1))\oplus H^1(\mathcal C(-1,1))$ ($2$-dim),
  $H^1(\mathcal C(1,2))\oplus H^1(\mathcal C(2,1))
  \xrightarrow[\sigma]{\cong} H^1(\mathcal C(-2,-1))\oplus H^1(\mathcal C(-2,-1))$, 
  $H^1(\mathcal C(1,-2))\oplus H^1(\mathcal C(-2,1))
  \xrightarrow[\sigma]{\cong} H^1(\mathcal C(2,-1))\oplus H^1(\mathcal C(-1,2))$.

  We consider $1\otimes K(u)$:
  \begin{equation*}
    \begin{pmatrix}
      \frac{2u_1}{2u_1 + \hbar} & \frac{-\hbar}{2u_1 + \hbar} & 0 & 0\\
      \frac{-\hbar}{2u_1 + \hbar} & \frac{2u_1}{2u_1 + \hbar} & 0 & 0\\
      0 & 0 & \frac{2u_1}{2u_1 + \hbar} & \frac{-\hbar}{2u_1 + \hbar} \\
      0 & 0 & \frac{-\hbar}{2u_1 + \hbar} & \frac{2u_1}{2u_1 + \hbar}
    \end{pmatrix}
  \end{equation*}

  The decomposition $x_1\oplus x_{-1} \oplus (x_2\oplus x_{-2})$ is well-defined.
  Then
  \begin{equation*}
    \begin{split}
      &\phantom{{}={}}
      \left(H^1(\mathcal C(1,2))\oplus H^1(\mathcal C(1,-2))\right) \oplus
      \left(H^1(\mathcal C(2,1))\oplus H^1(\mathcal C(-2,1))\right)
      \\
      &\cong
      \left(H^1(\mathcal C(-2,-1))\oplus H^1(\mathcal C(2,-1))\right) \oplus
      \left(H^1(\mathcal C(-1,-2))\oplus H^1(\mathcal C(-1,2))\right)
    \end{split}
  \end{equation*}
  and
  \begin{equation*}
    H^1(\mathcal C(1,-1)) \oplus H^1(\mathcal C(-1,1))
  \end{equation*}
  are well-defined.
\end{NB}%

For the wall $\{ u_2 = 0\}$, an induced polarization of the normal bundle
of $(\fM^\sigma)^{\CC^\times_{u_1}}$ in $\fM^\sigma$ would be either
$H^1(\mathcal C(1,2))\oplus H^1(\mathcal C(1,-2))
\cong H^1(\mathcal C(1,2))\oplus H^1(\mathcal C(2,-1))$ or
$H^1(\mathcal C(2,1))\oplus H^1(\mathcal C(-2,1))
\cong H^1(\mathcal C(2,1))\oplus H^1(\mathcal C(-1,2))$ on each component.

These conditions are not compatible with our choice of the polarization
for the wall $\{ u_1 = u_2\}$ above, as we do not have distinction
between cases $s < t$, $s > t$.
Even if we would change the polarization for the wall $\{u_1 = u_2\}$,
we could not choose induced polarizations simultaneously for
$\{ u_1 = 0\}$ and $\{ u_2 = 0\}$ walls.
Indeed, suppose we
  choose $H^1(\mathcal C(2,1))\oplus H^1(\mathcal C(2,-1))$ for the component 
  of $(\fM^\sigma)^{\CC^\times_{u_2}}$ given by $t=1$.
  In particular, we need to choose $H^1(\mathcal C(2,-1))$ at $s=1$, and
  $H^1(\mathcal C(2,1))$ at $s\neq 1$. 
  Both have weight $-u_2$ on the wall $\{ u_1 = 0\}$, but $-u_1-u_2$
  and $u_1 - u_2$ on the whole Lie algebra of $A$.
  Then considering the component of $(\fM^\sigma)^{\CC^\times_{u_1}}$,
  i.e., moving $t$ fixing $s$,
  we must choose
  $H^1(\mathcal C(2,-1))\oplus H^1(\mathcal C(1,2))$ for $s=1$, and
  $H^1(\mathcal C(2,1))\oplus H^1(\mathcal C(-1,2))$ for $s\neq 1$.
  In particular, we need to choose
  $H^1(\mathcal C(1,2))$ at $s=1$, $t\neq 1$, and
  $H^1(\mathcal C(2,1))$ at $s\neq 1$, $t\neq s$.
  Now we move $s$ from $s=1$, $t\neq 1$ fixing $t$. We have to choose
  $H^1(\mathcal C(1,2)) \oplus H^1(\mathcal C(-1,2))$ for all the components $t\neq 1$.
  Thus the choice cannot be consistent on $H^1(\mathcal C(1,2))$ vs $H^1(\mathcal C(2,1))$  
  for $s\neq 1$, $t\neq 1$, $s\neq t$.

There is an exceptional case $\ell = 2$, as there is no pair $s$, $t$ with $\neq 1$, $s\neq t$. Indeed, one define
an induced polarization by the above argument. We can also see a well-defined polarization since
$\fM^\sigma$ is a cotangent bundle when $\ell=2$.
Indeed, for $\ell=2$, the diagram automorphism $\sigma'$ is trivial, and $\fM^\sigma$ is an example
discussed in \cref{sec:ex2}. We will prove that the equivariant cohomology $H^*_{\TT_\bw}(\fM^\sigma)$ is 
a representation of Molev-Ragoucy reflection equation algebra $\mathcal B(2,1)$.
The reflection equation algebra is a quotient of the extended reflection equation algebra
$\widetilde{\mathcal B}(2,1)$ \cite[\S2.16, Example~6]{MR2355506}, and $\widetilde{\mathcal B}(2,1)$ 
is isomorphic to the extended twisted Yangian $\mathrm{X}(\mathfrak{o}_2)$ in the sense of
\cite[Def.~2.13.1]{MR2355506} by \cite[\S2.16, Example~6]{MR2355506}. Indeed, one of the defining 
relations of $\mathrm{Y}(\mathfrak{o}_2)$, \cite[(2.6)]{MR2355506}, is \emph{not} satisfied
in our case, while the unitarity, which gives the quotient $\widetilde{\mathcal B}(2,1)\to\mathcal B(2,1)$,
is satisfied.


For $\ell > 2$, we observe that $\fM^\sigma$ is an ALE space of type $\mathrm{A}_{\ell-1}$ for $\bw_1 = 1$.
Therefore $H_2(\fL^\sigma)$ is $(\ell-1)$-dimensional. We have no non-trivial $(\ell-1)$-dimensional
representation of $\mathfrak{o}_\ell$. This is an obstruction to choose an induced polarization,
and well-defined $\sY(\mathfrak{o}_\ell)$ representation on $H^*_{\TT_\bw}(\fM^\sigma(\bw))$.

\subsection{\texorpdfstring{$\SO$}{SO}-instanton moduli spaces}

  The same argument can be applied to type $(+)$ case, where we
  put an \emph{orthogonal} form on $W_1\oplus W_{\ell-1}$. The $\sigma$-quiver variety
  $\fM_\zeta^\sigma$ is the moduli space of framed
  $\SO(2\bw_1)$-instantons on an ALE space of type $\mathrm{A}_{\ell-1}$ in
  that case. The action of $\sigma$ on $H^1(\mathcal C(k,-k))$ is
  $-1$, as we multiply the isomorphism $W_{\ell-1}\cong W_1^*$ by $-1$.
  Hence there is no contribution from $H^1(\mathcal C(k,-k))$, and the
  dimension of $\fM_\zeta^\sigma$ is $\bw_1(\bw_1-1)$. The torus
  $A_{\bw_1}$ fixed points are parametrized by $T$ with
  \eqref{eq:27}. We have $\ell^{\bw_1}$ fixed points in total.
  
  The charge $l_{\SO}(T)$ is given \emph{only} by the second term of
  $l_{\grpSp}(T)$ in \eqref{eq:34} because we do not have contribution from
  $H^1(\mathcal C(k,-k))$. Namely,
\begin{equation}
   l_{\SO}(T) = \frac12 \# \{ (k,l,a) \mid k\neq -l \text{ and \eqref{eq:28} is satisfied}\}.
\end{equation}
  By this difference, we have more $T$ with $l_{\SO}(T) =
  0$. Therefore $\fM_\zeta^\sigma$ is \emph{not} connected. Let us
  study several examples:

  \begin{Example}\label{ex:SO}
    (1) Suppose $\bw_1 = 1$, there are $\ell$ isolated points. If
    $\zeta_\CC$ is generic, they are points
    $(x,y,z) = (0,0,-\zeta_{a,\CC})$ ($a=1/2,\dots,\ell-1/2$) in
    $xy = (z+\zeta_{1/2,\CC})\dots (z+\zeta_{\ell-1/2,\CC})$ in
    \cref{ex:ALE}: the involution is $(x,y,z)\mapsto
  (-x,-y,z)$. Therefore we have isolated fixed points
  $(x,y,z) = (0,0,-\zeta_{a,\CC})$ ($a=\frac12,\dots,\ell-\frac12$).
  Like in the explanation in \cref{ex:ALE}, $\fM_\zeta^\sigma$ in this case is
  the moduli space of $\SO(2)$-instantons with fixed Chern class and boundary condition at $\infty$.
  Since $\SO(2)\cong \U(1)$, it is the moduli space of line bundles. But there is at most
  one line bundle with given Chern class and boundary condition at $\infty$.
  Therefore $\fM_\zeta^\sigma$ is a finite set of points. This is a geometric explanation of
  the above description.

\begin{NB}
  For the symmetric case, we still have decomposition as above. If $L$
  is a line bundle $E = L\oplus L^*$ is an $\SO(2)$-bundle so that the
  symmetric bilinear form is
  $(l_1\oplus l^*_1, l_2\oplus l^*_2) = \langle l_1, l^*_2\rangle +
  \langle l_2, l^*_1\rangle$. For the computation for the tangent
  space, we change $V_\ell \cong V_0^*$ by sign. Therefore $\sigma$
  acts on $\mathcal C(k,-k)$ by $-1$, instead of $1$. In particular,
  the $\sigma$-fixed part of the tangent space for the
  $\bw_1 = 1$ case is $0$.
\end{NB}%

(2) Suppose $\bw_1 = 2$. Let us check that there is one component,
which is an ALE space of type $\mathrm{A}_{\ell-1}$, and 
$\frac12 \ell(\ell-1)$ other components, which are ALE spaces of type
$\mathrm{A}_1$:
Since the charge $l_{\SO}(T)$ of $T$ in this case is only the second term in
$l_{\grpSp}(T)$ of \eqref{eq:34}, tableaux $T$ with $l_{\SO}(T) = 0$ are
\begin{equation}\label{eq:29}
  \Yvcentermath1
  \young(2345,2345,1,1)\ ,\quad
  \young(\ \ \ \ ,\ \ \ \ ,s,t)\ 
  \quad (s > t)
\end{equation}
where the $(-2)$-nd and $(-1)$-st rows consist of $1$ to $5$ excluding
$t$ and $s$ respectively by the rule \eqref{eq:27}.

Other fixed points are
\begin{equation*}
  \Yvcentermath1
  \young(\ \ \ \ ,\ \ \ \ ,s,s)
  \quad (s\neq 1),\quad
  \young(\ \ \ \ ,\ \ \ \ ,t,s)
  \quad (s > t)
\end{equation*}
which have index $1$.

The first type of a fixed point is connected to the first one in
\eqref{eq:29}. This can be checked as follows. We consider the sum of
data for the $1$-st and $(-2)$-nd rows only, as a quiver variety with
dimension vectors ${\bw} = (1,0,0,1)$,
${\bv} = (0,0,0,0)$. This quiver variety is an ALE space of
type $\mathrm{A}_{\ell-1}$. The fixed points are classified by the same data,
just taking the $1$-st and $(-2)$-nd rows only.
We add the image of data under $\sigma$ to see that those fixed points
are connected.

In the same manner, the second type of a fixed point sits in the same
connected component of the second one in \eqref{eq:29}. We take the
sum of the $1$-st and $2$-nd rows in this case. The component is an ALE
space of type $\mathrm{A}_1$.

In fact, these description give the whole $\fM_\zeta^\sigma$, not only
fixed points, as we know $\dim \fM_\zeta^\sigma = 2$, and we cover all
fixed points. In particular, for the second type of components,
$C_{\ell-\frac12}\dots C_{\frac12}$ and
$D_{\frac12}\dots D_{\ell-\frac12}$ vanish. In fact, when we first
take the sum of $1$-st and $2$-nd, these vanish as there is no $V_\ell$
in that component. Since a point is the direct sum of this plus its
image under $\sigma$, the assertion follows.

On the other hand, for the first type of components,
$C_{\ell-\frac12}\dots C_{\frac12}$ and
$D_{\frac12}\dots D_{\ell-\frac12}$ do \emph{not} vanish generically.


\begin{NB}
  This is a moduli space of $\SO(4)$-bundle over an ALE space $X$ of
  type $\mathrm{A}_{\ell-1}$.
  Let us consider $w_2\in H^2(X,\ZZ/2)\cong (\ZZ/2)^{\oplus
    \ell-1}$. Fixed points are direct sum $L_1\oplus L_2$ of
  $\SO(2)$-bundles, and $L_1$, $L_2$ can be read off from the $1$-st
  and $2$-nd rows of the Young diagrams respectively. If the first
  entry is $a$, $c_1(L_1) = (0,\dots,0,-1,1,0,\dots,0)$, where $1$
  appears in the $a$-th entry. (In particular,
  $c_1(L_1) = (1,0,\dots,0)$ for $a=1$, $=(0,\dots,0,-1)$ for $a=\ell$.)
  Therefore the first fixed point in \eqref{eq:29} has $w_2 = 0$,
  while the second fixed point in \eqref{eq:29} has $w_2$ with nonzero
  entries at $(b-1)$, $b$, $(a-1)$, $a$ if $a-1 > b$, or at $(a-2)$,
  $a$ if $a-1 = b$. If $a=2$, $b=1$, $\ell=2$, we do not have nonzero
  entries. Hence it cannot be distinguished by $w_2$.
\end{NB}%

(3) Suppose $\ell = 2$. Consider $\zeta$ with $\zeta_\CC = 0$, but
$\zeta_\RR$ is a standard one. Then $\fM_\zeta$ is the cotangent
bundle of the Grassmannian of $\bw_1$-dimensional subspaces in
$\CC^{2\bw_1}$. The $\sigma$-quiver variety $\fM_\zeta^\sigma$ is the
cotangent bundle of the Grassmannian of maximal isotropic subspaces,
which is known to have \emph{two} connected components. We have two
Young tableaux with $l_{\SO}(T) = 0$: (a) $T(k,1) = 1$ for $k > 0$,
$=2$ for $k < 0$ and (b) $T(k,1) = 1$ for $k > 1$ or $k=-1$,
$=2$ for $k < -1$ or $k=1$.
More generally, connected components of fixed points can be
distinguished by the parity of the number of $1$ in $T(k,1)$ for
$k > 0$: if $T(k,1) = T(k',1)$ for $k > k' > 0$, we use the analysis
in the above example (2) to conclude that it is contained in the same
connected component of the fixed point for $T'$, obtained from $T$ by
replacing $T(k,1)$, $T(k',1)$ by $1$. Repeating this procedure, we arrive at
either $T$ of (a) or (b).
The number of $A_{\bw_1}$-fixed points in each component is
$2^{\bw_1-1}$: $T(k,1)$ for $k=1,\dots, \bw_1-1$ is either $1$ or
$2$. The final entry $T(\bw_1,1)$ is determined by the parity of the
number of $1$ in $T(k,1)$ in total.
\end{Example}

\begin{Remark}\label{rem:coulombD}
  Suppose $\bw_1$ is even.
  Naively $\bM^\sigma\tslash G^\sigma$ is \emph{expected} to be the Coulomb
  branch of the following type D quiver theory:
  \begin{equation*}
    \begin{tikzpicture}[baseline=0pt,decoration={brace,amplitude=7},scale=.5,
    dot/.style = {draw, circle, thick, minimum size=#1,
              inner sep=0pt, outer sep=0pt},
    dot/.default = 6pt  
                    ] 
    \node[dot=20pt] at (0,0) (v1) {$1$};
    \node[dot=20pt] at (4,0) (v2) {$2$};
    \draw[-] (v1.east) -- (v2.west);
    \draw[-] (v2.east) -- (6,0);
    \draw[-,very thick,dotted] (6.5,0) -- (7.5,0);
    \node[dot=20pt] at (10,0) (v3) {$\scriptstyle 2n\!-\!2$};
    \draw[-] (8,0) -- (v3.west);
    \node[dot=20pt] at (14,2) (v4) {$\scriptstyle n\!-\!1$};
    \draw[-] (v4.west) -- ([yshift=1em]v3.east);
    \node[dot=20pt] at (14,-2) (v5) {$n$};
    \draw[-] ([yshift=-1em]v3.east) -- (v5.west);
    \node[draw,regular polygon,regular polygon sides=4,thick,
    inner sep=0pt, outer sep=0pt,minimum size=24pt]
     at (18,-2) (w5) {$\ell$};
    \draw[-] (v5) -- (w5);
    \end{tikzpicture}
  \end{equation*}
  where $n = \bw_1/2$. Indeed, de Campos Affonso studied $\bM^\sigma\tslash G^\sigma$ during
  preparing his master's thesis \cite{Henrique}, and found that it is \emph{not} irreducible.
  This is seen from the above computation of connected components of $\fM_\zeta^\sigma$.
  The Coulomb branch of the above quiver theory is given by the irreducible component
  corresponding to $\lsp{\circ}\fM_\zeta^\sigma$, where $C_{\ell-\frac12}\dots C_{\frac12}$
  and $D_{\frac12}\dots D_{\ell-\frac12}$ are invertible generically, as in (2).
  We identify the connected component of $\fM_\zeta^\sigma$ with the deformed Coulomb branch
  in a separate publication.
  Like in \cref{rem:coulombC}, we expect the cohomology of $\fM_\zeta^\sigma$ has commuting 
  actions of twisted Yangian and $\mathfrak{so}(\bw_1)$.

  As in \cref{rem:SdualC}, the categorical quotient $\bM^\sigma\tslash G^\sigma$ is
  \eqref{eq:h:23} 
  when $\ell$ is even,
  or \eqref{eq:h:22} 
  when $\ell$ is odd.
  In this case, there is a different delicate point from \cref{rem:SdualC},
  as $G^\sigma$ is disconnected when $\ell$ is even. See \cite{Examples_of_S-dual}.
\end{Remark}

\subsection{\texorpdfstring{$K$}{K}-matrix for type (+)}\label{subsec:K-matrix+}

Let us first suppose $\bw_1 = 2$ as in \cref{subsec:K-matrix}.

Computation of the $R$-matrix for type $(+)$ is the same as one in 
type $(-)$. It is because we are only concerned with the usual quiver
varieties, and the computation does not depend on the choice of type.

The computation of the $K$-matrix becomes trivial, as the fixed point satisfies
$(\fM^\sigma)^{\CC^\times_{u_2}}$ are isolated points in this case. Therefore the corresponding
$K$-matrix is the identity matrix.

A crucial difference is about the choice of a polarization. 
In type $(-)$, we cannot choose a polarization, which gives an \emph{induced} polarization on
the walls $\{ u_1 = 0\}$, $\{u_2=0\}$.
This issue disappears in type $(+)$.

Indeed, consider $(\fM^\sigma)^{\CC^\times_{u_2}}$. The entry $t$ in the
$2$-nd row is fixed on each component of $(\fM^\sigma)^{\CC^\times_{u_2}}$. In type $(-)$,
we cannot fix the entry $s$ in the $1$-st row, as each component is the type $\mathrm{A}_{\ell-1}$ ALE
space. However, in type $(+)$, $(\fM^\sigma)^{\CC^\times_{u_2}}$ consists
of isolated points, we can fix $s$ as well. In particular, any choice of a polarization
is automatically an induced one. We choose a polarization at walls
$\{ u_1 = u_2\}$, $\{ u_1 + u_2 = 0\}$ as in type $(-)$, and it works
for the remaining walls $\{ u_1 = 0\}$, $\{ u_2 = 0\}$.

This argument works for $\bw_1 \ge 2$ as well.
Let us denote the corresponding $\sigma$-quiver variety by $\fM^\sigma(\bw_1)$.
Here we fix a decomposition
$W = W^0 \oplus W^1\oplus W^{-1}\oplus W^2\oplus W^{-2}$ and consider
two dimensional torus $A$ acting on $W^1$, $W^{-1}$, $W^2$, $W^{-2}$ as
above, and trivially on $W^0$.
Connected components of $(\fM^\sigma(\bw))^A$ are divided according to
$s$, $t = 1,\dots,\ell$ according to summands for $W^1$, $W^2$ respectively as above. Together
with connected components of the remaining factor for $W^0$, i.e., $\fM^\sigma(\bw_1-2)$, we have 
$\pi_0(\fM^\sigma(\bw_1)^A) \cong \{(s,t) \mid s,t = 1,\dots,\ell\} \times \pi_0(\fM^\sigma(\bw_1-2))$.
In the fixed point set $(\fM^\sigma(\bw_1))^{\CC^\times_{u_2}}$ with respect to the smaller torus $\CC^\times_{u_2}$, we fix the entry $t$.
Thus $\pi_0((\fM^\sigma(\bw_1))^{\CC^\times_{u_2}}) \cong \{ t=1,\dots,\ell\} \times \pi_0(\fM^\sigma(\bw_1-1))$.
The normal bundle of a component of $(\fM^\sigma(\bw_1))^A$ has an additional summands
$H^1(\mathcal C(a,0))$, $H^1(\mathcal C(0,a))$ for $a=1,2$, but it is irrelevant to
the issue of choosing a well-defined induced polarization: the only issue was
choices between $u_1-u_2$ vs $-u_1-u_2$.
\begin{NB}
  Indeed, we can choose either of $H^1(\mathcal C(a,0))$, $H^1(\mathcal C(0,a))$, as they
  are dual to each other. Here we choose $a=1,2$ already. Hence we do not need to confuse with
  $H^1(\mathcal C(a,0))\cong H^1(\mathcal C(0,-a))$.
\end{NB}%
Then we only need to care summands $H^1(\mathcal C(a,b))$ for $a,b=\pm 1$, $\pm 2$.
But they already appeared in the case $\bw_1 = 2$: we choose
arbitrary polarization as above.

  %
  %

\begin{Theorem}\label{thm:O_twisted}
  The equivariant cohomology $H^*_{\TT_\bw}(\fM^\sigma(\bw_1))$ is a
  representation of Olshanski twisted Yangian $\mathrm{Y}(\mathfrak o_\ell)$.
\end{Theorem}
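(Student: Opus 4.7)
The plan is to use the reduced extended twisted Yangian $\sX^{\prime\mathrm{tw}}$ of Remark~\ref{rem:smaller_ext-twist-Yang}(4) with auxiliary space the vector representation $F_1 = H^*_{\CC^\times_\hbar}(\fM(\varpi_1)) \cong \CC^\ell$, and to identify it with Olshanski's $\mathrm{Y}(\mathfrak{o}_\ell)$. The first step is a direct matching of structure data. The explicit matrices computed in \cref{subsub:A,subsub:B} show that $R_{F_1,F_1}(u)$ coincides with Yang's $R$-matrix $1 - P/u$ and that $R_{F_1,F_1^\sigma}(u)$ reproduces the transposed $R^t$ of \cite[Ch.~2.2]{MR2355506}, up to the shift of spectral parameter by $\tfrac{\hbar}{2} c_\fg$ predicted by Proposition~\ref{prop:dual_rep} and Remark~\ref{rem:shift_of_R}. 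Combined with the trivial $K$-matrix established in \cref{subsec:K-matrix+} --- where the isolatedness of $(\fM^\sigma)^{\CC^\times_{u_2}}$ for type $(+)$ is exploited --- the reflection equation \eqref{eq:h:42} for $\sS_{F_1}(u)$ becomes exactly the quaternary defining relation of Molev's extended twisted Yangian $\mathrm{X}(\mathfrak{o}_\ell)$.

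The second, more delicate, step is to verify the Olshanski symmetry relation \cite[Def.~2.13.1, (2.6)]{MR2355506}, which cuts $\mathrm{Y}(\mathfrak{o}_\ell)$ out of $\mathrm{X}(\mathfrak{o}_\ell)$. Substituting the trivial $K$-matrix gives $\sS_{F_1}(u) = \sT_{F_1^\sigma}(-u)^{-1} \sigma\, \sT_{F_1}(u)$; using the identification of $F_1^\sigma$ with the left dual of $F_1$ from Proposition~\ref{prop:dual_rep} and the antipode--translation identity $S^2 = \tau_{-\frac{\hbar}{2} c_\fg}$ of \eqref{eq:h:28}, the Olshanski symmetry reduces to a tautological identity involving $\sT_{F_1}$. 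The central rescaling factor $f(u) f(u + \tfrac{\hbar}{2} c_\fg)^{-1}$ of \eqref{eq:h:30} that distinguishes the normalized from the universal $R$-matrix lies in the center of $\sX$ by \cref{thm:ext-Yang-center} and therefore does not interfere with the quotient to $\mathrm{Y}(\mathfrak{o}_\ell)$. Once $\sX^{\prime\mathrm{tw}} \cong \mathrm{Y}(\mathfrak{o}_\ell)$ is established, the geometric action constructed in \cref{sec:twisted-yangian} furnishes the desired $\mathrm{Y}(\mathfrak{o}_\ell)$-module structure on $H^*_{\TT_\bw}(\fM^\sigma(\bw_1))$.

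The hardest part will be the bookkeeping of the spectral-parameter shifts and the overall sign: Olshanski's $\mathrm{Y}(\mathfrak{o}_\ell)$ is distinguished from $\mathrm{Y}(\mathfrak{sp}_\ell)$ by exactly one sign in the symmetry relation, and in our setting that sign is determined by the type $(+)$ versus $(-)$ choice, which via the McKay correspondence of \cref{subsec:diagram} controls whether the $\invast$-fixed representations $\rho_i$ are orthogonal or symplectic. The cleanest way to pin it down is probably to evaluate the symmetry relation on the $\ell$-highest weight classes corresponding to the tableaux $T$ with $l_{\SO}(T)=0$ identified in \cref{ex:SO}, where $\sT_{F_1}(u)$ and $K_{F_1}(u)$ can be written down explicitly from the stable-envelope formulas of \cref{subsec:K-matrix+}.
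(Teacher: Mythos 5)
Your overall route is the paper's: take the vector representation $F_1\cong\CC^\ell$ as auxiliary space, match the geometric $R$-matrices with Yang's $R$-matrix and its transposed variant, observe that $K_{F_1}(u)=\sigma$ is constant, and recognize $\sS_{F_1}(u)=\sT_{F_1^\sigma}(-u)^{-1}\sigma\,\sT_{F_1}(u)$ as Olshanski's generating matrix $T^t(-u)GT(u)$, so that both the quaternary and the symmetry relations hold by Molev's concrete realization \cite[Th.~2.4.3]{MR2355506}. The paper leaves your second step implicit for precisely this reason, and your idea of fixing the $\mathfrak o$-versus-$\mathfrak{sp}$ sign by evaluating on the classes with $l_{\SO}(T)=0$ is reasonable.

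There is, however, one load-bearing step that you assume rather than prove: the verification of Assumption~\ref{assum:induced-polarization}, i.e.\ that a polarization exists which is \emph{induced} on all walls $\{u_a=0\}$ and $\{u_a=\pm u_b\}$ simultaneously and consistently over the connected components of the relevant fixed-point sets, for every $\bw_1$. Without this the stable envelopes do not factorize, $K_{F_1}(u)$ and hence $\sS_{F_1}(u)$ are not well defined, and there is no algebra to identify with $\mathrm Y(\mathfrak o_\ell)$. This is not a formality: for type $(-)$ with $\ell>2$ the paper shows that \emph{no} such polarization exists, and the entire reason the theorem holds in type $(+)$ is that $(\fM^\sigma)^{\CC^\times_{u_2}}$ consists of isolated points, which both trivializes the $K$-matrix \emph{and} makes any polarization automatically induced; one must then still run the $\pi_0$ bookkeeping of \cref{subsec:K-matrix+} to pass from $\bw_1=2$ to general $\bw_1$, which your proposal does not address. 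You invoke the isolatedness only to get $K=\id$ and otherwise cite \cref{subsec:K-matrix+}, but that subsection \emph{is} the proof of the theorem, so a self-contained argument must supply the polarization analysis. Two smaller slips: the dual-representation shift from \cref{prop:dual_rep} and \cref{rem:shift_of_R} is $\frac{\hbar}{4}c_\fg=\frac{\ell\hbar}{2}$, not $\frac{\hbar}{2}c_\fg$; and the symmetry relation is not literally tautological for $\sT^t(-u)\sigma\sT(u)$ --- it follows from the $RTT$ relation, which is the actual content of Molev's embedding theorem.
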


\begin{Remark}
It is natural to ask which $\sigma$-quiver varieties give rise representations
of the twisted Yangian $\mathrm{Y}(\algsp_{2\ell})$.
As in \cref{rem:coulombC}, one should look at Howe duality, 
involving $\mathrm{Y}(\algsp_{2\ell})$, or its variant, the coideal subalgebra $\bU'_q(\algsp_{2\ell})$.
The author could not find it, but \cite{MR3939581} provides one in which
the quantum group and the coideal subalgebra are exchanged.
The dual algebra is $\bU_q(\algsp_{2\bw_1})$, hence we should look at the
Coulomb branch of a quiver gauge theory of type $\mathrm{B}_{\bw_1}$.
This Coulomb branch was studied by de Campos Affonso during preparing his
master's thesis \cite{Henrique}. For a certain choice of dimension vectors,
it is a $\sigma$-quiver variety of type $\mathrm{A}_{2\ell-1}$
\begin{NB}
  as we want to realize twisted Yangian for a symmetric pair 
  $(\fg,\fg^\sigma) = (\mathfrak{sl}_{2\ell},\algsp_{2\ell})$
\end{NB}%
as in \eqref{eq:31}, but additional $1$-dimensional framing at the middle vertex.
Like in \cref{ex:no-solution}, there is no integral solution $\bv$ of $\mu = \bw - \bC\bv = 0$.
It means that we need to consider a singular $\sigma$-quiver variety.
\end{Remark}

\subsection{Type D and \texorpdfstring{$\mathrm{E}_6$}{E6}}

Consider moduli spaces of $\SO$-instantons on an ALE space of other types.
In the above argument for the case of type A, it was important that
$\fM^\sigma$ for $\bw_1 = 1$ is a finite set of points. It is because 
$\fM^\sigma$ is the moduli space of $\SO(2)$-instantons. See \cref{ex:SO}(1).
This explanation remains valid for other types. Concretely, we take
a vertex $i$ of the Dynkin diagram, which corresponds to a minuscule weight.
Concretely $i$ is the leftmost vertex of type D, or one in two tails of
type D, or the leftmost/rightmost vertex of type $\mathrm{E}_6$. We then choose
$\bw$ such that the corresponding weight $\lambda$ is $\varpi_i + \varpi_{\invast[i]}$.
We then choose $\bv$ so that the corresponding $\mu$ is $0$. We consider the
corresponding $\sigma$-quiver variety $\fM^\sigma(\bv,\bw)$ with $\sigma'=\id$.
In \cite[Table~1]{MR3900699} referred in \cref{rem:symmetric-pair}, it
corresponds to $(\fg,\fg^\sigma) = (\mathfrak{so}_{2\ell},\mathfrak{\so}_\ell\oplus
\mathfrak{so}_\ell)$ in Satake type DI, and $(E_6, \mathfrak{sp}_4)$ in Satake type EI.

We can choose polarizations which are induced on $\{u_1=0\}$, $\{ u_2 = 0\}$ walls as above,
hence get a representation of $\sX^{\mathrm{tw}}$ on $H^*_{\TT_\bw}(\fM^\sigma(\bv,\bw))$,
where $\bw$ corresponds to $\lambda = N(\varpi_i + \varpi_{\invast[i]})$ for $N=1,2,\dots$.
($N$ here corresponds to $\bw_1$ in the above subsections.)

In this case, the $K$-matrix is the identity matrix, as in \cref{subsec:K-matrix+}.
The $R$-matrix is the one for the $\ell$-fundamental representation of $\sY$ of
the underlying Dynkin diagram, i.e., of type D or $\mathrm{E}_6$.
As in \cref{subsec:K-matrix}, we need to compute it in terms of the fixed point
basis, but we leave it as an exercise for the reader.

After the computation, one should compare $\sX^{\mathrm{tw}}$ for
type D with $i = $ the leftmost vertex with the twisted Yangian
in \cite{MR3545488}.
Note that twisted Yangian in \cite{MR3545488} is classified into two types,
the first and second kinds, according to the $K$-matrix,
$\mathcal G(u)$ in their notation. For type D with $(\fg,\fg^\sigma) = (\mathfrak{so}_{2\ell},\mathfrak{\so}_\ell\oplus
\mathfrak{so}_\ell)$, it is of the first kind, i.e., $\mathcal G(u)$ is a constant matrix
with respect to $u$.
This is consistent with our $K$-matrix, which is the identity matrix.

\section{Examples -- partial flags}\label{sec:ex2}

Let us continue examples of involutions in type $\mathrm{A}_{\ell-1}$, but now
with $\sigma' \circ \invast = \id$. Therefore the diagram for
$\bM^\sigma/G^\sigma$ in \cref{subsec:cat_quotient} is an
orthosymplectic one. Let us further assume that framing vector spaces
appear only at the leftmost vertex. This class of examples studied in
\cite[\S6]{MR3900699}. We will use same techniques as in
\cref{sec:ex1} to compute the $K$-matrix explicitly, and show that the
twisted Yangian is the \emph{reflection equation algebra} in
\cite[\S2.16, Example~4]{MR2355506}.

\subsection{Partial flags}

Let us label linear maps as in \eqref{eq:31}, but without framing at
the rightmost vertex:
\begin{equation}\label{eq:h:20}
  \begin{tikzcd}[execute at end picture={
      \draw[dashed,thin]
      ([yshift=2ex]$(\tikzcdmatrixname-1-3.north)!0.5!
      (\tikzcdmatrixname-1-4.north)$)
      --
      ([yshift=-2ex]$(\tikzcdmatrixname-2-3.north)!0.5!
      (\tikzcdmatrixname-2-4.north)$);
      }]
      V_1 \arrow[r,yshift=.5ex,"C_{\frac32}"] 
      \arrow[d,xshift=.5ex,near end,"D_{\frac12}"]
      &
      V_2 \arrow[l,yshift=-.5ex,"D_{\frac32}"]
      \arrow[r,yshift=.5ex,"C_{\frac52}"]
      &
      \phantom{V}
      \arrow[r,no head,very thick,dotted]
      \arrow[l,yshift=-.5ex,"D_{\frac52}"]
      &
      \phantom{V}
      \arrow[r,yshift=.5ex,"C_{\ell-\frac52}"] &
      V_{\ell-2} \arrow[l,yshift=-.5ex,"D_{\ell-\frac52}"]
      \arrow[r,yshift=.5ex,"C_{\ell-\frac32}"] & V_{\ell-1}
      \arrow[l,yshift=-.5ex,"D_{\ell-\frac32}"]
      \\
      W
      \arrow[u,xshift=-.5ex, "C_{\frac12}"]
      &&\phantom{V}&\phantom{V}&&
      \phantom{W}
  \end{tikzcd}
\end{equation}
We drop the index $1$ from $W_1$ in this section.
Let us set $\bv_0 = \bw$, $\bv_{\ell} = 0$ for convention.
Our convention for parameters $\zeta$ is the same as in \cref{sec:ex1}.

Among the assumption $\sigma(\zeta) = \zeta$, $\sigma(\bv) = \bv$,
$\sigma(\bw) = \bw$, the last one is automatic. (See
\cref{def:involution}.) The first one means
$\zeta_{3/2} = \zeta_{\ell-3/2}$, $\zeta_{5/2} = \zeta_{\ell - 5/2}$,
\dots. Note that we can achieve this condition without violating the
genericity condition. However, the number of deformation parameters is
about half. The middle condition means
\begin{equation}\label{eq:h:21}
  \bv_0 - \bv_1 = \bv_{\ell-1}-\bv_{\ell},\quad
  \bv_1 - \bv_2 = \bv_{\ell-2} - \bv_{\ell-1}, \quad
  \dots
\end{equation}
In other words, $\bw = \bv_i + \bv_{\ell-i}$ for all $i=1,\dots,\ell-1$.

Depending on the type $(+)$, $(-)$, we assign an $\ve$-form on
$W$. Thus the involution $\sigma$ on $\fM_\zeta(\bv,\bw)$ is defined.

Let us concentrate on the case $\zeta_\CC = 0$, but
$\sqrt{-1}\zeta_{i,\RR} > 0$. Hence $\fM^\sigma_\zeta(\bv,\bw)$ is smooth.

The whole quiver variety $\fM_\zeta(\bv,\bw)$ was described in
\cite[\S7]{Na-quiver}. It is the cotangent bundle of a (partial) flag
variety of $\GL(W)$. It is the space of pairs
$(x\in\End(W), S_1\supset S_2\supset \dots \supset S_{\ell-1}\supset 0)$
satisfying conditions
\begin{equation*}
  x(S_i)\subset S_{i+1}
\end{equation*}
where $\dim S_i = \bv_i$. The $\GL(W)$-action is the natural one.
The involution was computed in \cite[\S6]{MR3900699}, and the
$\sigma$-quiver variety is the cotangent bundle of the partial flag
variety of classical type. More precisely, it is the space of pairs
$x\in\End(W)$ and flags $W\supset S_1\supset S_2\supset \dots \supset
S_{\ell-1}\supset 0$ as above, with further conditions
\begin{equation*}
  x^* = -x, \quad
  S_i^\perp = S_{\ell-i},
\end{equation*}
where $*$ and $\perp$ are defined by the chosen $\ve$-form on $W$.
The $G_\ve(W)$-action in \cref{rem:framing-symmetry} is identified
with the standard one on the cotangent bundle of the partial flag variety.

The categorical quotient $\bM^\sigma/G_\bv^\sigma$ in
\cref{def:cat_quot} is an orthosymplectic quiver variety, studied by
\cite{MR694606}. If the dominance condition
\begin{equation*}
  \bv_0 - \bv_1 \ge \bv_1 - \bv_2 \ge \dots \ge \bv_{\ell-2} - \bv_{\ell-1}
  \ge \bv_{\ell-1} - \bv_\ell
\end{equation*}
is satisfied, $\bM^\sigma/G_\bv^\sigma$ is isomorphic to the closure
of nilpotent $G_\ve(W)$-orbits in $\fg_\ve(\bw)$ corresponding to the
partition given by the above sequence of integers.
Unfortunately the dominance condition and \eqref{eq:h:20} are
compatible only when
$\bv_0 - \bv_1 = \dots = \bv_{\ell-2} - \bv_{\ell-1} = \bv_{\ell-1} - \bv_\ell$.
Indeed, the image of $\fM^\sigma_\zeta(\bv,\bw)$ under the projective
morphism $\pi$ is the nilpotent orbit corresponding to the partition,
which is obtained from
$\bv_0 - \bv_1$, $\bv_1 - \bv_2$, \dots $\bv_{\ell-2} - \bv_{\ell-1}$,
$\bv_{\ell-1}-\bv_\ell$ by sorting them in descending order.

\begin{Example}\label{ex:partial-flag}
  As in \cref{sec:ex1}, it is important to understand examples with
  small $\bw$. Let us study $\bw = 1$, $2$, or $3$.

  (1) Suppose $\bw = 1$. Only type $(+)$ is possible. By the condition
  \eqref{eq:h:21}, the only possibility is the case $\ell$ is odd and
  $\bw = \bv_1 = \dots = \bv_{(\ell-1)/2} = 1$,
  $0 = \bv_{(\ell+1)/2} = \dots = \bv_{\ell-1}$. Indeed,
  $\bw - \bv_1$, $\bv_1-\bv_2$, \dots, $\bv_{\ell-1}$ are $0$ or $1$,
  the total sum is $1$, and they appear in pairs by \eqref{eq:h:21}.
  Thus it is the only possibility.
  The corresponding $\fM^\sigma_\zeta(\bv,\bw)$ is a single point.
  Note also the whole quiver variety $\fM_\zeta(\bv,\bw)$ is a single point in this case.

  (2) Suppose $\bw=2$ and the type is $(+)$. We have a decomposition
  $W = W(1)\oplus W(-1)$ as before by the eigenvalue of
  $t\in G_\ve^0(W) = \SO(2) = \CC^\times$. There are only two
  $1$-dimensional isotropic subspaces, namely $W(1)$ and $W(-1)$. The
  linear map $x\in\End(W)$ is automatically $0$ as it is in
  $\fg_\ve(W)$ and nilpotent. Thus $\fM^\sigma_\zeta(\bv,\bw)$ is either
  empty, a single point or two points. It is a single point if and
  only if $\ell$ is odd and $\bw = \bv_1 = \dots = \bv_{(\ell-1)/2} = 2$,
  $0 = \bv_{(\ell+1)/2} = \dots = \bv_{\ell-1}$.
  It is two points if and only if
  $\bw = \bv_1 = \dots = \bv_{i-1} = 2$,
  $1 = \bv_i = \bv_{i+1} = \dots = \bv_{\ell-i}$,
  $0 = \bv_{\ell+1-i} = \dots = \bv_{\ell-1}$ for some
  $i\le \lfloor \ell/2\rfloor$.
The whole quiver variety $\fM_\zeta(\bv,\bw)$ is also a single point
in the case $\fM_\zeta^\sigma(\bv,\bw)$ is a single point,
and $T^*\proj^1$ in the case $\fM_\zeta^\sigma(\bv,\bw)$ is two points.

  (3) Suppose $\bw=2$ and the type is $(-)$. Contrary to the case (2)
  above, arbitrary $1$-dimensional subspace is lagrangian in the
  symplectic vector space $W$. The condition $x^* = -x$ simply says
  $x$ is trace-free, but it is automatically satisfied if $x$ is
  nilpotent. The possible dimension vectors giving nonempty
  $\fM^\sigma_\zeta(\bv,\bw)$ are the same as the case (2), but we get
  $T^*\proj^1$ in the case when $\fM^\sigma_\zeta(\bv,\bw)$ is a two point above.
  The whole quiver variety $\fM_\zeta(\bv,\bw)$ is the same as $\fM^\sigma_\zeta(\bv,\bw)$,
  in other words, $\sigma$ is a trivial involution.

  (4) Suppose $\bw = 3$ and the type is $(+)$. As in the case (1),
  $\ell$ is odd and $\bv_{(\ell-1)/2} - \bv_{(\ell+1)/2}$ is either
  $3$ or $1$.  In the former case,
  $\bw = \bv_1 = \dots = \bv_{(\ell-1)/2} = 3$,
  $0 = \bv_{(\ell+1)/2} = \dots = \bv_{\ell-1}$. The corresponding
  $\fM^\sigma_\zeta(\bv,\bw)$ is a point. The whole quiver variety
  $\fM_\zeta(\bv,\bw)$ is also a point in this case.
  
  In the latter case,
  $3 = \bw = \bv_1 = \dots = \bv_{i-1}$,
  $2 = \bv_i = \bv_{i+1} = \dots = \bv_{(\ell-1)/2}$,
  $1 = \bv_{(\ell+1)/2} = \dots = \bv_{\ell-i}$,
  $0 = \bv_{\ell+1-i} = \dots = \bv_{\ell-1}$ for some
  $i\le (\ell-1)/2$. We get $T^*\proj^1$ in this case.
  The whole quiver variety $\fM_\zeta(\bv,\bw)$ is the cotangent bundle
  of the full flag variety for $\CC^3$.
\end{Example}

\subsection{Torus action}

As in 
\cref{subsec:torus_action}, we decompose $W$ into $1$-dimensional spaces:
\begin{equation*}
  W = W(1)\oplus W(-1) \oplus W(2) \oplus W(-2) \oplus \dots.
\end{equation*}
The final part of $\dots$ is $W(\bw/2)\oplus W(-\bw/2)$ if $\bw$ is
even. If $\bw$ is odd, we continue until
$W((\bw-1)/2)\oplus W(-(\bw-1)/2)$ and add one more summand $W(0)$. We
then require $W(k)\cong W(-k)^*$ under the $\ve$-form, which remains to be true for $k=0$.

Let us take a maximal torus $A_\bw$ of $G_\ve(W)$ preserving the above
decomposition. Note that $A_\bw$ is $\bw/2$-dimensional
if $\bw$ is even, while $(\bw-1)/2$ dimensional if $\bw$ is odd. 

The torus fixed points are parametrized by Young tableaux as
before. The differences are followings:
\begin{itemize}
\item All rows have length $1$,
\item The cardinarity of the nodes with $i$ is $\bv_{i-1}-\bv_i$.
\item If the entry of $k$-th row is $s$, the entry of $(-k)$-th row is
  $\ell+1-s$. In particular, the only possible entry of the $0$-th row is
  $s = (\ell+1)/2$.
\end{itemize}

For example,
\begin{equation*}
  \vcenter{\hbox{\begin{ytableau}
    \none[-2] & \none & 5 \\
    \none[-1] & \none & 4\\
    \none[1] & \none & 2 \\
    \none[2] & \none & 1
  \end{ytableau}}}
  \qquad
  \vcenter{\hbox{\begin{ytableau}
    \none[-2] & \none & 5 \\
    \none[-1] & \none & 4\\
    \none[0] & \none & 3 \\
    \none[1] & \none & 2 \\
    \none[2] & \none & 1
  \end{ytableau}}}
\end{equation*}
for $\bw = 4$, $\ell = 5$ and $\bw=5$, $\ell = 5$ respectively.

Thanks to \cref{ex:partial-flag}, we can compute $\sigma$ on the complex
$H^1(\mathcal C(k,-k))$, and it is the identity map for $(-)$-type,
and $-\id$ for $(+)$-type. 
In case both entries of $k$ and $-k$-th rows are $(\ell+1)/2$,
$H^1(\mathcal C(k,-k))$ is $0$ anyway, hence it does not matter
which sign we take.

\subsection{\texorpdfstring{$K$}{K}-matrix}

Let us compute the $K$-matrix in this example.

Since $\fM(\bw)$ is the cotangent bundle of a partial flag variety,
we can choose a polarization as the cotangent directions.
It is automatically an induced polarization.

We consider the case $\bw=4$ or $5$ (type $(+)$ only). We take a maximal torus $A_\bw$
in $G_\ve(W)$, which is two dimensional torus in all cases. 
The Young tableaux are
\begin{equation*}
\begin{ytableau}
    \none[-2] & \none & s_2 \\
    \none[-1] & \none & s_1\\
    \none[0] & \none & \scriptstyle{\frac{\ell+1}2} \\
    \none[1] & \none & t_1 \\
    \none[2] & \none & t_2
  \end{ytableau}
\end{equation*}
with $s_1+t_1 = s_2 + t_2 = \ell+1$ when $\bw=5$. If $\bw=4$, we omit
the $0$-th row. In particular, the number of fixed points is always $\ell^2$.

The $R$-matrix for $u_1 = u_2$ is the same as the one in
\cref{subsec:K-matrix}\ref{subsub:A}, as we only need to consider
the same quiver varieties.
Let us decide to write the $K$-matrix as a matrix with rows and columns
indexed by
\ytableausetup{smalltableaux}
\begin{ytableau}
    t_1 \\ t_2
\end{ytableau}
as before.

The $R$-matrix for $u_1 = -u_2$ is also the same as the one in
\cref{subsec:K-matrix}\ref{subsub:A} with $u_1-u_2$ replaced by $u_1+u_2$,
as the geometry is the same.
There is no shift of spectral parameter unlike \cref{subsec:K-matrix}\ref{subsub:B}.
See \cref{rem:shift_of_R}.
However, there is one caveat: the natural index set is for
\begin{ytableau}
    s_1 \\ t_2
\end{ytableau}.
We want to write the $R$-matrix as the same matrix with $R(u_1+u_2)$,
$(1 + P\hbar (u_1+u_2)^{-1})(1+\hbar(u_1+u_2)^{-1})$.
Then we will write the $K$-matrix from a vector space with basis indexed
by
\begin{ytableau}
  t_1
\end{ytableau}
to \emph{another} vector space with basis indexed by
\begin{ytableau}
  s_1
\end{ytableau}.
\ytableausetup{nosmalltableaux}

In order to illustrate how $K$-matrix looks like, let us
consider the case $\bw=2$ of type $(+)$. Then all varieties involved
are points, hence entries of the $K$-matrix is either $0$ or $1$.
By the above discussion, the $K$-matrix is the anti-identity matrix:

\ytableausetup{smalltableaux}
\begin{table}[h]
\centering
\begin{blockarray}{lccc}
\diagbox[width=3.2em,height=2em]{
\begin{ytableau}
  s_1
\end{ytableau}
}{
\begin{ytableau}
  t_1
\end{ytableau}
} & $1$ & $\dots$ & $\ell$ \\
\begin{block}{c(ccc)}
  $1$ & $0$ & $\dots$ & $1$ \\
  $\vdots$ & $\vdots$ & $\iddots$ & $\vdots$ \\
  $\ell$ & $1$ & $\dots$ & $0$\\
\end{block}
\end{blockarray}
\ytableausetup{nosmalltableaux}
\end{table}
\begin{NB}
\begin{equation*}
\bordermatrix{
  \scalebox{-1}[1]{\nicefrac{\scalebox{-1}[1]{
  \framebox[1.2em]{\rule{0pt}{0.6ex}$t_1$}
  }}
  {\scalebox{-1}[1]{
  \framebox[1.2em]{\rule{0pt}{0.6ex}$s_1$}
  }}}%
            & 1     & \cdots & \ell     \cr
    1     & 0    & \ldots & 1       \cr
    \vdots  & \vdots  & \iddots & \vdots   \cr
    \ell     & 1     & \ldots & 0     \cr
}
\end{equation*}
\end{NB}%
Note that this matrix is conjugate to $\operatorname{diag}(1,\dots,1,-1,\dots,-1)$,
where $1$ appears $\lceil \frac{\ell}2 \rceil$ times and $-1$ appears
$\lfloor \frac{\ell}2 \rfloor$ times.
Thus the extended twisted Yangian, more precisely the version with
fixed $i = 1$ in \cref{rem:smaller_ext-Yang}, is the
Molev-Ragoucy reflection equation algebra $\mathcal{B}(\ell,\lfloor \frac{\ell}2\rfloor)$.
(See \cite[\S2.16, Example~4]{MR2355506} and \cite{MR1894013}.)
\begin{NB}
  Comment on the the unitary condition ?
\end{NB}%

If $\bw=2$ with type $(-)$, or $\bw=3$ with type $(+)$,
the $K$-matrix has nontrivial $(i,j)$ entries if $j=i$ or $j=\ell+1-i$
except $i=j=(\ell+1)/2$. Those nontrivial entries are the same as
the ones in \cref{subsec:K-matrix}\ref{subsub:A} with $u_1-u_2$ replaced by
$2u_1$.
The reflection equation algebra is alternatively defined by
the reflection equation and the unitary condition. The latter holds
thanks to \cref{subsec:unitarity}. Therefore the extended twisted Yangian,
the version with fixed $i=1$ in \cref{rem:smaller_ext-twist-Yang}(4), is again the reflection equation algebra.
Nevertheless the associated embedding $\sX^{\mathrm{tw}}\to \sX$ is different
from the one for $\bw=2$ with type $(+)$ above.

From these analysis, we get the following for general $\bw$.

\begin{Theorem}\label{thm:MR_refl}
  The equivariant cohomology $H^*_{\TT_\bw}(\fM^\sigma(\bw))$ is a representation
  of the Molev-Ragoucy reflection equation algebra $\mathcal{B}(\ell,\lfloor \frac{\ell}2\rfloor)$.
\end{Theorem}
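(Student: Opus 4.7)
The plan is to apply the construction of \cref{sec:twisted-yangian} to $\fM^\sigma(\bw)$ and identify the resulting coideal subalgebra with $\mathcal{B}(\ell,\lfloor\ell/2\rfloor)$.

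First I would verify \cref{assum:induced-polarization}. Since $\fM^\sigma(\bw)$ is the cotangent bundle of a partial flag variety of classical type, the cotangent fiber direction gives a canonical polarization whose restriction to any $A'$-fixed locus is again the cotangent fiber direction of a smaller cotangent bundle; the compatibility across connected components required by the second bullet of \cref{assum:induced-polarization} is then automatic. Hence $K_F(u)$ is a well-defined rational operator for every $\ell$-fundamental auxiliary space $F$, and together with the geometric reflection equation \eqref{eq:h:42} this equips $H^*_{\TT_\bw}(\fM^\sigma(\bw))$ with a representation of the extended reflection algebra $\sX^{\mathrm{ref}}$.

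Next I would compute $K_{F_1}(u)$ explicitly in the vector auxiliary space. Using the weight-space decomposition $W=\bigoplus_{k>0}(W(k)\oplus W(-k))$ (plus $W(0)$ when $\bw$ is odd) from \cref{subsec:torus_action}, the coideal property of $\sS_{F_1}(u)=\sT_{F_1^\sigma}(-u)^{-1}K_{F_1}(u)\sT_{F_1}(u)$ under the coproduct $\Delta\colon\sX^{\mathrm{tw}}\to\sX\otimes\sX^{\mathrm{tw}}$ reduces the $K$-matrix for general $\bw$ to the base cases $\bw=2,3$. Those base cases, worked out in \cref{subsec:K-matrix} above, produce a $K$-matrix conjugate to $\operatorname{diag}(\underbrace{1,\dots,1}_{\lceil\ell/2\rceil},\underbrace{-1,\dots,-1}_{\lfloor\ell/2\rfloor})$, which is precisely the constant $K$-matrix specifying $\mathcal{B}(\ell,\lfloor\ell/2\rfloor)$ in \cite[\S2.16, Example~4]{MR2355506}, and which fixes the signature invariant $\lfloor\ell/2\rfloor$.

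Finally I would match relations. The matrix entries of $\sS_{F_1}(u)$ satisfy the reflection equation \eqref{eq:h:42} with the above numerical matrix as its leading term, yielding a homomorphism from the extended reflection equation algebra $\widetilde{\mathcal{B}}(\ell,\lfloor\ell/2\rfloor)$ to the algebra of operators on $H^*_{\TT_\bw}(\fM^\sigma(\bw))$. The unitarity $K_{F_1^\sigma}(-u)K_{F_1}(u)=\id$ from \cref{subsec:unitarity} supplies the additional relations cutting out the quotient $\mathcal{B}(\ell,\lfloor\ell/2\rfloor)$, so the representation descends to $\mathcal{B}(\ell,\lfloor\ell/2\rfloor)$ as asserted. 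The hard part will be the careful matching of normalizations: one must reconcile the conjugation absorbed into the polarization choice, the spectral-parameter shift observed in \cref{rem:shift_of_R}, and the normalization of the Yang $R$-matrix on the walls $\{u_1=\pm u_2\}$ with the conventions of \cite[Chapter~2]{MR2355506}, so that the geometrically produced $K$-matrix literally coincides with the Molev--Ragoucy one rather than merely lying in the same orbit.
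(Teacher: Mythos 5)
Your proposal follows essentially the same route as the paper: the cotangent-fiber polarization is automatically induced, the $K$-matrix is computed on the torus fixed points in the small-$\bw$ cases, and the identification with $\mathcal{B}(\ell,\lfloor\ell/2\rfloor)$ is completed by combining the geometric reflection equation \eqref{eq:h:42} with the unitarity of \cref{subsec:unitarity}, which together characterize the reflection equation algebra as a quotient of the extended one. One correction: it is only the $\bw=2$, type $(+)$ base case that yields a \emph{constant} $K$-matrix (the anti-identity, conjugate to $\operatorname{diag}(1,\dots,1,-1,\dots,-1)$ with $\lfloor\ell/2\rfloor$ minus signs); for $\bw=2$ of type $(-)$ and $\bw=3$ of type $(+)$ the $K$-matrix has genuinely $u$-dependent entries (the wall entries with $u_1-u_2$ replaced by $2u_1$), so in those cases the identification cannot proceed by matching the constant Molev--Ragoucy $K$-matrix and must instead rely on the reflection-equation-plus-unitarity characterization --- which you do invoke, so the argument still closes, but the embeddings $\sX^{\mathrm{tw}}\to\sX$ differ between these cases.
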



\appendix
\section{ADHM description of instantons on ALE spaces for classical groups}
\label{sec:ADHM}

When a gauge group is a unitary group, instantons on ALE spaces have a
description in terms of quiver representations \cite{KN}. It is a
modification of ADHM description \cite{ADHM} of instantons on
$S^4$. (It followed more closely the presentation in \cite{MR1079726}.)
The latter has a version for $\SO/\grpSp$ gauge groups, as an
$\SO/\grpSp$ instanton can be considered as a unitary instanton $A$
together with an involutive isomorphism between the dual instanton and
the original instanton $A^*\cong A$. More precisely we take ADHM
description for both $A$ and $A^*$, and assign an isomorphism between
two descriptions. This was explained already in \cite{ADHM}, and has
been well-known in gauge theory context. See \cite{MR857374} for
instantons on $\overline{\CC P}^2$. It is discussed e.g.\ in
\cite{MR3508922} in the presentation in \cite{MR1079726}.

When \cite{KN} was written, description of $\SO/\grpSp$ instantons on
ALE spaces was \emph{not} known, as the description of the dual
instanton $A^*$ was not given. 
It was mentioned in the third paragraph of \cite[p.~266]{KN}.
More precisely, the description in
\cite{KN} involves the parameter $\zeta$ for the level of the
hyperk\"ahler moment map, and we are forced to change $\zeta$ to
$-\zeta$ when we describe $A^*$.
\begin{NB}
Also a diagram automorphism is
involved, which is easy to handle.
\end{NB}%
Thus we need an isomorphism from description for $\zeta$ to one for
$-\zeta$. Such an isomorphism was found later as the reflection functor $S_{w_0}$
corresponding to the longest element $w_0$ in the finite Weyl group
\cite[\S9]{Na-reflect}.

However, the ADHM description of $\SO/\grpSp$ instantons on ALE spaces was 
\emph{not} discussed in \cite[\S9]{Na-reflect}, as the motivation there was different.
Let us explain it in this appendix.
%
%
No new input other than \cite{KN,Na-reflect} is necessary, so we just
state the result without a proof. It is also good to look at
\cite[App.~A.4]{2015arXiv150303676N} where ADHM description of
$\SO/\grpSp$ instantons on $\RR^4/\Gamma$ is explained.
It can be considered as a degenerate case of the discussion below
where the reflection functor $S_{w_0}$ becomes the identity.

\subsection{ADHM description of dual instantons}\label{subsec:ADHM}

Let $(I,H)$ be the McKay quiver for $\Gamma$, namely $I$ is the set of
isomorphism classes of irreducible representations of $\Gamma$, and
$H$ is the set of arrows where we draw $a_{ij}$ arrows from $i$ to $j$
for $a_{ij} = \dim \Hom_\Gamma(\rho_i,\rho_j\otimes Q)$, where $Q$ is
the $2$-dimensional representation of $\Gamma$ given by the inclusion
$\Gamma\subset\SU(2)$. 
Contrary to the assumption in \cref{sec:inv-quiver}, our quiver is of affine type.
However, the finite type quiver was obtained by removing the $0$-th vertex corresponding to the trivial
representation $\rho_0$ in \cref{subsec:diagram}, hence the explanation
in \cref{sec:inv-quiver} is still valid.
We choose an orientation $\Omega$ of $H$,
which is a division $H = \Omega \sqcup\overline{\Omega}$, where
$\overline{h}$ is the arrow with the opposite direction to $h$. 
We choose it as in \cref{subsec:orientation}, where we choose a cyclic orientation,
instead of a linear orientation, in affine type A.


We take $\zeta = (\zeta_i) \in (\RR^3)^I$, the data for an ALE space
$X_\zeta$ asymptotic to $\RR^4/\Gamma$ at infinity. It sits in the
level $0$ hyperplane
$0 = \zeta\cdot\delta = \sum_i \zeta_i \dim \rho_i$, where $\delta$ is
the positive primitive imaginary root of the corresponding affine Lie
algebra.

Let us take an $\U(n)$ framed instanton $A$ on $X_\zeta$. We have the
corresponding ADHM description \cite{KN}. Namely the quiver variety
$\fM_\zeta^{\mathrm{reg}}(\bv,\bw)$ is the moduli space of framed
instantons on $X_\zeta$. Here the superscript ``reg'' means that we take
the open subset of the quiver variety consisting of free orbits of the
gauge group $\prod_{i\in I} \U(V_i)$.
Furthermore, we can replace the hyperK\"ahler quotient by the complex symplectic
quotient with the stability condition given by $\zeta_\RR$, hence the quiver
variety is one discussed in \cref{sec:inv-quiver}.
($B_h$ was denoted by $B_{i,j}$, and $a_i$, $b_i$ were denoted by
$i_k$, $j_k$ in \cite{KN}.)

The description of \cite{KN} uses the tautological bundle
$\scR$, which decomposes as
$\bigoplus_{i\in I} \scR_i\otimes\rho_i^*$.
Reflection functors in \cite{Na-reflect} are understood as
isomorphisms between different descriptions of instanton moduli spaces
for different choices of $\scR$. In particular, the reflection
functor $S_{w_0}$ for the longest element $w_0$ composed with the
diagram automorphism $\invast$
\begin{equation*}
  * \circ S_{w_0} = S_{w_0}\circ * \colon
  \fM_\zeta^{\mathrm{reg}}(\bv,\bw)\to
  \fM_{w_0\invast[\zeta]}^{\mathrm{reg}}(w_0\star \invast[\bv],\invast[\bw])
\end{equation*}
corresponds to the dual bundle
$\scR^* = \bigoplus_{i\in I}\scR_i^*\otimes\rho_i$ as
shown in \cite[9(iii)]{Na-reflect}.
%
%
%
%
Note that $S_{w_0}$ and $\invast$ do \emph{not} touch the vertex $0$
corresponding to the trivial representation, hence the component
$\bv_0$, $\bw_0$ are unchanged.

The dual instanton $A^*$ is given by the transpose $t$ in \cref{subsec:transpose}
%
%
%
\begin{equation*}
  t\colon \fM_\zeta^{\mathrm{reg}}(\bv,\bw)\to
  \fM_{-\zeta}^{\mathrm{reg}}(\bv,\bw).
\end{equation*}
Recall that $\fM_{-\zeta}^{\mathrm{reg}}(\bv,\bw)$ is associated with $W^*$.

Composing two isomorphisms, we get
\begin{equation*}
  S_{w_0}\circ\invast\circ t
  \colon \fM_\zeta^{\mathrm{reg}}(\bv,\bw)
  \to \fM_\zeta^{\mathrm{reg}}(w_0\star \invast[\bv],\invast[\bw]),
\end{equation*}
where we have used $-w_0\invast[\zeta] = \zeta$.

%

\subsection{$\SO/\grpSp$ instantons}

Let us take $\SO(n)$ or $\grpSp(n)$ framed instanton $A$ on
$X_\zeta$. We regard it as a $\U(n)$ framed instanton $A$ together
with an isomorphism $A\cong A^*$ compatible with the framing.
Since framed instantons have no nontrivial automorphisms, the
isomorphism $A\cong A^*$ is unique if it exists. Hence moduli spaces
of $\SO(n)/\grpSp(n)$ framed instantons are fixed point loci in
moduli spaces of $\U(n)$ instantons with respect to the involution
given by $A\mapsto A^*$. Here moduli spaces are constructed by a gauge
theoretic method as in \cite{MR1074476}.

Recall that the framing of $A$ is an approximate isomorphism of $A$
and a flat connection on $\RR^4/\Gamma$ at infinity. The flat
connection corresponds to a $\Gamma$-module
$\bigoplus_i W_i\otimes\rho_i$.
Its dual representation is
$\bigoplus W_i^*\otimes\rho_{\invast[i]} = \bigoplus W_{\invast[i]}^* \otimes\rho_i$.
We choose an isomorphism $W_i\cong W^*_{\invast[i]}$ as in \cref{subsec:form}.
Here we choose $\sigma' = \id$, and determine $\ve_i$ as in \cref{subsec:form},
according to a choice of type $(+)$ or $(-)$.
We choose $(+)$ if we want to describe $\SO$-instantons, and $(-)$ for $\grpSp$-instantons.
The rule for the choice of $\ve_i$ is determined so that the isomorphism
$\bigoplus W_i\otimes\rho_i\cong \bigoplus W_i^*\otimes\rho_{\invast[i]}$ is given by
an orthogonal form in type $(+)$ or a symplectic form in type $(-)$. 
Thus we fix an isomorphism of the flat connection and its dual in advance.



Being an $\SO(n)$ or $\grpSp(n)$ instanton, a $\U(n)$ instanton must have
vanishing first Chern class. Hence we have $w_0\star \invast[\bv] = \bv$ by \cite[\S9]{KN}.
%
in the last paragraph of \cref{subsec:ADHM}.
%


%

Now $\sigma = S_{w_0}\circ\invast\circ t$ is defined as an involution on
$\fM_\zeta^{\mathrm{reg}}(\bv,\bw)$.
\begin{Theorem}
  A moduli space of $\SO(n)$ or $\grpSp(n)$ framed instantons on
  $X_\zeta$ is isomorphic to the fixed point locus
  $\fM_\zeta^{\mathrm{reg}}(\bv,\bw)^{\sigma}$ as a hyperk\"ahler manifold.
\end{Theorem}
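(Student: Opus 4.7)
The plan is to reduce the statement to the Kronheimer--Nakajima correspondence \cite{KN} for $\U(n)$-framed instantons on $X_\zeta$, by identifying the involution on quiver data induced by dualization of instantons $A\mapsto A^*$ with the geometric involution $\sigma = S_{w_0}\circ\invast\circ t$. Since a framed instanton admits no non-trivial automorphisms, an isomorphism $A\cong A^*$ compatible with the framing is unique when it exists; so an $\SO(n)$- (resp.\ $\grpSp(n)$-) instanton is nothing but a $\U(n)$-framed instanton fixed by the dualization involution (with the appropriate sign of the form). Combined with the identification of involutions this yields the desired isomorphism of sets, and one then needs to check compatibility with the hyperk\"ahler structure.

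The first step is to describe the dual instanton $A^*$ in ADHM language. Taking duals of all linear maps in the ADHM data and using the isomorphism $\bM(V,W)\cong \bM(V^*,W^*)$ (with the opposite orientation) of \cref{subsec:transpose} gives an instanton on $X_\zeta$ described as a point of $\fM_{-\zeta}^{\mathrm{reg}}(\bv,\bw)$, but now framed by the dual $\Gamma$-module $\bigoplus W_i^*\otimes \rho_i$. The second step is to re-express the dual framing $\bigoplus W_i^*\otimes\rho_i = \bigoplus W_{\invast[i]}^*\otimes\rho_i$ using the chosen identification $W_i\cong W_{\invast[i]}^*$ from \cref{subsec:form}; this is precisely the effect of the diagram involution $\invast$ on the framing data, and it yields a point in $\fM_{-\invast[\zeta]}^{\mathrm{reg}}(\invast[\bv],\bw)$. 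The third step is to apply the reflection functor $S_{w_0}$, which by \cite[\S9]{Na-reflect} implements the isomorphism on the level of instantons corresponding to replacing the tautological bundle $\scR=\bigoplus\scR_i\otimes\rho_i^*$ by its dual $\scR^*=\bigoplus\scR_i^*\otimes\rho_i$; by the same reference, $\invast\circ S_{w_0}$ is exactly the change of description for $\scR^*$. This moves the parameter from $-\invast[\zeta]$ back to $\zeta$ (using $w_0(-\invast[\zeta])=\zeta$) and the dimension vector from $\invast[\bv]$ to $w_0\star\invast[\bv]=\bv$, the latter equality holding because the first Chern class vanishes for an $\SO/\grpSp$ instanton (\cite[\S9]{KN}). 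Composing the three steps gives precisely $\sigma=S_{w_0}\circ\invast\circ t$, and identifies the instanton-theoretic involution $A\mapsto A^*$ with the quiver-theoretic $\sigma$.

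With the identification of involutions in hand, the fixed point locus $\fM_\zeta^{\mathrm{reg}}(\bv,\bw)^\sigma$ is in bijection with the set of framed $\U(n)$-instantons $A$ equipped with a (necessarily unique) isomorphism $\varphi\colon A\to A^*$ compatible with the framing; the choice of type $(\pm)$, made via the signs $\ve_i$ in \cref{subsec:form}, precisely records whether $\varphi$ is orthogonal or symplectic, so $(+)$ recovers $\SO(n)$ and $(-)$ recovers $\grpSp(n)$. It remains to upgrade this bijection to a hyperk\"ahler isomorphism: $t$ acts on the hyperk\"ahler parameter by $\zeta\mapsto -\zeta$ and is an anti-holomorphic isomorphism in the natural complex structure, while $\invast$ and $S_{w_0}$ preserve the hyperk\"ahler structure up to the corresponding permutation of $\zeta$-coordinates (\cite[\S3, \S8]{Na-reflect}), and the composite $\sigma$ preserves the full hyperk\"ahler structure. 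This matches the effect of the dualization involution on the moduli space of $\U(n)$-instantons constructed gauge-theoretically \cite{MR1074476}, completing the identification.

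The main obstacle is the third step: one must verify that the reflection functor $S_{w_0}$, as defined algebraically via the sequence of hyperbolic restrictions in \cite{Na-reflect}, really corresponds on the instanton side to replacing $\scR$ by $\scR^*$, and that this replacement is in turn compatible with the duality $A\mapsto A^*$. The tools for this are already present in \cite{Na-reflect}, which identifies the effect of individual simple reflections on the tautological bundle; the only work is to iterate this carefully through a reduced expression of $w_0$ and match with the transpose operation on ADHM data. Once this matching is in place the remaining verifications (bijection, freeness of the gauge action, preservation of the hyperk\"ahler structure) are formal and follow the template of \cite[App.~A.4]{2015arXiv150303676N} in the degenerate case $X_\zeta = \RR^4/\Gamma$, where $S_{w_0}$ degenerates to the identity.
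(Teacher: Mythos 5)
Your proposal is correct and follows essentially the same route as the paper, which deliberately states this theorem without proof on the grounds that no input beyond \cite{KN} and \cite{Na-reflect} is needed: your three-step factorization (transpose $t$ giving the dual instanton at parameter $-\zeta$, the diagram involution $\invast$ re-identifying the dual framing via the chosen $\ve$-forms, and $S_{w_0}$ returning to $\zeta$ with $w_0\star\invast[\bv]=\bv$ by vanishing of $c_1$) is precisely the chain of identifications the paper assembles in the preceding subsections, with the key input \cite[9(iii)]{Na-reflect} identifying $\invast\circ S_{w_0}$ with the passage to the dual tautological bundle $\scR^*$. The only quibble is your parenthetical that $t$ is ``anti-holomorphic''; it is a holomorphic (complex-linear transpose) map to the quiver variety with opposite orientation and parameter $-\zeta$, and the composite $\sigma$ preserves the holomorphic symplectic form by construction.
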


Dimension vectors $\bv$, $\bw$ are determined by the second Chern
class and the framing considered as $\U(n)$ instantons.

\subsection{Extension to partial compactification}

Recall that $\fM_\zeta^{\mathrm{reg}}(\bv,\bw)$ is defined as
the open subset consisting of free $\prod \U(V_i)$-orbits. We have a
larger space $\fM_\zeta(\bv,\bw)$ by dropping the freeness
condition. It is Uhlenbeck's partial compactification of the moduli
space of framed instantons on $X_\zeta$.

The isomorphism $\sigma$ extends to
$\fM_\zeta(\bv,\bw)$ as a homeomorphism. This is because the
reflection functor is defined on the larger space, and $t$, $*$
clearly extend. It is also clear in the gauge theoretic construction
of $\fM_\zeta(\bv,\bw)$.
If we use algebro-geometric construction of
$\fM_\zeta(\bv,\bw)$ via geometric invariant theory, we can
endow $\fM_\zeta(\bv,\bw)$ with a structure of a
quasiprojective variety. Then the extension of
$\sigma$ is an involution on a variety.

Note that $\fM_\zeta^{\mathrm{reg}}(\bv,\bw)$ has another
partial compactification as a moduli space of framed torsion free
sheaves on $X_\zeta$. The corresponding ADHM description is given as
follows. (See \cite{Na-ADHM} for detail.) We take an algebro-geometric
description of $\fM_\zeta(\bv,\bw)$. We
decompose the parameter $\zeta$ to complex and real parts $\zeta_\CC$,
$\zeta_\RR$, impose the complex moment map equation involving only
$\zeta_\CC$. We have the group action of $\prod \GL(V_i)$ on the
solution space. We take the quotient of the $\zeta_\RR$-semistable
locus by the S-equivalence relation. Thus
\begin{equation*}
  \fM_\zeta(\bv,\bw) \cong H^{\mathrm{ss}}_{(\zeta_\RR,\zeta_\CC)}/\!\sim,
\end{equation*}
where $H^{\mathrm{ss}}_{(\zeta_\RR,\zeta_\CC)}$ is the open subset of
$\zeta_\RR$-semistable points in the solution space of the complex
moment map equation. See \cite[Prop.~2.11]{Na-reflect}.

Recall that $\zeta_\RR$ lives on the level 0 hyperplane
$\zeta_\RR\cdot \delta = 0$. Then we take $\zeta_\RR'$ near
$\zeta_\RR$ with $\zeta_\RR'\cdot\delta < 0$. Then we define
$\fM_{(\zeta_\CC,\zeta_\RR')}(\bv,\bw)$ as the quotient of the
$\zeta_\RR'$-semistable (equivalently $\zeta_\RR'$-stable) locus by
the action of $\prod\GL(V_i)$.
Then $\zeta_\RR'$-semistability implies $\zeta_\RR$-semistability by
our choice, that is $H^{\mathrm{ss}}_{(\zeta_\RR',\zeta_\CC)}\subset
H^{\mathrm{ss}}_{(\zeta_\RR,\zeta_\CC)}$. It induces a morphism
\begin{equation*}
  \fM_{(\zeta_\CC,\zeta_\RR')}(\bv,\bw) \to
  \fM_\zeta(\bv,\bw), 
\end{equation*}
which is an isomorphism on
$\fM_\zeta^{\mathrm{reg}}(\bv,\bw)$.  Therefore
$\fM_{(\zeta_\CC,\zeta_\RR')}(\bv,\bw)$ is another partial
compactification of $\fM_\zeta^{\mathrm{reg}}(\bv,\bw)$.

Note that $-w_0 (\zeta_\RR')^* \neq \zeta_\RR'$ as they live in the
opposite side of the level 0 hyperplane $\zeta_\RR\cdot\delta = 0$.
Therefore $\sigma$ does not define an involution
on $\fM_{(\zeta_\CC,\zeta_\RR')}(\bv,\bw)$.

\bibliographystyle{myamsalpha}
\bibliography{nakajima,mybib,orthsymp,symmetric,MO}

\end{document}